\renewcommand{\arraystretch}{1.25}
\def\rad{\mathrm{rad}}
\def\ch{\mathrm{ch}}
\def \ker{\mathrm{ker}}
\def \im{\mathrm{im}}
\newtheorem{thm}{Theorem}[section]
\newtheorem{lemma}[thm]{Lemma}
\newtheorem{prop}[thm]{Proposition}
\newtheorem{cor}[thm]{Corollary}
\newtheorem{question}[thm]{Question}
\newtheorem{definition}[thm]{Definition}
\theoremstyle{definition}
\newtheorem{example}[thm]{Example}
\newtheorem{remark}[thm]{Remark}
\def\C{{\mathbb{C}}}
\def\stab{\mathrm{Stab}}
\def\sgn{\mathrm{sgn}}
\def\type{\mathrm{type}}
\def\ch{\mathrm{ch}}
\DeclareTextFontCommand{\emph}{\bf}
\newcommand{\patty}[1]{\textcolor{olive}{Patty:[#1]}}
\title{Invariant theory for the face algebra of the braid arrangement}
\author{Patricia Commins}
\subjclass{
05E10, %Combinatorial aspects of representation theory 
05E18, %Group actions on combinatorial structures
16W22, %Actions of groups and semigroups; invariant theory
16GXX}%Representation theory of associative rings and algebras
\keywords{Solomon's descent algebra, higher Lie characters, plethysm, finite dimensional algebras, poset topology, reflection arrangements, Tits semigroup, left regular band}
\begin{document}

\maketitle
\begin{abstract}
    {The faces of the braid arrangement form a monoid. The associated monoid algebra -- the face algebra-- is well-studied, especially in relation to card shuffling and other Markov chains. In this paper, we explore the action of the symmetric group on the face algebra from the perspective of invariant theory. Bidigare proved the invariant subalgebra of the face algebra is (anti)isomorphic to Solomon's descent algebra. We answer the more general question: what is the structure of the face algebra as a simultaneous representation of the symmetric group and Solomon's descent algebra?

Special cases of our main theorem recover the Cartan invariants of Solomon's descent algebra discovered by Garsia--Reutenauer and work of Uyemura-Reyes on certain shuffling representations. Our proof techniques involve the homology of intervals in the lattice of set partitions.}
\end{abstract}

%%%%%%%%%%%%%%%%%%%%%%%%%%%%%
%This \setcounter line below makes the table of contents only list sections, not subsections or subsubsections
\setcounter{tocdepth}{1}
%%%%%%%%%%%%%%%%%%%%%%%%%%%%%
%\tableofcontents

\section{Introduction}
In this paper, we explore the interactions between the symmetric group and two related finite dimensional algebras: Solomon's descent algebra and the face algebra of the braid arrangement.

The descent algebra is a remarkable subalgebra of the symmetric group algebra -- and more generally, a Coxeter group algebra-- which was discovered by Solomon in \cite{Solomondescentbirth}. There has since been great progress in understanding its representation theory (see \cite{atkinson,Bergerons-hyper1, Bergerons-general, Bergerons-hyper2, blessonlaue, bonnafepfeiffer, garsia-reutenauer, pfeiffer2009quiver, saliolaloewytypeD, saliolaquiverdescalgebra, Schocker}). However, as a non semisimple algebra, its structure is rich and many open questions (such as the Cartan invariants in general Coxeter type) remain. The descent algebra is also studied for its connections with the free Lie algebra \cite{garsia-reutenauer, reutenauer}, higher Lie representations \cite{schocker2003multiplicities, Uyemura-Reyes}, and characters of Coxeter groups \cite{blesshohlshock, jollenbeckreutenauer, Solomondescentbirth}. See \cite{SchockerSurvey} for a very nice overview of these connections.

The faces of any hyperplane arrangement form a semigroup under a product first considered by Tits in \cite{TITS1976265}. This face semigroup is perhaps best known for its connections to card shuffling and other Markov chains, which were pioneered by Bidigare--Hanlon--Rockmore in \cite{BHR} and continued by many others (see, for example, \cite{browndiac, DIEKER2018427, lafreniere, reiner2014spectra, Uyemura-Reyes}). Beyond shuffling, hyperplane face semigroups are also interesting in their own right. For instance, the structure of the associated semigroup {algebras} is governed by the combinatorics of the arrangement. Face semigroup algebras have been studied extensively from this perspective by Aguiar--Mahajan in \cite{Aguiar-Mahajan},  Saliola in \cite{saliolaloewytypeD, saliolaquiverdescalgebra, saliolafacealgebra}, and Schocker in \cite{Schocker}. Finally, face semigroups are also studied as a canonical example of a left regular band, a special type of idempotent semigroup studied in \cite{aguiar2006coxeter, BrownonLRBs, margolis2015combinatorial, MSS,saliolaquiverlrb,  SteinbergRepMonoidsBook}.

The face algebra of the braid arrangement and the descent algebra are intrinsically linked by the symmetric group, which acts on the face algebra by algebra automorphisms. Specifically, in his PhD thesis \cite{Bidigare}, Bidigare proved the invariant subalgebra of the face algebra is (anti)isomorphic to the descent algebra. Bidigare's connection provides a useful perspective on the descent algebra. As Schocker wrote in \cite{SchockerSurvey}, it offers ``fully transparent proofs of [several of] Solomon's results and sheds new light on the representation theory of the descent algebra.'' As concrete examples, Saliola used this geometric viewpoint of the descent algebra to compute its quiver in types $A$ and $B$ in \cite{saliolaquiverdescalgebra} and its Loewy length in type $D$ in \cite{saliolaloewytypeD}. 

Inspired by classical invariant theory, we aim to build on this story by further exploring the connections between the descent algebra, the face algebra, and the symmetric group. In particular, when a finite group $G$ acts on a ring $R$ by ring automorphisms, it is natural to consider the structure of $R$ as a module over its invariant ring $R^G.$  Bidigare's result reveals the face algebra is a (right) module over Solomon's descent algebra, opening the door to studying a concrete, combinatorial representation of the descent algebra. In \cite[\S 9.3]{Schocker}, Schocker initiated study in this direction. Specifically, he studied the indecomposable projective and simple modules of the face algebra as descent algebra modules in \cite[Prop 9.4, Prop 9.6, Thm 9.7]{Schocker}. 

Invariant theory also asks a more refined question. Provided $R$ is an algebra over a field of characteristic zero, $R$ decomposes into its \textit{$G$-isotypic subspaces}, \[R = \bigoplus_{\chi}R^\chi,\] as $\chi$ varies over the irreducible characters of $G.$ The trivial isotypic subspace is precisely the invariant subalgebra of $R$ and each isotypic subspace $R^\chi$ is a module over the invariant subalgebra $R^G.$ Studying each $G$-isotypic subspace as a module over the invariant subalgebra examines $R$ as a module over both $G$ and $R^G$ simultaneously.

It is thus a natural extension of Bidigare's result to consider the other symmetric group isotypic subspaces of the face algebra. In fact, although he did not consider its module structure over the descent algebra,  Bidigare also studied the structure of the sign isotypic subspace of the face algebra in his thesis \cite[\S 3.5.3]{Bidigare}. In what follows, let $S_n$ be the symmetric group on $n$ letters and let $\C \mathcal{F}_n$ denote the face algebra (over the complex numbers $\C$) of the associated braid arrangement.  Our main question is as follows.

\begin{question}\label{question:OG-pre}
    What is the structure of each $S_n$-isotypic subspace of $\C \mathcal{F}_n$ as a module over the descent algebra?
\end{question}

 Our answer to \cref{question:OG-pre} is (informally) summarized in the following theorem, which combines \cref{prop:decomp-isos},  \cref{prop:transform-to-group-rep-q}, and \cref{conj:typeA-comp-factors}. The irreducible representations of both the symmetric group and the descent algebra are indexed by partitions of $n.$ Let $\chi^\nu$ be the irreducible character of $S_n$ associated to the partition $\nu$ and define $f^\nu$ to be its dimension. Let $M_\lambda$ be the irreducible representation of the descent algebra associated to the partition $\lambda.$ We write $s_\nu$ for the Schur function indexed by the partition $\nu$ and $L_\rho$ for the symmetric function corresponding to the higher Lie character indexed by the partition $\rho.$ Any remaining notation is discussed in \cref{sec:full-answer}. 
\begin{thm}\label{thm:informal-main}
    The $\chi^\nu$-isotypic subspace of the face algebra $\C \mathcal{F}_n$ has a decomposition into descent algebra submodules $N_{\mu}^\nu$  labelled by partitions $\mu$ of $n:$ \[ \left(\C \mathcal{F}_n\right)^{\chi^{\nu}} = \bigoplus_{\substack{\nu \text{ weakly}\\ \text{ dominates }\mu}} N_{\mu}^\nu.\] The $\C$-dimensions of the descent algebra modules $N_{\mu}^{\nu}$ have simple combinatorial formulas and the composition multiplicity of the descent algebra simple $M_\lambda$ within $N_{\mu}^{\nu}$ is \[f^\nu \cdot \left\langle s_\nu, \text{coefficient of }\mathbf{y}_\lambda \mathbf{z}_\mu \text{ in } F\right\rangle,\] where $F$ is the generating function
    \[F = \prod_{\substack{\text{Lyndon}\\\text{words } w}} \,\,
    \sum_{\substack{\text{partitions}\\ \rho}}\mathbf{y}_{\rho \cdot |w|} \, \, \mathbf{z}_{w^{|\rho|}} \,\, L_\rho [h_w].\]
\end{thm}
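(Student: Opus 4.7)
The plan is to prove the three assertions of the theorem in sequence, leveraging Saliola's decomposition of the face algebra into principal indecomposable modules together with classical results on symmetric group representations on the partition lattice.

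For the decomposition into $N_\mu^\nu$, I would begin with Saliola's complete system of primitive orthogonal idempotents $\{e_X\}$ indexed by set partitions $X$ of $[n]$, giving $\C \mathcal{F}_n = \bigoplus_X \C \mathcal{F}_n \, e_X$ as a decomposition of right face algebra (hence right descent algebra) modules. Since $S_n$ permutes these idempotents according to its action on set partitions, the coarser decomposition $\C \mathcal{F}_n = \bigoplus_\mu \bigl(\bigoplus_{\type(X) = \mu} \C \mathcal{F}_n \, e_X\bigr)$ grouped by the shape $\mu$ is $S_n$-equivariant. Defining $N_\mu^\nu$ as the $\chi^\nu$-isotypic part of the $\mu$-block yields the refinement claimed, and the dominance restriction $\nu \geq \mu$ follows from Young's rule applied to the permutation representation on set partitions of shape $\mu$.

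For the dimension formula, the $S_n$-module $\bigoplus_{\type(X)=\mu} \C \mathcal{F}_n \, e_X$ is induced from the stabilizer $\stab_{S_n}(X)$ of a fixed representative $X$, and the internal structure of $\C \mathcal{F}_n \, e_X$ as a module over that stabilizer is transparent from Saliola's construction. A Frobenius reciprocity calculation then extracts the $\chi^\nu$-multiplicity in the desired closed form.

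The heart of the argument -- and the main obstacle -- is the composition multiplicity formula. My strategy is to use Saliola's identification of the radical filtration of $\C \mathcal{F}_n \, e_X$ with the Whitney (co)homology of the interval $[\hat 0, X]$ in the partition lattice: composition factors $M_\lambda$ correspond to homology classes, and Lehrer--Solomon's theorem (equivalently Sundaram's in type $A$) identifies the $S_n$-character of this homology with sums of higher Lie characters $L_\rho$. The product decomposition $[\hat 0, X] \cong \prod_B [\hat 0, \hat 1_B]$ over the blocks $B$ of $X$ factorizes the character computation block-by-block, and Reutenauer's indexing of a basis of the free Lie algebra by Lyndon words identifies each block's contribution with the Lyndon-word factor appearing in $F$. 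Summing over all shapes $\mu$ of $X$, tracking the contribution to each descent algebra simple $M_\lambda$, and pairing with $s_\nu$ to extract the $\chi^\nu$-isotypic part produces the generating function as stated, with the $f^\nu$ prefactor accounting for the passage from multiplicity to isotypic-component dimension. The principal obstacle is assembling the double plethystic structure $L_\rho[h_w]$ into a clean product over Lyndon words rather than a tangled double sum; I anticipate this requires a PBW-style parameterization of the idempotents by multisets of Lyndon words, so that the multiplicative structure of $F$ emerges directly from the tensor-product decomposition of intervals $[\hat 0, X]$ rather than from ad hoc symmetric function manipulations.
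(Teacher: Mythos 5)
Your plan for the first two assertions is essentially the paper's: group Saliola's idempotents $\{E_X\}$ by shape to get $N_\mu^\nu = (\C\mathcal{F}_n E_\mu)^\nu$, then use Frobenius reciprocity / Young's rule to get the dimension (this is \cref{prop:decomp-isos}). The high-level intuition for the third assertion -- partition-lattice homology, higher Lie characters, Lyndon-word indexing -- is also aligned. But there are several concrete gaps in that part of the argument.

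First, the interval is wrong. The composition multiplicity $[(\C\mathcal{F}_n E_\mu)^\nu : M_\lambda]$ reduces (via \cref{eqn:composition-mults}) to $\dim_\C E_\lambda(\C\mathcal{F}_n E_\mu)^\nu = f^\nu\langle s_\nu, \ch(E_\lambda\C\mathcal{F}_n E_\mu)\rangle$, and the space $E_Y\C\mathcal{F}_n E_X$ (for $X$ of shape $\mu$, $Y$ of shape $\lambda$) is identified with the top cohomology of the \emph{open interval} $(X,Y)$ in $\Pi_n$, not the lower interval $[\hat 0, X]$. Your block-by-block factorization $[\hat 0, X]\cong\prod_B\Pi_{|B|}$ is only the $\mu = 1^n$ specialization (the chamber module); for general $\mu$ the intervals $(X,Y)$ factor into products of \emph{upper} intervals $(X_\nu,\hat 1)$ in smaller partition lattices, one per block of $Y$, and identifying the stabilizer $\stab_{S_n}(X)\cap\stab_{S_n}(Y)$ acting on this product (the paper's \cref{lem:normalizer-intersections} and \cref{prop:homology-of-intervals.}) is a nontrivial step that your proposal skips.

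Second, the determinant twists are missing. The $S_n$-equivariant comparison between $E_Y\C\mathcal{F}_n E_X$ and $\Tilde{H}^{k-2}(X,Y)$ (the paper's \cref{prop:change-coho}) goes through Saliola's quiver map, whose equivariant structure carries orientation signs $\sigma_X(\pi)\sigma_Y(\pi)$; the resulting twist by $\det(X)\otimes\det(Y)$ is exactly what converts the partition-lattice homology (which carries a sign-twisted Lie representation) into the untwisted $L_\rho$ appearing in $F$. Without tracking these twists you would not land on the correct higher Lie symmetric functions.

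Third, the mechanism you leave open -- assembling the double plethysm $L_\rho[h_w]$ into the product over Lyndon words -- is the genuine heart of the proof and your proposed fix (a ``PBW-style parameterization of the idempotents by multisets of Lyndon words'') is not a proof sketch. The paper does this with a concrete evaluation-preserving bijection between primitive necklaces on the monomial alphabet of $z_1 h_1 + z_2 h_2 + \cdots$ and disjoint unions of primitive necklaces on the alphabets of $\mathbf{z}_w h_w$ over Lyndon words $w$ (\cref{prop:eval-preserve-bij}), combined with the Gessel--Reutenauer necklace expression for $L_n$. This is a specific technical construction, not merely a reference to the Lyndon basis of the free Lie algebra, and it is what actually produces the factorization of $F$. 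Your proposal correctly identifies \emph{that} this step is hard; it does not supply a route across it.
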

The composition multiplicities in the special cases $\nu = n$ and $\mu = 1^n$ recover work of Garsia--Reutenauer on the Cartan invariants of the descent algebra and Uyemura-Reyes on certain shuffling representations, respectively. 

\vspace{.5cm}
\noindent \textbf{Outline of paper.}  In \cref{sec:two-algebras}, we provide background on the descent algebra, the face algebra of the braid arrangement, Bidigare's theorem, and some representation theoretic concepts. In \cref{sec:quest} we provide a preliminary answer (\cref{prop:decomp-isos}) which reduces \cref{question:OG-pre} to a more refined question (\cref{quest:2'}). In \cref{sec:full-answer} we state and give examples of our main theorem (\cref{conj:typeA-comp-factors}), explain how it recovers work of Uyemura-Reyes and Garsia--Reutenauer, and give a more explicit answer for the sign isotypic subspace (\cref{prop:sign-iso}). \cref{sec:proof} is dedicated to the proof of our main theorem. For readibility, the proof is first outlined in \cref{sec:outline-proof} and then broken into smaller subsections. Finally, we conclude with a brief discussion on generalizations to other Coxeter types and left regular bands in \cref{sec:generalizations}.

\vspace{.5cm}
\noindent\textbf{Acknowledgements.} The author would like to thank her advisor Vic Reiner for proposing this project and providing truly invaluable guidance at every stage; Sheila Sundaram for several very helpful discussions about her work in \cite{SundaramAdvances} and for sparking the idea of an improved proof for step two of the proof outline; Franco Saliola for very helpful discussions early on about his work in \cite{ saliolaquiverdescalgebra, saliolafacealgebra}; Esther Banaian, Sarah Brauner, Darij Grinberg, Trevor Karn, and Ben Steinberg for valuable references and conversations. Finally, the author wishes to thank the SageMath developers \cite{sagemath}.  The author is supported by an NSF GRFP Award \# 2237827 and also benefited from NSF grant DMS-2053288 for related travel.

\section{Background}\label{sec:two-algebras}
\subsection{The braid arrangement and its face algebra}
Write $\mathbf{x}:= (x_1, x_2, \ldots, x_n)$ to denote an element of the vector space $\mathbb{R}^n.$ The \emph{braid arrangement $\mathcal{B}_n$} is the hyperplane arrangement in $\mathbb{R}^n$ consisting of the hyperplanes $\{\mathbf{x}: x_i = x_j\}$ for all $1 \leq i < j \leq n.$ Each hyperplane $\{\mathbf{x}: x_i = x_j\}$ partitions $\mathbb{R}^n$ into three subsets: the halfspace $H_{ij}^+ = \{\mathbf{x}: x_i > x_j\},$ the halfspace $H_{ij}^- = \{\mathbf{x}: x_i < x_j\},$ and the hyperplane itself $H_{ij}^0.$ The \emph{faces} of $\mathcal{B}_n$ are the nonempty intersections of the form \[\bigcap_{1 \leq i < j \leq n}H_{ij}^{\sgn_{ij}}\] for some set of choices $\sgn_{ij}$ in $\{+, -, 0\}.$ 

The faces of $\mathcal{B}_n$ naturally correspond to strings of inequalities relating all coordinates. For example, one face $F $ of $\mathcal{B}_7$  corresponds to the string $x_4 < x_1 = x_5 < x_7 < x_2 = x_3 = x_6$. Combinatorially, these strings (and their corresponding faces) are {\emph{ordered set partitions}} of the set $[n]:= \{1, 2, \ldots, n\}$. For example, $F$ corresponds to the ordered set partition $\left (\{4\}, \{1, 5\}, \{7\}, \{2, 3, 6\}\right),$ which we write as $(4, 15, 7, 236).$ The symmetric group \emph{$S_n$} acts on the faces of $B_n$ by $\pi \left(P_1, P_2, \ldots, P_k\right) := \left(\pi(P_1), \pi(P_2), \ldots, \pi(P_k)\right).$ 

\begin{example} \label{ex:braid3labelled}
The braid arrangement $\mathcal{B}_3$ (intersected with the plane $x_1 + x_2 + x_3 = 0$) and its thirteen faces are shown below. The colors point out the four $S_3$-orbits of faces.
\begin{center}
    \begin{tikzpicture}[scale=3]

\draw[blue, thick, -> ]  (0, 0) -- (-.5, -0.86602540378
) node[anchor=east]{\footnotesize$(3,12)$}; 

\draw[violet, thick, -> ] (0, 0) -- (.5,  0.86602540378) node[anchor=west] {\footnotesize$(12,3)$}
;
%\draw[black]  (.5,  0.86602540378) --(.5, 1.01) node[anchor=south] {$H_{1,2}$} ;
\draw[blue, thick, <-] (-.5, 0.86602540378
) node[anchor=east]{\footnotesize$(2,13)$} --(0, 0); 

%\draw[black]  (1,0) --(1, .15) node[anchor=south] {$H_{2,3}$} ;

\draw[violet, thick, ->](0, 0)-- (.5,  -0.86602540378)
node[anchor=west] {\footnotesize$(13,2)$};
 \draw[violet, thick, <-]  (-1, 0
)-- (0, 0);
\filldraw[violet] (-1,0) circle (0pt) node[anchor=east] {\footnotesize$(23,1)$};
\draw[blue, thick, ->](0, 0)-- (1, 0) node[anchor=west] {\footnotesize$(1,23)$}
;

\filldraw[red] (0,0) circle (1pt) ;

\filldraw[red] (-.225, .005) circle (0pt) node[anchor=south]{\footnotesize$(123)$};

\filldraw[teal] (.6495,.375)  circle (0pt) node[anchor=south]{\footnotesize$(1,2,3)$};
\filldraw[teal] (0,.75)  circle (0pt) node[anchor=north]{\footnotesize$(2,1,3)$};
\filldraw[teal] (.6495,-.375)  circle (0pt) node[anchor=south]{\footnotesize$(1,3,2)$};
\filldraw[teal] (-.6495, .375)  circle (0pt) node[anchor=south]{\footnotesize$(2,3,1)$};
\filldraw[teal] (-.6495, -.375)  circle (0pt) node[anchor=south]{\footnotesize$(3,2,1)$};
\filldraw[teal] (0, -.75)  circle (0pt) node[anchor=south]{\footnotesize$(3,1,2)$};
\end{tikzpicture}
\end{center}
\normalsize
\end{example}

The faces of $\mathcal{B}_n$ have an associative, \textit{noncommutative} multiplicative structure, first considered by Tits in \cite{TITS1976265}. In terms of ordered set partitions, 
\[\left(P_1, P_2, \ldots, P_k \right) \cdot \left(Q_1, Q_2, \ldots, Q_\ell\right) := \left(P_1 \cap Q_1, P_1 \cap Q_2, \ldots, P_1 \cap Q_\ell, P_2 \cap Q_1, \ldots P_k \cap Q_\ell \right)^{\wedge},\] where $\wedge$ indicates the removal of empty sets. For example, in $\mathcal{B}_7,$
\[(4, 15, 7, 236) \cdot (245, 367, 1) = (4, 5, 1, 7, 2, 36).\] 

The ordered set partition with a single block $(12\ldots n)$ is an identity element, so the faces have the structure of a monoid, which we denote by \emph{$\mathcal{F}_n.$} Note that for any $f, g \in \mathcal{F}_n,$
\begin{equation}\label{eqn:lrb-property}
    fgf = fg.
\end{equation}
 By setting $g = (12 \cdots n)$ in \cref{eqn:lrb-property}, one sees that each element in $\mathcal{F}_n$ is an idempotent. In fact,  \cref{eqn:lrb-property} implies that $\mathcal{F}_n$ belongs to a very special class of idempotent semigroups, called left regular bands, which we (briefly) return to in \cref{sec:LRBs}.

Let \emph{$\Pi_n$} denote the lattice of \textit{unordered} set partitions of $[n]$ under the refinement ordering, so that the set partition with all numbers in a single block is a maximum element. There is a map, often called the \emph{support map} $$\sigma: \mathcal{F}_n \to \Pi_n$$
given by removing the ordering on the blocks of the ordered set partitions. For instance $\sigma\left(\left(4, 15, 7, 236\right)\right)$ is the unordered set partition $\{15, 236, 4, 7\}.$ This map has the special property that for elements $f, g\in \mathcal{F}_n,$ 
\begin{equation}\label{eqn:multipy-faces-meet}
    \sigma(fg) = \sigma(f) \wedge \sigma(g),
\end{equation}
 where $\wedge$ denotes the \emph{meet} operation of a lattice.

Symmetric group actions are of central importance in this paper. It is routine to check that the action of $S_n$ on $\mathcal{F}_n$ is by monoid automorphisms. We shall also consider the standard action of $S_n$ on $\Pi_n,$ defined as $\pi\left\{P_1, \ldots, P_k\right\}=
\left\{\pi(P_1), \ldots, \pi(P_k)\right\}
.$ One can check that the support map $\sigma$ commutes with this action of $S_n$. The $S_n$-orbits of set partitions in $\Pi_n$ are indexed by (integer) partitions $\lambda$ whose parts sum to $n$ (which we write as \emph{$\lambda \vdash n$}).  For a set partition $X \in \Pi_n,$ and integer partition $\lambda \vdash n$ we write \emph{$X \in \lambda$} if the blocks of $X$ have sizes given by the parts of $\lambda.$ For instance, $\{15, 236, 4, 7\} \in (3, 2, 1, 1).$

Rather than the monoid $\mathcal{F}_n$ itself, we are primarily interested in the \emph{face algebra} $\mathbb{C}\mathcal{F}_n$ which is the free $\mathbb{C}$-module with basis $\mathcal{F}_n$ and multiplication
\[\left(\sum_{f \in \mathcal{F}_n}c_ff \right)\cdot \left(\sum_{g \in \mathcal{F}_n}d_gg\right) := \sum_{f, g \in \mathcal{F}_n}c_fd_g \,\, f g.\]

Since the symmetric group acts on $\mathcal{F}_n$ by monoid automorphisms, its linearly extended action on $\mathbb{C}\mathcal{F}_n$ is by algebra automorphisms. The \emph{invariant subalgebra of the face algebra}, written as \emph{$\left(\C \mathcal{F}_n \right)^{S_n}$}, consists of the elements $x \in \C \mathcal{F}_n$ for which $w(x) = x$ for all $w \in S_n.$ Observe that the action of the symmetric group algebra \emph{$\C S_n$} commutes with both the left- and right- actions of the invariant subalgebra on the face algebra. In particular, for $x \in \C S_n$, $y \in \C \mathcal{F}_n$, and $z \in \left(\C \mathcal{F}_n \right)^{S_n},$  the following hold: 
\begin{align} \label{lem:commuting-action}
        x(y z) &= x(y) z, \  \text{ and } \ 
        x(z y) = z x(y).
\end{align}

\subsection{Solomon's descent algebra and Bidigare's theorem}
Each permutation  $\pi \in S_n$ has an associated (right) \emph{descent set} $\mathrm{Des}(\pi) := \{i: \pi(i) > \pi(i + 1)\}\subseteq [n - 1].$ For each subset $J \subseteq [n - 1]$, define an element $\mathbf{x}_J$ in $\C S_n$ by \[\mathbf{x_J} := \sum_{\pi: \mathrm{Des}(\pi) \subseteq J}\pi.\] In \cite{Solomondescentbirth}, Solomon {proved}
that the  $\C$- span of the elements $\left \{x_J: J \subseteq [n- 1]\right\}$ is closed under multiplication, so it is a subalgebra of $\mathbb{C}S_n.$ The structure constants with respect to this basis have a simple combinatorial formula (see\footnote{In the notation of \cite{garsia-reutenauer}, $x_J$ is $B_{\alpha(J)},$ where $\alpha(J)$ is the composition we define before \cref{thm:bidigare}.} {\cite[Proposition 1.1]{garsia-reutenauer}}). This subalgebra is 
known as \emph{Solomon’s descent algebra}, or simply the descent algebra, which we will denote by \emph{$\Sigma_n$}. The descent algebra is intimately linked to the face algebra. In particular, for a subset $J = \{a_1<a_2 < \ldots < a_k\} \subseteq [n - 1],$ write ${\alpha(J)}$ to be the integer composition $(a_1, a_2 - a_1, a_3 - a_2, \ldots, a_k - a_{k - 1}, n - a_k).$ Bidigare proved the following connection in \cite[Theorem 3.8.1]{Bidigare}.

\begin{thm}(Bidigare)\label{thm:bidigare}
The $S_n$-invariant subalgebra of the face algebra is antiisomorphic to Solomon's descent algebra.  An antiisomorphism from $\Sigma_n$ to $\left(\C \mathcal{F}_n\right)^{S_n}$ is given by \[\Phi:x_J \mapsto \sum_{\substack{\text{Faces }F\\ \text{with block }\\ \text{sizes } \alpha(J)}}F.\] 
\end{thm}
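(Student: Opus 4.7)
The plan is to exhibit an explicit inverse $\theta \colon (\C\mathcal{F}_n)^{S_n} \to \Sigma_n$ via ``evaluation at a canonical identity chamber,'' and then check that $\theta$ is an antihomomorphism of algebras.

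First I would verify that $\Phi$ is a vector space isomorphism. The $S_n$-orbits on $\mathcal{F}_n$ are indexed by the block-size compositions of $n$; the corresponding orbit sums form a basis of $(\C\mathcal{F}_n)^{S_n}$, and $J \leftrightarrow \alpha(J)$ sets up a bijection of this basis with Solomon's basis $\{x_J : J \subseteq [n-1]\}$ of $\Sigma_n$. Hence $\Phi$ is a linear bijection essentially by construction, and the remaining work is to promote this to an anti-algebra map.

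Next, set $C_e := (\{1\}, \{2\}, \ldots, \{n\})$ and identify the chamber $C_\pi := \pi(C_e) = (\{\pi(1)\}, \ldots, \{\pi(n)\})$ with the permutation $\pi \in S_n$. Define $\theta(y) := y \cdot C_e$. A direct Tits-product computation shows that for a face $F = (P_1, \ldots, P_k)$, the product $F \cdot C_e$ is the chamber obtained by listing the elements of each block $P_i$ in increasing order; the resulting permutation has descent set contained in $\{|P_1|, |P_1|+|P_2|, \ldots\}$. As $F$ ranges over faces with block sizes $\alpha(J)$, the map $F \mapsto F \cdot C_e$ is a bijection onto $\{C_\pi : \mathrm{Des}(\pi) \subseteq J\}$. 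Hence $\theta(\Phi(x_J)) = x_J$, so $\theta$ is a two-sided inverse of $\Phi$ and in particular lands in $\Sigma_n$.

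Finally I would verify that $\theta$ (and hence $\Phi$) is an antihomomorphism. Given $z_1, z_2 \in (\C\mathcal{F}_n)^{S_n}$, write $\theta(z_2) = \sum_\pi c_\pi C_\pi$. Using the invariance relation $\pi(z_1 \cdot y) = z_1 \cdot \pi(y)$ from \cref{lem:commuting-action} with $y = C_e$, compute
\[\theta(z_1 z_2) = z_1 \cdot (z_2 \cdot C_e) = \sum_\pi c_\pi \, z_1 \cdot \pi(C_e) = \sum_\pi c_\pi \, \pi\bigl(z_1 \cdot C_e\bigr) = \sum_\pi c_\pi \, \pi \cdot \theta(z_1) = \theta(z_2)\,\theta(z_1),\]
where the chamber identification $\pi(C_\sigma) = C_{\pi\sigma}$ converts the $S_n$-action into left multiplication in $\C S_n$. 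The main subtlety is keeping the Tits product and the $S_n$-action straight, which is exactly what the commutation relation \cref{lem:commuting-action} is designed to handle; beyond that, the argument is a bookkeeping exercise built on the descent-set identity from the previous paragraph.
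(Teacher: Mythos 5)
The paper states this result without proof, citing Bidigare's thesis \cite[Theorem 3.8.1]{Bidigare}, so there is no internal proof to compare your argument against. Your argument is correct and complete, and it is essentially the standard one in the literature (a version appears in Brown \cite[Theorem 8]{BrownonLRBs}, which the paper itself invokes in \cref{sec:Uyemura--Reyes}): the chamber-evaluation map $\theta(y) = y\cdot C_e$ exploits the simply transitive $S_n$-action on chambers; the Tits-product computation shows that $F\cdot C_e$ lists each block of $F$ in increasing order and hence establishes $\theta\circ\Phi=\mathrm{id}$ via the descent-set bijection; and the commutation relation \cref{lem:commuting-action} combined with $\pi(C_\sigma)=C_{\pi\sigma}$ transports the multiplication, giving $\theta(z_1z_2)=\theta(z_2)\theta(z_1)$. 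Two small points you left implicit and could spell out: the conclusion for $\Phi$ itself uses the general fact that the inverse of a bijective antihomomorphism is again an antihomomorphism (your step 1 gives bijectivity, so $\theta=\Phi^{-1}$), and unitality holds because $\Phi(x_\emptyset)$ is the one-block face $(12\cdots n)$, which is the identity of $\C\mathcal{F}_n$. Neither of these is a gap; the proof stands as written.
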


\begin{example}
     In one-line notation, the element $x_{\{1\}} = 123 + 213 + 312 \in \Sigma_3$ is mapped under Bidigare's antiisomorphism to the sum of the three rays colored blue in \cref{ex:braid3labelled}.
\end{example}

\subsection{Representation theory of finite dimensional algebras}\label{subsec:rep-theory-fin-dim-algebras} In order to explain the representation theory of the descent algebra, we first give a brief overview of the representation theory of finite dimensional algebras. We follow the explanations given in \cite[Appendix $D$]{Aguiar-Mahajan}, \cite[Ch. 1]{assem}, \cite[Ch. 9]{etingof}, \cite[\S 4.2]{MSS}, and \cite[Ch. 6, 7]{webbrepntheory}.

 A \emph{$\C$-algebra $A$}, which we will often just call an algebra, is a $\C$-vector space which is also a ring with unit in which $c(ab) = (ac)b = a(cb) = (ab) c$ for all $c \in \C, a, b \in A.$ We will assume $A$ is \emph{finite dimensional}, which means it is finite dimensional as a $\C$-vector space. A left (or right) \emph{representation of an algebra $A$} is a left (or right) $A$-module, so we will use the language of modules and representations somewhat interchangeably. For simplicity, we assume we are dealing with {left} \footnote{Even though the isotypic subspaces of the face algebra are \textit{right} modules over the descent algebra, we will mainly view them as \textit{left} modules over the invariant subalgebra of the face algebra. These approaches are equivalent by Bidigare's theorem.} $A$-modules for the remainder of this subsection, but the corresponding analogues hold for right $A$-modules too. An $A$-module $U$ is 
\begin{itemize}
    \item \emph{indecomposable} if it cannot be written as a nontrivial direct sum of $A$-modules $U = U_1 \oplus U_2.$ 
    \item \emph{simple} if it has no nontrivial submodules. 
    \item \emph{projective} if there is a free $A$-module $F$ and another $A$-module $U'$ such that $F \cong U \oplus U'$ as $A$-modules.
\end{itemize}

The \emph{idempotents} in an algebra are the elements $E$ for which $E^2 = E,$ and  play a key role in the representation theory of finite dimensional algebras. A family $E_1, E_2, \ldots, E_n$ of nonzero idempotents in $A$ is said to be 
\begin{itemize}
    \item \emph{complete} if $E_1 + E_2 + \ldots + E_n = 1.$
    \item \emph{orthogonal} if $E_iE_j = 0$ for all $i \neq j.$
    \item \emph{primitive} if for all $i,$ there do not exist two nonzero orthogonal idempotents $E, E'$ for which $E_i = E + E'.$
\end{itemize}

\emph{Complete families of primitive, orthogonal idempotents} (or \emph{cfpois})  $\{E_1, E_2, \ldots, E_n\}$ of $A$ biject with decompositions of $A$ into indecomposable left $A$-modules by

\begin{align*}
    A = \bigoplus_{i = 1}^n A E_i,
\end{align*}
 see, for instance, \cite[Proposition 7.2.1]{webbrepntheory}. The decomposition of $A$ into indecomposables is unique, up to re-indexing and $A$-module isomorphisms, by the Krull-Schmidt theorem. Since each $A$-module $AE_i$ is a direct summand of $A,$ each is projective. Further, up to isomorphism, these are the only projective indecomposable modules, so the isomorphism classes of these summands are called \emph{the projective indecomposable modules of $A$} (see \cite[Lemma I.5.3(b)]{assem}). So, for any two cfpois $\{E_i: 1 \leq i \leq n\} $ and $\{F_j: 1 \leq j \leq m\}$ of $A,$ we know that $m = n$ and there exists a reordering $\sigma \in S_n$ such that $E_i \cong F_{\sigma(i)}$ as $A$-modules.  

 \begin{remark}\label{rem:conjugation-idems}
   We will use the following two facts several times. Let $A$ be a finite dimensional $\C$-algebra.  Conjugating any cfpoi of $A$ by a unit of $A$ produces another cfpoi of $A$. Conversely, any two cfpois of $A$ are conjugate by some unit of $A$. (This latter fact follows from \cite[Lemma D.26 and its preceding discussion]{Aguiar-Mahajan}.)
\end{remark}

The \emph{radical} of a finite dimensional algebra $A,$ written \emph{$\rad(A)$}, is the intersection of all of its maximal left ideals. It turns out to be a nilpotent, two-sided ideal of $A$ (\cite[Corollary I.1.4, Corollary I.2.3]{assem}). More generally, the radical of an $A$-module $M,$ written \emph{$\rad(M)$} is the intersection of all its maximal submodules. One can also define it as $\rad(A) M$ (\cite[Proposition I.3.7]{assem}). Up to isomorphism, the set of simple $A$-modules is $$\left\{AE_i / \rad(AE_i): 1 \leq i \leq n \right\}.$$ The projective module $AE_i$ is a \emph{projective cover} for the simple module $AE_i / \rad(A E_i)$ (\cite[Corollary I.5.9]{assem}). Further, one has an isomorphism of the simples $AE_i / \rad(A E_i) \cong A E_j / \rad(A E_j)$ if and only if one has a corresponding isomorphism of their projective covers $AE_i \cong AE_j.$ In general, it is possible that $AE_i \cong AE_j$ for $i \neq j$. If we condense and write the list of isomorphism classes of simple and projective indecomposable $A$-modules as $\{S_\alpha\}$ and $\{P_\alpha\}$ as $\alpha$ ranges in some indexing set $\mathscr{A},$ it turns out that as $A$-modules,
 \begin{align} \label{eqn:algebra-decomposition}
     A \cong \bigoplus_{\alpha \in \mathscr{A}}P_\alpha^{\oplus \dim_\C S_\alpha},
 \end{align}
 
 see for instance \cite[Theorem 7.3.9]{webbrepntheory}.

 An algebra $A$ is \emph{semisimple} if every $A$-module can be decomposed into a direct sum of simple $A$-modules. By the Wedderburn-Artin theorem, an algebra $A$ is semisimple if and only if $\rad(A) = 0$ (\cite[Theorem I.3.4]{assem}). Maschke's theorem gives that group algebras $\C G$ for finite groups $G$ are semisimple. Unlike group algebras, many finite-dimensional $\C$-algebras are \textit{not} semisimple (including the face algebra and the descent algebra).

To understand an $A$-module $M$ for a non-semisimple algebra $A$, alternative methods to decomposing $M$ into a direct sum of simple $A$-modules must be used. A \emph{composition series} of a finite dimensional $A$-module $M$ is a sequence of $A$-modules $$0 \subsetneq M_1 \subsetneq M_2 \subsetneq \ldots \subsetneq M_{n - 1} \subsetneq M$$ such that each successive quotient $M_i / M_{i - 1}$ (called a \emph{composition factor}) is a simple $A$-module. Although composition series of an $A$-module are not unique, the Jordan H\"{o}lder theorem explains that the composition factors  (up to isomorphism) and their multiplicities are. The multiplicity of a simple $S_\alpha$ as a composition factor of any composition series of an $A$-module $M$ is called the \emph{composition multiplicity of $S_\alpha$ in $M$}, and written as \emph{$[M: S_\alpha].$} 
 
 One way to compute composition multiplicities without constructing an explicit composition series uses the projective indecomposables as projective covers for the simple modules. In particular, let $M$ be a finite dimensional $A$-module, let $S = AE / \rad(A E)$ be a simple $A$-module, and let $P = AE$ be its projective cover. 
 Since we are working over $\C$, it turns out that (see \cite[Proposition 7.4.1]{webbrepntheory}, specializing to $\C$),
\begin{align}\label{eqn:composition-mults}
     [M: S] = \dim_\C \mathrm{Hom}_A\left(P, M\right) = \dim_\C \mathrm{Hom}_A\left(AE, M\right) = \dim_\C EM.
 \end{align}

 A case of special importance is understanding the composition factors of the projective indecomposable modules. Let $\{S_\alpha\}$ and $\{P_\alpha\}$ be the isomorphism classes of simple $A$-modules and their corresponding projective indecomposables, as $\alpha$ varies in some indexing set $\mathscr{A}$. The composition multiplicities $[P_\beta: S_\alpha]$ as $\alpha, \beta$ vary in $\mathscr{A}$ are called the \emph{Cartan invariants} of $A.$ Using the above equation, \begin{align}
     [P_\beta: S_\alpha] = \dim_{\C} E_\alpha A E_\beta.
 \end{align}The Cartan invariants of the descent algebra are known in types A and B, and have beautiful combinatorial interpretations; we will discuss these in \cref{sec:recovering-results} and \cref{sec:generalizations}.

 \subsection{Representation theory of Solomon's descent algebra} \label{sec:rep-theory-descent-algebra}
 The representation theory of the descent algebra involves elegant combinatorics. Its (right) representation theory was studied in great depth by {Garsia and Reutenauer} in \cite{garsia-reutenauer} using its connections with the free Lie algebra. It was soon after studied in other Coxeter types by Bergeron, Bergeron, Howlett, and Taylor in \cite{Bergerons-hyper1,  Bergerons-general, Bergerons-hyper2}. Atkinson recovered some of Garsia--Reutenauer's results using other methods in \cite{atkinson}. Bidigare's theorem provided a new perspective on the representation theory of the descent algebra, studied by Saliola in \cite{saliolaquiverdescalgebra}, Aguiar--
Mahajan in \cite{Aguiar-Mahajan}, and Schocker in \cite{Schocker}.

 Both the descent algebra and the face algebra turn out to belong to a well-behaved class of algebras called \emph{elementary algebras} (meaning $A / \rad(A)$ is a commutative algebra isomorphic to $\C^n$ for some $n$). A $\C-$ algebra is elementary precisely if its simple modules are all one-dimensional. Aguiar--Mahajan have a very nice appendix \cite[Appendix D.8]{Aguiar-Mahajan} explaining properties of elementary algebras to which we will sometimes refer.

\subsubsection{Radical} 
Recall that the descent algebra $\Sigma_n$ has dimension $2^{n - 1}.$ In Solomon's original paper \cite{Solomondescentbirth}, he proved the radical $\rad(\Sigma_n)$ has dimension $2^{n - 1} - \# \{\lambda \vdash n\}$ and is spanned by the differences $x_J - x_{J'}$ for which $\alpha(J)$ and $\alpha(J')$ (defined just before \cref{thm:bidigare}) rearrange to the same partition. Hence, $\Sigma_n$ is not semisimple for all $n \geq 3.$
 
\subsubsection{Simples and projective indecomposables }
As we will see in \cref{subsec:idemps}, the idempotents in a cfpoi for the descent algebra $\Sigma_n$ are indexed by integer partitions of $n$. Since $\Sigma_n$ is elementary, each simple $\Sigma_n$-module is one-dimensional. By \cref{eqn:algebra-decomposition}, each idempotent in a cfpoi for $\Sigma_n$ then generates a distinct (up to isomorphism) projective indecomposable module, so the $\Sigma_n$- simples and projective indecomposables are also indexed by partitions. (See also \cite[Theorem D.35]{Aguiar-Mahajan}.) One can see an explicit construction of these simples in \cite[\S 3]{atkinson}. 
We will write \emph{${M_\lambda}$} to denote the $\Sigma_n$-simple indexed by the partition $\lambda$ and  \emph{$P_\lambda$} for the the projective indecomposable which is the projective cover of $M_\lambda.$  Then, \[\Sigma_n = \bigoplus_{\lambda \vdash n}P_\lambda.\] 
\subsubsection{Idempotents}\label{subsec:idemps}
Cfpois for the descent algebras are well-studied. Initially, they were studied from the perspective of Coxeter groups  (see \cite{Bergerons-hyper1,Bergerons-general,  Bergerons-hyper2, garsia-reutenauer}). Using Bidigare's theorem, Saliola provided a geometric perspective by constructing a cfpoi \cite{saliolaquiverdescalgebra} from the cfpois he discovered for the face algebra \cite{saliolaquiverdescalgebra, saliolafacealgebra}. Aguiar--Mahajan further study Saliola's idempotents and generalizations extensively in \cite{Aguiar-Mahajan}. For work connecting both perspectives, see \cite{Brauner2022} and \cite[\S 16]{Aguiar-Mahajan}. For our purposes, the geometric perspective shall prove very useful, so we shall consider cfpois $\{E_\lambda: \lambda \vdash n\}$ of $\left(\C \mathcal{F}_n \right)^{S_n},$ rather than the (equivalent) cfpois $\{\Phi^{-1}(E_\lambda): \lambda \vdash n\}$ of $\Sigma_n,$ where $\Phi$ is Bidigare's antiisomorphism from \cref{thm:bidigare}. 

The idempotents in a cfpoi of the face algebra $\C\mathcal{F}_n$ turn out to be indexed by the set partitions in $\Pi_n$. In \cite[\S 5]{saliolaquiverdescalgebra}, Saliola constructed {explicit cfpois}\footnote{Saliola's construction was quite general and provided multiple possible explicit cfpois (see \cite[\S 5.1, 5.2] {saliolaquiverdescalgebra} for some specific examples). When we write $\{\widehat{E}_X\},$ we mean any of the cfpois for $\C \mathcal{F}_n$ coming from his construction in \cite[Theorem 5.2]{saliolaquiverdescalgebra}.} $\{\widehat{E}_X: X \in \Pi_n\}$ for $\C \mathcal{F}_n$ for which $\pi \left(\widehat{E}_X\right) = \widehat{E}_{\pi(X)}$ for all $\pi \in S_n.$ He proved the $S_n$-orbit sums of such families $\left \{\widehat{E}_\lambda: \lambda \vdash n\right \}$ where $\widehat{E}_\lambda:= \sum_{X \in \lambda} \widehat{E}_X$, form cfpois for $\left(\C \mathcal{F}_n\right)^{S_n}.$ Thus, their images under Bidigare's antiisomorphism form a cfpoi for $\Sigma_n.$ 

Although we will not need Saliola's explicit construction for our purposes, we will repeatedly use that his idempotents $\widehat{E}_{X}$ satisfy three very nice {properties}\footnote{Notation warning: Saliola uses the opposite ordering as this paper on the poset $\Pi_n.$}:

\begin{enumerate}
    \item[(a)] For $f \in \mathcal{F}_n,$ if $\sigma(f)\ngeq X,$ then $f\widehat{E}_X = 0.$ (\cite[Lemma 5.3]{saliolaquiverdescalgebra})
    \item[(b)]$\widehat{E}_X \in \mathrm{span}_\C \left\{f \in \mathcal{F}_n : \sigma(f) \leq X \right\}.$ (\cite[Theorem 5.2]{saliolaquiverdescalgebra})
    \item[(c)] Let \emph{$\mathfrak{c}_X: \C \mathcal{F}_n \to \C$} be the $\C$-linear map which sends $f \in \mathcal{F}_n$ to $1$ if $\sigma(f) = X$ and sends $f$ to $0$ otherwise. Then, $\mathfrak{c}_X\left(\widehat{E}_X\right) = 1.$ (\cite[Theorem 5.2]{saliolaquiverdescalgebra}) 
\end{enumerate}

By conjugating Saliola's idempotents, properties (a) and (b) turn out to hold more generally for {any}\footnote{We assume appropriate indexing here, in that if $u$ is an invertible element of $\C \mathcal{F}_n$ which conjugates $\{\widehat{E}_X:X \in \Pi_n\}$ to produce $\{E_X: X \in \Pi_n\}$, then $E_X = u\widehat{E}_Xu^{-1}$ for all $X \in \Pi_n.$ Put differently, for all $X \in \Pi_n,$ $E_X$ and $\widehat{E}_X$ lift the same idempotent in the unique cfpoi of $\C \mathcal{F}_n / \rad(\C \mathcal{F}_n).$ (See \cite[Thm D.33]{Aguiar-Mahajan}.)} cfpoi of $\C \mathcal{F}_n.$ We should point out that each of the lemmas in the remainder of this idempotent section appear (often with much stronger statements and in more generality) in the remarkable characterizations of cfpois for $\C \mathcal{F}_n$ and $\left(\C \mathcal{F}_n \right)^{S_n}$ given by Aguiar--Mahajan \cite[Chapters 11, 16, Appendix D]{Aguiar-Mahajan}. For simplicity, we state only the minimal information that we will need, but we highly encourage the curious reader to consult their work.

\begin{lemma}
    \label{cor:supports-of-kb-idemps}
    Let $\{E_X: X \in \Pi_n\}$ be \emph{any} cfpoi for $\C \mathcal{F}_n$. The following statements hold. 
    \begin{enumerate}
        \item[(i)] Let $f \in \mathcal{F}_n.$ If $\sigma(f) \ngeq X,$ then $fE_X = 0.$ 
        \item[(ii)]$E_X \in \mathrm{span}_\C \{f \in \mathcal{F}_n: \sigma(f) \leq X\}$. 
        \item[(iii)]  If $Y \ngeq X,$ then $E_Y\C \mathcal{F}_nE_X = 0.$ 
        \item[(iv)] If $\sigma(f) = X,$ then there are constants $d_{f'} \in \C$ such that $fE_X$ can be written as \[fE_X = f + \sum_{f': \sigma(f') < X}d_{f'}f'.\] 
    \end{enumerate}
\end{lemma}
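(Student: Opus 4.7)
The plan is to reduce each of (i)--(iv) to the corresponding property of Saliola's explicit family $\{\widehat{E}_X\}$. By \cref{rem:conjugation-idems}, any cfpoi of $\C \mathcal{F}_n$ is conjugate to $\{\widehat{E}_X\}$ by a unit, so I would fix an invertible $u \in \C \mathcal{F}_n$ with $E_X = u \widehat{E}_X u^{-1}$ for every $X \in \Pi_n$. The combinatorial inputs I need are \cref{eqn:multipy-faces-meet} and the elementary observation, derivable directly from the definition of the face product, that $fg = f$ whenever $\sigma(g) \geq \sigma(f)$: if every block of $f$ sits inside a block of $g$, then each nonempty intersection $F_i \cap G_j$ is simply $F_i$.

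For (i), expand $u = \sum_h c_h h$ in the face basis; for each term, $\sigma(fh) = \sigma(f) \wedge \sigma(h) \leq \sigma(f) \ngeq X$, so Saliola's property (a) gives $fh\widehat{E}_X = 0$, whence $fu\widehat{E}_X = 0$ and $fE_X = 0$. For (ii), use property (b) to write $\widehat{E}_X = \sum_{g: \sigma(g) \leq X} a_g g$, and expand $u$ and $u^{-1}$ in the face basis: every face $hgk$ appearing in $u\widehat{E}_X u^{-1}$ has $\sigma(hgk) \leq \sigma(g) \leq X$, giving (ii). For (iii), (ii) shows every face in $E_Y$ has support $\leq Y$, and right-multiplication by any $z \in \C \mathcal{F}_n$ can only decrease support under $\wedge$, so every face in $E_Y z$ has support $\leq Y$. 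If any such face had support $\geq X$, transitivity would force $X \leq Y$, contradicting $Y \ngeq X$; hence (i) annihilates each term on right-multiplication by $E_X$.

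Part (iv) is the delicate one. Using (ii), I write $E_X = \sum_{g: \sigma(g) \leq X} a_g g$ and split $fE_X = \sum_g a_g fg$ according to whether $\sigma(g) = X$ or $\sigma(g) < X$: in the first case $\sigma(g) \geq \sigma(f)$, so $fg = f$; in the second case $\sigma(fg) = \sigma(g) < X$. This gives $fE_X = c \cdot f + \sum_{g: \sigma(g) < X} a_g fg$ with $c := \sum_{g: \sigma(g) = X} a_g$, and the trailing sum is supported on faces of support $< X$. The main obstacle is pinning $c$ down to $1$: the identity $E_X^2 = E_X$ alone only yields $c^2 = c$, leaving $c \in \{0, 1\}$. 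I would resolve this by extending $\sigma$ linearly to $\bar\sigma: \C \mathcal{F}_n \to \C \Pi_n$, the semilattice algebra with product $\wedge$; by \cref{eqn:multipy-faces-meet} this is an algebra homomorphism, and $\C \Pi_n$ is commutative. Hence
\[
\bar\sigma(E_X) \;=\; \bar\sigma(u)\,\bar\sigma(\widehat{E}_X)\,\bar\sigma(u)^{-1} \;=\; \bar\sigma(\widehat{E}_X),
\]
and reading off the coefficient of $X \in \Pi_n$ gives $c = \mathfrak{c}_X(\widehat{E}_X) = 1$ by Saliola's property (c). This passage through the commutative quotient, which forces conjugation to act trivially on the ``top-support'' coefficient, is the conceptual heart of the argument.
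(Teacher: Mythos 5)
Your parts (i)--(iii) follow essentially the same route as the paper's proof (conjugating back to Saliola's $\widehat{E}_X$ for (i), observing that $\mathrm{span}_\C\{f: \sigma(f)\le X\}$ is a two-sided ideal for (ii), and chaining (ii) with (i) for (iii)), and they are correct. Part (iv), however, takes a genuinely different and arguably slicker route than the paper. The paper goes back to the conjugation $fE_X = \sum_{f'} d_{f'}\, f u f' u^{-1}$, and for $\sigma(f')=X$ uses the left regular band identity $fuf=fu$ (extended linearly to $u$) together with $ff'=f$ to collapse $f u f' u^{-1} = f$; Saliola's property (c) then supplies the coefficient $\sum_{\sigma(f')=X}d_{f'}=1$. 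You instead apply part (ii) directly to $E_X$ to get $fE_X = c\cdot f + (\text{terms of support}<X)$ with $c=\mathfrak{c}_X(E_X)$, and then show $c=1$ by pushing the conjugation through the support homomorphism $\bar\sigma:\C\mathcal{F}_n\to\C\Pi_n$: since $\C\Pi_n$ is commutative, $\bar\sigma(E_X)=\bar\sigma(u)\bar\sigma(\widehat{E}_X)\bar\sigma(u)^{-1}=\bar\sigma(\widehat{E}_X)$, and reading off the coefficient of $X$ gives $c=\mathfrak{c}_X(\widehat{E}_X)=1$. Your argument isolates the conceptual content — the top-support coefficient $\mathfrak{c}_X(E_X)$ is a conjugation-invariant because it factors through a commutative quotient — and replaces the paper's LRB-identity computation with a single structural observation; the paper's version is more elementary in that it never introduces $\C\Pi_n$ as an algebra. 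Both are valid, and your observation that $E_X^2=E_X$ alone only yields $c\in\{0,1\}$ correctly pinpoints why some additional input (here, property (c) transported through $\bar\sigma$) is needed.
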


\begin{proof}
 Recall $\{E_X\}$ is conjugate to the family $\{\widehat{E}_X\}$ by some invertible element of $\C\mathcal{F}_n$, say $u:= \sum_{f \in \mathcal{F}_n}c_{f}f.$

\begin{itemize}
    \item[(i)] Let $f \in \mathcal{F}_n$ with $\sigma(f) \ngeq X.$ By \cref{eqn:multipy-faces-meet}, for any $g \in \mathcal{F}_n,  \sigma(fg) = \sigma(f) \wedge \sigma(g) \ngeq X.$ Using property (a) of Saliola's idempotents,
    \begin{align*}
        fE_X = fu\widehat{E}_Xu^{-1} = \left(\sum_{g \in \mathcal{F}_n}c_g(fg)\widehat{E}_X\right)u^{-1} = 0.
    \end{align*}
\item[(ii)] It follows from \cref{eqn:multipy-faces-meet} that $\mathrm{span}_\C \{f \in \mathcal{F}_n: \sigma(f) \leq X\}$ is closed under left and right multiplication by $\C \mathcal{F}_n.$ Property (b) of Saliola's idempotents explains that $\widehat{E}_X$ is in $\mathrm{span}_\C \{f \in \mathcal{F}_n: \sigma(f) \leq X\}.$ Hence, $\widehat{E}_X = uE_Xu^{-1}$ is also in $\mathrm{span}_\C \{f \in \mathcal{F}_n: \sigma(f) \leq X\}.$
\item[(iii)]  By part (ii), $E_Y \in \mathrm{span}_\C \{f\in \mathcal{F}_n: \sigma(f) \leq Y\}.$ For $f$ with $\sigma(f) \leq Y,$ $\sigma(f) \ngeq X$ (since otherwise, $Y \geq \sigma(f) \geq X).$ By part (i), $E_Y \C \mathcal{F}_n E_{X} = 0.$
\item[(iv)] By part (ii), it suffices to prove that the coefficient of $f$ in $fE_X$ is one and the coefficient of $f' \neq f$ for $\sigma(f') = X$ is zero. By property (b) of Saliola's idempotents, there exist constants $d_{f'}$ for which \[fE_X = fu\widehat{E}_Xu^{-1} = fu \left(\sum_{\substack{f' \in \mathcal{F}_n:\\ \sigma(f') \leq X}}d_{f'}f'\right)u^{-1} = \sum_{\substack{f' \in \mathcal{F}_n:\\ \sigma(f') \leq X}}d_{f'}fuf'u^{-1}.\] 
By expanding $u, u^{-1}$ as a linear combination of faces and applying \cref{eqn:multipy-faces-meet}, observe that if $\sigma(f') < X$, then  $\mathfrak{c}_X \left(fuf'u^{-1}\right) = 0.$ Hence, we can ignore the terms with $\sigma(f') < X$ the above sum. From the ordered set partition definition of face multiplication, observe that if $\sigma(f') = X,$ then $ff' = f.$ Using this fact and  \cref{eqn:lrb-property}, \[fuf'u^{-1} = fuff'u^{-1} = fufu^{-1} = fuu^{-1} = f.\]

Hence, the coefficient of $f' \neq f$ with $\sigma(f) = X$ in $fE_X$ is $0$ as desired, and the coefficient of $f$ in $fE_X$ is $\sum_{f': \sigma(f') = X}c_{f'} = 1$ by property (c) of Saliola's idempotents. 
\end{itemize}
\end{proof}

The proof of the following lemma imitates the proofs of Lemma 11.29 in \cite{Aguiar-Mahajan} and Corollary 4.4 of \cite{MSS}.
\begin{lemma}\label{prop:bE_b-form-basis}
    Let $\{E_X: X \in \Pi_n\}$ be any cfpoi for $\C\mathcal{F}_n.$ The set $\left \{fE_{\sigma(f)}: f \in \mathcal{F}_n \right\}$ forms a $\C$-basis for the vector space $\C \mathcal{F}_n.$
\end{lemma}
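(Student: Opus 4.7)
The plan is to prove this via a unitriangularity argument. The set in question has cardinality at most $|\mathcal{F}_n|$, and the face set $\mathcal{F}_n$ itself is a $\C$-basis of $\C\mathcal{F}_n$, so it suffices to show that the expansions of the elements $fE_{\sigma(f)}$ in the face basis form an invertible (in fact upper unitriangular) matrix under a suitable ordering of $\mathcal{F}_n$. This will simultaneously establish that the $fE_{\sigma(f)}$ are distinct and linearly independent, giving the basis claim.

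First I would fix a linear extension $f_1, f_2, \ldots, f_N$ of the preorder on $\mathcal{F}_n$ induced by the partial order on supports: that is, order the faces so that whenever $\sigma(f_i) < \sigma(f_j)$ strictly, we have $i < j$ (faces with smaller support come earlier). Ties between faces with equal support can be broken arbitrarily. Then I would invoke \cref{cor:supports-of-kb-idemps}(iv), which gives, for each $f\in\mathcal{F}_n$ with $X = \sigma(f)$, an expansion
\[
fE_X \;=\; f \;+\; \sum_{f'\,:\,\sigma(f') < X} d_{f'}\, f'.
\]
The crucial features are (a) $f$ itself appears with coefficient $1$, and (b) every other face in the expansion has strictly smaller support. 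With respect to the chosen ordering of $\mathcal{F}_n$, this means the transition matrix $M$ from $(f_1, \ldots, f_N)$ to $(f_1 E_{\sigma(f_1)}, \ldots, f_N E_{\sigma(f_N)})$ has $1$'s on the diagonal and only possibly nonzero entries in rows indexed by faces with strictly smaller support, i.e., rows that come strictly earlier in the ordering. Thus $M$ is upper unitriangular.

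From this one immediately concludes that $M$ is invertible, so the $N$ vectors $fE_{\sigma(f)}$ are linearly independent and hence form a $\C$-basis of $\C\mathcal{F}_n$, completing the proof. There is no real obstacle here; all the substantive work has been absorbed into part (iv) of \cref{cor:supports-of-kb-idemps}. The only thing to verify carefully is that the ordering of faces is compatible with the partial order on $\Pi_n$ via $\sigma$, so that ``strictly smaller support'' translates to ``strictly earlier in the chosen linear order,'' which makes $M$ genuinely triangular rather than merely block-triangular.
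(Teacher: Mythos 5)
Your proof is correct and is essentially the same as the paper's: both pick a linear extension of the support preorder, invoke part (iv) of the preceding lemma to see that the change-of-basis matrix from $\{f\}$ to $\{fE_{\sigma(f)}\}$ is unitriangular, and conclude invertibility.
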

\begin{proof}
    Pick a total ordering on the faces $f \in \mathcal{F}_n$ such that if $\sigma(f) < \sigma(f'),$ then $f$ precedes $f'.$ Then, the matrix representing the linear transformation $f \mapsto f E_{\sigma(f)}$ is upper triangular with ones along the diagonal by \cref{cor:supports-of-kb-idemps}(iv). Hence,  $\left \{fE_{\sigma(f)}: f \in \mathcal{F}_n \right\}$ is also a basis for $\C \mathcal{F}_n.$
\end{proof}

\begin{lemma}\label{lem:invariant-idems-are-sums-of-idems}

Let $\{E_\lambda: \lambda \vdash n\}$ be any cfpoi for $\left(\C \mathcal{F}_n\right)^{S_n}.$ Then, there exists a cfpoi $\{E_X: X \in \Pi_n\}$ for the face algebra $\C \mathcal{F}_n$ which satisfies all three of the following properties.
\begin{enumerate}
    \item[(i)] $\pi(E_X) = E_{\pi(X)}$ for all $X \in \Pi_n$ and all $\pi \in S_n,$
    \item [(ii)] $E_\lambda = \sum_{X \in \lambda}E_X, $ and
    \item[(iii)] There is an invertible $u \in \left(\C \mathcal{F}_n\right)^{S_n}$ such that $E_X = u\widehat{E}_X u^{-1}$ for all $X \in \Pi_n.$
\end{enumerate}
\end{lemma}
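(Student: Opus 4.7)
The plan is to build the desired cfpoi of $\C \mathcal{F}_n$ by conjugating Saliola's equivariant cfpoi $\{\widehat{E}_X\}$ by an appropriately chosen unit. The key observation is that Saliola already gave us a ``reference'' cfpoi with all the structure we want: his family $\{\widehat{E}_X : X \in \Pi_n\}$ satisfies $\pi(\widehat{E}_X) = \widehat{E}_{\pi(X)}$ for all $\pi \in S_n$, and the $S_n$-orbit sums $\widehat{E}_\lambda := \sum_{X \in \lambda}\widehat{E}_X$ form a cfpoi for the invariant subalgebra $\left(\C \mathcal{F}_n\right)^{S_n}$. So the task reduces to producing a conjugator that moves $\{\widehat{E}_\lambda\}$ to the given $\{E_\lambda\}$, subject to the constraint that the conjugator lies in the invariant subalgebra (so that equivariance is preserved).

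First I would apply Remark 2.5 \emph{within the algebra} $\left(\C \mathcal{F}_n\right)^{S_n}$. Since $\{\widehat{E}_\lambda\}$ and $\{E_\lambda\}$ are both cfpois of this finite dimensional algebra, after reindexing (which is forced, since by Bidigare's theorem the primitive idempotents of $(\C\mathcal{F}_n)^{S_n}$ are indexed by partitions of $n$ via their projective modules) there is a unit $u \in \left(\C \mathcal{F}_n\right)^{S_n}$ such that $E_\lambda = u \widehat{E}_\lambda u^{-1}$ for every partition $\lambda \vdash n$. Invariance of $u$ is the crucial feature, and it is exactly what Remark 2.5 delivers when applied inside the invariant subalgebra rather than inside $\C \mathcal{F}_n$.

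Next I would \textbf{define} $E_X := u \widehat{E}_X u^{-1}$ for each $X \in \Pi_n$, and verify the three claimed properties. Property (iii) holds by construction. For property (i), since $u \in \left(\C \mathcal{F}_n\right)^{S_n}$, conjugation by $u$ commutes with the $S_n$-action, giving
\[\pi(E_X) = \pi(u)\, \pi(\widehat{E}_X)\, \pi(u^{-1}) = u\, \widehat{E}_{\pi(X)}\, u^{-1} = E_{\pi(X)}.\]
For property (ii), linearity of conjugation gives $\sum_{X \in \lambda} E_X = u \left(\sum_{X \in \lambda} \widehat{E}_X\right) u^{-1} = u \widehat{E}_\lambda u^{-1} = E_\lambda$. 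Finally, one needs to check that $\{E_X : X \in \Pi_n\}$ actually is a cfpoi of $\C \mathcal{F}_n$; this is routine since conjugation by a unit preserves the identities $E_X^2 = E_X$, $E_X E_Y = 0$ for $X \neq Y$, and $\sum_X E_X = 1$, and also preserves primitivity (as conjugation gives an $A$-module isomorphism between $A E_X$ and $A \widehat{E}_X$, so indecomposability transfers).

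The only real obstacle is producing $u$ \emph{inside} the invariant subalgebra rather than just inside $\C \mathcal{F}_n$. That is handled cleanly by applying the general conjugacy-of-cfpois fact to the algebra $\left(\C \mathcal{F}_n\right)^{S_n}$ itself, which is why it is important that Saliola's orbit sums already give a cfpoi for the invariant subalgebra. Everything else is a direct bookkeeping check.
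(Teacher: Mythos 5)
Your proposal is correct and follows essentially the same route as the paper's own proof: apply the conjugacy-of-cfpois fact (\cref{rem:conjugation-idems}) \emph{inside} the invariant subalgebra $\left(\C \mathcal{F}_n\right)^{S_n}$ to obtain an invariant unit $u$, then define $E_X := u\widehat{E}_X u^{-1}$ and verify the three properties. The only minor difference is that you spell out the reindexing issue and the verification that conjugation preserves the cfpoi property slightly more explicitly than the paper does, but these are the same ideas.
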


\begin{proof}
    Let $\{E_\lambda: \lambda \vdash n\}$ be a cfpoi for $\left(\C \mathcal{F}_n\right)^{S_n}.$ By \cref{rem:conjugation-idems}, there exists an invertible element $u \in \left(\C \mathcal{F}_n\right)^{S_n}$ such that for each $\lambda \vdash n,$ $E_\lambda = u\widehat{E}_\lambda u^{-1}.$ Hence, for each $\lambda \vdash n,$\[E_\lambda = \sum_{X \in \lambda} u\widehat{E}_X u^{-1}.\]
    By \cref{rem:conjugation-idems}, the family $\left \{u\widehat{E}_Xu^{-1}: X \in \Pi_n\right\}$ is a cfpoi for $\C \mathcal{F}_n.$ Since for $\pi \in S_n,$ $\pi\left(u^{-1}\right) = \left(\pi(u)
\right)^{-1} = u^{-1},$ the inverse $u^{-1}$ is also in $\left(\C \mathcal{F}_n \right)^{S_n}.$  Hence, for $\pi \in S_n,$ \[\pi \left(u \widehat{E}_X u^{-1}\right) = \pi(u)\pi \left(\widehat{E}_X\right)\pi(u^{-1}) = u \widehat{E}_{\pi(X)}u^{-1}.\] Setting $E_X := u \widehat{E}_Xu^{-1}$ completes the proof.

    %We claim that if $u \in \left(\C \mathcal{F}_n\right)^{S_n},$ then so is $u^{-1}.$ In particular, for any $w \in S_n,$
    %\[1 = w(1) = w(u\cdot u^{-1}) = w(u)w(u^{-1}) = uw(u^{-1}).\]

    %Multiplying both sides by $u^{-1}$ on the left, we see that $u^{-1} = w(u^{-1}).$ \vic{Shorten or skip that proof of claim: a unit $u$ in a ring $R$ with $G$-action having $u \in R^G$ will have $u^{-1}$ in $R^G$ since $g(u^{-1})=g(u)^{-1}=u^{-1}.$} 
  
\end{proof}

A similar statement to \cref{cor:supports-of-kb-idemps}(ii) holds for the cfpois of the invariant subalgebra $\left(\C \mathcal{F}_n \right)^{S_n}$.
\begin{lemma} \label{lem:invariant-saliola-lemma}
   Let $\{E_\mu: \mu \vdash n\}$ be any choice of cfpoi for $\left (\C \mathcal{F}_n \right)^{S_n}.$ Let $f \in \mathcal{F}_n$ with $\sigma(f) \in \lambda.$ If the partition $\mu$ does not refine $\lambda,$ then 
   \[f E_\mu = 0.\]
\end{lemma}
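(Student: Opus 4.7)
The plan is to reduce the question about invariant idempotents in $(\C\mathcal{F}_n)^{S_n}$ to the corresponding question about idempotents in a cfpoi for the full face algebra $\C\mathcal{F}_n$, where the needed support control has already been established in \cref{cor:supports-of-kb-idemps}(i). Concretely, I would apply \cref{lem:invariant-idems-are-sums-of-idems} to the given cfpoi $\{E_\mu : \mu \vdash n\}$ to produce a cfpoi $\{E_X : X \in \Pi_n\}$ for $\C\mathcal{F}_n$ with the property that $E_\mu = \sum_{X \in \mu} E_X$.

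With this decomposition in hand, we distribute: $fE_\mu = \sum_{X \in \mu} fE_X$. It then suffices to show that each summand $fE_X$ vanishes for every set partition $X$ of type $\mu$. By \cref{cor:supports-of-kb-idemps}(i), this is automatic whenever $\sigma(f) \not\geq X$ in the refinement order on $\Pi_n$.

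The remaining step is the combinatorial translation between set-partition refinement and integer-partition refinement. Suppose, for contradiction, that some $X \in \mu$ satisfies $\sigma(f) \geq X$. By definition of the refinement order, each block of $\sigma(f)$ is then a union of blocks of $X$, so the parts of $\lambda$ (the block sizes of $\sigma(f)$) are sums of parts of $\mu$ (the block sizes of $X$); equivalently, $\mu$ refines $\lambda$ as integer partitions, contradicting our hypothesis. Thus no $X \in \mu$ lies below $\sigma(f)$, every $fE_X$ vanishes, and $fE_\mu = 0$.

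Given the machinery already developed in the excerpt, I do not foresee a real obstacle: the argument is a one-line distribution followed by a bookkeeping check. The only subtlety worth flagging is that one must start by passing to the finer cfpoi on $\C\mathcal{F}_n$ via \cref{lem:invariant-idems-are-sums-of-idems} rather than attempting to argue directly about $E_\mu$, since the crucial support property from \cref{cor:supports-of-kb-idemps}(i) is stated for idempotents indexed by set partitions.
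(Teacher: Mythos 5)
Your proposal is correct and takes essentially the same approach as the paper: pass to the finer cfpoi $\{E_X : X \in \Pi_n\}$ via \cref{lem:invariant-idems-are-sums-of-idems}, distribute, and kill each summand $fE_X$ using the support condition, after checking that $\sigma(f) \geq X$ would force $\mu$ to refine $\lambda$. Incidentally, you cite \cref{cor:supports-of-kb-idemps}(i), which is the correct part; the paper's proof cites part (ii), which appears to be a typo since (ii) does not directly give $fE_X = 0$.
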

\begin{proof}
    By \cref{lem:invariant-idems-are-sums-of-idems}, there exists a cfpoi $\{E_X: X \in \Pi_n\}$ for $\C \mathcal{F}_n$ permuted by $S_n$ for which $E_\mu = \sum_{X \in \mu} E_X.$ If $\sigma(f) \geq X$ for some $X \in \mu,$ then the set partition $X$ refines $\sigma(f),$ so $\mu$ would refine $\lambda.$ Hence, $\sigma(f) \ngeq X$ for all $X \in \mu.$ Thus, by \cref{cor:supports-of-kb-idemps}(ii), 

    \begin{align*}
        fE_\mu = \sum_{X \in \mu}f E_X = 0.
    \end{align*}
\end{proof}

For the remainder of the paper, we shall fix:
\begin{itemize}
    \item some cfpoi $\{E_\lambda: \lambda \vdash n\}$ of $\left(\C \mathcal{F}_n\right)^{S_n},$ and
    \item a cfpoi $\{E_X: X \in \Pi_n\}$ for $\C \mathcal{F}_n$ satisfying the three conditions of \cref{lem:invariant-idems-are-sums-of-idems} (given our choice of $\{E_\lambda\}$).
\end{itemize}

\subsection{Representation theory of the symmetric group and wreath products}

We will assume some familiarity with the representation theory of finite groups (especially symmetric groups) as well as symmetric functions. See \cite{sagan2013symmetric}, \cite[Ch. 7]{ec2}, and \cite{Alexandersson2020} for good references. 

Let \emph{$\chi^\lambda$} denote the irreducible character of $S_n$ indexed by partition $\lambda;$ we will often write \emph{$\mathbb{1}_{S_n}$} and \emph{$\sgn_{S_n}$} to denote the trivial and sign characters of $S_n,$ respectively. Throughout this paper, let \emph{$\ch$} denote the \emph{Frobenius characteristic map} from characters of symmetric groups (or their corresponding representations) to the ring of symmetric functions. We will write \emph{$s_\lambda$} for the Schur function indexed by $\lambda$ and \emph{$h_i$} for the complete homogeneous symmetric function of degree $i.$  Recall that
\begin{itemize}
    \item $\ch \left(\chi^\lambda\right) = s_\lambda,$
    \item  $\ch \left(\sgn_{S_n}\right) = \ch \left(\chi^{1^n}\right) = s_{1^n},$ and
    \item $\ch \left(\mathbb{1}_{S_n}\right) = \ch \left(\chi^{n}\right) = s_n = h_n.$
\end{itemize}

We write \emph{$\alpha \vDash n$} if $\alpha = (\alpha_1, \alpha_2, \ldots, \alpha_k)$ is an \emph{integer composition} of $n$ (a sequence of positive integers summing to $n$). We use the convention \emph{$h_\alpha := h_{\alpha_1}h_{\alpha_2} \ldots h_{\alpha_k}$} for $\alpha$ an integer composition, partition, or even a word on the positive integers. Note that $h_{\alpha} = h_{\alpha'}$ if $\alpha$ \emph{rearranges} to $\alpha'$ (which we write as \emph{$\alpha \sim \alpha'$}).
For $\alpha \vDash n,$ we shall view \emph{$S_{\alpha} := S_{\alpha_1} \times S_{\alpha_2} \times \ldots \times S_{\alpha_k}$} as a subgroup of $S_n,$ by considering it as the subgroup which permutes the the first $\alpha_1$ positive integers amongst themselves, the next $\alpha_2$ amongst themselves, and so on.

Recall that for $\rho_1, \rho_2, \ldots, \rho_k$ characters of $S_{\alpha_1}, S_{\alpha_2}, \ldots, S_{\alpha_k},$
\[\ch\left( \left(\rho_1 \otimes \rho_2 \otimes \ldots \otimes \rho_k \right) \Big \uparrow_{S_\alpha}^{S_n}\right) = \ch \left(\rho_1\right) \ch \left(\rho_2\right) \ldots \ch\left(\rho_k\right),\] where the notation \emph{$(-)\uparrow_H^G$} indicates the \emph{induction} of a representation of a subgroup $H$ to the full group $G.$ Later on, we will also consider the \textbf{restriction} of a $G$-representation to an $H$-represenation, written \emph{$(-)\downarrow_H^G.$}

We use the inner product symbol \emph{$\left \langle A, B \right \rangle$} to mean either
\begin{itemize}
    \item the Hall inner product on symmetric functions if $A$ and $B$ are symmetric functions, or
    \item The inner product on the characters of a group $G$ if $A$ and $B$ are $G$-characters (or the inner product on the corresponding characters if $A, B$ are $G$-representations).
\end{itemize}
Recall that for $S_n$-representations $\rho, \psi,$ one has $\left \langle \rho, \psi \right \rangle = \left \langle \ch (\rho), \ch (\psi) \right \rangle.$

We remind the reader of \textbf{Young's rule} (see \cite[\S 2.11]{sagan2013symmetric}). Let $\lambda, \mu$ be partitions of $n.$ Recall the \textbf{Kostka coefficients} $K_\lambda, \mu$ which count semistandard Young tableaux with shape $\lambda$ and content $\mu.$ Young's rule states that
\begin{align} \label{eqn:Young's-rule}
    \left \langle \mathbb{1} \uparrow_{S_\mu}^{S_n}, \chi^\lambda\right \rangle = \left \langle h_\mu, s_\lambda\right \rangle = K_{\lambda, \mu}.
\end{align}

It is well known that $K_{\lambda, \mu} = 0$ if $\lambda$ does not (weakly) dominate $\mu.$

Wreath products of symmetric groups with finite groups will play an important role in this work. Let $G$ be a finite group. The \emph{wreath product $S_n[G]$} is defined to be the group with elements $\{(g_1, g_2, \ldots, g_n; \sigma): g_i \in G, \sigma \in S_n\}$ under the operation \[(g_1, g_2, \ldots, g_n; \sigma)\cdot(h_1, h_2, \ldots, h_n; \tau) = (g_1h_{\sigma^{-1}(1)}, g_2h_{\sigma^{-1}(2)}, \ldots, g_n h_{\sigma^{-1}(n)}; \sigma \tau).\]

Let $V$ be a representation of $S_n$ and let $W$ be a representation of $G$. We write \emph{$V[W]$}, like Wachs \cite{WachsPosetTopology} and Sundaram \cite{SundaramAdvances}, to denote the $S_n[G]$ representation with underlying vector space \[\left (\otimes ^n W \right) \otimes V,\] and action 
\[
\begin{aligned}
&(g_1, g_2, \ldots, g_n; \pi) \cdot (w_1 \otimes w_2 \otimes \ldots \otimes w_n) \otimes v \\
&= \left(g_1 \cdot w_{\pi^{-1}(1)}\otimes g_2 \cdot w_{\pi^{-1}(2)} \otimes \ldots \otimes g_n \cdot w_{\pi^{-1}(n)}\right)\otimes \pi \cdot v.
\end{aligned}
\] 

We shall often consider the wreath product $S_m[S_n]$ as a subgroup of $S_{mn},$ as illustrated by the following example. 
%\vic{That first sentence of this paragraph is good, and I found the rest of the paragraph confusing!  I would just remove it all, and illustrate it by your Example 2.12 below.}
%Given a tableau $T$ of size $n$ filled with $\{1, 2, \ldots, n\},$ associate a permutation \emph{$\mathrm{perm}(T)$} to it by writing the fillings in order from left to right, top to bottom and considering the word as a permutation in one-line notation. Define \emph{$T_\lambda$} be the unique filling of  tableau of shape $\lambda$ with $\mathrm{perm}(T_\lambda)$ the identity. We consider $S_m[S_n]$ as the subgroup consisiting of $\mathrm{perm}(T)$ for all $T$ obtained from $T_{n^m}$ by (i) permuting elements within a row and (ii) wholesale swapping rows. It is straightforward to check that this subgroup is indeed isomorphic to the wreath product definition given in \cref{def:wreath} by the map sending the element $(w_1, w_2, \ldots, w_m; \sigma)$ to $\mathrm{perm}(T)$ with $T$ obtained from $T_{n^m}$ by first acting by each $w_i$ on each row $i$ and then wholesale permuting rows according to $\sigma.$

\begin{example}\label{ex:wr-prod-1}
We consider $S_3[S_4]$ as the subgroup of $S_{12}$ generated by the following permutations (in cycle notation):
\begin{itemize}
    \item nine transpositions 
    %$(1, 2), (2, 3), (3, 4), (5, 6), (6, 7), (7, 8), (9, 10), (10, 11), (11, 12)$ 
    {\color{red} $(1, 2), (2, 3), (3, 4), (5, 6), (6, 7), (7, 8), (9, 10), (10, 11), (11, 12)$} 
    which generate the subgroup $S_{4^3}$ and are represented by the red arrows in \cref{S3wrS4}, and 
    \item two products of transpositions 
    %$(1, 5)(2, 6)(3,7)(4,8)$ and $ (5,9)(6,10)(7,11)(8,12)$% 
    {\color{blue}$(1, 5)(2, 6)(3,7)(4,8)$} and 
    {\color{blue}$ (5,9)(6,10)(7,11)(8,12)$} 
    which generate the \textit{wholesale} permutations of the three rows in \cref{S3wrS4} and are represented by the blue arrows.
\end{itemize}
    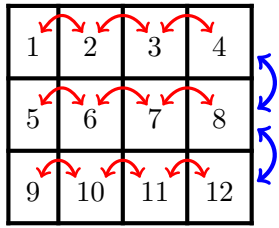
\begin{figure}[h]
\begin{center}
    \renewcommand{\arraystretch}{2}
\begin{NiceTabular}{cccc}[hvlines, create-medium-nodes]
   $1$   & $2$ & $3$ & $4$ \\
  $5$ &  $6$   & $7$   & $8$   \\
  $9$ & $10$   & $11$   & $12$ \CodeAfter
  \begin{tikzpicture} %[->, name suffix=-medium]
        \draw[red, very thick, <->] ([yshift=0.5mm]3-2.north west) .. controls +(-.1,.3) and +(.1,.3) ..  ([yshift=0.5mm]3-1.north east) ; 
    \draw[red, very thick, <->] ([yshift=0.5mm]3-4.north west) .. controls +(-.1,.3) and +(.1,.3) ..  ([yshift=0.5mm]3-3.north east) ; 
    \draw[red, very thick, <->] ([yshift=0.5mm]3-3.north west) .. controls +(-.1,.3) and +(.1,.3) ..  ([yshift=0.5mm]3-2.north east) ; 
        \draw[red, very thick, <->] ([yshift=0.5mm]1-2.north west) .. controls +(-.2,.3) and +(.2,.3) ..  ([yshift=0.5mm]1-1.north east) ; 
    \draw[red, very thick, <->] ([yshift=0.5mm]1-4.north west) .. controls +(-.2,.3) and +(.2,.3) ..  ([yshift=0.5mm]1-3.north east) ; 
    \draw[red, very thick, <->] ([yshift=0.5mm]1-3.north west) .. controls +(-.2,.3) and +(.2,.3) ..  ([yshift=0.5mm]1-2.north east) ; 
      \draw[red, very thick, <->] ([yshift=0.5mm]2-2.north west) .. controls +(-.2,.3) and +(.2,.3) ..  ([yshift=0.5mm]2-1.north east) ; 
    \draw[red, very thick, <->] ([yshift=0.5mm]2-4.north west) .. controls +(-.2,.3) and +(.2,.3) ..  ([yshift=0.5mm]2-3.north east) ; 
    \draw[red, very thick, <->] ([yshift=0.5mm]2-3.north west) .. controls +(-.2,.3) and +(.2,.3) ..  ([yshift=0.5mm]2-2.north east) ; 
    \draw[blue, ultra thick, <->] ([xshift=.4cm]1-4.south east) .. controls +(.3,-.2) and +(.3,.2) ..  ([xshift=.4cm]2-4.north east) ;
    \draw[blue, ultra thick, <->] ([xshift=.4cm]2-4.south east) .. controls +(.3,-.2) and +(.3,.2) ..  ([xshift=.3cm]3-4.north east) ;
  \end{tikzpicture}
\end{NiceTabular}
\end{center}
    \caption{$S_3[S_4] \leq S_{12}$}
    \label{S3wrS4}
\end{figure}

\end{example}

Wreath products are highly related to the notion of \emph{plethysm $f[g]$} of symmetric functions $f, g.$ (We assume familiarity with plethysm and recommend  \cite[Ch. 7 Appendix 2]{ec2}, \cite{plethystic-calculus}, \cite[\S 4]{sundaram1995plethysm} and \cite[\S 1.8]{macdonald} as references.) If $V$ is a representation of $S_m$ with $\ch(V) = f$ and $W$ a representation of $S_n$ with $\ch(W) = g$, then \[\ch\left( V[W] \big \uparrow_{S_m[S_n]}^{S_{mn}}\right) = f[g].\]

The following properties of plethysm hold for general symmetric functions: see equation (8.7) and remark (8.3) of \cite{macdonald}. 

\begin{lemma} \label{prop:plethysm-properties}     Let $f$, $g$, and $h$ be symmmetric functions. Then, 
    \begin{enumerate}
        \item (Associativity of plethysm):$f \left[ g \left [ h \right] \right] = f [g][h].$
        \item $f[h] + g[h] = (f + g)[h]$
        \item $(f[h])\cdot (g[h]) = (fg)[h]$
    \end{enumerate}
\end{lemma}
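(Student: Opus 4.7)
The plan is to work from the power-sum definition of plethysm. Recall that after tensoring with $\mathbb{Q}$, the ring of symmetric functions is a polynomial ring on the algebraically independent power-sum generators $p_1, p_2, \ldots$. For a fixed symmetric function $h$, one defines the plethystic substitution $f \mapsto f[h]$ as the unique $\mathbb{Q}$-algebra endomorphism of $\Lambda_\mathbb{Q}$ determined on generators by
\[
p_n[h] := h(x_1^n, x_2^n, \ldots).
\]
With this setup in hand, properties (2) and (3) become tautologies: additivity and multiplicativity are precisely what it means for $f \mapsto f[h]$ to be an algebra homomorphism.

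For the associativity property (1), I would exploit the same structure. Both assignments $f \mapsto f[g[h]]$ and $f \mapsto f[g][h]$ are algebra endomorphisms of $\Lambda_\mathbb{Q}$: the first is by construction, and the second is the composition of two such endomorphisms (using (2) and (3)). Since two algebra homomorphisms which agree on a generating set must coincide, it is enough to verify
\[
p_n[g[h]] = p_n[g][h]
\]
for every $n \geq 1$.

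To establish this identity, the key lemma is the special case $p_m[p_n[h]] = p_{mn}[h]$, which follows immediately from the definitions by two successive substitutions $x_i \mapsto x_i^n$ and then $x_j \mapsto x_j^m$ (or equivalently, one substitution $x_i \mapsto x_i^{mn}$). Expanding $g$ in the power-sum basis as $g = \sum_\lambda c_\lambda p_\lambda$, applying (2) and (3) to reduce to the monomials $p_\lambda = p_{\lambda_1} p_{\lambda_2} \cdots$, and then invoking this special case yields both sides of the desired identity as the same explicit expression in the $p_k[h]$.

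The main obstacle is purely bookkeeping: keeping track of the variable substitutions and confirming they commute correctly. In particular, one must be careful that the substitution $x_i \mapsto x_i^n$ on a symmetric function commutes appropriately with plethystic substitution by $h$, which is exactly the content of the lemma $p_m[p_n[h]] = p_{mn}[h]$. Once that foundational identity is established, the rest of the argument is formal manipulation via the ring-homomorphism property in the first argument.
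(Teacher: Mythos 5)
Your proof is correct. The paper does not actually prove this lemma; it simply cites Macdonald (eq.\ (8.7) and remark (8.3) of that reference), and the argument you give from the power-sum definition — treating $f \mapsto f[h]$ as the unique $\mathbb{Q}$-algebra endomorphism of $\Lambda_{\mathbb{Q}}$ with $p_n \mapsto h(x_1^n, x_2^n, \ldots)$, so that (2) and (3) are immediate, and then checking (1) on the generators $p_n$ via the key identity $p_m[p_n[h]] = p_{mn}[h]$ — is precisely the standard proof found in that reference.
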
 

It remains an open problem in algebraic combinatorics to understand plethysm coefficients, i.e. to understand how $f[g]$ positively expands into Schur functions for two homogeneous Schur-positive symmetric functions $f, g.$

\section{Refining the question}\label{sec:quest}
By Maschke's theorem, the face algebra $\C\mathcal{F}_n$ decomposes into a direct sum of irreducible $S_n$-representations. Although this decomposition is not unique, the coarser decomposition given by grouping the irreducibles of the same isomorphism type is; this is the decomposition of $\C \mathcal{F}_n$ into its \emph{isotypic subspaces}. Hence, there is an $S_n$-representation decomposition \[\C\mathcal{F}_n = \bigoplus_{\nu \vdash n}\left(\C \mathcal{F}_n \right)^\nu,\] where $\left(\C \mathcal{F}_n\right)^\nu$ is the $S_n$-isotypic subspace associated to $\chi^\nu.$ (In general, for a finite group $G$, an irreducible $G$-character $\chi,$ and a representation $V$ of $G,$ we write $\left(V\right)^\chi$ to denote the $\chi-$isotypic subspace of $V.$) Each isotypic subspace $\left(\C \mathcal{F}_n \right)^\nu$ can be accessed by acting on the left of $\C \mathcal{F}_n$ by the corresponding isotypic projector \emph{$\pi_\nu \in \C S_n,$} defined as
\begin{align*}
    \pi_\nu = \frac{\chi^\nu(1)}{n!}\sum_{w \in S_n}\chi^\nu(w)w.
\end{align*}
 We will use that an element $x \in \C \mathcal{F}_n$ is in the isotypic subspace $\left(\C \mathcal{F}_n\right)^\nu$ if and only if $\pi_\nu(x) = x.$

The trivial isotypic subspace $\left(\C \mathcal{F}_n\right)^{n}$ of the face algebra is precisely its invariant subalgebra $\left(\C \mathcal{F}_n\right)^{S_n}$. Moreover, the isotypic subspaces are not only $S_n$-representations; each carries an additional, rich structure as a left module over $\left(\C \mathcal{F}_n\right)^{S_n}.$ Hence, by Bidigare's theorem, each isotypic subspace is actually a (right) module over the descent algebra $\Sigma_n$ by the action \[f \cdot x := \Phi(x) f \text{ for } f \in \left(\C \mathcal{F}_n\right)^\nu, x \in \Sigma_n.\] 

Since $\Sigma_n$ is not semisimple, we are unable in general to decompose each $\Sigma_n$-module $\left(\C \mathcal{F}_n E_\mu \right)^\nu$ into a direct sum of simple $\Sigma_n$-modules. As discussed in \cref{subsec:rep-theory-fin-dim-algebras}, we take the alternative approach of understanding composition multiplicities and we restate our main question from the introduction (\cref{question:OG-pre}) more explicitly.

\begin{question}\label{question:OG}
What are the $\Sigma_n$-module composition multiplicities
for each $S_n$-isotypic subspace $\left(\mathbb{C} \mathcal{F}_n \right)^{\nu}$?
\end{question}

We will answer \cref{question:OG} with \cref{conj:typeA-comp-factors}. Specifically, we will reduce \cref{question:OG} to understanding specific symmetric group representations, whose structure we analyze up to longstanding open problems. 

\subsection{Preliminary answer}
 As a first step towards answering \cref{question:OG}, we decompose each $S_n$-isotypic subspace $\left(\C \mathcal{F}_n\right)^\nu$ into a direct sum of smaller $\Sigma_n$-modules. 
 
\begin{lemma}\label{prop:decomposing-monoid-algebra} The $\C$- linear map 
\begin{align*}
    \varphi: \mathrm{span}_\C \{f \in \mathcal{F}_n: \sigma(f) \in \mu\} &\longrightarrow \C \mathcal{F}_n E_\mu\\
    f &\mapsto fE_{\sigma(f)}
\end{align*}
is an isomorphism of $S_n$-representations. Hence,  \begin{align*}
   \ch \left( \C \mathcal{F}_n E_{\mu}\right)  = \# \{\alpha \vDash n: \alpha \sim \mu \}\cdot h_\mu.
\end{align*}
\end{lemma}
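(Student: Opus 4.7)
The plan is to verify that $\varphi$ is a well-defined, $S_n$-equivariant bijection in four short steps, and then to compute $\ch(\C\mathcal{F}_n E_\mu)$ by decomposing the source as a sum of permutation representations. All of the heavy lifting has in fact already been done: the proof is a clean application of \cref{prop:bE_b-form-basis} together with the three properties of the special cfpoi $\{E_X: X \in \Pi_n\}$ guaranteed by \cref{lem:invariant-idems-are-sums-of-idems}.

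For well-definedness, the span of $\{f \in \mathcal{F}_n : \sigma(f) \in \mu\}$ is $S_n$-stable because $\sigma$ commutes with the $S_n$-action, and $\C\mathcal{F}_n E_\mu$ is $S_n$-stable because $E_\mu \in \left(\C\mathcal{F}_n\right)^{S_n}$. If $\sigma(f) = X \in \mu$, orthogonality of $\{E_Y : Y \in \Pi_n\}$ combined with $E_\mu = \sum_{Y \in \mu} E_Y$ yields $E_X E_\mu = E_X$, so $\varphi(f) = fE_X E_\mu \in \C\mathcal{F}_n E_\mu$. For $S_n$-equivariance, I would compute
\[
\pi\bigl(\varphi(f)\bigr) = \pi(f)\,\pi(E_{\sigma(f)}) = \pi(f)\, E_{\pi\sigma(f)} = \pi(f)\, E_{\sigma(\pi f)} = \varphi(\pi f),
\]
using that $\sigma$ is $S_n$-equivariant and that our chosen cfpoi satisfies $\pi(E_X) = E_{\pi X}$. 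Injectivity is immediate from \cref{prop:bE_b-form-basis}, since $\{fE_{\sigma(f)} : f \in \mathcal{F}_n\}$ is a $\C$-basis of $\C\mathcal{F}_n$, and its restriction to $\{f : \sigma(f) \in \mu\}$ is therefore linearly independent. For surjectivity, any element of $\C\mathcal{F}_n E_\mu$ has the form $y E_\mu$; expanding $y = \sum_f c_f\, fE_{\sigma(f)}$ in the basis from \cref{prop:bE_b-form-basis} and using that $E_X E_\mu = E_X$ if $X \in \mu$ and $E_X E_\mu = 0$ otherwise (by orthogonality), one obtains $yE_\mu = \sum_{\sigma(f) \in \mu} c_f\, fE_{\sigma(f)}$, visibly in the image of $\varphi$.

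For the Frobenius characteristic of the source, I would decompose according to the integer composition recording the block sizes of $f$ read in order. For a fixed $\alpha \vDash n$ with $\alpha \sim \mu$, the faces with block sizes exactly $\alpha$ form a single $S_n$-orbit with stabilizer $S_\alpha$, so their $\C$-span affords the permutation representation $\mathbb{1}\uparrow_{S_\alpha}^{S_n}$ with Frobenius characteristic $h_\alpha = h_\mu$. Summing over all compositions $\alpha \vDash n$ with $\alpha \sim \mu$ and using $\varphi$ to transport the characteristic to $\C\mathcal{F}_n E_\mu$ gives the claimed formula.

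There is no real obstacle here; the only subtle point is that the argument rests on using a cfpoi $\{E_X\}$ of $\C\mathcal{F}_n$ refining $\{E_\mu\}$ in the precise sense of \cref{lem:invariant-idems-are-sums-of-idems}. Without this refinement, one would have $E_\mu$ but not the orthogonality identities $E_X E_\mu \in \{E_X, 0\}$ that drive both well-definedness and surjectivity, and the clean identification of $\C\mathcal{F}_n E_\mu$ as a span of Tits faces would fail.
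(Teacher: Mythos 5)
Your proof is correct and follows essentially the same route as the paper's: injectivity via \cref{prop:bE_b-form-basis}, surjectivity via expansion in that basis together with orthogonality $E_X E_\mu \in \{E_X, 0\}$, the same equivariance computation, and the same orbit-by-composition decomposition for the Frobenius characteristic. Your closing remark about needing the cfpoi of $\C\mathcal{F}_n$ from \cref{lem:invariant-idems-are-sums-of-idems} that refines the fixed $\{E_\mu\}$ is exactly the point the paper addresses by fixing both families at the end of \cref{subsec:idemps}.
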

\begin{proof} 
Since $E_\mu = \sum_{X \in \mu} E_X,$ there is a vector space decomposition, \[\C \mathcal{F}_n E_\mu = \bigoplus_{X \in \mu} \C \mathcal{F}_n E_X.\] Hence if $\sigma(f) \in \mu,$ then $fE_{\sigma(f)} \in \C \mathcal{F}_n E_\mu.$ The map $\varphi$ is injective because \[\varphi \left(\sum_{g}c_g g\right) = 0 \implies \sum_{g}c_g gE_{\sigma(g)} = 0 \implies c_g = 0 \text{ for all }g \in \mathcal{F}_n\] by \cref{prop:bE_b-form-basis}. To show that $\varphi$ is surjective, it suffices to prove that $\{fE_{\sigma(f)}: \sigma(f) = X\}$ spans $\C \mathcal{F}_n E_X.$ Let $yE_X \in \C\mathcal{F}_n E_X.$ By \cref{prop:bE_b-form-basis}, there are constants $c_g$ for which $y = \sum_{g \in \mathcal{F}_n}c_ggE_{\sigma(g)}.$ By orthogonality, 
\[yE_X = \sum_{g \in \mathcal{F}_n}c_ggE_{\sigma(g)}E_X = \sum_{\substack{g \in \mathcal{F}_n:\\ \sigma(g) = X}} c_g g E_{\sigma(g),}\]
as desired. Therefore, $\varphi$ is a vector space isomorphism.

The map $\varphi$ respects the action of $S_n$ since for $f$ with $\sigma(f) \in \mu$, \[\varphi(\pi(f)) = \pi(f) E_{\sigma(\pi f)} = \pi(f) E_{\pi \sigma(f)} = \pi(f)\pi \left(E_{\sigma(f)}\right) = \pi \left(fE_{\sigma(f)}\right) = \pi \varphi(f).\]

The $S_n$-orbits of faces in  $\mathcal{F}_n$ are labelled by integer composition $\alpha \vDash n,$ consisting of the ordered set partitions which have block sizes $\alpha.$ The face-orbits whose supports belongs to the support-orbit labelled  by $\mu$ are those for which $\alpha$ rearranges to $\mu.$ The $S_n$-stabilizer of a face in the $S_n$-orbit labelled by $\alpha$ is conjugate to the subgroup $S_\alpha$. Hence, using that $\varphi$ is a $S_n$-representation isomorphism and basic properties of induced representations (see, for instance \cite[Proposition 4.3.2]{webbrepntheory}),

\begin{align*}
   \ch \left( \C \mathcal{F}_n E_\mu\right) = \ch \left(\bigoplus_{\substack{\alpha \vDash n:\\ \alpha \sim \mu}} \mathbb{1} \big \uparrow_{S_\alpha}^{S_n}\right) = \sum_{\substack{\alpha \vDash n:\\ \alpha \sim \mu}} h_\alpha = \# \{\alpha \vDash n: \alpha \sim \mu\}\cdot h_\mu.
\end{align*}
\end{proof}

We are now in the position to give a preliminary answer to our question. In particular, the following proposition explains the modules $N_\mu^{\nu}$ from the informal statement of the main theorem in the introduction (\cref{thm:informal-main}) as $N_\mu^{\nu} = \left(\C \mathcal{F}_n E_\mu\right)^{\nu}.$ It also provides the promised ``simple combinatorial formulas'' for their dimensions over $\C.$ Recall that $f^\nu$ denotes the dimension of $\chi^\nu$ (or equivalently the number of standard Young tableaux of shape $\nu$) and that we write $\alpha \sim \mu$ if the composition $\alpha$ rearranges to the partition $\mu.$

\begin{prop}\label{prop:decomp-isos}
As (right) $\Sigma_n$-modules,
    \[\left(\C \mathcal{F}_n\right)^{\nu} =\bigoplus_{\mu \vdash n}(\C \mathcal{F}_n E_\mu)^{\nu}.\] The $\C$-dimension of each $\Sigma(W)$-submodule $\left(\C \mathcal{F}_nE_\mu \right)^{\nu}$ is \[f^\nu \cdot \#\{\alpha \vDash n \ | \ \alpha \sim \mu\}\cdot K_{\nu, \mu}.\] 
    In particular, if $\nu$ does not (weakly) dominate $\mu,$ then $\left(\C \mathcal{F}_n E_\mu \right)^\nu = 0.$
\end{prop}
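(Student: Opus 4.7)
The plan is to exploit the cfpoi structure fixed at the end of \cref{subsec:idemps} together with \cref{prop:decomposing-monoid-algebra}, and then read off dimensions via Young's rule.

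First I would establish the direct sum decomposition. Because $\{E_\mu:\mu\vdash n\}$ is a cfpoi of $\left(\C\mathcal{F}_n\right)^{S_n}$, the identity $\sum_\mu E_\mu=1$ together with orthogonality gives the right $\Sigma_n$-module decomposition $\C\mathcal{F}_n=\bigoplus_{\mu\vdash n}\C\mathcal{F}_n E_\mu$. Since each $E_\mu$ is $S_n$-invariant and $S_n$ acts on $\C\mathcal{F}_n$ by algebra automorphisms, right multiplication by $E_\mu$ commutes with the left $S_n$-action, so each summand $\C\mathcal{F}_n E_\mu$ is an $S_n$-subrepresentation. By \cref{lem:commuting-action}, the right $\Sigma_n$-action also commutes with the left $S_n$-action, so the isotypic projector $\pi_\nu\in\C S_n$ is a $\Sigma_n$-linear idempotent. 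Applying $\pi_\nu$ termwise yields the desired decomposition
\[
\left(\C\mathcal{F}_n\right)^{\nu}=\bigoplus_{\mu\vdash n}\left(\C\mathcal{F}_n E_\mu\right)^{\nu}
\]
as right $\Sigma_n$-modules.

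Next I would compute the dimension. By \cref{prop:decomposing-monoid-algebra}, $\C\mathcal{F}_n E_\mu$ is isomorphic as an $S_n$-representation to $\mathrm{span}_\C\{f\in\mathcal{F}_n:\sigma(f)\in\mu\}$, whose Frobenius characteristic is $\#\{\alpha\vDash n:\alpha\sim\mu\}\cdot h_\mu$. Since the $\C$-dimension of the $\chi^\nu$-isotypic subspace of an $S_n$-representation $V$ equals $f^\nu\cdot\langle\chi^\nu,V\rangle$, I get
\[
\dim_\C\left(\C\mathcal{F}_n E_\mu\right)^{\nu}=f^\nu\cdot\#\{\alpha\vDash n:\alpha\sim\mu\}\cdot\langle s_\nu,h_\mu\rangle,
\]
and Young's rule (\cref{eqn:Young's-rule}) identifies $\langle s_\nu,h_\mu\rangle$ with the Kostka number $K_{\nu,\mu}$, giving the claimed formula.

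Finally, the vanishing statement is immediate from the standard fact that $K_{\nu,\mu}=0$ whenever $\nu$ does not dominate $\mu$. There is no serious obstacle here: the whole proof is a short bookkeeping argument once \cref{prop:decomposing-monoid-algebra} and the commutation in \cref{lem:commuting-action} are in hand. The only point requiring a touch of care is verifying that the isotypic decomposition really is compatible with the $\Sigma_n$-module structure, which is precisely what \cref{lem:commuting-action} delivers.
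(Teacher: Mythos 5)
Your proof is correct and follows essentially the same route as the paper: both arguments decompose via the cfpoi $\{E_\mu\}$, use \cref{lem:commuting-action} to commute the isotypic projector $\pi_\nu$ past right multiplication by $E_\mu$, and then read the dimension off the Frobenius characteristic from \cref{prop:decomposing-monoid-algebra} together with Young's rule. The only cosmetic difference is the order of operations (you decompose $\C\mathcal{F}_n$ first and then apply $\pi_\nu$, whereas the paper applies $\pi_\nu$ first and then splits by the $E_\mu$), which amounts to the same computation.
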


\begin{proof}
Since the idempotents $\left\{E_{\mu}: \mu \vdash n\right\} \in \left (\C \mathcal{F}_n \right)^{S_n}$ are complete and orthogonal, there is decomposition of $\Sigma_n$- representations,
\[\left(\C \mathcal{F}_n\right)^\nu = \bigoplus_{\mu \vdash n}\left(\C \mathcal{F}_n\right)^\nu E_{\mu}.\]

By \cref{lem:commuting-action}, $\pi_\nu (yE_\mu) = \pi_\nu(y) E_\mu$ for all $y \in \C \mathcal{F}_n. $ Hence, 
 \[\left(\C \mathcal{F}_n E_{\mu}\right)^\nu = \left(\C \mathcal{F}_n \right)^\nu E_{\mu}\] and there is a decomposition of $\Sigma_n$-representations \[\left(\C\mathcal{F}_n \right)^\nu = \bigoplus_{\mu \vdash n} \left(\C \mathcal{F}_n E_\mu\right)^\nu.\]

By \cref{prop:decomposing-monoid-algebra}, the $\C$-dimension of $\left(\C \mathcal{F}_n E_{\mu}\right)^\nu$ is $ \dim \left(\chi^\nu\right) \cdot \# \{\alpha \sim \mu: \alpha \vDash n\} \cdot \left \langle s_\nu, h_\mu \right \rangle.$ Recall from \cref{eqn:Young's-rule} that $\left \langle s_\nu, h_\mu \right \rangle= K_{\nu, \mu},$ which is $0$ unless $\nu$ (weakly) dominates $\mu.$ 
\end{proof}

\cref{prop:decomp-isos} reduces \cref{question:OG} to understanding the composition multiplicities of each $\Sigma_n$-module $\left(\C \mathcal{F}_n E_\mu \right)^\nu$ for any two partitions $\mu, \nu$ of $n.$  Thus, we have refined \cref{question:OG} as follows.
\begin{question}\label{quest:2'}
For any triple of partitions $\mu, \nu, \lambda$ of $n,$  what is the composition multiplicity \[[(\C \mathcal{F}_nE_\mu)^{\nu}:M_\lambda]?\]
\end{question}

We can use the finite dimensional algebra tools discussed in \cref{subsec:rep-theory-fin-dim-algebras} to further reduce our question. 

\begin{prop}\label{prop:transform-to-group-rep-q}
The composition multiplicity of the $\Sigma_n$- simple $M_{\lambda}$ in $\left(\C \mathcal{F}_n E_{\mu}\right)^{\nu}$ is 
    \[\left[(\C \mathcal{F}_nE_{\mu})^{\nu}:M_{\lambda}\right] = f^\nu \cdot \left \langle s_\nu, \ch \left(E_{\lambda}\C \mathcal{F}_n E_{\mu}\right) \right \rangle.\]
\end{prop}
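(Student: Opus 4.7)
The plan is to chain together three ingredients already built up in the paper: the finite-dimensional algebra formula \eqref{eqn:composition-mults} relating composition multiplicities to idempotent-cut dimensions, the commuting actions identity \eqref{lem:commuting-action}, and the standard character-theoretic formula $\dim_\C V^\nu = f^\nu \langle s_\nu, \ch V\rangle$ for the dimension of a $G$-isotypic component.

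First I would translate the right-$\Sigma_n$-module structure on $(\C\mathcal{F}_n E_\mu)^\nu$ into a left $(\C\mathcal{F}_n)^{S_n}$-module structure via Bidigare's antiisomorphism $\Phi$. Under this identification, the simple $M_\lambda$ corresponds to the simple $(\C\mathcal{F}_n)^{S_n}$-module with projective cover generated by $E_\lambda$. Since $(\C\mathcal{F}_n)^{S_n}$ is elementary, \eqref{eqn:composition-mults} gives
\[
\left[(\C\mathcal{F}_nE_\mu)^\nu : M_\lambda\right] \;=\; \dim_\C E_\lambda \cdot (\C\mathcal{F}_n E_\mu)^\nu.
\]

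Next I would rewrite the right-hand side as an isotypic component of an honest $S_n$-subrepresentation of $\C\mathcal{F}_n$. From the proof of \cref{prop:decomp-isos} we already know $(\C\mathcal{F}_n E_\mu)^\nu = (\C\mathcal{F}_n)^\nu E_\mu$. Applying the commuting actions \eqref{lem:commuting-action} once more on the left, together with the fact that $E_\lambda \in (\C\mathcal{F}_n)^{S_n}$, gives $\pi_\nu(E_\lambda y E_\mu) = E_\lambda \pi_\nu(y) E_\mu$ for all $y \in \C\mathcal{F}_n$. Consequently
\[
E_\lambda (\C\mathcal{F}_n E_\mu)^\nu \;=\; E_\lambda (\C\mathcal{F}_n)^\nu E_\mu \;=\; \pi_\nu\bigl(E_\lambda \C\mathcal{F}_n E_\mu\bigr) \;=\; \bigl(E_\lambda \C\mathcal{F}_n E_\mu\bigr)^\nu,
\]
where the $S_n$-action on $E_\lambda \C\mathcal{F}_n E_\mu$ is well defined because $E_\lambda$ and $E_\mu$ are both $S_n$-invariant.

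Finally I would conclude using the standard identity $\dim_\C V^\nu = \langle \chi^\nu(1)\cdot \chi^\nu, V \rangle = f^\nu \langle \chi^\nu, V\rangle = f^\nu \langle s_\nu, \ch V\rangle$, applied to the $S_n$-representation $V = E_\lambda \C\mathcal{F}_n E_\mu$. Combining these three equalities yields exactly the claimed formula
\[
\left[(\C\mathcal{F}_nE_\mu)^\nu : M_\lambda\right] \;=\; f^\nu \cdot \left\langle s_\nu, \ch\bigl(E_\lambda \C\mathcal{F}_n E_\mu\bigr)\right\rangle.
\]

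I do not anticipate a substantive obstacle here: every step is an invocation of a result already recorded earlier in the excerpt. The only point requiring mild care is the antiisomorphism bookkeeping — making sure that the left-module composition multiplicity formula over $(\C\mathcal{F}_n)^{S_n}$ transports correctly to the right-$\Sigma_n$-module composition multiplicity, and that the simple indexed by $\lambda$ on one side matches $M_\lambda$ on the other. The real work of the paper will come afterward, in identifying $\ch(E_\lambda \C\mathcal{F}_n E_\mu)$ with the generating-function coefficient asserted in \cref{thm:informal-main}.
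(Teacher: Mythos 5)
Your proposal is correct and follows essentially the same route as the paper: both apply \eqref{eqn:composition-mults} to reduce to $\dim_\C E_\lambda(\C\mathcal{F}_n E_\mu)^\nu$, use the commuting-actions identity \eqref{lem:commuting-action} to identify this with $\dim_\C (E_\lambda\C\mathcal{F}_n E_\mu)^\nu$, and finish with the standard isotypic-dimension formula. The only cosmetic difference is that you spell out the left/right module bookkeeping via $\Phi$ a bit more explicitly, whereas the paper folds it into the phrase ``adjusted for right modules.''
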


\begin{proof}
By \cref{eqn:composition-mults} (adjusted for right modules), 
\begin{align*}
    \left[(\C \mathcal{F}_nE_{\mu})^{\nu}:M_{\lambda}\right] 
    = \dim_{\C} \left((\C \mathcal{F}_nE_{\mu})^{\nu} \Phi^{-1}(E_\lambda) \right)= \dim_{\C} E_{\lambda}\left(\C \mathcal{F}_n E_{\mu}\right)^{\nu}.
\end{align*}

By \cref{lem:commuting-action},  $E_\lambda \pi_\nu \left(y E_\mu\right) = \pi_\nu \left(E_\lambda yE_\mu\right)$ for all $y \in \C \mathcal{F}_n.$ Hence,
\[E_{\lambda}\left(\C \mathcal{F}_n E_{\mu}\right)^\nu = \left(E_{\lambda}\C \mathcal{F}_n E_{\mu}\right)^\nu.\]

Therefore,
\begin{align*}
    \left[(\C \mathcal{F}_nE_{\mu})^{\nu}:M_\lambda\right] &= \dim_\C \left(E_\lambda\C \mathcal{F}_n E_{\mu}\right)^{\nu}\\
    &= \dim_\C \left(\chi^\nu \right) \cdot \left \langle\chi^\nu, E_{\lambda}\C \mathcal{F}_n E_{\mu}  \right \rangle\\
    &= f^\nu \cdot \left \langle s_\nu, \ch \left (E_\lambda \C \mathcal{F}_n E_\mu \right)\right \rangle.
\end{align*}
\end{proof}

Given \cref{prop:transform-to-group-rep-q}, our final conversion of \cref{question:OG} is the question below.

\begin{question}\label{quest:sn-rep-structure}
    What is the Frobenius characteristic of $E_\lambda \C\mathcal{F}_nE_\mu$?
\end{question}

\section{Our answer}\label{sec:full-answer}
\subsection{Higher Lie characters and Lyndon words}\label{subsec:wreathplethysmlielyndon}

Thrall studied a collection of $S_n$-representations in \cite{Thrall} which are (also) indexed by partitions of $n$ and often called the \emph{higher Lie representations}.  We write \emph{$L_\lambda$} to denote the Frobenius image of the higher Lie representation \emph{$\mathscr{L}_\lambda$} associated to $\lambda.$ These representations have several interesting {interpretations} and are closely tied to the free Lie algebra.  For our purposes, it is most revealing to {define}\footnote{This is equivalent to the standard definition by work of Stanley \cite[Theorem 7.3]{stanley_grps_on_postes}, Hanlon \cite[Theorem 4]{Hanlon}, and Klyachko \cite{Klyachko}; see also a combinatorial proof by Barcelo in \cite{BARCELO199093}.} $L_{n}$ as the Frobenius image of the $S_n$-representation carried by the top homology of the order complex of the proper part of the set partition {lattice}\footnote{see \cref{sec:poset-top} for details and our conventions on poset topology} tensored with the sign representation:
\begin{align*}
    L_n := \ch \left(\Tilde{H}_{n - 3} \left(\Pi_n\right) \otimes \sgn_n\right).
\end{align*}

More generally, for a partition $\lambda = 1^{m_1}2^{m_2} \ldots k^{m_k},$ let 
\begin{equation} \label{eqn:def-higher-lie}
    L_\lambda := \prod_{i = 1}^k h_{m_i}[L_i].
\end{equation} Positively expanding $L_\lambda$ into Schur functions is a longstanding open problem known as \emph{Thrall's problem}. The furthest result in this direction is due to Schocker, who wrote $L_\lambda$ as a signed sum of Schur functions \cite{schocker2003multiplicities}. One fact we will use several times is that the higher Lie representations decompose the regular representation of $S_n,$ which implies
\begin{equation}\label{eqn:higher-Lies-decompose-h1n}
    \sum_{\lambda \vdash n}L_\lambda = h_{1^n}.
\end{equation}

    A \emph{Lyndon word} is a finite nonempty word on alphabet $\{1, 2, \ldots \}$  that is lexicographically \textit{strictly} smaller than all of its cyclic rearrangements. It is well-known (see \cite[Theorem 5.1.5]{Lothaire_1997}) that any word on alphabet $\{1, 2, \ldots\}$ factors uniquely into the concatenation of weakly (lexicographically) decreasing Lyndon words. For instance, \[2431122231121 = (243)(112223)(112)(1).\]
    
    Finally, we define some notation in order to state our main theorem. Let $\alpha = (\alpha_1, \alpha_2, \ldots, \alpha_k)$ be an integer composition, partition, or word  on $\{1, 2, \ldots \}.$ Let $\mathbf{y} = \{y_1, y_2, \ldots\}$ be an infinite variable set and $m$ an integer. Define \emph{$\mathbf{y}_\alpha, |\alpha|, \alpha \cdot m, \alpha^m$} as follows.
    \begin{itemize}
        \item $\mathbf{y}_\alpha:= y_{\alpha_1}y_{\alpha_2} \ldots y_{\alpha_k}.$ 
         \item $|\alpha|:= \alpha_1 + \alpha_2 + \ldots + \alpha_k.$
        \item $\alpha \cdot m:= (\alpha_1 \cdot m, \alpha_2 \cdot m, \ldots, \alpha_k \cdot m)$
        \item $\alpha^m:= \underbrace{(\alpha_1, \alpha_2, \ldots, \alpha_k, \alpha_1, \alpha_2, \ldots, \alpha_k, \ldots, \alpha_1, \alpha_2, \ldots, \alpha_k)}_{mk \text{ total parts}}$
    \end{itemize}

\subsection{The main theorem}
We now have the necessary definitions to state our main theorem that was paraphrased as part of \cref{thm:informal-main} in the introduction. Note that we use the convention that there exists an empty partition $().$

\newtheorem*{conj:typeA-comp-factors}{\cref{conj:typeA-comp-factors}}

\begin{conj:typeA-comp-factors} There is an equality of generating functions
\begin{align}
    \sum_{n \geq 0} \sum_{\substack{\lambda \vdash n\\ \mu \vdash n}}  \mathbf{y}_\lambda \mathbf{z}_\mu\cdot \ch(E_\lambda \C \mathcal{F}_n E_\mu) = \prod_{\substack{\text{Lyndon }\\ w}} \sum_{\substack{\text{partition}\\\rho}} \mathbf{y}_{\rho \cdot |w|}\mathbf{z}_{w^{|\rho|}}L_\rho[h_w].
\end{align}
\end{conj:typeA-comp-factors}

\cref{conj:typeA-comp-factors} explains the structure of $\C \mathcal{F}_n$ as a module over $S_n$ and $\Sigma_n$ simultaneously, answering \cref{question:OG}. Indeed, \cref{prop:transform-to-group-rep-q} and \cref{conj:typeA-comp-factors} combine to give
\begin{equation}\label{eqn:answer}
    \left[\left(\C\mathcal{F}_nE_\mu\right)^\nu: M_\lambda\right] = f^\nu \cdot \left \langle s_\nu, \ [\mathbf{y}_\lambda\mathbf{z}_\mu] \prod_{\substack{\text{Lyndon }\\ w}} \sum_{\substack{\text{partition}\\\rho}} \mathbf{y}_{\rho \cdot |w|}\mathbf{z}_{w^{|\rho|}}L_{\rho}[h_w] \right \rangle,
\end{equation}
where \emph{$[\mathbf{y}_\lambda \mathbf{z}_\mu]$} denotes the coefficient of the monomial $\mathbf{y}_\lambda \mathbf{z}_\mu.$ Since Thrall's problem and understanding plethysm coefficients are longstanding open problems in algebraic combinatorics, this is as far as we are able to simplify our answer for now.

\subsection{Example of main theorem}\label{subsec:examples}

As an example of \cref{conj:typeA-comp-factors}, we analyze the case $n = 4$ in the table below. The box in row $\nu$ and column $\mu$ is filled with $[\left(\C \mathcal{F}_4E_\mu\right)^\nu: M_\lambda]$ copies of $\lambda,$ and the numbers in parentheses indicate multiplicities.
\begin{center}\scalebox{.7}{
\begin{tabular}{cc}
     & \Huge{$\mu$} \\[.5em]
 \Huge{$\nu$} \ & 
\begin{tabular}{c||c|c|c|c|c}
\centering
%&  { \ydiagram{4}} & { \ydiagram{3,1}} & { \ydiagram{2,2}} & { \ydiagram{2,1,1}} &  {\ydiagram{1,1,1,1}}\\[4em]
& \Large $4$ &  \Large $3,1$ & \Large $2,2$ & \Large $2, 1, 1$ &  \Large $1, 1, 1, 1$\\
\hline
\hline
 \Large $1,1,1,1$ & & & & & \ydiagram{2,2}\\[2em]
\hline
 \Large {$2, 1, 1$} & & & & \cellcolor{pink}{\ydiagram{4} \ (3) \hspace{.5cm} \ydiagram{3,1} \  (3)} & {\ydiagram{4} \ (3) \hspace{.5cm} \ydiagram{3,1} \ (3)}\\ 
 & & & & \cellcolor{pink}{\ydiagram{2,2} \ (3)} &{\ydiagram{2,1,1}\ (3)\multirow{-2}{*}{}}\\[2em]
\hline
\Large{$2, 2$} & & & \ydiagram{2,2} \ (2) & \cellcolor{pink}{\ydiagram{4} \ (2) \hspace{.5cm} \ydiagram{3,1} \ (2)} & {\ydiagram{3,1} \ (2) \hspace{.5cm} \ydiagram{2,2}\  (2) }\\[1.2em]
 & & &  & \cellcolor{pink}{\ydiagram{2, 1, 1} \ (2)}&{\multirow{-2}{*}{}}\\[2em] 
\hline
\Large{$3, 1$} & & \ydiagram{4} \ (3) \hspace{.5cm}\ydiagram{3,1} \ (3) & \ydiagram{4} \ (3) & \cellcolor{pink}{\ydiagram{4} \ (6) \hspace{.5cm} \ydiagram{3,1} \ (6)} & {\ydiagram{4} \ (3) \hspace{.5cm} \ydiagram{3,1} \ (3)}\\[1.5em] 
 & &  &  & \cellcolor{pink}{\ydiagram{2,2} \ (3) \hspace{.5cm} \ydiagram{2,1,1} \ (3) }& { \ydiagram{2,1,1} \ (3)}\\[2em]
\hline
 \Large{$4$} \normalsize& {\ydiagram{4}} & {\ydiagram{4} \hspace{.5cm} \ydiagram{3,1}}& {\ydiagram{2,2}}& \cellcolor{pink}{\ydiagram{4} \hspace{.5cm}\ydiagram{3,1}} & \ydiagram{1,1,1,1}\\ 
 &  & & & \cellcolor{pink}{{\ydiagram{2,1,1}}} & {}\multirow{-2}{*}{}\\[2em] 
\end{tabular}
\end{tabular}}
\end{center}

To illustrate how to fill this table, we work out the $\Sigma_4$- composition factors of $\left(\C \mathcal{F}_{4}E_{211}\right)^{\nu}$ as $\nu$ varies over partitions of $4,$ i.e. the pink boxes. Although this is the most technical column of the table, it is also the most rich. We invite the reader try the other columns using the same process. 

Each term in the expansion of the generating function $\displaystyle \prod_{\substack{\text{Lyndon }\\ w}} \sum_{\substack{\text{partition}\\\rho}} \mathbf{y}_{\rho \cdot |w|}\mathbf{z}_{w^{|\rho|}}L_{\rho}[h_w]$ is formed by choosing one (potentially empty) partition $\rho$ for each Lyndon word factor $w$ and multiplying the corresponding terms $\mathbf{y}_{\rho \cdot |w|}\mathbf{z}_{w^{|\rho|}}L_{\rho}[h_w].$ To obtain terms with $\mathbf{z}$-weight $\mathbf{z}_{211} = z_2z_1^2,$ the only Lyndon word factors $w$ from which one can choose a nonempty partition are $w = 1, w = 2, w = 12,$ and $w = 112.$ Writing these relevant factors first, our generating function is:
\begin{align*}
\underbrace{\left(\sum_{\rho}\mathbf{y}_{\rho}\mathbf{z}_1^{|\rho|}L_\rho[h_1]\right)}_{w = 1}\underbrace{\left(\sum_{\rho}\mathbf{y}_{\rho\cdot 2}\mathbf{z}_2^{|\rho|}L_\rho[h_2]\right)}_{w = 2}\underbrace{\left(\sum_{\rho}\mathbf{y}_{\rho\cdot 3}\mathbf{z}_{12}^{|\rho|}L_\rho[h_{12}]\right)}_{w = 12}\underbrace{\left(\sum_{\rho}\mathbf{y}_{\rho\cdot 4}\mathbf{z}_{112}^{|\rho|}L_\rho[h_{112}]\right)}_{w = 112}\ldots.
\end{align*}
Labelling each term by the factors $w$ for which a nonempty $\rho$ was chosen, the coefficient of $\mathbf{z}_{211}$ in the generating function is 
\begin{align*}
    \underbrace{\mathbf{y}_{2} L_{2}[h_1]}_{\substack{w = 1\\ \rho = 2}} \cdot \underbrace{\mathbf{y}_{2}L_1[h_2]}_{\substack{w = 2\\ \rho = 1}} + \underbrace{\mathbf{y}_{11} L_{11}[h_1]}_{\substack{w = 1\\ \rho = 11}}\cdot \underbrace{\mathbf{y}_{2}L_1[h_2]}_{\substack{w = 2\\ \rho = 1}} + \underbrace{\mathbf{y}_1L_1[h_1]}_{\substack{w = 1\\ \rho = 1}}\cdot \underbrace{\mathbf{y}_{3}L_1[h_{12}] }_{\substack{w = 12\\ \rho = 1}} + \underbrace{\mathbf{y}_4L_{1}[h_{112}]}_{\substack{w = 112\\ \rho = 1}}\\
    = \mathbf{y}_{22}\left(L_2[h_1]L_1[h_2]\right) + \mathbf{y}_{211}\left(L_{11}[h_1]L_1[h_2]\right) + \mathbf{y}_{31}\left(L_1[h_1]L_1[h_{12}] \right) + \mathbf{y}_4 \left(L_1[h_{112}]\right).
\end{align*}
{Expanding}\footnote{One can do this with SageMath, since plethysm and the higher Lie characters (under the name gessel\_reutenauer symmetric functions) are implemented. } into Schur {functions}, we obtain:
\begin{equation}\label{eqn:ex-pink}
\mathbf{y}_{22} \left(s_{211} + s_{31}\right) + \mathbf{y}_{211} \left(s_4 + s_{22} + s_{31}\right) + {\mathbf{y}_{31}}\normalcolor \left(s_4 + {2s_{31}}\normalcolor + s_{22} + s_{211}\right) + \color{blue}{\mathbf{y}_{4}}\normalcolor\left(s_4 + \color{blue}{2s_{31}}\normalcolor + s_{22} + s_{211}\right).
    \end{equation}
This process reveals how to fill each box of the pink column. For instance, the composition multiplicity of $M_{4}$ in $\left(\C\mathcal{F}_4E_{211}\right)^{31}$ is $6 = 3 \cdot 2$ because $f^{(3,1)} = 3$ and the coefficient of $\mathbf{y}_{4}s_{31}$ in \cref{eqn:ex-pink} is $2,$ as indicated by the blue coloring above.

\subsection{Recovering results of Garsia--Reutenauer and Uyemura-Reyes}\label{sec:recovering-results}

As further examples, we explain how \cref{conj:typeA-comp-factors} specializes to recover results of Garsia--Reutenauer in \cite{garsia-reutenauer} and Uyemura-Reyes in \cite{Uyemura-Reyes}.

\subsubsection{The bottom row ($\nu = n$): Garsia--Reutenauer's Cartan invariants of $\Sigma_n$}
In \cite[Theorem 5.4]{garsia-reutenauer}, Garsia and Reutenauer computed the Cartan invariants\footnote{They actually proved a stronger result by finding a basis for each space $\Phi^{-1}(E_\mu)\Sigma_n \Phi^{-1}(E_\lambda).$} of the descent algebra. Let \emph{$\mathrm{type}(\alpha)$} for a composition $\alpha$ be the partition obtained by reordering $|w_1|, |w_2|, \ldots, |w_k|$ where $w_1 w_2\ldots w_k$ is the unique factorization of $\alpha$ into weakly decreasing (lexicographically) Lyndon words (see \cref{subsec:wreathplethysmlielyndon}). 

\begin{thm}[Garsia--Reutenauer]\label{thm:garsia-reutenauer} The composition multiplicity of the $\Sigma_n$-simple module $M_\lambda$ in the projective indecomposable module $P_\mu$ is
\[[P_\mu: M_\lambda ] = \# \{\alpha \sim \mu : \mathrm{type}(\alpha) = \lambda\}.\]
\end{thm}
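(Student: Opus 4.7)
The plan is to specialize the main theorem (\cref{conj:typeA-comp-factors}) to the trivial $S_n$-isotypic subspace $\nu = (n)$. Via Bidigare's antiisomorphism, the summand $\left(\C\mathcal{F}_n E_\mu\right)^{(n)} = \left(\C\mathcal{F}_n\right)^{S_n} E_\mu$ is identified with the right projective indecomposable $\Phi^{-1}(E_\mu)\Sigma_n \cong P_\mu$, so $[P_\mu : M_\lambda] = \bigl[\left(\C\mathcal{F}_n E_\mu\right)^{(n)} : M_\lambda\bigr]$. Since $f^{(n)} = 1$ and $s_{(n)} = h_n$, \cref{prop:transform-to-group-rep-q} and \cref{conj:typeA-comp-factors} combine to give
\[[P_\mu : M_\lambda] = \left\langle h_n, \ [\mathbf{y}_\lambda \mathbf{z}_\mu]\prod_{w}\sum_\rho \mathbf{y}_{\rho \cdot |w|}\mathbf{z}_{w^{|\rho|}} L_\rho[h_w]\right\rangle,\]
and the task becomes to evaluate this trivial-character multiplicity explicitly.

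The key technical step is the plethystic identity
\[\langle h_{|\rho||w|}, L_\rho[h_w]\rangle = [\rho = 1^{|\rho|}].\]
I would prove this by evaluating at $x_1 = 1$, $x_i = 0$ for $i \geq 2$: since $h_w(1, 0, 0, \ldots) = 1$, the substitutions $p_k \mapsto p_k[h_w]$ all specialize to $1$ at this point, so $L_\rho[h_w](1, 0, 0, \ldots) = L_\rho(1, 0, 0, \ldots) = \langle h_{|\rho|}, L_\rho\rangle$. Applying the same trick to $L_\rho = \prod_i h_{m_i}[L_i]$ (where $\rho = 1^{m_1} 2^{m_2}\cdots$) together with $\langle h_i, L_i\rangle = [i = 1]$ (a standard M\"obius-type computation for $L_n = \tfrac{1}{n}\sum_{d\mid n}\mu(d)p_d^{n/d}$) shows that $\langle h_{|\rho|}, L_\rho\rangle = [\rho = 1^{|\rho|}]$, since all factors with $m_i \geq 1$ and $i \geq 2$ vanish. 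Combined with the coproduct identity $\langle h_n, \prod_i f_i\rangle = \prod_i \langle h_{\deg f_i}, f_i\rangle$ (immediate from $\Delta(h_n) = \sum_{a+b=n} h_a \otimes h_b$), this forces each $\rho_w$ to be of the form $(1^{k_w})$ in order to contribute, and each such choice then contributes exactly $1$ to the Hall pairing.

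For $\rho_w = 1^{k_w}$, one has $\mathbf{y}_{\rho_w \cdot |w|} = y_{|w|}^{k_w}$ and $\mathbf{z}_{w^{|\rho_w|}} = \mathbf{z}_w^{k_w}$. So $[\mathbf{y}_\lambda \mathbf{z}_\mu]\langle h_n, F\rangle$ counts the finitely supported tuples $(k_w)_{w \text{ Lyndon}}$ for which $\lambda = \bigsqcup_w (|w|)^{k_w}$ and $\mu$ equals the multiset obtained by taking $k_w$ copies of each letter of each $w$. By the Chen--Fox--Lyndon theorem, such tuples biject with compositions $\alpha \vDash n$ via weakly lexicographically decreasing concatenation of Lyndon factors, under which $\mathrm{type}(\alpha) = \lambda$ corresponds to the first multiset condition and $\alpha \sim \mu$ to the second. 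This exactly recovers Garsia--Reutenauer's count $\#\{\alpha \sim \mu : \mathrm{type}(\alpha) = \lambda\}$. The only real obstacle is establishing the plethystic identity, but the evaluation at $(1, 0, 0, \ldots)$ makes it essentially a one-line verification.
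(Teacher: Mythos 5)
Your proof is correct, and it follows the same overall structure as the paper's Proposition~\ref{prop:recovering-GR}: identify $P_\mu$ with $\left(\C\mathcal{F}_n E_\mu\right)^{(n)}$, extract the $\mathbf{y}_\lambda\mathbf{z}_\mu$ coefficient from the generating function, factor the Hall pairing with $h_n$ across the Lyndon factors, reduce to the key identity $\langle h_{|\rho||w|}, L_\rho[h_w]\rangle = [\rho = 1^{|\rho|}]$, and then use Chen--Fox--Lyndon to biject the surviving tuples with $\{\alpha \sim \mu : \mathrm{type}(\alpha) = \lambda\}$. Where you genuinely diverge is in the proof of the key identity (the paper's Lemma~\ref{lem: mult-of-triv}). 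The paper argues by nonnegativity-plus-counting: it shows $\sum_{\nu}\langle s_n, L_\nu[h_\mu]\rangle = K_{n,\mu^j} = 1$, observes each summand is a nonnegative integer, and then shows the $\nu = 1^j$ term contributes $1$ by identifying $h_j[h_\mu]$ with the transitive coset representation $\mathbb{1}\uparrow_{S_j[S_\mu]}^{S_n}$ via Lemma~\ref{conj:induction-wreath-prods}. You instead argue directly by the principal specialization $x_1 = 1,\ x_i = 0$ for $i \geq 2$: since $p_k[h_w]$ evaluates to $h_w(1,0,\ldots) = 1$ at that point, $L_\rho[h_w](1,0,\ldots) = L_\rho(1,0,\ldots)$, and then the N\"ecklein formula $L_i = \tfrac{1}{i}\sum_{d\mid i}\mu(d)p_d^{i/d}$ together with $L_\rho = \prod_i h_{m_i}[L_i]$ kills all factors with $i\geq 2$, $m_i\geq 1$. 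This specialization argument is more elementary (no induction of representations, no appeal to transitivity of a permutation action) and is a clean alternative; the paper's version is more in line with its overall representation-theoretic framing and avoids invoking the explicit power-sum formula for $L_n$. One small thing worth tightening in your write-up: the step ``$h_{m_i}[L_i]$ vanishes at $(1,0,\ldots)$ for $i\geq 2$'' deserves a sentence, since it does not follow merely from $L_i(1,0,\ldots) = 0$; it follows because $p_k[L_i](1,0,\ldots) = L_i(1,0,\ldots) = 0$ for all $k$, and $h_{m_i}$ written as a polynomial in power sums has no constant term when $m_i \geq 1$.
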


\begin{example}
The compositions rearranging to $(2, 1, 1)$ are $(2, 1, 1)$, $(1, 2, 1)$, and $(1, 1, 2)$. By taking the Lyndon types of these compositions, as the table below illustrates, we obtain that the composition factors of $P_{211}$ are one copy each of $M_{211}$, $M_{31}$, and $M_{4}$. Compare this to the box in row $(4)$ and column $(2, 1, 1)$ of the table in \cref{subsec:examples}.
\begin{table}[h]\label{table:excompmulttypea}
    \centering
    \begin{tabular}{c|c|c}
         $\alpha$ & Lyndon Factorization & $\type(\alpha)$  \\
         \hline
         $(2, 1, 1)$ & $(2)(1)(1)$ & $(2, 1, 1)$\\
         $(1,2, 1)$ & $(1,2)(1)$ & $(3, 1)$\\
          $( 1, 1, 2)$ & $(1,1,2)$ & $(4)$
    \end{tabular}
\end{table}
\end{example}
As right descent algebra modules, $P_\mu = \Phi^{-1}(E_\mu)\Sigma_n \cong\left(\C \mathcal{F}_n\right)^{n} E_\mu = \left ( \C \mathcal{F}_n E_\mu \right)^n$ by \cref{lem:commuting-action}. So, by \cref{eqn:answer}, 
\begin{align*}
    [P_\mu: M_\lambda] =  \left \langle s_n, [\mathbf{y}_\lambda\mathbf{z}_\mu] \prod_{\substack{\text{Lyndon }\\ w}} \sum_{\substack{\text{partition}\\\rho}} \mathbf{y}_{\rho \cdot |w|}\mathbf{z}_{w^{|\rho|}}L_{\rho}[h_w] \right \rangle.
\end{align*}

We show  \cref{conj:typeA-comp-factors} recovers Garsia--Reutenauer's result by computing the composition multiplicity $[P_\mu: M_\lambda]$ using the above equation. To do so, we will need \cref{lem: mult-of-triv}. Our proof of \cref{lem: mult-of-triv} uses the following fact (appearing as Lemma 2.25 in \cite{Albach-Swanson}) which will also be useful for a proof later on. 

\begin{lemma}\label{conj:induction-wreath-prods}(\cite[Lemma 2.25]{Albach-Swanson}).
Let $H$ be a subgroup of $G$. Let $W$ be an $H$-representation and $V$ be an $S_n$-representation. Then, as $S_n[G]$ representations,
\[V[W] \Big \uparrow_{S_n[H]}^{S_n[G]} \cong V\left[  W \big \uparrow_H^{G} \right].\]
\end{lemma}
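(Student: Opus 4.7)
The plan is to exhibit an explicit $S_n[G]$-equivariant isomorphism
\[\Phi\colon V[W] \big\uparrow_{S_n[H]}^{S_n[G]} \xrightarrow{\ \sim\ } V\bigl[W \big\uparrow_H^G\bigr].\]
The definition is essentially forced once one computes how $(g_1,\ldots,g_n;\pi)\in S_n[G]$ must act on the ``basepoint'' $(1\otimes w_1)\otimes\cdots\otimes(1\otimes w_n)\otimes v\in V[W\!\uparrow]$: the wreath-product action formula yields $(g_1\otimes w_{\pi^{-1}(1)})\otimes\cdots\otimes(g_n\otimes w_{\pi^{-1}(n)})\otimes \pi v$. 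Accordingly I would set
\[\Phi\bigl((g_1,\ldots,g_n;\pi)\otimes(w_1\otimes\cdots\otimes w_n\otimes v)\bigr) := (g_1\otimes w_{\pi^{-1}(1)})\otimes\cdots\otimes(g_n\otimes w_{\pi^{-1}(n)})\otimes \pi v,\]
each factor $g_i\otimes w_j$ living in $W\big\uparrow_H^G = \C G\otimes_{\C H}W$.

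The verification then breaks into three steps. First, $\Phi$ must respect the $\C S_n[H]$-balancing that defines the induced representation: applying $(h_1,\ldots,h_n;\sigma)\in S_n[H]$ either by absorbing it into the left $S_n[G]$-factor (where wreath multiplication yields $(g_1 h_{\pi^{-1}(1)},\ldots,g_n h_{\pi^{-1}(n)};\pi\sigma)$) or by acting first on $V[W]$ must produce the same image under $\Phi$; using the defining relation $g\otimes hw = gh\otimes w$ in $W\!\uparrow_H^G$ and tracking indices, both expressions collapse to $(g_1 h_{\pi^{-1}(1)}\otimes w_{\sigma^{-1}\pi^{-1}(1)})\otimes\cdots\otimes(g_n h_{\pi^{-1}(n)}\otimes w_{\sigma^{-1}\pi^{-1}(n)})\otimes \pi\sigma v$. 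Second, the analogous unwinding shows $\Phi$ commutes with left multiplication by any $(a_1,\ldots,a_n;\tau)\in S_n[G]$, so it is $S_n[G]$-equivariant. Third, fixing a transversal $T$ for $H\backslash G$ and $\C$-bases $B_W,B_V$, the elements $(t_1,\ldots,t_n;1)\otimes(w_1\otimes\cdots\otimes w_n\otimes v)$ with $t_i\in T$, $w_i\in B_W$, $v\in B_V$ form a $\C$-basis for $V[W] \big\uparrow_{S_n[H]}^{S_n[G]}$ (using $G^n/H^n \cong (G/H)^n$), and $\Phi$ sends this bijectively onto the standard basis $\{(t_1\otimes w_1)\otimes\cdots\otimes(t_n\otimes w_n)\otimes v\}$ of $V\bigl[W\!\uparrow_H^G\bigr]$ arising from the basis $\{t\otimes w\}$ of $W\!\uparrow_H^G$.

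The only genuinely delicate point I anticipate is the index bookkeeping in step one, where three distinct permutations interact: $\pi$ from the left $S_n[G]$-factor, $\sigma$ being moved across the tensor, and the permutation implicit in the $V[W]$-action. Once the indices are carefully tracked the computations are purely mechanical, so no conceptual obstruction should arise. An alternative route via a character computation in the wreath-product Frobenius characteristic ring is available, but the explicit $\Phi$ produces an actual module isomorphism (not merely a character identity), which is the form needed downstream, and it matches the pointwise flavor of the statement.
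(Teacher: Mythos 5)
Your proposal is essentially the same argument the paper uses: you define exactly the same map $\Phi$ on the induced module, and verify well-definedness (balancing over $\C S_n[H]$), $S_n[G]$-equivariance, and bijectivity. The only cosmetic difference is in the bijectivity step (the paper shows surjectivity and then counts dimensions, whereas you track a transversal-indexed basis — note you want a transversal of $G/H$, not $H\backslash G$, since $gh\otimes w = g\otimes hw$); both routes are fine.
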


\begin{lemma} \label{lem: mult-of-triv}
Let $\nu, \mu$ be partitions of $j, k$ respectively. Set $n = j\cdot k.$ Then,
\begin{align*}
   \left  \langle s_n, L_{\nu}[h_\mu]\right  \rangle \  = \begin{cases}
    1 &\text{ if $\nu = 1^{j}$},\\
    0 &\text{ otherwise.}
    \end{cases}
\end{align*}
\end{lemma}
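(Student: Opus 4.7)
My plan is to prove both cases simultaneously by a sum-to-one argument, thereby avoiding any direct analysis of the individual inner products $\langle s_n, L_\nu[h_\mu]\rangle$. I would first start from the identity $\sum_{\nu \vdash j} L_\nu = h_{1^j} = h_1^{\,j}$ from equation \eqref{eqn:higher-Lies-decompose-h1n} and plethyse both sides by $h_\mu$. Applying the product rule $(fg)[h] = f[h]\,g[h]$ from \cref{prop:plethysm-properties} together with the elementary identity $h_1[f] = f$, this collapses to
\[\sum_{\nu \vdash j} L_\nu[h_\mu] \;=\; h_\mu^{\,j}.\]
Pairing with $s_n$ and invoking Young's rule \eqref{eqn:Young's-rule}, the right-hand side evaluates to $\langle s_n, h_\mu^{\,j}\rangle = K_{(n),\,\mu^{j}} = 1$, since the single-row shape $(n)$ admits exactly one semistandard filling of any prescribed content.

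Next I would observe that each individual summand $\langle s_n, L_\nu[h_\mu]\rangle$ is a non-negative integer: the symmetric function $L_\nu[h_\mu]$ is the Frobenius characteristic of the honest $S_n$-representation $\mathscr{L}_\nu\bigl[\mathbb{1}\uparrow_{S_\mu}^{S_k}\bigr]\uparrow_{S_j[S_k]}^{S_n}$, and the pairing with $s_n$ records the multiplicity of the trivial representation inside it. Thus I have finitely many non-negative integers summing to $1$, which forces exactly one of them to equal $1$ and all others to vanish.

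It remains to identify the nonzero summand as the one indexed by $\nu = 1^j$. Using the definition \eqref{eqn:def-higher-lie} together with the fact that $\mathscr{L}_1$ is the trivial representation of $S_1$, I compute $L_{1^j} = h_j[L_1] = h_j[h_1] = h_j$. Hence $L_{1^j}[h_\mu] = h_j[h_\mu]$ is the Frobenius characteristic of $\mathbb{1}\uparrow_{S_j[S_\mu]}^{S_n}$, and Frobenius reciprocity gives $\langle s_n, h_j[h_\mu]\rangle = \langle \mathbb{1}_{S_n}{\downarrow}_{S_j[S_\mu]}, \mathbb{1}_{S_j[S_\mu]}\rangle = 1$. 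The only non-routine step is the non-negativity observation, which sidesteps any need to unpack $L_\nu$ for $\nu \neq 1^j$; all the rest is routine plethysm manipulation and Frobenius reciprocity.
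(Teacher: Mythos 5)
Your proposal is correct and follows essentially the same three-step strategy as the paper: sum the inner products over all $\nu \vdash j$ using the identity $\sum_\nu L_\nu = h_{1^j}$ together with the plethysm product rule and Young's rule to get $1$, observe that each summand is a nonnegative integer because $L_\nu[h_\mu]$ is the Frobenius characteristic of an honest $S_n$-representation, and then identify the nonvanishing term by computing $L_{1^j}[h_\mu] = h_j[h_\mu]$ as the permutation representation on $S_n/S_j[S_\mu]$. The only cosmetic difference is that you invoke Frobenius reciprocity at the end where the paper reaches the same conclusion by noting a transitive permutation representation contains the trivial with multiplicity one, and you state the identification of $h_j[h_\mu]$ with $\ch\bigl(\mathbb{1}\uparrow_{S_j[S_\mu]}^{S_n}\bigr)$ directly while the paper spells it out via \cref{conj:induction-wreath-prods} and transitivity of induction.
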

\begin{proof} 
We first show that $\sum_{\nu \vdash j}\left \langle s_n, L_{\nu}[h_\mu] \right \rangle = 1.$  By linearity of the Hall inner product, \cref{prop:plethysm-properties}(2), and \cref{eqn:higher-Lies-decompose-h1n},
\begin{align*}
    \sum_{\nu \vdash j}\langle s_n, L_{\nu}[h_\mu] \rangle \  = \left \langle s_n, \sum_{\nu \vdash j} L_{\nu}[h_\mu] \right \rangle \ = \left \langle s_n, \left(\sum_{\nu \vdash j} L_{\nu}\right)[h_\mu] \right \rangle = \left \langle s_n, h_{1^j}[h_\mu]\right \rangle. 
\end{align*}
By \cref{prop:plethysm-properties}(3) and the fact that $h_1[f] = f,$ we can simplify the above to 
\begin{align*}
\left \langle s_n, \left(h_1[h_\mu]\right)^j\right \rangle = \left \langle s_n, \left(h_\mu\right)^j\right \rangle=\left \langle s_n, h_{\mu^j} \right \rangle.
\end{align*}
\cref{eqn:Young's-rule} then implies that 
\begin{align*}
    \sum_{\nu \vdash j} \left\langle s_n, L_{\nu}[h_\mu] \right\rangle = \langle s_n, h_{\mu^j}\rangle  = K_{n, \mu^j} = 1.
\end{align*}

Since $L_\nu[h_\mu]$ is the Frobenius characteristic of an $S_n$-representation, namely $\mathscr{L}_\nu\left[\mathbb{1} \uparrow_{S_\mu}^{S_k}\right] \Big \uparrow_{S_{j}[S_k]}^{S_n},$  we know that each $L_\nu[h_\mu]$ is a nonnegative integer linear combination of Schur functions. Thus, there is exactly one partition $\nu \vdash j$ for which $\left \langle s_n,L_\nu[h_\mu] \right \rangle$ is $1$, and $\left \langle s_n,L_\nu[h_\nu] \right \rangle$ must be $0$ otherwise. So, to prove the lemma, it suffices to prove that $\left\langle s_n, L_{(1^j)}[h_\mu] \right \rangle = 1$.

Since  $ L_{1^j} = h_j[L_1] = h_j,$ the inner product $\left \langle s_n, L_{1^j}[h_\mu] \right\rangle 
   = \left \langle s_n, h_j[h_\mu] \right\rangle.$ By \cref{conj:induction-wreath-prods} and transitivity of induction, $h_j[h_\mu]$ is the Frobenius image of \[\mathbb{1}\left[\mathbb{1} \uparrow_{S_\mu}^{S_k}\right]\Big \uparrow_{S_j[S_k]}^{S_n}\cong \mathbb{1}\left[\mathbb{1}\right]\big \uparrow_{S_j[S_\mu]}^{S_j[S_k]} \Big \uparrow_{S_j[S_k]}^{S_n} \cong \mathbb{1}\left[\mathbb{1}\right] \Big \uparrow_{S_j[S_\mu]}^{S_n} \cong \mathbb{1}\uparrow_{S_j[S_\mu]}^{S_n},\] which is the coset representation on $S_n/S_j[S_\mu].$ As a transitive permutation representation, it contains the the trivial representation with multiplicity one, completing the lemma. 
\end{proof}

\begin{prop} \label{prop:recovering-GR}
\cref{conj:typeA-comp-factors} recovers Garsia--Reutenauer's Cartan invariants of $\Sigma_n.$ Namely, \begin{align*}
  \left \langle s_n, [\mathbf{y}_\lambda\mathbf{z}_\mu] \prod_{\substack{\text{Lyndon }\\ w}} \sum_{\substack{\text{partition}\\\rho}} \mathbf{y}_{\rho \cdot |w|}\mathbf{z}_{w^{|\rho|}}L_{\rho}[h_w] \right \rangle =  \# \{\alpha \sim \mu : \mathrm{type}(\alpha) = \lambda\}.
\end{align*}
\end{prop}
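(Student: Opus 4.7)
My plan is to compute the coefficient $[\mathbf{y}_\lambda \mathbf{z}_\mu]$ explicitly, pair with $s_n$, apply \cref{lem: mult-of-triv} to each factor, and then use the uniqueness of the Lyndon factorization to identify what remains with the right-hand side.

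First, the coefficient $[\mathbf{y}_\lambda \mathbf{z}_\mu]$ of the product expands as a sum over families $\{\rho^{(w)}\}_{w\,\text{Lyndon}}$ of (possibly empty) partitions such that the multiset $\bigsqcup_w \rho^{(w)} \cdot |w|$ sorts to $\lambda$ and the multiset of letters of the concatenations $w^{|\rho^{(w)}|}$ sorts to $\mu$. Each such family contributes $\prod_w L_{\rho^{(w)}}[h_w]$. Since pairing with $s_n$ is multiplicative with respect to products of Schur-positive symmetric functions of complementary degrees (Frobenius reciprocity gives $\langle s_n, fg\rangle = \langle s_j, f\rangle \langle s_k, g\rangle$ whenever $f, g$ have degrees summing to $n$), iterating yields
\[\left\langle s_n, \prod_w L_{\rho^{(w)}}[h_w]\right\rangle = \prod_w \left\langle s_{|\rho^{(w)}|\cdot |w|},\, L_{\rho^{(w)}}[h_w]\right\rangle.\]

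Next I would apply \cref{lem: mult-of-triv} to each factor, with the convention that an empty $\rho^{(w)}$ contributes $\langle s_0, 1\rangle = 1$. The lemma forces each $\rho^{(w)}$ to be either empty or of the form $1^{m_w}$ for some $m_w \geq 1$, in which case the factor equals $1$; otherwise it vanishes. Thus the full inner product is $1$ on surviving families and $0$ elsewhere, so the quantity in question counts families $(m_w)_{w\,\text{Lyndon}}$ of nonnegative integers with finitely many nonzero entries, subject to the conditions that the multiset in which each $|w|$ has multiplicity $m_w$ equals $\lambda$, and the multiset in which each letter of each $w$ has multiplicity $m_w$ equals $\mu$.

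Finally, by uniqueness of the weakly decreasing Lyndon factorization, such a family corresponds bijectively to the composition $\alpha$ whose Lyndon factorization is the weakly lex-decreasing concatenation of $m_w$ copies of each $w$. Under this bijection, the letter-multiset condition becomes $\alpha \sim \mu$ and the length-multiset condition becomes $\type(\alpha) = \lambda$, yielding exactly $\#\{\alpha \sim \mu : \type(\alpha) = \lambda\}$. I expect the bulk of the real work to sit inside \cref{lem: mult-of-triv}; once that is in hand, the remaining steps are essentially bookkeeping, with the only mild subtlety being to correctly account for empty $\rho^{(w)}$ and degree-$0$ factors so that ``doing nothing'' at a given Lyndon word is a valid choice.
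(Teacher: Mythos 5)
Your proof is correct and follows essentially the same route as the paper: expand $[\mathbf{y}_\lambda\mathbf{z}_\mu]$ as a sum over families of partitions indexed by Lyndon words, use the multiplicativity of $\langle s_n,\cdot\rangle$ across products of homogeneous factors, invoke \cref{lem: mult-of-triv} to force each $\rho^{(w)}$ to be a column $1^{m_w}$, and biject the surviving data with compositions $\alpha \sim \mu$ of Lyndon type $\lambda$. The only cosmetic difference is that you derive the multiplicativity of the $s_n$-pairing via Frobenius reciprocity, while the paper cites the Littlewood--Richardson rule; both give the same identity.
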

\begin{proof} 
Observe that 
\begin{align*}
[\mathbf{y}_\lambda \mathbf{z}_\mu]\prod_{\substack{\text{Lyndon }\\ w}} \sum_{\substack{\text{partition}\\\rho}} \mathbf{y}_{\rho \cdot |w|}\mathbf{z}_{w^{|\rho|}}L_{\rho}[h_w] &= \sum_{\substack{(\rho^w)}}[\mathbf{y}_\lambda \mathbf{z}_\mu] \left(\prod_{w}\mathbf{y}_{\rho^w \cdot |w|}\mathbf{z}_{w^{|\rho^w|}}L_{\rho^w}[h_w]\right).
\end{align*} where the sum on the right side is over \textit{vectors} $(\rho^w)$ of partitions (one partition $\rho^w$ for each Lyndon word $w$) with only finitely many of the partitions nonempty. Extracting the coefficient of $\mathbf{y}_\lambda \mathbf{z}_\mu$ from each term, we simplify the above to 
\begin{align*}
\sum_{\substack{(\rho^w) \in S(\lambda, \mu)}}\prod_w L_{\rho^w}[h_w],
\end{align*}
where $S(\lambda, \mu)$ is the set of partition vectors $(\rho^w)$ for which 
\begin{itemize}
    \item the concatenation of the partitions $\rho^w\cdot|w|$ rearranges to $\lambda$, and 
    \item the concatenation of the partitions $w^{|\rho^w|}$ rearranges to $\mu.$
\end{itemize}
By decomposing each $f_i$ into Schur functions and applying the Littlewood--Richardson rule, one can show that if $f_1, f_2, \ldots, f_k$ are homogeneous symmetric functions of degree $m_1, m_2, \ldots, m_k$ and $n = \sum_{i = 1}^k m_i,$ then 
\[\left \langle s_n, \prod_{i = 1}^k f_i \right \rangle = \prod_{i = 1}^k \left \langle s_{m_i}, f_i\right \rangle.\]

Applying this fact, we have

\begin{align*}
     \sum_{(\rho^w) \in S(\lambda, \mu)} \left \langle s_n,  \prod_w L_{\rho^w}[h_w] \right \rangle= \sum_{(\rho^w) \in S(\lambda, \mu)}   \prod_w \left \langle s_{(|\rho^w|\cdot |w|)},L_{\rho^w}[h_w] \right \rangle.
    \end{align*}

    By \cref{lem: mult-of-triv}, for a fixed $(\rho^w) \in S(\lambda, \mu),$ the above is equivalent to 
    \begin{align*}
        \prod_w \left \langle s_{(|\rho^w|\cdot |w|)},L_{\rho^w}[h_w] \right \rangle = \begin{cases}
            1 &\text{if $\rho^w = 1^{|\rho^w|}$ for each $w,$}\\
            0 & \text{otherwise.}
        \end{cases}
    \end{align*}

    Hence, $\displaystyle \sum_{(\rho^w) \in S(\lambda, \mu)}   \prod_w \left \langle s_{(|\rho^w|\cdot |w|)},L_{\rho^w}[h_w] \right \rangle$ counts the number of ways to pick a vector $(m_w)$ of nonnegative integers (one integer $m_w$ for each Lyndon word $w$ with only finitely many $m_w \neq 0$) so that the vector $(1^{m_w}) \in S(\lambda, \mu).$

The set of such vectors is in bijection with the set $\{\alpha \sim \mu: \type(\alpha) = \lambda\}$ . Specifically, consider some vector $(m_w)$ of nonnegative integers satisfying the described conditions. The composition obtained by concatenating $m_w$ copies of each Lyndon word $w$ for which $m_w \neq 0$ (in lexicographically weakly decreasing order) is necessarily a composition in the set $\{\alpha \sim \mu: \type(\alpha) = \lambda\}.$ For example, if $m_{112} = 2,$ $m_{14} = 3,$ and $m_w = 0$ for all other $w,$ then the vector $(1^{m_w})$ is in $S(5^34^2, 4^32^21^5)$ and its corresponding composition $141414112112$ rearranges to $4^32^21^5$ and has Lyndon type $5^34^2.$ Since Lyndon factorization is unique and always exists, this map defines a bijection, completing the proof.
\end{proof}

\subsubsection{The rightmost column $(\mu = 1^n)$: Uyemura-Reyes's shuffling representations}\label{sec:Uyemura--Reyes}
    It is simple to check that the coefficient of $\mathbf{z}_{1^n}$ in \cref{eqn:ogf} is \[\displaystyle \sum_{\substack{ \lambda \vdash n}}\mathbf{y}_{\lambda}L_\lambda[h_1] = \sum_{\substack{\lambda \vdash n}}\mathbf{y}_{\lambda}L_\lambda.\] Hence, \cref{conj:typeA-comp-factors} reveals that 
    \begin{align} \label{eqn:face-algebra}
        [\left(\C \mathcal{F}_n E_{1^n}\right)^\nu: M_\lambda] = f^\nu \cdot \left \langle s_\nu, L_\lambda\right \rangle.
    \end{align}

    This turns out to recover a result of Uyemura--Reyes which was originally conjectured by R. Stanley. In particular, since $\Sigma_n$ is an elementary algebra, the (potentially non-distinct) {eigenvalues }%\footnote{For a very nice description of the eigenvalues and multiplicities for elements of elementary algebras, see Aguiar--Mahajan's description in \cite[Theorem D.38 and \S D.8]{Aguiar-Mahajan}.} 
     of an element $x \in \Sigma_n$ acting on the \textit{right} of the $S_n$-isotypic subspace $\left(\C S_n\right)^\nu$ {are} indexed by partitions $\lambda \vdash n.$ The eigenvalue associated to $\lambda$ is $\chi_{M_\lambda}(x),$ where $\chi_{M_\lambda}$ is the character associated to the simple $\Sigma_n$-representation $M_\lambda$ and has multiplicity $[\left(\C S_n\right)^\nu: M_\lambda];$ see \cite[Theorem D.38]{Aguiar-Mahajan}. In \cite[Theorem 4.1]{Uyemura-Reyes}, Uyemura--Reyes studied the $S_n-$representation-structure of the eigenspace associated to $\lambda$ for an element $x \in \Sigma_n$ acting semisimply on $\C S_n$. His work gives that the Frobenius image of the $\lambda$-eigenspace is $L_\lambda.$ Hence,
     \begin{align}\label{eqn:descent-algebra}[\left(\C S_n\right)^\nu: M_\lambda] = f^\nu \cdot \left \langle s_\nu, L_\lambda \right \rangle.\end{align}
    
    \cref{eqn:face-algebra} and \cref{eqn:descent-algebra} are actually equivalent. Specifically, it turns out out that $\C \mathcal{F}_n E_{1^n}$ is \textit{equal to} the $\C$-span of the \emph{chambers} (top-dimensional faces) of $\mathcal{B}_n.$ (This follows from the fact that $E_{1^n}$ can be written as the sum $\sum_{X \in 1^n}E_X$ and each $E_X$ is a sum of chambers by \cref{cor:supports-of-kb-idemps}(ii).) The symmetric group acts simply transitively on the chambers which provides an $S_n$-isomorphism between the chamber space and the group algebra. Under this $S_n$-isomorphism, acting by $x$ on the right of $\C S_n$ is equivalent to acting on the left of the chamber space by $\Phi^{-1}(x)$ (see, for instance, \cite[Theorem 8]{BrownonLRBs}).

In \cite[Theorem 4.2]{reiner2014spectra}, Reiner--Saliola--Welker generalize the result of Uyemura--Reyes to all Coxeter groups in terms of the (twisted) cohomology of lower intervals in the lattice of intersections. Their result recovers the case $\mu = 1^n$ in \cref{cor:induced-cohomology} and more generally $[K] = [\emptyset]$ in \cref{gen-of-cor:induced-cohomology}. 

\subsection{More explicit answer for the sign isotypic subspace}
Although \cref{conj:typeA-comp-factors} is as complete of an answer as we can provide for most isotypic subspaces, we are able to provide a more explicit answer for the sign isotypic subspace.
\begin{prop}\label{prop:sign-iso}
As $\Sigma_n$-modules, the sign isotypic subspace $\left(\C \mathcal{F}_n\right)^{1^n}$ of the face algebra is isomorphic to $M_\lambda$ where $\lambda = \left(2^{\frac{n}{2}}\right)$ if $n$ is even and $\lambda = \left(2^{\frac{n - 1}{2}}, 1\right)$ if $n$ is odd.
\end{prop}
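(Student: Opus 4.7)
The plan is as follows. First, I would show $(\C\mathcal{F}_n)^{1^n}$ is one-dimensional, so because $\Sigma_n$ is elementary (all its simple modules are one-dimensional), this isotypic subspace is itself a simple $\Sigma_n$-module, i.e., $M_\lambda$ for a unique $\lambda \vdash n$. By \cref{prop:decomp-isos}, $\dim (\C\mathcal{F}_n E_\mu)^{1^n} = f^{1^n} \cdot \#\{\alpha \vDash n : \alpha \sim \mu\} \cdot K_{1^n, \mu}$; since the single column of a semistandard tableau of shape $(1^n)$ must be strictly increasing, $K_{1^n, \mu} = \delta_{\mu, 1^n}$. Thus $(\C\mathcal{F}_n)^{1^n} = (\C\mathcal{F}_n E_{1^n})^{1^n}$ has dimension $1$, and by \cref{eqn:face-algebra} the problem reduces to finding the unique $\lambda$ with $\langle s_{1^n}, L_\lambda\rangle = 1$ (uniqueness follows from \cref{eqn:higher-Lies-decompose-h1n} combined with $\langle s_{1^n}, h_{1^n}\rangle = K_{1^n, 1^n} = 1$).

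Write $\lambda = 1^{m_1} 2^{m_2} \cdots k^{m_k}$. Since the Littlewood--Richardson coefficient $c_{\mu\nu}^{1^n}$ equals $1$ when $\mu = 1^a, \nu = 1^b$ and $0$ otherwise, the factorization $L_\lambda = \prod_i h_{m_i}[L_i]$ yields
\[
\langle s_{1^n}, L_\lambda\rangle \;=\; \prod_i \langle s_{1^{m_i \cdot i}}, h_{m_i}[L_i]\rangle.
\]
For $i = 1$, the factor is $\langle s_{1^{m_1}}, h_{m_1}\rangle = K_{1^{m_1}, m_1}$, which vanishes unless $m_1 \in \{0, 1\}$. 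For $i = 2$, expanding $s_{1,1} = \tfrac{1}{2}(p_1^2 - p_2)$ and applying $\omega p_k = (-1)^{k-1} p_k$ term by term gives $\omega(h_m[s_{1,1}]) = h_m[h_2]$, so by Frobenius reciprocity applied to the transitive coset representation $\mathbb{1} \uparrow_{S_m[S_2]}^{S_{2m}}$,
\[
\langle s_{1^{2m}}, h_m[s_{1,1}]\rangle \;=\; \langle s_{2m}, h_m[h_2]\rangle \;=\; 1.
\]

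The main obstacle is showing the factor vanishes for $i \geq 3$. Using the Klyachko-type description $\mathscr{L}_i \cong \zeta \uparrow_{C_i}^{S_i}$ for a faithful character $\zeta$ of the cyclic subgroup $C_i \subseteq S_i$, \cref{conj:induction-wreath-prods} together with transitivity of induction yields $h_m[L_i] = \ch\bigl(\mathbb{1}[\zeta]\uparrow_{S_m[C_i]}^{S_{mi}}\bigr)$. Frobenius reciprocity then rewrites the factor as $\langle \sgn_{S_{mi}}|_{S_m[C_i]}, \mathbb{1}[\zeta]\rangle$. Since $\mathbb{1}[\zeta]$ sends $(g_1, \ldots, g_m; \sigma)$ to $\prod_j \zeta(g_j)$ and $\sgn_{S_{mi}}$ is multiplicative under the decomposition $(g;\sigma) = (g; 1)(1; \sigma)$, the character sum factors, and the $g$-portion equals $\bigl(\sum_{g \in C_i} \sgn_{S_i}(g)\overline{\zeta(g)}\bigr)^m = \bigl(|C_i| \langle \sgn|_{C_i}, \zeta\rangle_{C_i}\bigr)^m$. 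Since $\zeta$ is primitive of order $i$ while $\sgn|_{C_i}$ has order dividing $2$, these characters agree only when $i \leq 2$; thus the expression vanishes for $i \geq 3$.

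Combining these constraints forces $m_i = 0$ for $i \geq 3$ and $m_1 \in \{0, 1\}$, leaving a unique partition of $n$: $\lambda = (2^{n/2})$ when $n$ is even and $\lambda = (2^{(n-1)/2}, 1)$ when $n$ is odd.
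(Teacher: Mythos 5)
Your proof is correct, and it takes a genuinely different route from the paper's. The first half --- the dimension count via Proposition~\ref{prop:decomp-isos}, reducing to finding the unique $\lambda$ with $\langle s_{1^n}, L_\lambda\rangle = 1$, and the uniqueness argument via $\sum_\lambda L_\lambda = h_{1^n}$ --- matches the paper. Where you diverge is in identifying which $\lambda$ survives: the paper simply cites Gessel--Reutenauer's theorem (that $\langle L_\lambda, s_{1^n}\rangle$ counts permutations with cycle type $\lambda$ and full descent set, forcing $\lambda$ to be the cycle type of $w_0$), whereas you compute the inner product directly. Your factorization $\langle s_{1^n}, L_\lambda\rangle = \prod_i \langle s_{1^{m_i i}}, h_{m_i}[L_i]\rangle$ (a consequence of $c^{1^n}_{\mu\nu}$ being supported on single-column pairs) is the single-column analogue of the $s_n$ version used in the paper's proof of Proposition~\ref{prop:recovering-GR}, and your vanishing argument for $i \geq 3$ via Klyachko's $\mathscr{L}_i \cong \zeta\uparrow_{C_i}^{S_i}$, Lemma~\ref{conj:induction-wreath-prods}, and the character computation on $S_m[C_i]$ is correct: the sum $\sum_{g\in C_i}\sgn_{S_i}(g)\overline{\zeta(g)}$ is $i$ times an inner product of two distinct irreducible characters of $C_i$ (since $\zeta$ has order $i\geq 3$ while $\sgn|_{C_i}$ has order at most $2$), hence zero. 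The trade-off: the paper's proof is shorter but leans on Gessel--Reutenauer as a black box; yours is longer but more self-contained and makes transparent exactly why only parts of size at most two can appear. (One small observation: given your uniqueness argument, the explicit $\omega$-duality verification that the $i=2$ factor equals $1$ is a pleasant sanity check but not logically required --- the vanishing for $i\geq 3$ plus $m_1 \leq 1$ already pins down the unique surviving $\lambda$.)
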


\begin{proof}
By \cref{prop:decomp-isos} the dimension of $\left(\C \mathcal{F}_nE_\mu\right)^{1^n}$ is $\# \{\alpha \vDash n : \alpha \sim \mu \} \cdot K_{1^n, \mu},$ which is one if $\mu = 1^n$ and $0$ otherwise. Thus, $\left(\C \mathcal{F}_n\right)^{1^n} = \left(\C \mathcal{F}_nE_{1^n}\right)^{1^n}$ and is one dimensional. To determine the $\Sigma_n$-isomorphism type of $\left(\C \mathcal{F}_n\right)^{1^n} = \left(\C \mathcal{F}_nE_{1^n}\right)^{1^n},$ it suffices to find the unique $\lambda \vdash n$ for which $\left[ \left(\C \mathcal{F}_n E_{1^n}\right)^{1^n}: M_\lambda \right]$ is nonzero. By \cref{eqn:face-algebra},\[\left[ \left(\C \mathcal{F}_n E_{1^n}\right)^{1^n}: M_\lambda \right] = \left \langle s_{1^n}, L_\lambda \right \rangle.\]

A special case of a beautiful result of Gessel--Reutenauer (see \cite[Theorem 2.1]{gesselreutenauer}) explains that $\langle L_\lambda, s_{1^n}\rangle$ counts permutations in $S_n$ with cycle type $\lambda$ and descent set $\{1, 2, \ldots, n - 1\}.$ Hence, the scalar product is zero except for when $\lambda$ is the cycle type of the longest word of $S_n$, which is well known to be $\left(2^{\frac{n}{2}}\right)$ for $n$ even and $\left(2^{\frac{n - 1}{2}}, 1\right)$ for $n$ odd.

\end{proof}

\section{Proof of the main theorem}\label{sec:proof}

    \subsection{Outline of proof}\label{sec:outline-proof}
        The proof of \cref{conj:typeA-comp-factors} is quite long. There are also portions which may be of interest in their own right, involving the homology of intervals in the set partition lattice and the interpretation of $L_n$ in terms of necklaces. In this section, we outline the main ideas in an attempt to (1) make our involved proof more digestible and (2) guide the curious reader towards these ingredients of potential interest. All gaps in this proof outline will be filled in Sections \ref{sec:poset-top}, \ref{sec:base-case}, \ref{sec:general-case}, and \ref{subsec:final-step}.

\vskip.1in
\noindent
{\sf \bf{Step 1. Reduction to homology of intervals in the set partition lattice.}}  Let \emph{$\stab_{S_n}(X)$} denote the stabilizer subgroup of the set partition $X.$ We shall prove the following proposition.

\newtheorem*{prop:double-idemp-sapces}{Proposition \ref{prop:double-idemp-sapces}}
\begin{prop:double-idemp-sapces}
If $\mu$ does not refine $\lambda,$ then $E_\lambda \C \mathcal{F}_n E_\mu = 0.$ Otherwise, as $S_n$-representations,
    \begin{align*}
        E_{\lambda}\C \mathcal{F}_nE_{\mu} \cong \bigoplus_{ [X \leq Y]} E_Y \C \mathcal{F}_n E_X\Big\uparrow_{\stab_{S_n}(X) \cap \stab_{S_n}(Y)}^{S_n},
    \end{align*}
     where the direct sum is over $S_n$-orbits of pairs $X \leq Y$ in $\Pi_n$ with $X \in \mu, Y \in \lambda.$
\end{prop:double-idemp-sapces}
\cref{prop:double-idemp-sapces} reduces our proof to understanding the $\stab_{S_n}(X) \cap \stab_{S_n}(Y)$- representations $E_Y \C \mathcal{F}_n E_X.$ A twisting character appears when studying these representations. The set partition lattice $\Pi_n$ is ($S_n$-equivariantly) isomorphic to the  lattice of intersections of the hyperplanes of $\mathcal{B}_n$.  Define \emph{$\det(Y)$} be the $\pm 1$-valued $\stab_{S_n}(Y)$-character sending $\pi$ to its determinant when restricted to the subspace associated to $Y.$  For more details about this determinant character, see \cref{sec:det-char}.

Saliola explicitly proves the non-equivariant version of the following lemma in \cite[\S 10.2]{saliolafacealgebra}, and the  determinant twists making it equivariant appear implicitly in his work in \cite[Theorem 6.2]{saliolaquiverdescalgebra}. Aguiar and Mahajan also state it in a more general context in \cite[Proposition 14.44]{Aguiar-Mahajan}. We will include a proof for completeness. Let the \emph{length $\ell(X)$} of a set partition mean the number of blocks.

\newtheorem*{prop:change-coho}{Lemma \ref{prop:change-coho}}
\begin{prop:change-coho}
  Assume $X, Y \in \Pi_n$ with $X$ refining $Y$ and $\ell(X) - \ell(Y) = k.$ Then, as representations of the subgroup $\stab_{S_n}(X) \cap \stab_{S_n}(Y)$, 
    \[E_Y \C \mathcal{F}_n E_X \cong \Tilde{H}^{k - 2}\left(X, Y\right) \otimes \det(Y) \otimes \det(X),\] where $\Tilde{H}^{k - 2}\left(X, Y\right)$ is the reduced cohomology of the order complex of the open interval $(X, Y)$ in $\Pi_n.$  
\end{prop:change-coho}
Note that we shall explain our poset topology conventions in \cref{sec:poset-top}. Combining \cref{prop:double-idemp-sapces}, \cref{prop:change-coho}, and properties of dual representations, we prove the following reformulation. For a partition $\mu,$ let \emph{ $\ell(\mu)$} denote its \emph{length.}

\newtheorem*{cor:induced-cohomology}{\cref{cor:induced-cohomology}}

\begin{cor:induced-cohomology}
If $\mu$ does not refine $\lambda,$ then $E_\lambda \C \mathcal{F}_n E_\mu = 0.$ Otherwise, let $\ell(\mu) - \ell(\lambda) = k.$ Then, as $S_n$-representations,
    \begin{align*}
        E_{\lambda}\C \mathcal{F}_nE_{\mu} \cong \bigoplus_{ [X \leq Y]} \left(\Tilde{H}_{k - 2}\left(X, Y\right) \otimes \det(Y) \otimes \det(X)\right)\Big\uparrow_{\stab_{S_n}(X) \cap \stab_{S_n}(Y)}^{S_n}
    \end{align*}
     where the direct sum is over $S_n$-orbits of pairs $X \leq Y$ in $\Pi_n$ with $X \in \mu, Y \in \lambda$ and $\Tilde{H}_{k - 2}$ means the reduced homology.
\end{cor:induced-cohomology}

The proofs of \cref{prop:double-idemp-sapces}, \cref{prop:change-coho}, and \cref{cor:induced-cohomology} appear in \cref{sec:poset-top}.

\vskip.1in
\noindent
{\sf\bf{Step 2: Understand upper intervals}}.
The next key step in proving \cref{conj:typeA-comp-factors} is understanding $\ch \left(E_\lambda \C \mathcal{F}_n E_\mu\right)$ in the special case $\lambda = n.$ We do so by proving the following proposition. 

\begin{prop}\label{prop:base-case}
\begin{align*}
     \sum_{\mu \neq \emptyset}\mathbf{z}_\mu \cdot \ch \left(E_{|\mu|}\C \mathcal{F}_{|\mu|}E_\mu \right) = \sum_{\substack{\text{Lyndon}\\w}} \ \sum_{m \geq 1}\mathbf{z}_{w^m}\cdot L_m[h_w].
\end{align*}
\end{prop}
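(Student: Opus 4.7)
The plan is to apply \cref{cor:induced-cohomology} in the special case $\lambda=(n)$, identify the resulting top cohomology via poset topology, and then rearrange the generating function using Lyndon factorization.

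Fix $\mu \vdash n$ with $\ell:=\ell(\mu)$. Since $\hat 1 \in \Pi_n$ is $S_n$-fixed and corresponds to the diagonal line on which $S_n$ acts trivially, $\det(\hat 1)=\triv$ and $\stab_{S_n}(X)\cap\stab_{S_n}(\hat 1)=\stab_{S_n}(X)$ for any $X\in \mu$. Moreover the upper interval $(X,\hat 1)\subset\Pi_n$ is order-isomorphic to the proper part of the partition lattice $\Pi_\ell$ on the blocks of $X$. Thus \cref{cor:induced-cohomology} reduces the computation to
\[
E_{|\mu|}\,\C\mathcal{F}_{|\mu|}\,E_\mu \;\cong\; \bigl(\Tilde H_{\ell-3}(X,\hat 1)\otimes \det(X)\bigr)\uparrow_{\stab_{S_n}(X)}^{S_n}.
\]

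Writing $\mu=1^{m_1}2^{m_2}\cdots$, the stabilizer equals $G:=\prod_i S_{m_i}[S_i]$, and its action on the $\ell$ blocks of $X$ factors through the projection onto the Young subgroup $\prod_i S_{m_i}\hookrightarrow S_\ell$; the inner factors $\prod_i S_i^{m_i}$ act trivially on blocks. By the Stanley--Hanlon--Klyachko--Barcelo theorem recalled in \cref{subsec:wreathplethysmlielyndon}, the sign-twisted top cohomology $\Tilde H_{\ell-3}(X,\hat 1)\otimes \sgn_{S_\ell}$ realizes the higher Lie representation $\mathscr L_\ell$ of $S_\ell$. A direct check shows that $\det(X)$ restricted to $G$ equals the pullback of $\sgn_{S_\ell}$ along the block-action map (namely $\sgn_{S_{m_i}}$ on each outer $S_{m_i}$, and trivial on each inner $S_i$). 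Hence the two sign twists cancel, and $\Tilde H_{\ell-3}(X,\hat 1)\otimes \det(X)$ is the pullback of $\mathscr L_\ell$ to $G$.

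Next, inducing from $G$ to $S_n$ and applying \cref{conj:induction-wreath-prods} together with the wreath-product Frobenius identity $\ch(V[W]\uparrow_{S_m[S_k]}^{S_{mk}})=\ch(V)[\ch(W)]$ factor by factor, one obtains
\[
\ch\bigl(E_{|\mu|}\,\C\mathcal{F}_{|\mu|}\,E_\mu\bigr) \;=\; \sum_{(\nu^{(i)})}\Bigl\langle L_\ell,\,\prod_i s_{\nu^{(i)}}\Bigr\rangle \prod_i s_{\nu^{(i)}}[h_i],
\]
where the sum runs over tuples of partitions with $\nu^{(i)}\vdash m_i$. Summing $\mathbf z_\mu$ times this expression over $\mu\ne\emptyset$, the resulting generating function should collapse to $\sum_{\text{Lyndon }w}\sum_{m\ge 1}\mathbf z_{w^m}L_m[h_w]$ via the Chen--Fox--Lyndon unique factorization of words into weakly decreasing Lyndon words, combined with the description of $L_m$ by aperiodic necklaces on multi-letter alphabets.

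The main obstacle is this last combinatorial rearrangement: identifying the sum indexed by tuples $(\nu^{(i)})$ on the left with the sum indexed by pairs (Lyndon word, positive integer) on the right. My plan is to compare both sides coefficient-by-coefficient in $\mathbf z_\mu$, verifying that each produces the formula $\sum_{d\mid\gcd_j m_j} N_d\cdot L_d[h_{\mu/d}]$, where $\mu/d$ denotes the partition $1^{m_1/d}2^{m_2/d}\cdots$ and $N_d$ counts Lyndon words whose letter-multiset is $\mu/d$. The right-hand side yields this formula by grouping pairs $(w,m)$ with $m=d$ and $w^m\sim\mu$; the left-hand side requires computing the restriction of $L_\ell$ to the Young subgroup $\prod_i S_{m_i}$ and recognizing it through the classical necklace interpretation of the Lie character.
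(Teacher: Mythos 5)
Your reduction matches the paper's first half quite closely: you apply \cref{cor:induced-cohomology} with $\lambda=(n)$, observe that the $\stab_{S_n}(X)$-action on the upper interval $(X,\hat 1)$ factors through the Young subgroup $\prod_i S_{m_i}\hookrightarrow S_\ell$ of block permutations (which is the content of Sundaram's \cref{prop:sundaram-Ln}), identify $\det(X)$ with $\bigotimes_i \sgn_{S_{m_i}}[\mathbb{1}_{S_i}]$ as in \cref{lem:det(X_0/[n])}, and thereby recover \cref{lem:upper-interval-stabilizer-rep}. Your explicit formula
\[
\ch\bigl(E_{|\mu|}\C\mathcal{F}_{|\mu|}E_\mu\bigr)=\sum_{(\nu^{(i)})}\Bigl\langle L_\ell,\ \prod_i s_{\nu^{(i)}}\Bigr\rangle\ \prod_i s_{\nu^{(i)}}[h_i]
\]
is correct and equivalent to what one obtains via the paper's use of Wachs's generating-function lemma (\cref{lem:Wachs-plethysm-gen-fn}), so up to this point you are just rearranging the same content.

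The gap is in the final step, which you explicitly flag as a ``plan'' rather than an argument. You are right that the coefficient of $\mathbf{z}_\mu$ on the right side of the proposition equals $\sum_{d\mid\gcd_j m_j} N_d\, L_d[h_{\mu/d}]$, and right that proving the proposition reduces to showing your tuple-indexed sum equals this; but that identity is precisely the hard part and you have not established it. The paper does not attempt this $\mu$-by-$\mu$ comparison: instead, \cref{lem:Wachs-plethysm-gen-fn} packages the full generating function as $\sum_{r\ge1}L_r[z_1h_1+z_2h_2+\cdots]$, and then \cref{lem:necklace-bijection} is proved by constructing an explicit evaluation-preserving bijection $\Psi$ (\cref{prop:eval-preserve-bij}) between primitive necklaces on the alphabet $\mathcal{M}(z_1h_1+z_2h_2+\cdots)$ and pairs $(w,\,\text{primitive necklace on }\mathcal{M}(\mathbf{z}_w h_w))$ as $w$ ranges over Lyndon words, together with the Gessel--Reutenauer formula for $L_m$ in terms of primitive necklaces (\cref{lem:Lie-plethysm-in-terms-of-necklaces}). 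Chen--Fox--Lyndon factorization alone does not do this automatically: the delicate points are that primitivity of the parenthesized necklace must be deduced from the Lyndon property of $w$, and that the grouping must be independent of the choice of rotation. Your route would ultimately need to reconstruct exactly this bijection, just localized within each $\mathbf{z}_\mu$-coefficient rather than at the level of the whole generating function, so you have not avoided or shortened the key combinatorial argument; it is simply missing from your proof.
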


Indeed, if $\{X_\mu: \mu \vdash n\}$ are set partitions where $X_\mu$ has block sizes $\mu$, then by \cref{cor:induced-cohomology}, the left side of \cref{prop:base-case} can be rewritten as 
\begin{align} \label{eqn:base-case-eqn}
     \sum_{\mu \neq \emptyset} \mathbf{z}_\mu \cdot \ch \left({\Tilde{H}}_{\ell(\mu) - 1}\left(X_\mu, \hat{1}\right) \otimes \det\left(X_\mu\right)\Big \uparrow_{\stab_{S_{|\mu|}}\left(X_\mu\right)}^{S_{|\mu|}} \right) .
\end{align}Sundaram studied the homology of the partition lattice in great depth in \cite{SundaramAdvances}. In \cite[proof of Thm 1.4]{SundaramAdvances}, she studies the $\stab_{S_{|\mu|}}\left(X_\mu\right)$-representations ${\Tilde{H}}_{\ell(\mu) - 1}\left(X_\mu, \hat{1}\right).$ By adjusting her work with the appropriate determinant twists, we can prove the next lemma. (Note that plethysm of formal power series is explained in \cref{def:plethysm-formal-power-series}.)

\newtheorem*{lem:upper-intervals-plethysm}{Lemma \ref{lem:upper-intervals-plethysm}}

\begin{lem:upper-intervals-plethysm}
 \[\sum_{\mu \neq \emptyset} \mathbf{z}_\mu \cdot \ch \left(E_{|\mu|}\C \mathcal{F}_{|\mu|}E_\mu \right)  = \sum_{r \geq 1} L_r[z_1h_1 + z_2h_2  + \ldots].\]
\end{lem:upper-intervals-plethysm}

The symmetric functions $L_r$ can be written as a a generating function counting combinatorial objects called primitive necklaces. Using this interpretation, we construct a technical necklace bijection to prove the following lemma. 

\newtheorem*{lem:necklace-bijection}{\cref{lem:necklace-bijection}}

\begin{lem:necklace-bijection} 
    \[ \sum_{r \geq 1} L_r[z_1h_1 + z_2h_2  + \ldots] = \sum_{\substack{\text{Lyndon}\\w}} \ \sum_{m \geq 1}\mathbf{z}_{w^m}L_m[h_w].\]
\end{lem:necklace-bijection}

To complete the proof of \cref{prop:base-case} it suffices to provide proofs of \cref{lem:upper-intervals-plethysm} and \cref{lem:necklace-bijection}, which we do in \cref{sec:base-case}. 

\vskip.1in
\noindent
{\sf\bf{Step 3: Use upper intervals to understand general intervals}}:
By identifying the intersections of the subgroups $\stab_{S_n}(X) \cap \stab_{S_n}(Y)$ (\cref{lem:normalizer-intersections}), we recast their action on intervals of the partition lattice  as the action of (wreath) products of smaller subgroups on products of smaller partition lattices (with appropriate determinant twists). Using Sundaram's work in \cite{SundaramJerusalem}, we can express the twisted homology representations $\Tilde{H}_{k - 2}(X, Y) \otimes \det(X) \otimes \det(Y)$ in terms of the upper intervals from step 2 (\cref{prop:homology-of-intervals.}).  All of these arguments appear in \cref{sec:general-case}.

\vskip.1in

\noindent{\sf\bf{Step 4: Proof of main theorem}}:
We use step 3 to obtain the Frobenius characteristic of $E_\lambda \C \mathcal{F}_n E_\mu$ in \cref{prop:reduction-to-lambda=n}. With this result, we complete the proof of our main theorem (\cref{conj:typeA-comp-factors}) using a generating function argument. These two arguments appear in \cref{subsec:final-step}.

    \subsection{Reduction to homology of intervals in $\Pi_n$}\label{sec:poset-top}
        In this subsection we fill in the gaps to step 1 of the proof outline in \cref{sec:outline-proof}. We prove \cref{prop:double-idemp-sapces} in \cref{subsubsec:prop5.2proof}. We then give background on the determinant character in \cref{sec:det-char}, on a special map defined by Saliola in \cref{sec:Saliola'smap}, and on poset topology in \cref{sec:poset-top}, so that we can prove \cref{prop:change-coho} and \cref{cor:induced-cohomology} in \cref{sec:precise-conversion}.

\subsubsection{Proof of \cref{prop:double-idemp-sapces}} \label{subsubsec:prop5.2proof}

\begin{prop}\label{prop:double-idemp-sapces}
    If $\mu$ does not refine $\lambda,$ then $E_\lambda \C \mathcal{F}_n E_\mu = 0.$ Otherwise, as $S_n$-representations,
    \begin{align*}
        E_{\lambda}\C \mathcal{F}_nE_{\mu} \cong \bigoplus_{ [X \leq Y]} E_Y \C \mathcal{F}_n E_X\Big\uparrow_{\stab_{S_n}(X) \cap \stab_{S_n}(Y)}^{S_n}
    \end{align*}
     where the direct sum is over $S_n$-orbits of pairs $X \leq Y$ in $\Pi_n$ with $X \in \mu, Y \in \lambda.$
\end{prop}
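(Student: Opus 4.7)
The plan is to decompose $E_\lambda \C \mathcal{F}_n E_\mu$ using the refinements $E_\lambda = \sum_{Y \in \lambda} E_Y$ and $E_\mu = \sum_{X \in \mu} E_X$ (which hold by \cref{lem:invariant-idems-are-sums-of-idems}(ii)), and then to organize the resulting pieces by the $S_n$-orbits on pairs $(X,Y)$.

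First, expand
\[
E_\lambda \C \mathcal{F}_n E_\mu = \sum_{Y \in \lambda} \sum_{X \in \mu} E_Y \C \mathcal{F}_n E_X.
\]
By \cref{cor:supports-of-kb-idemps}(iii), any summand with $Y \ngeq X$ vanishes. If $\mu$ does not refine $\lambda$, then no pair $(X,Y)$ with $X \in \mu$, $Y \in \lambda$ satisfies $X \leq Y$ in $\Pi_n$, so every summand vanishes and $E_\lambda \C \mathcal{F}_n E_\mu = 0$. Otherwise, we are reduced to the sum
\[
E_\lambda \C \mathcal{F}_n E_\mu = \sum_{\substack{X \in \mu,\ Y \in \lambda \\ X \leq Y}} E_Y \C \mathcal{F}_n E_X.
\]

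Next I would check that this sum is direct. Suppose $\sum v_{(X,Y)} = 0$ with $v_{(X,Y)} \in E_Y \C \mathcal{F}_n E_X$. For any fixed pair $(X_0, Y_0)$, multiplying on the left by $E_{Y_0}$ and on the right by $E_{X_0}$ kills every term except $v_{(X_0,Y_0)}$ (by orthogonality of the $E_Y$'s and of the $E_X$'s and the idempotent property $E_Y v_{(X,Y)} E_X = v_{(X,Y)}$), yielding $v_{(X_0,Y_0)} = 0$. Hence the decomposition
\[
E_\lambda \C \mathcal{F}_n E_\mu = \bigoplus_{\substack{X \in \mu,\ Y \in \lambda \\ X \leq Y}} E_Y \C \mathcal{F}_n E_X
\]
holds as a direct sum of vector spaces.

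Finally I would install the $S_n$-action. By our choice of cfpoi $\{E_X\}$ from \cref{lem:invariant-idems-are-sums-of-idems}(i), $\pi(E_X) = E_{\pi(X)}$ for all $\pi \in S_n$, so $\pi$ carries $E_Y \C \mathcal{F}_n E_X$ isomorphically onto $E_{\pi(Y)} \C \mathcal{F}_n E_{\pi(X)}$. Thus $S_n$ permutes the summands according to its diagonal action on pairs $(X,Y)$ with $X \in \mu$, $Y \in \lambda$, $X \leq Y$. Fixing a set of orbit representatives $(X,Y)$ and grouping the summands orbit-by-orbit, each orbit block
\[
\bigoplus_{(X',Y') \in S_n \cdot (X,Y)} E_{Y'} \C \mathcal{F}_n E_{X'}
\]
is an $S_n$-subrepresentation whose summands are permuted simply transitively by cosets of $\stab_{S_n}(X) \cap \stab_{S_n}(Y)$ (the common stabilizer of the pair), and on which this stabilizer acts on the distinguished summand $E_Y \C \mathcal{F}_n E_X$. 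This is precisely the standard description of an induced representation, giving
\[
\bigoplus_{(X',Y') \in S_n \cdot (X,Y)} E_{Y'} \C \mathcal{F}_n E_{X'} \cong E_Y \C \mathcal{F}_n E_X \Big\uparrow_{\stab_{S_n}(X) \cap \stab_{S_n}(Y)}^{S_n}.
\]
Summing over orbits yields the claimed decomposition. No step is especially hard; the one thing that must be handled with care is invoking the $S_n$-equivariance of $\{E_X\}$, which is exactly why we fixed a cfpoi satisfying the conclusion of \cref{lem:invariant-idems-are-sums-of-idems} at the end of \cref{sec:rep-theory-descent-algebra}.
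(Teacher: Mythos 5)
Your proof is correct and follows essentially the same route as the paper: expand $E_\lambda \C\mathcal{F}_n E_\mu$ via the refinements $E_\lambda = \sum_{Y\in\lambda}E_Y$ and $E_\mu = \sum_{X\in\mu}E_X$, kill off summands with $X \nleq Y$ using \cref{cor:supports-of-kb-idemps}(iii), observe that $S_n$-equivariance of the cfpoi makes $S_n$ permute the remaining summands according to its action on pairs $(X,Y)$, and identify each orbit block with an induced representation. The only difference is cosmetic — you spell out explicitly why the sum is direct via left/right multiplication by $E_{Y_0}$, $E_{X_0}$, where the paper simply cites orthogonality — so this is not a genuinely different argument, just a slightly more detailed write-up of the same one.
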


\begin{proof}
    Since $E_\lambda = \sum_{Y \in \lambda} E_Y$ and $E_\mu = \sum_{X \in \mu}E_X,$ and the idempotents $\{E_X: X \in \Pi_n\}$ are orthogonal, there is a vector space decomposition 
    \begin{align*}
        E_\lambda \C \mathcal{F}_n E_\mu = \bigoplus_{\substack{Y \in \lambda, \\ X \in \mu}} E_Y \C \mathcal{F}_n E_X. 
    \end{align*} By \cref{cor:supports-of-kb-idemps}(iii), the spaces $E_Y \C \mathcal{F}_n E_X$ are zero unless $X \leq Y.$ Hence,
    \begin{align*}
        E_\lambda \C \mathcal{F}_n E_\mu = \bigoplus_{X \leq Y} E_Y \C \mathcal{F}_n E_X, 
    \end{align*}
    where the direct sum is over pairs $X \leq Y$ in $\Pi_n$ with $X \in \mu$, $Y \in \lambda.$ Observe that $\pi\left(E_Y \C \mathcal{F}_n E_X\right) = E_{\pi(Y)} \C \mathcal{F}_n E_{\pi(X)}$ for $\pi \in S_n,$ so the direct summands above are permuted by $S_n.$ The $S_n$-stabilizer of the summand $E_Y \C \mathcal{F}_nE_X$ is $\stab_{S_n}(X) \cap \stab_{S_n}(Y).$ The proposition then follows from standard theory of induced representations (see, for instance, \cite[Proposition 4.3.2]{webbrepntheory}).
\end{proof}

\subsubsection{The determinant character} \label{sec:det-char}
Each set partition $X = \{X_1, X_2, \ldots, X_k\}$ of $[n]$ into $k$ parts is associated to a $k$-dimensional subspace of $\mathbb{R}^n.$ In particular, letting $e_1, e_2, \ldots, e_n$ be the standard basis vectors for $\mathbb{R}^n$, $X$ is associated to the space spanned by the vectors $v_1, v_2, \ldots, v_k,$ where \[v_i:= \sum_{j \in X_i}e_j.\]

An \emph{orientation} of a subspace $V$ of $\mathbb{R}^n$ is some choice of an ordered basis  $\mathcal{V}$ for $V.$ Given this choice, an ordered basis $\mathcal{V}'$ of $V$ is said to be \emph{positive} if the change of basis matrix between $\mathcal{V}$ and $\mathcal{V'}$  has positive determinant and \emph{negative} otherwise.

A permutation $\pi \in S_n$ maps an ordered basis of the subspace associated to set partition $X$ to an ordered basis of the subspace associated to $\pi X.$ If one fixes an orientation on each subspace of $\mathbb{R}^n$ associated to a set partition of $\Pi_n,$ define \emph{$\sigma_X(\pi)$} to be $+1$ if $\pi$ maps a positively oriented basis of the space associated to $X$ to a positively oriented basis of the space associated to $\pi X$ and $-1$ otherwise.

\textit{Crucially}, note that when $\pi \in \stab_{S_n}(X),$ then $\sigma_X(\pi)$ does \textit{not} depend on the choice of orientation. In this situation, $\sigma_X(\pi) = \det(X)(\pi).$ (Recall that the \emph{determinant character }\emph{$\det(X)$} is the $\pm 1$-valued $\stab_{S_n}(X)$ character sending $\pi$ to the determinant of its action on the subspace associated to $X$). We provide an example of $\det(X)$ below.

\begin{example}
    Let $X = \{15, 26, 37, 48\}.$ One choice of orientation for the subspace of $\mathbb{R}^8$ associated to $X$ is $(v_1, v_2, v_3, v_4)$ with $v_i = e_i + e_{i + 4}.$ Using cycle notation, the permutation $\pi = (1, 5)(3, 8)(4, 7)(2, 6)$ lives in $\stab_{S_n}(X)$ and (expressed with respect to  the chosen orientation) has matrix $\left[ \begin{smallmatrix}
       1 & 0 & 0 & 0\\
       0 & 1 & 0 & 0\\
       0 & 0 & 0 & 1 \\
       0 & 0 & 1 & 0 
    \end{smallmatrix}\right].$ Since this matrix has determinant $-1$, $\det(X)(\pi) = -1,$ even though $\sgn(\pi) = \det\left(\mathbb{R}^8\right)(\pi) = +1.$
\end{example}

\subsubsection{Saliola's map}\label{sec:Saliola'smap} We assume some basic familiarity with quivers and path algebras in this subsection; see \cite[\S 3]{saliolaquiverdescalgebra} for more details.

In \cite{saliolaquiverdescalgebra, saliolafacealgebra}, Saliola proved the remarkable fact that the \emph{quiver} \emph{$\mathcal{Q}_n$} of the face algebra  $\C \mathcal{F}_n$ \textit{is} the (directed) Hasse diagram of the set partition lattice $\Pi_n$. Namely, the vertices \emph{$\varepsilon_X$} of $\mathcal{Q}_n$ are indexed by set partitions $X$ in $\Pi_n$ and there is an arrow $\varepsilon_X \to \varepsilon_Y$ if and only if $Y$ covers $X$ in $\Pi_n.$ The \emph{path algebra $\C \mathcal{Q}_n$} has all paths in $\mathcal{Q}_n$ as a $\C$-basis; such paths correspond to unrefinable chains of $\Pi_n.$  Given choices of orientations on each subspace associated to a set partition, Saliola defines an action of $S_n$ on $\C \mathcal{Q}_n.$ In particular,
\begin{align}\label{eqn:quiver-action}
    \pi\left(\varepsilon_{X_1} \to \varepsilon_{X_2} \to \ldots \to \varepsilon_{X_k}\right) := \sigma_{X_1}(\pi)\sigma_{X_k}(\pi)\left(\varepsilon_{\pi\left(X_1\right)} \to \varepsilon_{\pi \left(X_2\right)} \to \ldots \to \varepsilon_{\pi \left(X_k\right)}\right).
\end{align}

 By the definition of multiplication in the path algebra, the $\C$-vector space $\varepsilon_Y \C \mathcal{Q}_n \varepsilon_X$ has as a $\C-$basis all paths $\varepsilon_X \to \ldots  \to \varepsilon_Y$ in $\mathcal{Q}_n,$ which correspond with unrefinable chains from $X$ to $Y$ in $\Pi_n.$ Hence, for $X \leq Y$ in $\Pi_n,$ the space $\varepsilon_Y \C \mathcal{Q}_n \varepsilon_X$ is closed under the action of $\stab_{S_n}(X) \cap \stab_{S_n}(Y).$ Importantly, since $\sigma_X(\pi)\sigma_Y(\pi) = \det(X)(\pi)\det(Y)(\pi)$ for $\pi \in \stab_{S_n}(X) \cap \stab_{S_n}(Y),$ the action of $\stab_{S_n}(X) \cap \stab_{S_n}(Y)$ on $\varepsilon_Y \C \mathcal{Q}_n \varepsilon_X$ is \textit{independent} of the choices of orientation.

In \cite[Theorem 6.2]{saliolaquiverdescalgebra}, Saliola defines an explicit, \textit{surjective} algebra homomorphism  \[\varphi: \C \mathcal{Q}_n \twoheadrightarrow \C \mathcal{F}_n.\]  We will not need $\varphi$'s precise definition for our purposes, but we will use two of its  properties, listed below (see \cite[Theorem 6.2]{saliolaquiverdescalgebra}). Recall from \cref{subsec:idemps} that $\{\widehat{E}_X: X \in \Pi_n\}$ are Saliola's explicit {idempotents}\footnote{Saliola's map can be adjusted to work for the general $\{E_X\}$ fixed at the end of \cref{subsec:idemps}. However, to avoid having to explain his proof, we circumvent this issue by sticking with his idempotents and connecting them back to the fixed system $\{E_X\}$ in \cref{lem:saliolas-idems-to-ours-for-hom}}for $\C\mathcal{F}_n. $
\begin{enumerate}
    \item[(i)] $\varphi\left(\varepsilon_X \right) = \widehat{E}_X.$ 
    \item[(ii)] $\varphi$ is $S_n$-equivariant with respect to the action in \cref{eqn:quiver-action}.
\end{enumerate}

Since $ \ \ker \ \varphi \vert_{\varepsilon_Y \C \mathcal{Q}_n \varepsilon_X} = \varepsilon_Y \left( \ker \ \varphi  \right)\varepsilon_X,$  there is an isomorphism of  $\stab_{S_n}(X) \cap \stab_{S_n}(Y)$- representations:
\begin{align}\label{quiver-double-idem-spaces} 
\varepsilon_Y \C \mathcal{Q}_n \varepsilon_X\  / \varepsilon_Y \left( \ker \ \varphi  \right)\varepsilon_X &\longrightarrow \widehat{E}_Y \C \mathcal{F}_n \widehat{E}_X \\
p + \varepsilon_Y \left(\ker \varphi\right) \varepsilon_X &\mapsto \varphi(p).\nonumber 
\end{align}

 In \cite[Theorem 6.2]{saliolaquiverdescalgebra}, Saliola proves the kernel of $\varphi$ is the two-sided ideal of $\C \mathcal{Q}_n$ generated by the sum of all the paths of length two -- we shall call this special sum $s$. Write \emph{$\lessdot$} to denote the \emph{covering} relation in $\Pi_n.$

\begin{lemma}\label{lem:kernel}
   $\varepsilon_Y \left( \ker \ \varphi  \right)\varepsilon_X$ is the $\C$-span of the elements 
\begin{align*}
    x_C &:= \sum_{X_i \lessdot Z \lessdot X_{i + 1}}\varepsilon_{X} \to \varepsilon_{X_1} \to \ldots \to \varepsilon_{X_i} \to \color{blue}{\varepsilon_Z}\normalcolor \to \varepsilon_{X_{i + 1}} \to \ldots \to \varepsilon_{X_k} \to \varepsilon_{Y}
    \end{align*}
    as one varies over chains $C$ of the form \[C = X \lessdot X_1 \lessdot \ldots \lessdot X_i \color{blue}{<}\normalcolor  X_{i + 1} \lessdot \ldots \lessdot X_k \lessdot Y\] in $\Pi_n$ where $\ell(X_{i + 1}) - \ell(X_i) = 2.$
\end{lemma}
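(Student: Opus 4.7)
The plan is to unpack Saliola's description of $\ker \varphi$ as the two-sided ideal generated by $s$ and carry out the relevant multiplications in the path algebra explicitly. Since $\C\mathcal{Q}_n$ has the set of directed paths in $\mathcal{Q}_n$ as a $\C$-basis and its multiplication is concatenation of paths (zero if endpoints do not match), the two-sided ideal $(s)$ is spanned by products $p \cdot s \cdot q$ as $p, q$ range over individual paths (allowing each empty path $\varepsilon_V$). Consequently, $\varepsilon_Y (\ker \varphi) \varepsilon_X$ is spanned by the elements $\varepsilon_Y \cdot p \cdot s \cdot q \cdot \varepsilon_X$, and the entire content of the lemma is to compute each such product and recognize it as one of the advertised $x_C$.

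To perform the computation, observe that $\varepsilon_Y p s q \varepsilon_X$ vanishes unless $p$ ends at $Y$ and $q$ begins at $X$, so we may write $q$ as a specific unrefinable chain $X \lessdot X_1 \lessdot \cdots \lessdot X_i$ in $\Pi_n$ and $p$ as a specific unrefinable chain $X_{i+1} \lessdot X_{i+2} \lessdot \cdots \lessdot X_k \lessdot Y$. Expanding $s = \sum_{A \lessdot B \lessdot C}(\varepsilon_A \to \varepsilon_B \to \varepsilon_C)$, only those length-two summands whose source equals $X_i$ (the target of $q$) and whose target equals $X_{i+1}$ (the source of $p$) survive the product. This forces the interval $[X_i, X_{i+1}]$ to have rank two in $\Pi_n$, and the surviving contribution is precisely
\[
\sum_{X_i \lessdot B \lessdot X_{i+1}} \bigl(\varepsilon_X \to \varepsilon_{X_1} \to \cdots \to \varepsilon_{X_i} \to \varepsilon_B \to \varepsilon_{X_{i+1}} \to \cdots \to \varepsilon_{X_k} \to \varepsilon_Y\bigr),
\]
which is exactly $x_C$ for the chain $C$ having its rank-two gap at position $i$.

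Conversely, each $x_C$ arises this way: given the chain $C = X \lessdot X_1 \lessdot \cdots \lessdot X_i < X_{i+1} \lessdot \cdots \lessdot X_k \lessdot Y$, take $q$ to be the initial segment $X \lessdot \cdots \lessdot X_i$ and $p$ to be the final segment $X_{i+1} \lessdot \cdots \lessdot Y$; then $\varepsilon_Y p s q \varepsilon_X = x_C$, placing $x_C$ in $\varepsilon_Y (\ker \varphi) \varepsilon_X$. Combining the two directions gives the claimed spanning. The main obstacle is essentially bookkeeping: being careful about the convention for composition in $\C \mathcal{Q}_n$ and about the degenerate cases where $p$ or $q$ is an empty path (so that the rank-two gap of $C$ sits at one of the endpoints $X$ or $Y$), but these are handled uniformly by the same argument, so no substantive difficulty remains.
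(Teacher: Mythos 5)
Your proposal is correct and follows essentially the same route as the paper's own proof: both express an arbitrary element of $\varepsilon_Y(\ker\varphi)\varepsilon_X$ as a linear combination of products $p\cdot s\cdot q$ with $p,q$ individual paths, observe that each nonzero such product is exactly one of the $x_C$ (with the rank-two gap located at the junction of $q$ and $p$), and conversely recover each $x_C$ by taking $q$ to be the initial segment of $C$ up to $X_i$ and $p$ the terminal segment from $X_{i+1}$. Your explicit remark about the degenerate cases where $p$ or $q$ is a trivial path $\varepsilon_V$ is a minor but welcome clarification the paper leaves implicit.
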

\begin{proof}
    Observe that \begin{align*}
   \sum_{X_i \lessdot Z \lessdot X_{i + 1}}&\varepsilon_{X} \to \varepsilon_{X_1} \to \ldots \to \varepsilon_{X_i} \to \varepsilon_Z \to \varepsilon_{X_{i + 1}} \to \ldots \to \varepsilon_{X_k} \to \varepsilon_{Y}\\
    &= \left(\varepsilon_{X_{i + 1}} \to \ldots \to \varepsilon_{X_k} \to \varepsilon_{Y}\right)\cdot s\cdot \left(\varepsilon_{X} \to \varepsilon_{X_1} \to \ldots \to \varepsilon_{X_i}\right).
\end{align*}
 Hence, each $x_C$ is  in $\varepsilon_Y \ker \left(\mathcal{Q}_n \right) \varepsilon_X.$ Any element of $\varepsilon_Y \ker \left(\mathcal{Q}_n \right) \varepsilon_X$ can be written as \[\left(\sum_{y} c_{y} y\right) \cdot s \cdot \left(\sum_{x} c_x x\right)  = \sum_{x, y} c_xc_y (y\cdot s \cdot p),\] where the sums are over paths $x$ beginning at $\varepsilon_X$ and paths $y$ ending at $Y.$ The element $(y\cdot s \cdot p)$ is zero if the end of path $x$ and the start of path $y$ are not connected by a path of length two and otherwise is one of the $x_C,$ completing the proof.
\end{proof}

 If $X = Y,$ the restriction of $\ker \  \varphi$ to $\varepsilon_Y \C \mathcal{Q} \varepsilon_X$ is empty, so as $\stab_{S_n}(X)$-representations, $\widehat{E}_X \C \mathcal{F}_n \widehat{E}_X \cong \varepsilon_X \C \mathcal{Q} \varepsilon_X \cong \mathrm{span}_{\C}\{\varepsilon_X\},$ the trivial representation.
 
\subsubsection{Poset homology and cohomology}
We give a (very brief) recollection of poset homology and cohomology here. For more details, see \cite{WachsPosetTopology} as an excellent source. We will be particularly interested in the topology of \textit{open intervals} within posets. Throughout, assume that $X \leq Y$ in some ambient poset $P.$

 Assume first that $X < Y.$ An \emph{$i$-chain} in $(X, Y)$ is a sequence of elements $(p_1 < p_2 < \ldots < p_{i + 1}),$ with each $p_j$ in the open interval $(X, Y).$ Define \emph{$C_i(X, Y)$} to be the $\C$-vector space spanned by the $i$-chains of $(X, Y).$  Note that we allow $i = -1,$ so that $C_{-1}(X, Y) = \C,$ the one-dimensional $\C$-vector space spanned by the empty chain $().$ For all $i < -1$, $C_{i}(X, Y) = 0.$ For $i \geq -1,$ define the boundary map \emph{$\partial_i: C_i(X, Y) \longrightarrow C_{i - 1}(X, Y)$} to be the $\C$-linear map which maps an $i$-chain as
\begin{align*}
    (p_1 < p_2 < \ldots < p_{i + 1}) \mapsto \sum_{j = 1}^{i + 1}(-1)^j (p_1 < p_2 < \ldots < p_{j - 1}< p_{j + 1}< \ldots  < p_{i + 1}).
\end{align*}
 
 Now, for the case $X = Y,$ we set $C_{-2}(X, X) = \C$ and $C_{i}(X, X) = 0$ for all other $i.$ The map $\partial_{-2}$ is defined to be the zero map $\partial_{-2}:C_{-2}(X, Y) = \C \to C_{-3}(X, Y) = 0.$ 
 
 In both cases ($X = Y$ and $X < Y$), these boundary maps give rise to a chain complex 
\begin{align*}
 \ldots \xrightarrow{\partial_4} C_3(X, Y) \xrightarrow{\partial_3} C_2(X, Y)\xrightarrow{\partial_2} C_1(X, Y)\xrightarrow{\partial_1} C_0(X, Y) \xrightarrow{\partial_0} C_{-1} (X, Y)\xrightarrow{\partial_{-1}} C_{-2}(X, Y) \xrightarrow{\partial_{-2}} 0.
\end{align*}
The \emph{$i$-th reduced homology group of $(X, Y)$}, written \emph{$\Tilde{H}_i(X, Y)$} is defined to be \[\Tilde{H}_i(X, Y): = \ker(\partial_i) / \im(\partial_{i + 1}).\]

Note that this homology definition agrees with the ordinary reduced homology of the \textit{order complex} of the open interval $(X, Y)$ in $P.$

We now define the dual notion of poset cohomology. Assume first that $X < Y.$ For $i \geq -1,$ define the dual boundary map \emph{$\partial_i^\ast: C_i(X, Y) \longrightarrow C_{i + 1}(X, Y)$} to be the $\C$-linear map sending
\begin{align*}
    (p_1 < p_2 < \ldots < p_{i + 1}) \mapsto \sum_{j = 1}^{i + 2} \,\, \sum_{p_{j - 1} < p < p_j}(-1)^j (p_1 < \ldots < p_{j - 1} < p < p_j  < \ldots <  p_{i + 1}).
\end{align*}
If $X = Y,$ the map \emph{$\partial_{-2}^\ast$} is the zero map from $ C_{-2}(X, Y) = \C \to C_{-1}(X, Y) = 0.$ Again, in both cases, these maps form a (co-)chain complex
\begin{align*}
 \ldots \xleftarrow{\partial_3^\ast} C_3(X, Y) \xleftarrow{\partial_2^\ast} C_2(X, Y)\xleftarrow{\partial_1^\ast} C_1(X, Y)\xleftarrow{\partial_0^\ast} C_{0}(X, Y) \xleftarrow{\partial_{-1}^\ast} C_{-1}(X, Y) \xleftarrow{\partial_{-2}^\ast} C_{-2}(X, Y) \xleftarrow{} 0.
\end{align*}
For $i \geq -1,$ the \emph{$i$-th reduced cohomology group of $(X, Y)$}, written \emph{$\Tilde{H}^i(X, Y)$} is defined to be \[\Tilde{H}^i(X, Y): = \ker(\partial_i^\ast) / \im(\partial_{i - 1}^\ast).\]

If a poset $P$ has an absolute minimum element $\hat{0}$ and an absolute maximum element $\hat{1},$ we use the convention of writing \emph{$\Tilde{H}_i(P)$} and \emph{$\Tilde{H}^i(P)$} to denote $\Tilde{H}_i(\hat{0}, \hat{1})$ and $\Tilde{H}^i(\hat{0}, \hat{1}),$ respectively.

\begin{remark}\label{rem:top-cohomology}
    Assume $P$ is a graded poset and $X < Y$ in $P.$ Set $r -2$ as the length of the maximal chains of $(X, Y).$ Then, there is precisely one position in which any $(r - 3)$-chain in $C_{r - 3}(X, Y)$ can be refined. Thus, $\partial_{r - 3}^\ast$ sends any $(r - 3)$-chain $ p_1 \lessdot  p_2 \lessdot  \ldots \lessdot p_i \color{blue}{<}\normalcolor  p_{i + 1} \lessdot \ldots \lessdot   p_{r - 2}$ in $(X, Y)$ to 
    \begin{align*}
        (-1)^i \sum_{p_i \lessdot p \lessdot p_{i + 1}}p_1 \lessdot \ldots \lessdot p_i \lessdot \color{blue}{p} \normalcolor \lessdot p_{i + 1} \lessdot \ldots \lessdot p_{r-2}. 
    \end{align*}
    Hence, the $\C$-vector space $\im \left(\partial_{r - 3}^\ast\right)$ is the $\C$-span of elements of the above form as one varies over all $(r - 3)$-chains of $(X, Y).$
\end{remark}

\begin{remark} \label{rem:group-poset}
    Let $P$ be a finite poset. If a finite group $G$ acts on the elements of $P$ by poset automorphisms, then $\stab_G(X) \cap \stab_G(Y)$ acts on each chain group $C_i(X, Y)$ for any $X \leq Y$ in $P.$ If $X < Y,$ we assume here that $\stab_G(X) \cap \stab_G(Y)$ acts trivially on the empty chain spanning $C_{-1}(X, Y).$ Similarly, if $X = Y,$ we assume that $\stab_G(X)$ acts trivially on $C_{-2}(X, X) = \C.$ It can be checked that $G$ commutes with the maps $\partial_i$ and $\partial_i^\ast$. Therefore,  $\Tilde{H}_i\left(X, Y\right)$ and $\Tilde{H}^i\left(X, Y\right)$ are both $\stab_G(X) \cap \stab_G(Y)$-representations.  As Wachs explains in \cite[p31]{WachsPosetTopology}, they are \textit{dual} representations (as we are working over a field, namely $\C$).
\end{remark}

\subsubsection{Proofs of \cref{prop:change-coho} and \cref{cor:induced-cohomology}}\label{sec:precise-conversion}
To prove \cref{prop:change-coho}, we first need a lemma translating between the representations given by our fixed cfpoi of $\C \mathcal{F}_n$ and Saliola's.
\begin{lemma}\label{lem:saliolas-idems-to-ours-for-hom}
    As $\stab_{S_n}(X)\cap \stab_{S_n}(Y)$-representations, 
    \begin{align*}
        \widehat{E}_Y \C \mathcal{F}_n \widehat{E}_X \cong  E_Y \C \mathcal{F}_n E_X.
    \end{align*}
\end{lemma}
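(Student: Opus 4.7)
The plan is to use conjugation by the invertible $S_n$-invariant element that relates the two systems of idempotents. Recall from \cref{lem:invariant-idems-are-sums-of-idems}(iii) (the condition that was fixed at the end of \cref{subsec:idemps}) that there exists an invertible element $u \in \left(\C \mathcal{F}_n\right)^{S_n}$ such that $E_Z = u \widehat{E}_Z u^{-1}$ for every $Z \in \Pi_n$. In particular, both $u$ and $u^{-1}$ are fixed by every element of $S_n$.

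First I would define the map
\[\Psi: \widehat{E}_Y \C \mathcal{F}_n \widehat{E}_X \longrightarrow E_Y \C \mathcal{F}_n E_X, \qquad x \mapsto u x u^{-1}.\]
For $x = \widehat{E}_Y a \widehat{E}_X$ with $a \in \C \mathcal{F}_n$, one computes
\[u x u^{-1} = (u \widehat{E}_Y u^{-1})\,(u a u^{-1})\,(u \widehat{E}_X u^{-1}) = E_Y\,(u a u^{-1})\,E_X,\]
so $\Psi$ indeed lands in $E_Y \C \mathcal{F}_n E_X$. The inverse map $y \mapsto u^{-1} y u$ lands back in $\widehat{E}_Y \C \mathcal{F}_n \widehat{E}_X$ by the same computation, so $\Psi$ is a $\C$-linear isomorphism.

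Finally, to verify equivariance under $\stab_{S_n}(X) \cap \stab_{S_n}(Y)$, note that any $\pi$ in this subgroup fixes both $\widehat{E}_X, \widehat{E}_Y$ (by Saliola's property that $\pi(\widehat{E}_Z) = \widehat{E}_{\pi(Z)}$) and $E_X, E_Y$ (by property (i) of \cref{lem:invariant-idems-are-sums-of-idems}), so both spaces are preserved by its action. Since $u, u^{-1} \in \left(\C \mathcal{F}_n\right)^{S_n}$, for such $\pi$ we have
\[\Psi(\pi \cdot x) = u\,\pi(x)\,u^{-1} = \pi(u)\,\pi(x)\,\pi(u^{-1}) = \pi(u x u^{-1}) = \pi \cdot \Psi(x),\]
which gives the desired isomorphism of $\stab_{S_n}(X) \cap \stab_{S_n}(Y)$-representations. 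No real obstacle arises; the proof is essentially just the observation that the element conjugating one cfpoi to the other is $S_n$-invariant.
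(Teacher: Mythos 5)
Your proof is correct and follows essentially the same route as the paper: both invoke the invertible $S_n$-invariant element $u$ with $E_Z = u\widehat{E}_Zu^{-1}$ from Lemma \ref{lem:invariant-idems-are-sums-of-idems} and observe that conjugation by $u$ is an $S_n$-equivariant algebra automorphism restricting to the desired isomorphism. You have merely spelled out the verification that the map lands in the correct space and is equivariant, which the paper states more tersely.
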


\begin{proof}
   Recall from \cref{lem:invariant-idems-are-sums-of-idems} that there exists an invertible element $u \in \left(\C \mathcal{F}_n \right)^{S_n}$ for which $u\widehat{E}_Xu^{-1} = E_X$ and $u\widehat{E}_Yu^{-1} = E_Y.$ The $S_n$-equivariant conjugation automorphism on $\C \mathcal{F}_n$ mapping $m \mapsto umu^{-1}$ restricts to a $\stab_{S_n}(X) \cap \stab_{S_n}(Y)$-equivariant isomorphism $\widehat{E}_Y \C \mathcal{F}_n \widehat{E}_X \to E_Y \C \mathcal{F}_n E_X.$

%   Let $m \in \widehat{E}_Y \C \mathcal{F}_n \widehat{E}_X.$ We claim that $umu^{-1} \in E_Y \C \mathcal{F}_n E_X.$ Indeed, since $m = \widehat{E}_Ym\widehat{E}_X,$ 
 %  \begin{align*}
  %     umu^{-1} = u\widehat{E}_Ym\widehat{E}_Xu^{-1} = u\widehat{E}_Yu^{-1}umu^{-1}u\widehat{E}_Xu^{-1} = E_Y umu^{-1}E_X.
   %\end{align*}
%The map $m \mapsto umu^{-1}$ is a vector space isomorphism because the map sending $n \in E_Y\C \mathcal{F}_n E_X$ to $u^{-1}nu$ is an inverse map by the same reasoning. 
   
   %Since $u \in \left(\C \mathcal{F}_n \right)^{S_n},$ its inverse $u^{-1} \in \left(\C \mathcal{F}_n\right)^{S_n}$ too (see the proof of \cref{lem:invariant-idems-are-sums-of-idems}). So, for $\pi \in S_n,$ $\pi(umu^{-1}) = \pi(u)\pi(m) \pi(u^{-1}) = u\pi(m)u^{-1}.$ Hence, the map $m \mapsto umu^{-1}$ commutes with the action of $S_n$ (and thus $\stab_{S_n}(X) \cap \stab_{S_n}(Y)$). 
\end{proof}

We are now ready to prove \cref{prop:change-coho}.

\begin{lemma}\label{prop:change-coho}
    Let $X \leq Y \in \Pi_n$ with $\ell(X) - \ell(Y) = k.$ As representations of $\stab_{S_n}(X) \cap \stab_{S_n}(Y),$
    \[E_Y \C \mathcal{F}_n E_X \cong \Tilde{H}^{k - 2}\left(X, Y\right) \otimes \det(Y) \otimes \det(X).\]
\end{lemma}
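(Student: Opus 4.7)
The plan is to combine Saliola's quiver presentation with a direct identification of $\varepsilon_Y \C \mathcal{Q}_n \varepsilon_X$ as the top group of the cochain complex for the open interval $(X,Y)$ in $\Pi_n$. First I would use \cref{lem:saliolas-idems-to-ours-for-hom} to reduce to proving the claim for Saliola's original idempotents, so it suffices to establish the isomorphism
\[
\widehat{E}_Y \C \mathcal{F}_n \widehat{E}_X \cong \Tilde{H}^{k-2}(X,Y) \otimes \det(Y) \otimes \det(X)
\]
as $\stab_{S_n}(X) \cap \stab_{S_n}(Y)$-representations. Then I would invoke \cref{quiver-double-idem-spaces} to write $\widehat{E}_Y \C \mathcal{F}_n \widehat{E}_X \cong \varepsilon_Y \C \mathcal{Q}_n \varepsilon_X / \varepsilon_Y(\ker \varphi) \varepsilon_X$.

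Next I would set up a vector-space identification with the cochain complex. A basis of $\varepsilon_Y \C \mathcal{Q}_n \varepsilon_X$ consists of paths $\varepsilon_X \to \varepsilon_{X_1} \to \cdots \to \varepsilon_{X_{k-1}} \to \varepsilon_Y$, which correspond precisely to saturated chains from $X$ to $Y$, equivalently to $(k-2)$-chains in the open interval $(X,Y)$. This identifies $\varepsilon_Y \C \mathcal{Q}_n \varepsilon_X$ with $C_{k-2}(X,Y)$, which is the top nonzero piece of the cochain complex, so $\ker \partial^\ast_{k-2} = C_{k-2}(X,Y)$ and hence $\Tilde{H}^{k-2}(X,Y) = C_{k-2}(X,Y)/\im \partial^\ast_{k-3}$. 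The boundary case $X = Y$ (so $k=0$) works by convention: the interval is empty, $\varepsilon_X \C \mathcal{Q}_n \varepsilon_X = \C \varepsilon_X$, and $\Tilde{H}^{-2}(X,X) = \C$.

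Under this identification, I would use \cref{lem:kernel} and \cref{rem:top-cohomology} to show that $\varepsilon_Y(\ker \varphi) \varepsilon_X$ corresponds exactly to $\im \partial^\ast_{k-3}$. Concretely, the generators $x_C$ from \cref{lem:kernel} (indexed by chains with a single rank-two gap at position $i$) are, up to the global sign $(-1)^i$ in \cref{rem:top-cohomology}, precisely the images under $\partial^\ast_{k-3}$ of $(k-3)$-chains in $(X,Y)$. Taking the quotient then gives $\Tilde{H}^{k-2}(X,Y)$ as a vector space. I anticipate this step to be the main obstacle, as one must carefully match the combinatorics of Saliola's generators with the coboundary formula and verify nothing is missed (in particular, chains whose rank-two gap occurs adjacent to $X$ or $Y$ must be handled, and the signs must be tracked).

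Finally I would track the $\stab_{S_n}(X) \cap \stab_{S_n}(Y)$-equivariance. By \cref{eqn:quiver-action}, $\pi$ acts on a path from $\varepsilon_X$ to $\varepsilon_Y$ by the scalar $\sigma_X(\pi)\sigma_Y(\pi)$ times the permuted path, and for $\pi$ in the intersection of stabilizers the signs $\sigma_X(\pi)$ and $\sigma_Y(\pi)$ equal $\det(X)(\pi)$ and $\det(Y)(\pi)$. The permutation part of the action is exactly the induced action on $C_{k-2}(X,Y)$ from \cref{rem:group-poset}, which commutes with $\partial^\ast_{k-3}$ and so descends to $\Tilde{H}^{k-2}(X,Y)$. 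Pulling the sign twist out as a tensor factor yields the desired isomorphism
\[
\widehat{E}_Y \C \mathcal{F}_n \widehat{E}_X \cong \Tilde{H}^{k-2}(X,Y) \otimes \det(Y) \otimes \det(X),
\]
which combined with \cref{lem:saliolas-idems-to-ours-for-hom} completes the proof.
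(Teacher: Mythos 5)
Your proposal is correct and follows essentially the same route as the paper's proof: reducing to Saliola's idempotents via \cref{lem:saliolas-idems-to-ours-for-hom}, identifying $\varepsilon_Y \C \mathcal{Q}_n \varepsilon_X$ with $C_{k-2}(X,Y)$, matching $\varepsilon_Y(\ker\varphi)\varepsilon_X$ to $\im\partial^\ast_{k-3}$ via \cref{lem:kernel} and \cref{rem:top-cohomology}, and tracking the $\det(X)\otimes\det(Y)$ twist through \cref{eqn:quiver-action}. The step you flag as a potential obstacle is dispatched in the paper exactly as you anticipate, by citing those same two results.
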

\begin{proof}
By \cref{lem:saliolas-idems-to-ours-for-hom}, it suffices to prove that \[\widehat{E}_Y \C \mathcal{F}_n \widehat{E}_X \cong \Tilde{H}^{k - 2}\left(X, Y\right) \otimes \det(Y) \otimes \det(X).\]

First, assume that $X = Y$ so that $k = 0.$ As discussed at the end of \cref{sec:Saliola'smap},  $\widehat{E}_X \C \mathcal{F}_n \widehat{E}_X$ carries the structure of the trivial representation of $\stab_{S_n}(X).$ 
On the other hand, $\Tilde{H}^{-2}(X, X)$ carries the trivial representation of $\stab_{S_n}(X),$ as discussed in \cref{rem:group-poset}. Since $\det(X) \otimes \det(X)$ is also the trivial representation, $\Tilde{H}^{-2}\left(X, X\right) \otimes \det(X) \otimes \det(X)$ carries the trivial representation too, proving the lemma in this case.

Now assume that $X < Y,$ so that $k > 0.$ The maximal chains of $(X, Y)$ are of length $k - 2$ and in particular, $C_{k - 1}(X, Y) = 0.$ So, $\ker \left(\partial_{k - 2}^\ast\right) = C_{k - 2}(X, Y)$ is spanned by the unrefinable chains of the open interval $(X, Y).$  Hence, the map $\Psi$ sending the chain $p_1 \lessdot p_2 \lessdot \ldots \lessdot p_{k - 1}$ in $C_{k - 2}(X, Y)$ to the element $\varepsilon_X \to \varepsilon_{p_1} \to \varepsilon_{p_2}\to \ldots \to \varepsilon_{p_{k - 1}} \to \varepsilon_{Y} \in \C \mathcal{Q}_n$ is a vector space isomorphism from $\ker \left( \partial_{k - 2}^\ast \right)$ to $\varepsilon_Y \C \mathcal{Q}_n \varepsilon_X.$ By \cref{rem:top-cohomology} and \cref{lem:kernel}, $\Psi$ restricts to a vector space isomorphism  from $\im \left(\partial_{k - 3}^\ast \right)$ to $\varepsilon_Y \left(\ker \ \varphi \right) \varepsilon_X.$ 
     
By definition of the $S_n$-action on $\C \mathcal{Q}_n$ (\cref{eqn:quiver-action}), $\Psi$ is $\stab_{S_n}(X) \cap \stab_{S_n}(Y)$-equivariant with respect to a twist by $\det(X) \otimes \det(Y).$ Hence, as $\stab_{S_n}(X) \cap \stab_{S_n}(Y)$-representations, 
    \begin{align*}
        \left(\varepsilon_Y \C \mathcal{Q} \varepsilon_X \right)/\left(\varepsilon_Y \left(\ker \ \varphi \right) \varepsilon_X\right) &\cong \left(\ker\left(\partial_{k - 2}^\ast\right)\otimes \det(X) \otimes \det(Y)\right) / \left(\im\left(\partial_{k - 3}^\ast\right)\otimes \det(X) \otimes \det(Y)\right)\\
        &\cong  \left(\ker\left(\partial_{k - 2}^\ast\right) / \im\left(\partial_{k - 3}^\ast \right)\right)\otimes \det(X) \otimes \det(Y) \\
        &\cong \Tilde{H}^{k - 2}\left(X, Y\right) \otimes \det(X) \otimes \det(Y).
    \end{align*}
    The lemma then follows from equation \cref{quiver-double-idem-spaces}.
\end{proof}

\begin{cor}\label{cor:induced-cohomology}
   Let $\ell(\mu) - \ell(\lambda) = k.$ If $\mu$ does not refine $\lambda,$ then $E_\lambda \C \mathcal{F}_n E_\mu = 0.$ Otherwise, as $S_n$-representations,
    \begin{align*}
        E_{\lambda}\C \mathcal{F}_nE_{\mu} \cong \bigoplus_{ [X \leq Y]} \left(\Tilde{H}_{k - 2}\left(X, Y\right) \otimes \det(Y) \otimes \det(X)\right)\Big\uparrow_{\stab_{S_n}(X) \cap \stab_{S_n}(Y)}^{S_n},
    \end{align*}
     where the direct sum is over $S_n$-orbits of pairs $X \leq Y$ in $\Pi_n$ with $X \in \mu, Y \in \lambda.$
\end{cor}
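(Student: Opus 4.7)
The plan is to combine Proposition~\ref{prop:double-idemp-sapces} with Lemma~\ref{prop:change-coho} and then convert the resulting reduced cohomology into reduced homology via a duality argument.

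First I would dispose of the vanishing case: if $\mu$ does not refine $\lambda$, Proposition~\ref{prop:double-idemp-sapces} already gives $E_\lambda \C \mathcal{F}_n E_\mu = 0$. Otherwise, every $S_n$-orbit of pairs $X \leq Y$ with $X \in \mu$ and $Y \in \lambda$ satisfies $\ell(X) - \ell(Y) = \ell(\mu) - \ell(\lambda) = k$, so I may substitute the isomorphism of Lemma~\ref{prop:change-coho} into each summand of Proposition~\ref{prop:double-idemp-sapces} to obtain
\[
E_\lambda \C \mathcal{F}_n E_\mu \;\cong\; \bigoplus_{[X \leq Y]} \bigl(\Tilde{H}^{k-2}(X,Y) \otimes \det(X) \otimes \det(Y)\bigr) \Big\uparrow_{\stab_{S_n}(X) \cap \stab_{S_n}(Y)}^{S_n}.
\]

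The only remaining step is to upgrade the reduced cohomology appearing on the right to reduced homology, and this is where I expect the main (but still modest) work. My plan is to take $\C$-linear duals of both sides. The left-hand side $E_\lambda \C \mathcal{F}_n E_\mu$ is an $S_n$-representation, and since every character of $S_n$ is integer-valued, every such representation is isomorphic to its dual. On the right-hand side, taking duals commutes with direct sums and with induction from a subgroup of finite index, so it suffices to dualize a single summand. By Remark~\ref{rem:group-poset}, $\Tilde{H}^{k-2}(X,Y)$ and $\Tilde{H}_{k-2}(X,Y)$ are dual $\stab_{S_n}(X) \cap \stab_{S_n}(Y)$-representations, while the one-dimensional characters $\det(X)$ and $\det(Y)$ are $\pm 1$-valued and hence self-dual. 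Dualizing therefore sends each summand of the right-hand side to the corresponding summand of the formula claimed in the corollary.

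I do not anticipate any serious obstacle: the corollary is essentially a bookkeeping rewrite of Proposition~\ref{prop:double-idemp-sapces} and Lemma~\ref{prop:change-coho} combined with the standard fact that $S_n$-representations are self-dual. (As an alternative route, one could instead show directly that $\stab_{S_n}(X) \cap \stab_{S_n}(Y)$ is a product of wreath products of symmetric groups and is therefore ambivalent, so the summands themselves are already self-dual before inducing.)
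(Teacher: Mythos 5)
Your proposal is correct and is essentially the paper's own proof: both deduce the vanishing case and the direct-sum decomposition from Proposition~\ref{prop:double-idemp-sapces}, insert Lemma~\ref{prop:change-coho} to get the cohomological version, and then pass to homology by dualizing, using that duals commute with induction, that $\Tilde{H}^{k-2}(X,Y)$ and $\Tilde{H}_{k-2}(X,Y)$ are dual (Remark~\ref{rem:group-poset}), that $\det(X) \otimes \det(Y)$ is self-dual, and that all $S_n$-representations are self-dual. The alternative route you sketch at the end (ambivalence of $\stab_{S_n}(X) \cap \stab_{S_n}(Y)$, so the summands are self-dual before inducing) is a valid shortcut the paper does not use, but it yields the same conclusion.
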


\begin{proof}
 Let $G$ be a finite group and let $V$ and $W$ be representations of $G$ over $\C.$ Write $\ast$ to denote the dual of a representation.
% Since complex conjugation distributes over multiplication, there is an equality of characters for all $g \in G:$ \[\chi_{\left(V \otimes W\right)^\ast}(g) = \overline{\chi_{V \otimes W}(g)} = \overline{{\chi_{V}(g)\cdot \chi_W(g)}} = \overline{\chi_V(g)}\cdot \overline{\chi_W(g)} = {\chi_{V^\ast}}(g)\cdot  {\chi_{W^\ast}(g)} = \chi_{V^\ast \otimes W^\ast}(g).\] 
Since the vector space isomorphism $V^\ast \otimes W^\ast \to \left(V \otimes W\right)^\ast$ sending $f \otimes g$ to the functional $(v \otimes w \mapsto f(v)g(w))$ is $G$-equivariant, $\left(V \otimes W\right)^\ast \cong V^\ast \otimes W^\ast$ as $G$-representations.

Observe that the representation $\det(Y) \otimes \det(X)$ of $\stab_{S_n}(X) \cap \stab_{S_n}(Y)$ is self-dual, since it takes values in $\{+1, -1\}.$ Hence, as $\stab_{S_n}(X) \cap \stab_{S_n}(Y)$-representations,
\begin{align*}
    \left(\Tilde{H}_{k - 2}\left(X, Y\right) \otimes \det(Y) \otimes \det(X)\right)^{\ast} &\cong  \left(\Tilde{H}_{k - 2}\left(X, Y\right) \right)^\ast \otimes \left(\det(Y) \otimes \det(X)\right)^\ast\\
    &\cong \Tilde{H}^{k - 2}\left(X, Y\right) \otimes \det(Y) \otimes \det(X).
\end{align*}
Now, note that taking duals commutes with induction (see, for instance Exercise 4.6 of \cite{webbrepntheory}.) Thus, as $S_n$-representations,
\begin{align*}
    E_\lambda \C \mathcal{F}_n E_\mu &\cong \bigoplus_{ [X \leq Y]} \left(E_Y \C \mathcal{F}_n E_X\right)\Big\uparrow_{\stab_{S_n}(X) \cap \stab_{S_n}(Y)}^{S_n}\tag{\cref{prop:double-idemp-sapces}}\\
    &\cong\bigoplus_{ [X \leq Y]} \Tilde{H}^{k - 2}\left(X, Y\right) \otimes \det(Y) \otimes \det(X)\Big\uparrow_{\stab_{S_n}(X) \cap \stab_{S_n}(Y)}^{S_n} \tag{\cref{prop:change-coho}}\\
    &\cong\bigoplus_{ [X \leq Y]} \left(\Tilde{H}_{k - 2}\left(X, Y\right) \otimes \det(Y) \otimes \det(X)\right)^{\ast}\Big\uparrow_{\stab_{S_n}(X) \cap \stab_{S_n}(Y)}^{S_n} \\
    &\cong \bigoplus_{ [X \leq Y]}\left(\Tilde{H}_{k - 2}\left(X, Y\right) \otimes \det(Y) \otimes \det(X)\Big\uparrow_{\stab_{S_n}(X) \cap \stab_{S_n}(Y)}^{S_n}\right)^\ast\\
     &\cong \bigoplus_{ [X \leq Y]}\Tilde{H}_{k - 2}\left(X, Y\right) \otimes \det(Y) \otimes \det(X)\Big\uparrow_{\stab_{S_n}(X) \cap \stab_{S_n}(Y)}^{S_n},
\end{align*}
where the last line follows from the fact that all $S_n$-representations are self-dual (as they are in fact defined over the rational numbers).
\end{proof}

    \subsection{Upper intervals}\label{sec:base-case}
        In this section, we fill in the gaps to step 2 of the proof outline in \cref{sec:outline-proof}. In particular, we complete the proof of our ``base case'' (\cref{prop:base-case}) by proving  \cref{lem:upper-intervals-plethysm} in \cref{sec:reduction-of-base-case} and \cref{lem:necklace-bijection} in \cref{sec:necklace-bij}.

\subsubsection{Stabilizer representations}

We study the two representations involved in \cref{eqn:base-case-eqn}: the homology of upper intervals in the partition lattice and the determinant representation. The former has been studied by Sundaram in \cite{SundaramAdvances}. To explain her result, we first must define some notation.

Let $\lambda = 1^{m_1}2^{m_2}\ldots \vdash n$ be a partition. Given our view of $S_{m_i}[S_i]$ as a subgroup of $S_{m_i i}$ (see \cref{ex:wr-prod-1}), we consider the product of wreath products $\prod_{i}S_{m_i}[S_i]$ as a subgroup of $S_{m_1 + 2 m_2 + 3 m_3 + \ldots}$ in the expected manner. See the example below.

\begin{example}\label{ex:wreath-prod} Let $\lambda = (4^3, 2^2, 1).$ We consider $S_1 \times S_2[S_2] \times S_3[S_4]$ as a subgroup of $S_{17}$ generated by the following permutations
\begin{itemize}
    \item $i = 1, m_i = 1$: None (just the identity)
    \item $i = 2, m_i = 2:$ \color{red}{$(2, 3)$, $(4, 5)$}, \normalcolor and \color{blue}{$(2, 4)(3, 5)$}. \normalcolor
    \item $i = 4, m_i = 3:$ \color{red}{$(6, 7)$, $(7, 8)$, $(8, 9)$, $(10, 11)$, $(11, 12)$, $(12, 13)$, $(14, 15)$, $(15, 16)$, $(16, 17),$} \normalcolor as well as  \color{blue}{$(6,10)(7,11)(8,12)(9,13)$} \normalcolor and \color{blue}{$(10, 14)(11, 15)(12,16)(13,17).$}
\end{itemize}

See \cref{fig:stabx} for a pictorial representation of these generators.
    \begin{figure}
        \begin{center}
    \renewcommand{\arraystretch}{2}

\begin{NiceTabular}{c}[hvlines] $1$
\end{NiceTabular} \hspace{1cm}\begin{NiceTabular}{cc}[hvlines, create-medium-nodes]
   $2$   & $3$  \\
  $4$ &  $5$
  \CodeAfter
  \begin{tikzpicture} %[->, name suffix=-medium]
        \draw[red, very thick, <->] ([yshift=0.5mm]2-2.north west) .. controls +(-.1,.3) and +(.1,.3) ..  ([yshift=0.5mm]2-1.north east) ; 
        \draw[red, very thick, <->] ([yshift=0.5mm]1-2.north west) .. controls +(-.1,.3) and +(.1,.3) ..  ([yshift=0.5mm]1-1.north east) ; 
    \draw[blue, ultra thick, <->] ([xshift=.3cm]1-2.south east) .. controls +(.3,-.1) and +(.3, .1) ..  ([xshift=.3cm]2-2.north east) ; 
  \end{tikzpicture}
\end{NiceTabular}\hspace{1cm}
    \begin{NiceTabular}{cccc}[hvlines, create-medium-nodes]
   $6$   & $7$ & $8$ & $9$ \\
  $10$ &  $11$   & $12$   & $13$   \\
  $14$ & $15$   & $16$   & $17$ \CodeAfter
  \begin{tikzpicture} %[->, name suffix=-medium]
        \draw[red, very thick, <->] ([yshift=0.5mm]3-2.north west) .. controls +(-.1,.3) and +(.1,.3) ..  ([yshift=0.5mm]3-1.north east) ; 
    \draw[red, very thick, <->] ([yshift=0.5mm]3-4.north west) .. controls +(-.1,.3) and +(.1,.3) ..  ([yshift=0.5mm]3-3.north east) ; 
    \draw[red, very thick, <->] ([yshift=0.5mm]3-3.north west) .. controls +(-.1,.3) and +(.1,.3) ..  ([yshift=0.5mm]3-2.north east) ; 
        \draw[red, very thick, <->] ([yshift=0.5mm]1-2.north west) .. controls +(-.2,.3) and +(.2,.3) ..  ([yshift=0.5mm]1-1.north east) ; 
    \draw[red, very thick, <->] ([yshift=0.5mm]1-4.north west) .. controls +(-.2,.3) and +(.2,.3) ..  ([yshift=0.5mm]1-3.north east) ; 
    \draw[red, very thick, <->] ([yshift=0.5mm]1-3.north west) .. controls +(-.2,.3) and +(.2,.3) ..  ([yshift=0.5mm]1-2.north east) ; 
      \draw[red, very thick, <->] ([yshift=0.5mm]2-2.north west) .. controls +(-.1,.3) and +(.1,.3) ..  ([yshift=0.5mm]2-1.north east) ; 
    \draw[red, very thick, <->] ([yshift=0.5mm]2-4.north west) .. controls +(-.1,.3) and +(.1,.3) ..  ([yshift=0.5mm]2-3.north east) ; 
    \draw[red, very thick, <->] ([yshift=0.5mm]2-3.north west) .. controls +(-.1,.3) and +(.1,.3) ..  ([yshift=0.5mm]2-2.north east) ; 
    \draw[blue, ultra thick, <->] ([xshift=.4cm]1-4.south east) .. controls +(.3,-.2) and +(.3,.2) ..  ([xshift=.3cm]2-4.north east) ;
    \draw[blue, ultra thick, <->] ([xshift=.3cm]2-4.south east) .. controls +(.3,-.2) and +(.3,.2) ..  ([xshift=.3cm]3-4.north east) ;
  \end{tikzpicture}
\end{NiceTabular}
\end{center}
        \caption{$S_1 \times S_2[S_2] \times S_3[S_4]$ as a subgroup of $S_{17}$}
        \label{fig:stabx}
    \end{figure}
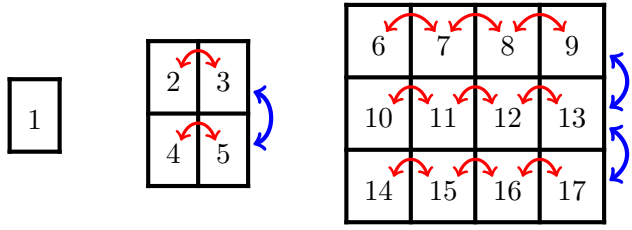
\end{example}

Given a partition $\lambda = 1^{m_1}2^{m_2}\ldots \vdash n,$ define an associated set partition \emph{$X_\lambda$} of $[n]$  with $\{1\},\{2\},\ldots, \{m_1\}$ as its blocks of size one, $\{m_1 + 1, m_1 + 2\},$ $\{m_1 + 3, m_1 + 4\}, \ldots, \{m_1 + 2m_2 - 1, m_1 + 2m_2\}$ as its blocks of size two, and so on. For example, \[X_{(4^3, 2^2, 1)} = \{ \{1\}, \{2, 3\}, \{4, 5\}, \{6, 7, 8, 9\}, \{10, 11, 12, 13\}, \{14, 15, 16, 17\}\}.\]  

Note that $\stab_{S_n}(X_\lambda) = \prod_i S_{m_i}[S_i].$ It is clear that the subgroup $\prod_i S_{m_i}[S_i]$ fixes the set partition $X_\lambda,$ and it is routine to check that this is the entire stabilizer subgroup. %To argue that this is the entire stabilizer subgroup, \vic{I think you can leave it to the reader to check that  assertion; maybe it's even asserted in Sheila's {\it Advances Math.} paper?} one can use the orbit stabilizer theorem (with $S_n$ acting on set partitions of $[n]$) to compute that \[\left \lvert\stab_{S_n}(X_\lambda)\right \rvert = \prod_i (i!)^{m_i}m_i!,\] which is the size of $\prod_i S_{m_i}[S_i].$ 

Let $W_1, W_2, W_3, \ldots$ be representations of $S_1, S_2, S_3 \ldots$ and let $V$ be a representation of $S_r.$ Assume $m_1 + m_2 + \ldots = r$ and that the restriction $V \big \downarrow^{S_r}_{\prod_{i}S_{m_i}}$ decomposes into $\prod_{i}S_{m_i}$-irreducibles as \[ \bigoplus_{(\lambda^1, \lambda ^2, \ldots)} \left(\chi^{\lambda^1} \otimes \chi^{\lambda^2} \otimes \ldots \right)^{\oplus c(\lambda^1, \lambda^2, \ldots)}.\]

Following the notation of Sundaram in \cite{SundaramAdvances}, define \emph{$V \Big \downarrow^{S_r}_{\prod_i S_{m_i}}[W_1 \otimes W_{2} \otimes \ldots]$} to be the following representation of $\prod_i S_{m_i}[S_i]:$\[\bigoplus_{(\lambda^1, \lambda^2, \ldots)} \left(\chi^{\lambda^1}\left[W_1\right] \otimes \chi^{\lambda^2}\left[W_2\right] \otimes \ldots \right)^{\oplus c(\lambda^1, \lambda^2, \cdots)}.\] 

We are now able  to state Sundaram's result on the homology of upper intervals in $\Pi_n.$

\begin{prop}[Sundaram, in proof of Theorem 1.4 of \cite{SundaramAdvances}]\label{prop:sundaram-Ln}
Let $\mu = 1^{m_1}2^{m_2}\ldots \vdash n$ be a {partition}\footnote{Technically, Sundaram assumes $\mu \neq (n)$ in her proof, but given our convention that $\Tilde{H}_{-2}(X, X)$ carries the trivial representation, the proposition holds for $\mu = (n)$ too.} with $\ell(\mu) = r.$ Recall that $\hat{1}$ denotes the maximal element $X_{(n)}$ of $\Pi_n.$ As representations of $\stab_{S_n}(X_\mu),$
\[\Tilde{H}_{r-3}\left(X_\mu, \hat{1}\right) \cong \Tilde{H}_{r - 3}\left(\Pi_r\right)\Big \downarrow^{S_r}_{\prod_{i} S_{m_i}}[\mathbb{1}_{S_1} \otimes \mathbb{1}_{S_2} \otimes \ldots].\]
\end{prop}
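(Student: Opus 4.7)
The strategy is to identify the open interval $(X_\mu, \hat{1})$ equivariantly with the proper part of a smaller partition lattice, and then unpack the wreath product notation. \emph{First}, since $X_\mu$ has exactly $r$ blocks, there is a canonical poset isomorphism $\Phi \colon [X_\mu, \hat{1}] \to \Pi_r$ sending $Y \ge X_\mu$ to the set partition of $[r]$ that records which blocks of $X_\mu$ are merged into each block of $Y$. Restricting, $\Phi$ identifies the open interval $(X_\mu, \hat{1})$ with the proper part of $\Pi_r$, so the chain complexes computing reduced homology agree and we obtain $\Tilde{H}_{r-3}(X_\mu, \hat{1}) \cong \Tilde{H}_{r-3}(\Pi_r)$ as $\C$-vector spaces.

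\emph{Second}, I would analyze how the stabilizer acts under $\Phi$. An element $\pi = \big((g_1^{(i)}, \ldots, g_{m_i}^{(i)}; \sigma_i)\big)_i$ of $\stab_{S_n}(X_\mu) = \prod_i S_{m_i}[S_i]$ acts on a block $B$ of $X_\mu$ by sending it to another block of $X_\mu$ via the $\sigma_i$'s and then permuting its elements internally via the $g_j^{(i)}$'s. Any $Y \geq X_\mu$ is determined solely by which blocks of $X_\mu$ it groups together, so the action of $\pi$ on $Y$ depends only on $\sigma := (\sigma_1, \sigma_2, \ldots)$; in other words, the action of $\prod_i S_{m_i}[S_i]$ on $(X_\mu, \hat{1})$ factors through the natural quotient $\prod_i S_{m_i}[S_i] \twoheadrightarrow \prod_i S_{m_i}$. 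Under $\Phi$, the residual $\prod_i S_{m_i}$-action is exactly the restriction of the standard $S_r$-action on $\Pi_r$ along the parabolic embedding $\prod_i S_{m_i} \hookrightarrow S_r$ that lists the blocks of $X_\mu$ in size order (the $m_1$ singletons first, then the $m_2$ doubletons, and so on). This yields $\Tilde{H}_{r-3}(X_\mu, \hat{1}) \cong \Tilde{H}_{r-3}(\Pi_r)\big\downarrow^{S_r}_{\prod_i S_{m_i}}$ as $\prod_i S_{m_i}$-representations.

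\emph{Third}, to promote this to an isomorphism of $\prod_i S_{m_i}[S_i]$-representations, I would invoke the definition of the bracket construction directly. If the right-hand side decomposes as $\bigoplus_{(\lambda^1, \lambda^2, \ldots)} c_{(\lambda^i)}\,\bigotimes_i \chi^{\lambda^i}$ over $\prod_i S_{m_i}$, then inflating along each quotient $S_{m_i}[S_i] \twoheadrightarrow S_{m_i}$ forces the inner $S_i$ factors to act trivially, producing the tensor factor $\chi^{\lambda^i}[\mathbb{1}_{S_i}]$. Summing with multiplicities, the resulting $\prod_i S_{m_i}[S_i]$-representation is precisely $\Tilde{H}_{r-3}(\Pi_r)\big\downarrow^{S_r}_{\prod_i S_{m_i}}[\mathbb{1}_{S_1}\otimes \mathbb{1}_{S_2}\otimes \ldots]$, completing the identification.

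The main obstacle is essentially careful bookkeeping: cleanly separating the block-permuting and within-block pieces of the wreath-product stabilizer (Step 2), and verifying that the former matches the parabolic embedding into $S_r$ (Step 3). There is no deep topology beyond the standard fact that an isomorphism of posets induces an isomorphism of order complexes and hence of their reduced homology; the proposition is fundamentally an equivariant unpacking of the classical identification $[X_\mu, \hat{1}] \cong \Pi_r$.
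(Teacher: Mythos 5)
Your proposal is correct. The paper does not supply a proof of this proposition---it cites Sundaram's proof of Theorem 1.4 in \cite{SundaramAdvances}---so there is no paper argument to compare against; your reconstruction (the poset isomorphism $[X_\mu,\hat 1]\cong\Pi_r$, the observation that $\stab_{S_n}(X_\mu)=\prod_i S_{m_i}[S_i]$ acts on this interval through its outer quotient $\prod_i S_{m_i}$ embedded parabolically in $S_r$, and the identification of $V\!\downarrow\![\mathbb{1}_{S_1}\otimes\mathbb{1}_{S_2}\otimes\cdots]$ with the inflation of $V\!\downarrow$ along $\prod_i S_{m_i}[S_i]\twoheadrightarrow\prod_i S_{m_i}$) is the standard argument and is sound.
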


The determinant representation of $X_\mu$ has a simple formulation.

\begin{lemma}\label{lem:det(X_0/[n])}
    Let $\mu = 1^{m_1}2^{m_2}\ldots \vdash n$ be a partition with $\ell(\mu) = r.$ As representations of $\stab_{S_n}(X_\mu),$
\[\det(X_\mu) \cong \bigotimes_{i\geq1} \sgn_{S_{m_i}}[\mathbb{1}_{S_i}].\]

\end{lemma}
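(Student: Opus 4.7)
The plan is to compute the character of $\det(X_\mu)$ directly by analyzing the action of $\stab_{S_n}(X_\mu) = \prod_i S_{m_i}[S_i]$ on the subspace $V_\mu \subseteq \mathbb{R}^n$ associated to $X_\mu$, and then observe that the resulting character is precisely that of $\bigotimes_i \sgn_{S_{m_i}}[\mathbb{1}_{S_i}]$.

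First, I would set up notation. The subspace $V_\mu$ has the natural basis $\{v_J : J \text{ a block of } X_\mu\}$, where $v_J = \sum_{j \in J} e_j$. Indexing the blocks of size $i$ as $J^{(i)}_1, J^{(i)}_2, \ldots, J^{(i)}_{m_i}$, the basis $\{v_{J^{(i)}_k} : i \geq 1, \, 1 \leq k \leq m_i\}$ has $r = \sum_i m_i$ elements, as expected. An element $\pi \in \stab_{S_n}(X_\mu)$ sends $v_J$ to $v_{\pi(J)}$, so $V_\mu$ is preserved by the stabilizer and the action is by a \emph{signed} permutation-type matrix; in fact, since $\pi$ simply permutes the basis vectors $v_J$, the matrix of $\pi$ on $V_\mu$ is an honest permutation matrix in this basis.

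Next, I would decompose the action. A typical element of $\stab_{S_n}(X_\mu) = \prod_i S_{m_i}[S_i]$ is a tuple $\pi = (\pi^{(1)}, \pi^{(2)}, \ldots)$, with $\pi^{(i)} = (g^{(i)}_1, \ldots, g^{(i)}_{m_i}; \sigma^{(i)}) \in S_{m_i}[S_i]$. The ``internal'' factors $g^{(i)}_k \in S_i$ permute elements within the block $J^{(i)}_k$, and hence fix $v_{J^{(i)}_k}$ pointwise (as permutations of the summation indices in $v_{J^{(i)}_k}$). The ``external'' factor $\sigma^{(i)} \in S_{m_i}$ permutes the blocks of size $i$ wholesale, sending $v_{J^{(i)}_k} \mapsto v_{J^{(i)}_{\sigma^{(i)}(k)}}$. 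Thus, as a block-diagonal matrix decomposed according to the block sizes $i$, the action of $\pi$ on $V_\mu$ is a direct sum (over $i$) of the permutation matrix for $\sigma^{(i)}$. Taking determinants, $\det(X_\mu)(\pi) = \prod_i \sgn_{S_{m_i}}(\sigma^{(i)})$.

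Finally, I would identify this character with that of $\bigotimes_i \sgn_{S_{m_i}}[\mathbb{1}_{S_i}]$. By the definition of the wreath product representation $V[W]$ given in the paper, $\sgn_{S_{m_i}}[\mathbb{1}_{S_i}]$ has underlying space $(\otimes^{m_i} \mathbb{C}) \otimes \mathbb{C}$ on which the internal factors $g^{(i)}_k$ act trivially (since $\mathbb{1}_{S_i}$ is trivial) and $\sigma^{(i)}$ acts by $\sgn_{S_{m_i}}(\sigma^{(i)})$, so the character of $\sgn_{S_{m_i}}[\mathbb{1}_{S_i}]$ evaluated at $\pi^{(i)}$ is $\sgn_{S_{m_i}}(\sigma^{(i)})$. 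Tensoring over $i$ matches $\det(X_\mu)$ on the nose. Both are one-dimensional characters of the same group agreeing on every element, so the isomorphism follows. No step here looks delicate; the main thing to be careful about is correctly parsing the wreath product conventions fixed in the paper.
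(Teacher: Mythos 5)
Your proof is correct and follows essentially the same route as the paper's: both identify the key fact that inner factors fix each basis vector $v_J$ while outer factors permute the $v_J$ among blocks of equal size, so the action on $V_\mu$ is by an honest permutation matrix whose determinant is $\prod_i \sgn(\sigma^{(i)})$. The only cosmetic difference is that you compute the character on a general wreath-product element while the paper checks agreement on a generating set of transpositions; for a one-dimensional character these are interchangeable.
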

\begin{proof}
 Let $e_1, e_2, \ldots, e_n$ be standard basis vectors for $\mathbb{R}^n$. For each block $\left(X_{\mu}\right)_i$ of $X_\mu$, define $v_i$ be the sum \[v_i:= \sum_{j \in \left(X_{\mu}\right)_i}e_j.\] 
 One orientation for the subspace of $\mathbb{R}^n$ associated to $X_\mu$ is the ordered basis $v_1, v_2, \ldots, v_r.$ Since both $\det(X_\mu)$ and $\bigotimes_{i \geq 1}\sgn_{S_{m_i}}[\mathbb{1}_{S_i}]$ are one-dimensional characters, it suffices to check that they agree on a generating set for $\stab_{S_n}(X_\mu).$ The subgroup $\stab_{S_n}(X_\mu) = \prod_i S_{m_i}[S_i]$ is generated by (i) \textit{outer transpositions} of the form $(1, 1, \ldots, 1; s_i)$ in one of the copies of $S_{m_j}[S_j]$ and (ii) \textit{inner transpositions} of the form $(1, \ldots, 1, s_j, 1, \ldots, 1; 1)$ in one of the copies $S_{m_j}[S_j].$  An outer transposition swaps two basis vectors $v_i, v_{i + 1},$ negating the orientation. An inner transposition fixes each $v_j,$ mantaining the orientation. The characters of the generators on $\bigotimes_{i \geq 1}\sgn_{S_{m_i}}[\mathbb{1}_{S_i}]$ agree. 
\end{proof}

The following lemma will be useful in studying the representation $\Tilde{H}_{r - 3}\left(X_\mu, \hat{1}\right) \otimes \det(X_\mu).$

\begin{lemma} \label{lem:mult-wreath-prod-reps}
    Let $V_1, V_2$ be representations of $S_n$ and $W_1, W_2$ be representations of a finite group $G. $ Then, as $S_n[G]$ representations,\[V_1[W_1] \otimes V_2[W_2] \cong (V_1 \otimes V_2)[W_1 \otimes W_2].\] 
\end{lemma}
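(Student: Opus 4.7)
The plan is to exhibit an explicit $S_n[G]$-equivariant isomorphism given by simply rearranging tensor factors. Recall that the underlying vector space of $V_1[W_1] \otimes V_2[W_2]$ is
\[
\bigl((\otimes^n W_1) \otimes V_1\bigr) \otimes \bigl((\otimes^n W_2) \otimes V_2\bigr),
\]
while the underlying vector space of $(V_1 \otimes V_2)[W_1 \otimes W_2]$ is
\[
\bigl(\otimes^n (W_1 \otimes W_2)\bigr) \otimes (V_1 \otimes V_2).
\]
These two vector spaces are canonically isomorphic via the standard reordering of tensor factors. Specifically, I would define $\Phi$ on elementary tensors by
\[
\Phi\bigl(((w_1 \otimes \cdots \otimes w_n) \otimes v) \otimes ((w_1' \otimes \cdots \otimes w_n') \otimes v')\bigr) := \bigl((w_1 \otimes w_1') \otimes \cdots \otimes (w_n \otimes w_n')\bigr) \otimes (v \otimes v'),
\]
extend by linearity, and observe that $\Phi$ is a linear isomorphism.

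The key step is then to verify that $\Phi$ intertwines the two $S_n[G]$-actions. First I would apply an arbitrary element $(g_1, \ldots, g_n; \pi) \in S_n[G]$ to an elementary tensor in $V_1[W_1] \otimes V_2[W_2]$, using the diagonal action on the outer tensor product and the definition of $V_i[W_i]$ on each factor. This yields
\[
\bigotimes_{i=1}^n \bigl(g_i \cdot w_{\pi^{-1}(i)}\bigr) \otimes (\pi \cdot v) \; \text{ tensored with } \; \bigotimes_{i=1}^n \bigl(g_i \cdot w_{\pi^{-1}(i)}'\bigr) \otimes (\pi \cdot v').
\]
Applying $\Phi$ afterwards gives
\[
\bigotimes_{i=1}^n \bigl(g_i \cdot w_{\pi^{-1}(i)} \otimes g_i \cdot w_{\pi^{-1}(i)}'\bigr) \otimes (\pi v \otimes \pi v'),
\]
which is exactly the result of first applying $\Phi$ and then acting by $(g_1, \ldots, g_n; \pi)$ on $(V_1 \otimes V_2)[W_1 \otimes W_2]$, where $G$ acts diagonally on each factor $W_1 \otimes W_2$ and $S_n$ acts diagonally on $V_1 \otimes V_2$ and permutes tensor slots as usual.

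I do not expect a serious obstacle here; the lemma is essentially a bookkeeping statement about how plethystic constructions respect tensor products, and $\Phi$ is the obvious candidate once one writes out the underlying vector spaces. The only thing requiring care is ensuring that the order of tensor factors matches up on the nose (so that both the ``$G$-action on each slot'' and the ``$S_n$-permutation of slots'' translate correctly), but this is transparent once one writes out the action of a single generator of the form $(g_1, \ldots, g_n; \pi)$ on an elementary tensor on each side.
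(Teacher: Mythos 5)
Your proposal is correct and takes essentially the same approach as the paper: both define the rearrangement map $\Phi$ on elementary tensors and verify it is an $S_n[G]$-equivariant linear isomorphism. The paper simply states that checking equivariance is straightforward, whereas you carry out the verification explicitly; the computation you give is the right one.
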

\begin{proof}
  It is straightforward to check that the vector space isomorphism 
    \begin{align*}
      \left((\otimes^n W_1)\otimes V_1 \right)\otimes \left((\otimes^n W_2)\otimes V_2 \right)&\longrightarrow (\otimes^n (W_1 \otimes W_2))\otimes (V_1 \otimes V_2)\\
        ((\otimes_{i}w^1_i)\otimes v^1)\otimes ((\otimes_{i} w^2_i) \otimes v^2) &\mapsto (\otimes_{i}(w_i^1 \otimes w_i^2)) \otimes (v^1 \otimes v^2).
    \end{align*}
commutes with the action of $S_n[G].$
%\begin{align*}
%    \Phi &\left[\left(g_1, g_2, \ldots, g_n; \pi\right) \cdot \left(\left(\otimes_{i} w^1_i)\otimes v^1\right)\otimes \left((\otimes_{i} w^2_i) \otimes v^2\right)\right) \right] \\ 
 %   &= \Phi \left[ \left(\left(\otimes_{i} g_i \cdot w_{\pi^{-1}(i)}^{1}\right) \otimes \pi \cdot v^1\right) \otimes \left(\left(\otimes_{i} g_i \cdot w_{\pi^{-1}(i)}^{2}\right) \otimes \pi \cdot v^2\right)\right]\\
  %  &=\left(\otimes_{i} \  \left(g_i \cdot w_{\pi^{-1}(i)}^1 \otimes g_i \cdot w_{\pi^{-1}(i)}^2\right)\right) \otimes \left(\pi \cdot v^1 \otimes \pi \cdot v^2\right)\\
   % &=  \left(\otimes_{i} \ g_i \cdot\left(w_{\pi^{-1}(i)}^1 \otimes w_{\pi^{-1}(i)}^2\right)\right) \otimes \pi \cdot \left(v^1 \otimes v^2\right)\\
   % &= \left(g_1, g_2, \ldots, g_n; \pi\right) \cdot \left[\left(\otimes_{i} \left(w_i^1 \otimes w_i^2\right)\right)\otimes (v^1 \otimes v^2)\right]\\
    %&= \left(g_1, g_2, \ldots, g_n; \pi\right) \cdot \Phi\left[\left((\otimes_{i} w^1_i)\otimes v^1\right)\otimes \left((\otimes_{i} w^2_i) \otimes v^2\right)\right].
%\end{align*}
\end{proof}

\begin{cor}\label{lem:upper-interval-stabilizer-rep}
    Let $\mu = 1^{m_1}2^{m_2}\ldots \vdash n$ be a partition with $\ell(\mu) = r.$ As $\stab_{S_n}(X_\mu)$-representations,
    \begin{align*}
        \Tilde{H}_{r - 3}\left(X_\mu, \hat{1}\right) \otimes \det(X_\mu) \cong \mathscr{L}_r \Big \downarrow^{S_r}_{\prod_{i} S_{m_i}}[\mathbb{1}_{S_1} \otimes \mathbb{1}_{S_2} \otimes \ldots].
    \end{align*}
\end{cor}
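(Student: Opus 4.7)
The strategy is to combine Proposition~\ref{prop:sundaram-Ln}, which computes the homology factor, with Lemma~\ref{lem:det(X_0/[n])}, which identifies the determinant character, and then reassemble the two pieces using an extension of Lemma~\ref{lem:mult-wreath-prod-reps}. Both inputs already live naturally in the $\prod_i S_{m_i}[S_i]$ wreath--product world, so the entire argument amounts to showing that the tensor product of two such wreath--style representations is again one of the same form, with inner representations tensored and outer representations tensored.

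\textbf{Step 1: Assemble the two factors.} Combining Proposition~\ref{prop:sundaram-Ln} and Lemma~\ref{lem:det(X_0/[n])} gives, as $\stab_{S_n}(X_\mu)$-representations,
\[
\Tilde{H}_{r-3}(X_\mu,\hat{1}) \otimes \det(X_\mu)
\cong
\Bigl(\Tilde{H}_{r-3}(\Pi_r)\big\downarrow^{S_r}_{\prod_i S_{m_i}}[\mathbb{1}_{S_1}\otimes \mathbb{1}_{S_2}\otimes\cdots]\Bigr) \otimes \bigotimes_i \sgn_{S_{m_i}}[\mathbb{1}_{S_i}].
\]
Because the sign character is multiplicative, $\sgn_r\big\downarrow^{S_r}_{\prod_i S_{m_i}} \cong \sgn_{S_{m_1}}\otimes\sgn_{S_{m_2}}\otimes\cdots$, and unpacking the bracket notation one sees that $\bigotimes_i \sgn_{S_{m_i}}[\mathbb{1}_{S_i}]$ coincides with $\sgn_r\big\downarrow^{S_r}_{\prod_i S_{m_i}}[\mathbb{1}_{S_1}\otimes\mathbb{1}_{S_2}\otimes\cdots]$. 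Thus both tensor factors are now expressed in the same ``restrict--then--plug--in'' form.

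\textbf{Step 2: Distribute the tensor through the bracket.} I would next verify the following extension of Lemma~\ref{lem:mult-wreath-prod-reps}: for $S_r$-representations $V_1,V_2$ and any representations $W_i$ of $S_i$,
\[
V_1\big\downarrow^{S_r}_{\prod_i S_{m_i}}[W_1\otimes W_2\otimes\cdots]\;\otimes\; V_2\big\downarrow^{S_r}_{\prod_i S_{m_i}}[W_1\otimes W_2\otimes\cdots]\;\cong\;(V_1\otimes V_2)\big\downarrow^{S_r}_{\prod_i S_{m_i}}[W_1\otimes W_2\otimes\cdots].
\]
The proof reduces to Lemma~\ref{lem:mult-wreath-prod-reps} applied one wreath factor at a time: decompose each $V_j\!\downarrow$ as a direct sum of $\prod_i S_{m_i}$-irreducibles $\chi^{\lambda^1}\otimes\chi^{\lambda^2}\otimes\cdots$, observe that tensoring such an irreducible with $\chi^{\mu^1}\otimes\chi^{\mu^2}\otimes\cdots$ splits as $\bigotimes_i (\chi^{\lambda^i}\otimes\chi^{\mu^i})$, and invoke Lemma~\ref{lem:mult-wreath-prod-reps} on each $S_{m_i}[S_i]$-factor (using $W_i\otimes W_i = \mathbb{1}_{S_i}\otimes \mathbb{1}_{S_i} \cong \mathbb{1}_{S_i}$ in our setting).

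\textbf{Conclusion and main obstacle.} Applying Step~2 with $V_1 = \Tilde{H}_{r-3}(\Pi_r)$ and $V_2 = \sgn_r$ collapses the two brackets of Step~1 into one, yielding
\[
\Tilde{H}_{r-3}(X_\mu,\hat{1}) \otimes \det(X_\mu) \cong \bigl(\Tilde{H}_{r-3}(\Pi_r)\otimes\sgn_r\bigr)\big\downarrow^{S_r}_{\prod_i S_{m_i}}[\mathbb{1}_{S_1}\otimes\mathbb{1}_{S_2}\otimes\cdots],
\]
and since $\mathscr{L}_r = \Tilde{H}_{r-3}(\Pi_r)\otimes\sgn_r$ by definition, this is exactly the claimed isomorphism. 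The only real content is the Step~2 extension of Lemma~\ref{lem:mult-wreath-prod-reps}; it is essentially a bookkeeping exercise once the bracket notation is unpacked, but one must take care to distinguish the $\otimes$'s that live \emph{inside} a single wreath factor from those that separate different indices $i$.
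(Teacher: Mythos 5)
Your proposal is correct and follows essentially the same route as the paper: both begin by invoking Proposition~\ref{prop:sundaram-Ln} and Lemma~\ref{lem:det(X_0/[n])}, then apply Lemma~\ref{lem:mult-wreath-prod-reps} factor-by-factor on the $\prod_i S_{m_i}$-irreducible decomposition (what the paper phrases as ``expanding, applying, then recondensing'' and you formalize as an explicit extension of that lemma), and finally use $\bigotimes_i \sgn_{S_{m_i}} \cong \sgn_{S_r}\!\downarrow^{S_r}_{\prod_i S_{m_i}}$ to recognize $\mathscr{L}_r$. The only cosmetic difference is the order in which you fold in the sign identity, and your explicit acknowledgment that $\mathbb{1}_{S_i}\otimes\mathbb{1}_{S_i}\cong\mathbb{1}_{S_i}$, which is left implicit in the paper.
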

\begin{proof}
     Using \cref{prop:sundaram-Ln} and \cref{lem:det(X_0/[n])} to rewrite $\Tilde{H}_{r - 3}\left(X_\mu, \hat{1}\right)$ and $\det(X_\mu)$, we have that as $\stab_{S_n}(X_\mu)$-representations,
 \begin{align*}
     \Tilde{H}_{r - 3}\left(X_\mu, \hat{1}\right) \otimes \det(X_\mu) \cong \left(\Tilde{H}_{r - 3}(\Pi_r) \Big \downarrow^{S_r}_{\prod_{i} S_{m_i}}[\mathbb{1}_{S_1} \otimes \mathbb{1}_{S_2} \otimes \ldots ]\right) \otimes \left(\bigotimes_i \sgn_{S_{m_i}}[\mathbb{1}_{S_i}]\right).
 \end{align*}

 By expanding $\Tilde{H}_{r - 3}(\Pi_r) \Big \downarrow^{S_r}_{\prod_{i} S_{m_i}}[\mathbb{1}_{S_1} \otimes \mathbb{1}_{S_2} \otimes \ldots ]$, applying \cref{lem:mult-wreath-prod-reps}, then recondensing, the above is equivalent to
\begin{align*}
    \left(\Tilde{H}_{r - 3}(\Pi_r) \Big \downarrow^{S_r}_{\prod_{i} S_{m_i}} \otimes \bigotimes_i \sgn_{S_{m_i}}\right)[\mathbb{1}_{S_1} \otimes \mathbb{1}_{S_2} \otimes \ldots ].
\end{align*}
However, note that as $\prod_i S_{m_i}$-representations, $\bigotimes_i \sgn_{S_{m_i}} \cong \sgn_{S_r} \big\downarrow^{S_r}_{\prod_iS_{m_i}}.$ Hence, we can rewrite the above as 
 
\begin{align*}
     \left(\Tilde{H}_{r - 3}(\Pi_r) \Big \downarrow^{S_r}_{\prod_{i} S_{m_i}} \otimes \sgn_{S_r} \Big \downarrow^{S_r}_{\prod_i S_{m_i}}\right)&[\mathbb{1}_{S_1} \otimes \mathbb{1}_{S_2} \otimes \ldots ]\\
     &\cong \left(\Tilde{H}_{r - 3}(\Pi_r) \otimes \sgn_{S_r} \right)\Big \downarrow^{S_r}_{\prod_{i } S_{m_i}} [\mathbb{1}_{S_1} \otimes \mathbb{1}_{S_2} \otimes \ldots]\\
     &\cong \mathscr{L}_r \Big \downarrow^{S_r}_{\prod_{i} S_{m_i}}[\mathbb{1}_{S_1} \otimes \mathbb{1}_{S_2} \otimes \ldots].
    \end{align*}
\end{proof}

\subsubsection{Plethysm of formal power series}\label{def:plethysm-formal-power-series}
    
Proving \cref{lem:upper-intervals-plethysm} and \cref{lem:necklace-bijection} involves \emph{plethysm of formal power series,} which we explain below using the conventions of Wachs in \cite[\S 2.4]{WachsPosetTopology}.

Let $g$ be a formal power series in the variables $x_1, x_2, \ldots, z_1, z_2, \ldots $ which has \textit{nonnegative integer coefficients}. Form some sequence \emph{$\mathcal{M}(g)$} of the monomials of $g,$ where the number of times each monomial appears in the sequence $\mathcal{M}(g)$ is precisely its coefficient in $g.$  For a symmetric function $f$ in $x_1, x_2, \ldots $ the plethysm \emph{$f[g]$} is defined by replacing $x_1$ in $f$ with the first monomial of $\mathcal{M}(g),$ $x_2$ with the second monomial of $\mathcal{M}(g),$ and so on. Note that the plethysm $f[g]$ is well-defined because the order of $\mathcal{M}(g)$ does not matter as $f$ is symmetric.  We assume that all symmetric functions are written in the variables $x_1, x_2, \ldots. $ If $g$ has no $z_i$'s and is also symmetric in the $x$'s, the definition of plethysm of formal power series agrees with the ordinary definition of plethysm of $f[g].$

The following two examples play key roles in the proof of \cref{lem:necklace-bijection}.

\begin{example}\label{ex:total-ordering-hi-sum}
    Let \[\displaystyle g = \sum_{i \geq 1}z_ih_i = \sum_{\substack{\text{partitions}\\ \nu \neq \emptyset}}z_{\ell(\nu)}\mathbf{x}_{\nu}.\] The monomials in $g$ each have coefficient one.  Since there are a countable number of monomials in $g$ there exists some bijection between the monomials and $x_1, x_2, \ldots.$ To compute $f[g]$ for a symmetric function $f,$ one would then replace each $x_i$ in $f$ with its corresponding monomial of $g.$
\end{example}

\begin{example} \label{ex:total-ordering-zwhw}
 Fix a Lyndon word $w = w_1w_2 \ldots w_k.$ Let \[g = \mathbf{z}_wh_w = \prod_{i = 1}^kz_{w_i}h_{w_i} = \prod_{i= 1}^k z_{w_i} \left(\sum_{\substack{\text{partitions }\nu:\\ \ell(\nu) = w_i}}\mathbf{x}_{\nu}\right).\] Unlike the previous example, there are some monomials with coefficients higher than one in $g.$ As an example, set $w = 12;$  the monomial $z_1z_2x_{2}x_3x_4 = \mathbf{z}_{12}\mathbf{x}_{234}$ has coefficient three in $g = (z_1h_1)(z_2h_2)$, counting which of the $x_2,$ $x_3,$ or $x_4$ arise from $h_1.$ To distinguish between repeated monomials, we consider instead distinct ordered pairs \[(z_{w_1}\mathbf{x}_{\nu(1)}, z_{w_2}\mathbf{x}_{\nu(2)}, \ldots, z_{w_k}\mathbf{x}_{\nu(k)}),\] where each $\nu(i)$ is a partition with length $w_i.$ For instance, the three ordered pairs producing $\mathbf{z}_{12}\mathbf{x}_{234}$ are \[(z_1\mathbf{x}_2, z_2\mathbf{x}_{34}), 
     (z_1\mathbf{x}_{3}, z_2 \mathbf{x}_{24}), \text{ and } (z_1 \mathbf{x}_4, z_2\mathbf{x}_{23}).\]

     There are countably many such ordered pairs so there exists some bijection matching the ordered pairs to $x_1, x_2, \ldots.$ To compute $f[g]$ for a symmetric function $f,$ one would then replace each $x_i$ in $f$ with the monomial of $g$ associated to the corresponding ordered pair.
\end{example}

\subsubsection{Proof of \cref{lem:upper-intervals-plethysm}} \label{sec:reduction-of-base-case}

To manipulate the generating function $\sum_{\mu \neq 0}\mathbf{z}_\mu \cdot \ch\left(E_{|\mu|}\C\mathcal{F}_{|\mu|}E_\mu\right),$ we will need a technical but powerful lemma. The following lemma is an application of a formula of Wachs from \cite[Theorem 5.5]{WachsWhitneyHomology}\footnote{Wachs uses different notation here.}, which is a refinement of a result of Sundaram in \cite[Lemma 1.5]{SundaramAdvances}.
\begin{lemma}[Wachs, refining Sundaram]\label{lem:Wachs-plethysm-gen-fn}
For each positive integer $i$, pick two $S_i$-modules: $V_i$ and $W_i.$ Then,
\begin{align*}
    \sum_{r \geq 1}\sum_{\substack{\mu =\\ 1^{m_1} 2^{m_2} \ldots:\\ m_1 + m_2 + \ldots = r} }\mathbf{z}_\mu \cdot \ch \left(V_r \Big \downarrow_{\prod_{i \geq 1} S_{m_i}}^{S_r} [W_1 \otimes W_2 \otimes \ldots ]\Bigg \uparrow_{\prod_iS_{m_i}[S_i]}^{S_{|\mu|}}\right) = \sum_{r \geq 1} \ch\left(V_r\right)\left[\sum_{i \geq 1} z_i \cdot \ch\left(W_i\right)\right].
\end{align*}
\end{lemma}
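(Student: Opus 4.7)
The plan is to reduce by linearity in $V_r$ to the case $V_r = \chi^\lambda$ for a single $\lambda \vdash r$, and then combine two standard tools: the classical wreath-product plethysm formula $\ch(U[W]\uparrow^{S_{mn}}_{S_m[S_n]}) = \ch(U)[\ch(W)]$ recalled in the excerpt, together with the multivariable branching/plethysm identity
\[
s_\lambda\bigl[g_1 + g_2 + \cdots\bigr] \;=\; \sum_{(\lambda^1, \lambda^2, \ldots)} c^{\lambda}_{\lambda^1, \lambda^2, \ldots}\, \prod_i s_{\lambda^i}[g_i],
\]
where $c^{\lambda}_{\lambda^1, \lambda^2, \ldots}$ denotes the multiplicity of $\chi^{\lambda^1} \otimes \chi^{\lambda^2} \otimes \cdots$ in the parabolic restriction $\chi^\lambda \Big\downarrow^{S_r}_{\prod_i S_{m_i}}$ (a generalized Littlewood--Richardson coefficient, obtained by iterating the two-variable formula $s_\lambda[g + h] = \sum_{\mu,\nu} c^{\lambda}_{\mu\nu}\, s_\mu[g]\, s_\nu[h]$).

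Concretely, for fixed $\lambda \vdash r$ and $\mu = 1^{m_1} 2^{m_2}\cdots$ with $\sum_i m_i = r$, the branching rule gives
\[
\chi^\lambda \Big\downarrow^{S_r}_{\prod_i S_{m_i}} \;\cong\; \bigoplus_{(\lambda^i)} c^{\lambda}_{(\lambda^i)} \bigotimes_i \chi^{\lambda^i},
\]
where the sum ranges over tuples with $\lambda^i \vdash m_i$. The bracket construction $[\,\cdot\,]$ distributes over direct sums and tensor factors, yielding as $\prod_i S_{m_i}[S_i]$-modules
\[
\chi^\lambda \Big\downarrow [W_1 \otimes W_2 \otimes \cdots] \;\cong\; \bigoplus_{(\lambda^i)} c^{\lambda}_{(\lambda^i)} \bigotimes_i \chi^{\lambda^i}[W_i].
\]
Since $\prod_i S_{m_i}[S_i]$ sits inside $S_{|\mu|}$ through the intermediate direct product $\prod_i S_{m_i \cdot i}$, and induction of an external tensor product from a direct product of subgroups factors as the tensor product of inductions, applying the classical wreath-product plethysm formula to each factor gives
\[
\ch\Bigl(\chi^\lambda \Big\downarrow [W_1 \otimes W_2 \otimes \cdots]\Big\uparrow^{S_{|\mu|}}_{\prod_i S_{m_i}[S_i]}\Bigr) \;=\; \sum_{(\lambda^i)} c^{\lambda}_{(\lambda^i)} \prod_i s_{\lambda^i}[\ch(W_i)].
\]

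To finish, multiply through by $\mathbf{z}_\mu = \prod_i z_i^{m_i}$ and absorb each $z_i^{m_i}$ into a plethysm: since $s_{\lambda^i}$ is homogeneous of degree $m_i = |\lambda^i|$, the formal-power-series convention of \cref{def:plethysm-formal-power-series} yields $z_i^{m_i} s_{\lambda^i}[\ch(W_i)] = s_{\lambda^i}[z_i \ch(W_i)]$. Summing over all $\mu$ with $\sum_i m_i = r$ and over all tuples $(\lambda^i)$ with $\lambda^i \vdash m_i$ collapses to an unconstrained sum over tuples $(\lambda^i)$ satisfying $\sum_i |\lambda^i| = r$, which by the multivariable plethysm identity above equals $s_\lambda\bigl[\sum_i z_i \ch(W_i)\bigr]$. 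Reversing the linearity reduction to restore a general $V_r$, and summing over $r \geq 1$, produces the right-hand side.

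The main obstacle is the careful bookkeeping of formal-power-series plethysm with the scalar variables $z_i$: one must verify $s_{\lambda^i}[z_i \ch(W_i)] = z_i^{|\lambda^i|} s_{\lambda^i}[\ch(W_i)]$ using power-sum expansions, and confirm that the multivariable $s_\lambda[g_1 + g_2 + \cdots]$ expansion really does have the generalized Littlewood--Richardson coefficients as claimed. Both are routine but easy to slip on. Since the lemma is attributed to Wachs \cite[Theorem 5.5]{WachsWhitneyHomology} as a refinement of Sundaram \cite[Lemma 1.5]{SundaramAdvances}, the alternative is to observe that our left-hand side is precisely the specialization of her generating function and invoke her result directly.
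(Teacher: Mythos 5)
Your proof outline is correct, but it takes a genuinely different route from the paper: the paper offers no proof of this lemma at all, simply citing it as an application of Wachs's Theorem 5.5 in \cite{WachsWhitneyHomology} (which refines Sundaram's Lemma 1.5 in \cite{SundaramAdvances}). You instead derive the identity from scratch. Your chain of reasoning is sound: reduce by linearity to $V_r = \chi^\lambda$; use the iterated Littlewood--Richardson rule to write $\chi^\lambda \downarrow_{\prod_i S_{m_i}}^{S_r} = \bigoplus_{(\lambda^i)} c^\lambda_{(\lambda^i)} \bigotimes_i \chi^{\lambda^i}$; unwind Sundaram's bracket notation (which, by definition in the text preceding \cref{prop:sundaram-Ln}, is exactly $\bigoplus_{(\lambda^i)} c^\lambda_{(\lambda^i)} \bigotimes_i \chi^{\lambda^i}[W_i]$); induce up in stages through $\prod_i S_{m_i \cdot i}$ using the classical wreath plethysm formula to get $\sum_{(\lambda^i)} c^\lambda_{(\lambda^i)} \prod_i s_{\lambda^i}[\ch W_i]$; absorb the monomial $\mathbf{z}_\mu$ via the homogeneity identity $z_i^{|\lambda^i|} s_{\lambda^i}[\ch W_i] = s_{\lambda^i}[z_i \ch W_i]$; and finally recognize the resulting unconstrained sum over tuples as the addition formula $s_\lambda[\sum_i g_i] = \sum_{(\lambda^i)} c^\lambda_{(\lambda^i)} \prod_i s_{\lambda^i}[g_i]$, which follows by iterating the skew-Schur identity $s_\lambda[f+g] = \sum_\mu s_{\lambda/\mu}[f]\, s_\mu[g]$. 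What your approach buys is self-containedness: the reader need not unpack Wachs's rather general Theorem 5.5 to see why the generating function identity holds. What the paper's citation route buys is brevity and an explicit pointer to a literature result encompassing exactly this kind of statement. Both are legitimate; a short remark converting your sketch into a proof would be a reasonable alternative to the paper's attribution.
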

With this lemma, we are now ready to prove \cref{lem:upper-intervals-plethysm}. 
\begin{lemma} \label{lem:upper-intervals-plethysm}
    \[\sum_{\mu \neq \emptyset} \mathbf{z}_\mu \cdot \ch \left(E_{|\mu|}\C \mathcal{F}_{|\mu|}E_\mu \right)  = \sum_{r \geq 1} L_r[z_1h_1 + z_2h_2  + \ldots].\]
\end{lemma}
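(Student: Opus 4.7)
The proof plan is to assemble the ingredients the paper has already laid out: \cref{cor:induced-cohomology} reduces the left side to an induced representation from a stabilizer, \cref{lem:upper-interval-stabilizer-rep} identifies that stabilizer representation with a wreath-product plethysm involving $\mathscr{L}_r$, and \cref{lem:Wachs-plethysm-gen-fn} rolls up the sum into a plethysm of formal power series.

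First I would apply \cref{cor:induced-cohomology} with $\lambda = (n)$. Since the only element of $\Pi_n$ with block-size partition $(n)$ is $\hat{1}$, and every $X \in \mu$ satisfies $X \leq \hat{1}$, there is a single $S_n$-orbit of pairs $(X, \hat{1})$, represented by $(X_\mu, \hat{1})$ with stabilizer $\stab_{S_n}(X_\mu) \cap \stab_{S_n}(\hat{1}) = \stab_{S_n}(X_\mu)$. Setting $r = \ell(\mu)$, so that $k = \ell(\mu) - \ell((n)) = r - 1$, the corollary gives
\[
E_n\, \C\mathcal{F}_n\, E_\mu \;\cong\; \Bigl(\Tilde{H}_{r-3}(X_\mu, \hat{1}) \otimes \det(\hat{1}) \otimes \det(X_\mu)\Bigr)\Big\uparrow^{S_n}_{\stab_{S_n}(X_\mu)}.
\]
A quick observation handles the extra twist: the subspace associated to $\hat{1}$ is the line spanned by $e_1 + \cdots + e_n$, which every $\pi \in S_n$ fixes, so $\det(\hat{1})$ is the trivial character and drops out.

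Next, I would substitute \cref{lem:upper-interval-stabilizer-rep} to identify
\[
\Tilde{H}_{r-3}(X_\mu, \hat{1}) \otimes \det(X_\mu) \;\cong\; \mathscr{L}_r \Big\downarrow^{S_r}_{\prod_i S_{m_i}}\bigl[\mathbb{1}_{S_1} \otimes \mathbb{1}_{S_2} \otimes \cdots\bigr]
\]
as $\stab_{S_n}(X_\mu) = \prod_i S_{m_i}[S_i]$-representations. Combining the two displays,
\[
\ch\bigl(E_n\,\C\mathcal{F}_n\,E_\mu\bigr) \;=\; \ch\!\left(\mathscr{L}_r \Big\downarrow^{S_r}_{\prod_i S_{m_i}}\bigl[\mathbb{1}_{S_1} \otimes \mathbb{1}_{S_2} \otimes \cdots\bigr]\Big\uparrow^{S_{|\mu|}}_{\prod_i S_{m_i}[S_i]}\right).
\]

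Finally, I would multiply by $\mathbf{z}_\mu$ and sum over all nonempty $\mu$, grouping the sum by $r = \ell(\mu)$. This is exactly the left-hand side of \cref{lem:Wachs-plethysm-gen-fn} with the choices $V_r = \mathscr{L}_r$ and $W_i = \mathbb{1}_{S_i}$, and applying that lemma with $\ch(\mathscr{L}_r) = L_r$ and $\ch(\mathbb{1}_{S_i}) = h_i$ yields
\[
\sum_{\mu \neq \emptyset} \mathbf{z}_\mu \cdot \ch\bigl(E_{|\mu|}\,\C\mathcal{F}_{|\mu|}\,E_\mu\bigr) \;=\; \sum_{r \geq 1} L_r\bigl[z_1 h_1 + z_2 h_2 + \cdots\bigr],
\]
as claimed. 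The proof is essentially an assembly; no step looks genuinely difficult, but the most delicate point is verifying cleanly that $\det(\hat{1})$ is trivial (so that the induced representation matches the form required to feed into \cref{lem:Wachs-plethysm-gen-fn}) and that the orbit reduction in step one really collapses to a single orbit with full stabilizer $\stab_{S_n}(X_\mu)$.
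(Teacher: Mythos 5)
Your proposal is correct and follows the same route as the paper's proof: specialize \cref{cor:induced-cohomology} to $\lambda = (n)$, substitute \cref{lem:upper-interval-stabilizer-rep}, and then apply \cref{lem:Wachs-plethysm-gen-fn} with $V_r = \mathscr{L}_r$, $W_i = \mathbb{1}_{S_i}$. The only difference is that you explicitly justify the collapse to a single orbit and the triviality of $\det(\hat{1})$, whereas the paper takes these for granted.
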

\begin{proof}
Let $\mu \neq \emptyset$ be a partition of $n$ of the form $1^{m_1}2^{m_2}\ldots$ with $ \ell(\mu) = r.$ 

By \cref{cor:induced-cohomology}, \[\ch \left(E_{|\mu|}\C \mathcal{F}_nE_\mu\right) = \ch \left(\Tilde{H}_{r - 3}\left(X_\mu, \hat{1}\right) \otimes \det(X_\mu) \Big \uparrow_{\prod_i S_{m_i}[S_i]}^{S_{n}} \right).\]

By \cref{lem:upper-interval-stabilizer-rep}, 
    \begin{align*}
        \Tilde{H}_{r - 3}\left(X_\mu, \hat{1}\right) \otimes \det(X_\mu) \cong \mathscr{L}_r \Big \downarrow^{S_r}_{\prod_{i \geq 1} S_{m_i}}[\mathbb{1}_{S_1} \otimes \mathbb{1}_{S_2} \otimes \ldots].
    \end{align*}

    Therefore,
    \[\ch(E_{|\mu|}\C \mathcal{F}_{|\mu|} E_\mu) = \ch \left( \left(\mathscr{L}_r \Big \downarrow_{\prod_{i \geq 1} S_{m_i}}^{S_r}[\mathbb{1}_{S_1} \otimes \mathbb{1}_{S_2}\otimes \ldots ]\right)\Bigg \uparrow_{\prod_{i\geq 1}S_{m_i}[S_i]}^{S_n}\right).\]
    
   Substituting $V_i = \mathscr{L}_i$ and $W_i = \mathbb{1}_{S_i}$ into  \cref{lem:Wachs-plethysm-gen-fn}, we have that

   \begin{align*}
       \sum_{\mu \neq \emptyset }\mathbf{z}_\mu \cdot \ch \left(E_{|\mu|}\C \mathcal{F}_{|\mu|}E_\mu\right) &= \sum_{r\geq 1}\sum_{\substack{\mu = \\1^{m_1} 2^{m_2} \ldots:\\ m_1 + m_2 + \ldots = r}}\mathbf{z}_\mu \cdot \ch \left(E_{|\mu|}\C \mathcal{F}_{|\mu|}E_{\mu}\right)\\
       &= \sum_{r \geq 1} L_{r}[z_1 h_1 + z_2 h_2 + \ldots ].
   \end{align*}
    \end{proof}

\subsubsection{Necklaces}

Let $\mathscr{A}$ be a totally ordered alphabet. A \emph{necklace} of length $k \geq 1$ on $\mathscr{A}$ is an equivalence class of a word on $\mathscr{A}$ with $k$ letters up to cyclic rotation. We will write the necklace represented by the word $(w_1, w_2, \ldots, w_k)$ as \emph{$[w_1, w_2, \ldots, w_k].$} Define \emph{$\mathscr{N}_k(\mathscr{A})$} to be the set of necklaces on  $\mathscr{A}$ with $k$ letters, and \emph{$\mathscr{N}(\mathscr{A})$} to be the set of necklaces on $\mathscr{A}$ of any (nonzero, finite) length. 

\begin{definition}\label{lem:bijection-necklaces-to-lyndon-powers}
Let $\mathscr{A} = \{1, 2, \ldots\}$ be the standard ordered alphabet. We define a map  \[f:\mathscr{N}(\mathscr{A}) \longrightarrow \bigsqcup_{\substack{\text{Lyndon}\\ w}}\{(w, 1), (w, 2), (w, 3), \ldots\}\] as follows. Let $\eta$ be a necklace on $\mathscr{A}$ with a lexicographically minimal representative word $\alpha = (\alpha_1, \alpha_2, \ldots, \alpha_k).$ Notice that $\alpha$ is either a Lyndon word, or a power $i \geq 1$ of a single Lyndon word, so can be written as $\alpha = w^i$ for a Lyndon word $w$. The Lyndon word $w$ and power $i$ are unique since Lyndon factorization is unique.  Let $f$ be the map which sends $\eta$ to the pair $\left(w,i\right).$ 
\end{definition}

Although we won't need this fact, the map $f$ is a bijection since $(w, i) \mapsto [w^i]$ can be checked to be an inverse (using that the $w$ are Lyndon words). 

\begin{example}\label{ex:mini-necklace-bijection}
$f\left([2, 2, 1, 1, 2, 2, 1, 1, 2, 2, 1, 1]\right)  = \left(\left(1,1,2, 2\right), 3\right).$
\end{example}

A necklace is \emph{primitive} if its representative words are not powers of shorter words. For example, the necklace $[2, 2, 1, 1, 2, 2, 1, 1, 2, 2, 1, 1]$ in \cref{ex:mini-necklace-bijection} is \textit{not} primitive since it is represented by the word $(2, 2, 1, 1)^3.$  Let \emph{$\mathscr{P}\mathscr{N}_k(\mathscr{A})$} denote the set of primitive necklaces of length $k$ on the alphabet $\mathscr{A}$  and let \emph{$\mathscr{P}\mathscr{N}(\mathscr{A})$ }denote the set of primitive necklaces on $\mathscr{A}$ of any (nonempty, finite) length.

Recall the notion of a sequence $\mathcal{M}(g)$ associated to a formal power series $g$ from \cref{def:plethysm-formal-power-series}. If $g$ has a countable number of monomials, we can theoretically view $\mathcal{M}(g)$ as a totally ordered alphabet (although we won't need to refer to the order) and form words and necklaces on it. Given a word $w= \left(\mathbf{z}_{\nu(1)}\mathbf{x}_{\mu(1)}, \mathbf{z}_{\nu(2)}\mathbf{x}_{\mu(2)},\ldots, \mathbf{z}_{\nu(k)}\mathbf{x}_{\mu(k)}\right)$ on $\mathcal{M}(g)$ (where each $\nu(i), \mu(i)$ are partitions), define its \emph{evaluation} \emph{$\mathrm{eval}(w)$} to be the product \[\mathrm{eval}(w) := \prod_{i=1}^k\mathbf{z}_{\nu(i)}\mathbf{x}_{\mu(i)}.\]

 Since evaluation does not depend on the order of the letters in $w,$ we can define it on necklaces in $\mathscr{N}(\mathcal{M}(g)),$ by $\mathrm{eval}([w]) = \mathrm{eval}(w).$ \begin{example} \label{ex:eval-example}
     Recall the examples in \cref{def:plethysm-formal-power-series}. Let $\eta$ and $\tau$ be the following necklaces in $\mathscr{PN}\left(\mathcal{M}\left( z_1h_1 + z_2h_2 + \ldots \right)\right)$ and $\mathscr{PN}\left(\mathcal{M}\left(\mathbf{z}_{223}h_{223}\right)\right),$ respectively. Note that we are using our ordered pair notation for the necklaces on $\mathcal{M}(\mathbf{z}_{223}h_{223}).$ 
     \begin{align*}  \eta &= \left[z_3\mathbf{x}_{244}, z_2\mathbf{x}_{44}, z_2\mathbf{x}_{23}, z_3\mathbf{x}_{266}, z_2\mathbf{x}_{44}, z_2\mathbf{x}_{23}, z_3\mathbf{x}_{246}, z_2\mathbf{x}_{44},  z_2\mathbf{x}_{23} \right],\\
    \tau &= \left[\left(z_2\mathbf{x}_{44}, z_2\mathbf{x}_{23}, z_3\mathbf{x}_{266}\right), \left(z_2\mathbf{x}_{44}, z_2\mathbf{x}_{23}, z_3\mathbf{x}_{246}\right), \left(z_2\mathbf{x}_{44},  z_2\mathbf{x}_{23}, z_3\mathbf{x}_{244}\right)\right].
\end{align*}
Then, \[\mathrm{eval}(\eta) = \mathrm{eval}(\tau) = \mathbf{z}_{2^6 3^3}\mathbf{x}_{2^6 3^3 4^9 6^3} = z_2^6z_3^3 x_2^6x_3^3 x_4^9x_6^3.\]
 \end{example}
    
\subsubsection{Necklace bijection
}\label{sec:necklace-bij}
We define an evaluation-preserving map \[\Psi: \mathscr{PN}\left(\mathcal{M}\left(z_1 h_1 + z_2h_2 + \ldots\right)\right) \longrightarrow \bigsqcup_{\substack{\text{Lyndon}\\ w}}\mathscr{PN}\left(\mathcal{M}\left(\mathbf{z}_wh_w\right)\right).\] 

Consider the necklace\[\eta = \left[z_{\ell\left(\nu(1)\right)}\mathbf{x}_{\nu(1)}, z_{\ell\left(\nu(2)\right)}\mathbf{x}_{\nu(2)}, \ldots, z_{\ell \left(\nu(k)\right)}\mathbf{x}_{\nu(k)}\right]\] in $\mathscr{PN}\left(\mathcal{M}\left(z_1h_1 + z_2 h_2 + \ldots \right)\right).$ Define an associated necklace on $\{1, 2, \ldots\}$ by taking the subscripts of $z$ in $\eta.$ Specifically, define \[z(\eta) := \left[\ell \left(\nu(1)\right), \ell\left(\nu(2)\right), \ldots, \ell\left(\nu(k)\right)\right].\] 
Recall the map $f$ in \cref{lem:bijection-necklaces-to-lyndon-powers}; we have $f([z(\eta)]) = \left(w, k / i\right)$ for some Lyndon word $w$ and integer $i,$ with $[z(\eta)] = [w^{k / i}].$ Because $w$ is a Lyndon word, if we write $\overline{m}$ to mean $m$ modulo $k,$ there are \textit{precisely} $k/i$ values of $0 \leq j \leq k - 1$ such that
\begin{align}\label{eqn:z(eta)}
    w = (w_1,w_2, \ldots, w_i) =  \left(\ell \left(\nu(\overline{j + 1})\right), \ell\left(\nu(\overline{j + 2})\right), \ldots, \ell\left(\nu(\overline{j + i})\right)\right).
\end{align}

In words, the map $\Psi$ will (i) pick one of the $k/i$ representative words of  $\eta$ whose prefix has $w_1, w_2, \ldots, w_i$ as the subscripts of $z$ then (ii) group together the first $i$ letters, the next $i$ letters, and so on. The result is a word on $\mathcal{M}(\mathbf{z}_wh_w)$ and $\Psi$ sends $\eta$ to the necklace represented by that word. More formally,  $\Psi(\eta)$ is the following necklace on $\mathcal{M}(\mathbf{z}_wh_w)$ with $k / i$ beads:

\begin{align*}
\Psi(\eta) := \Bigg[&{\left(z_{w_1}\mathbf{x}_{\nu(\overline{j + 1})},z_{w_2}\mathbf{x}_{\nu(\overline{j + 2})}, \ldots, z_{w_i}\mathbf{x}_{\nu(\overline{j + i})} \right)},\\
&{\left(z_{w_1}\mathbf{x}_{\nu(\overline{j +i + 1})},z_{w_2}\mathbf{x}_{\nu(\overline{j +i + 2})}, \ldots, z_{w_i}\mathbf{x}_{\nu(\overline{j + 2i})} \right)},\\
& \ldots,\\
&{\left(z_{w_1}\mathbf{x}_{\nu(\overline{j + k - i + 1})},z_{w_2}\mathbf{x}_{\nu(\overline{j + k - i + 2})}, \ldots, z_{w_i}\mathbf{x}_{\nu(\overline{j + k})} \right)} \Bigg].  
\end{align*}

The map $\Psi$ is well-defined because the resulting necklace does not depend on the choice of $j$ from \cref{eqn:z(eta)}. The necklace $\Psi(\eta)$ is primitive because $\eta$ is, and $\Psi$ is evaluation preserving by construction.

\begin{example}
Let $\eta$ be the following primitive necklace in $\mathscr{PN}\left(\mathcal{M}(z_1h_1 + z_2h_2 + \ldots)\right):$
\[\eta = \left[z_3\mathbf{x}_{244}, \color{blue}{z_2\mathbf{x}_{44}}\normalcolor, z_2\mathbf{x}_{23}, z_3\mathbf{x}_{266}, \color{blue}{z_2\mathbf{x}_{44}}\normalcolor, z_2\mathbf{x}_{23}, z_3\mathbf{x}_{246}, \color{blue}{z_2\mathbf{x}_{44}}\normalcolor,  z_2\mathbf{x}_{23} \right].\]

The necklace $z(\eta)$ is $[3, 2, 2, 3, 2, 2, 3, 2, 2],$ which $f$ maps to the ordered pair $\left((2, 2, 3), 3\right).$ The possible choices for of beads which we can cyclically rotate to the front before grouping are colored blue. $\Psi(\eta)$ is the following primitive necklace on $\mathscr{PN}\left(\mathcal{M}\left(\mathbf{z}_{223}h_{223}\right)\right):$ 
\begin{align*}
    \left[\left(z_2\mathbf{x}_{44}, z_2\mathbf{x}_{23}, z_3\mathbf{x}_{266}\right), \left(z_2\mathbf{x}_{44}, z_2\mathbf{x}_{23}, z_3\mathbf{x}_{246}\right), \left(z_2\mathbf{x}_{44},  z_2\mathbf{x}_{23}, z_3\mathbf{x}_{244}\right)\right].
\end{align*}
As explained in \cref{ex:eval-example}, $\eta$ and $\Psi(\eta)$ have the same evaluation.
\end{example}

\begin{prop}\label{prop:eval-preserve-bij}
    The map $\Psi$ is a bijection.
\end{prop}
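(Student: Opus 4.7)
The plan is to exhibit an explicit two-sided inverse $\Phi$ to $\Psi$, obtained by \emph{unpacking} beads. Given a Lyndon word $w = w_1 w_2 \cdots w_i$ and a primitive necklace $\zeta$ of length $m$ in $\mathscr{PN}(\mathcal{M}(\mathbf{z}_w h_w))$, each bead of $\zeta$ is (per the convention in \cref{ex:total-ordering-zwhw}) an ordered $i$-tuple of monomials from $\mathcal{M}(z_1h_1 + z_2h_2 + \ldots)$, one drawn from each factor $\mathcal{M}(z_{w_j}h_{w_j})$. Pick any representative word $(b_1, b_2, \ldots, b_m)$ of $\zeta$, concatenate the $m$ tuples in order into a word of length $mi$ on $\mathcal{M}(z_1 h_1 + z_2 h_2 + \ldots)$, and let $\Phi(w, \zeta)$ be the necklace this word represents. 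Well-definedness is immediate: rotating $\zeta$ cyclically by one bead simply shifts the concatenated word by exactly $i$ positions, producing the same necklace. The map is also evaluation-preserving by construction.

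The main obstacle is to verify that $\Phi(w, \zeta)$ is in fact primitive, so that $\Phi$ lands in $\mathscr{PN}(\mathcal{M}(z_1 h_1 + z_2 h_2 + \ldots))$. Let $(v_1, \ldots, v_{mi})$ be the representative word of $\eta := \Phi(w, \zeta)$ just constructed; the sequence of $z$-subscripts of $v_1, \ldots, v_{mi}$ is exactly $w^m$. Suppose for contradiction that $\eta$ admits a cyclic period $r$ with $1 \leq r < mi$. Then $w^m$, viewed as a word on $\{1,2,\ldots\}$, is cyclically invariant under a shift by $r$. A standard consequence of $w$ being Lyndon is that $w$ has no nontrivial cyclic self-symmetry, and hence the cyclic stabilizer of $w^m$ consists exactly of shifts by multiples of $i$. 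Thus $r = si$ for some $1 \leq s < m$, but then shifting $\zeta$ by $s$ beads witnesses a nontrivial period of $\zeta$, contradicting its primitivity.

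Finally, the identities $\Psi \circ \Phi = \mathrm{id}$ and $\Phi \circ \Psi = \mathrm{id}$ follow by direct inspection. For $\Psi \circ \Phi$: with $\eta = \Phi(w, \zeta)$, the auxiliary necklace $z(\eta)$ has representative $w^m$, a power of a Lyndon word, so $f(z(\eta)) = (w, m)$; the offset $j = 0$ then satisfies \cref{eqn:z(eta)}, and regrouping the beads of $\eta$ in consecutive chunks of size $i$ returns exactly $\zeta$. For $\Phi \circ \Psi$: unpacking the $i$-tuple beads produced by $\Psi(\eta)$ reassembles the very representative word of $\eta$ selected during the application of $\Psi$, so the resulting necklace is $\eta$ itself. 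Hence $\Psi$ is a bijection.
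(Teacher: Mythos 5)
Your proof is correct and essentially matches the paper's: both exhibit the same "remove parentheses" inverse map (your $\Phi$ is the paper's $\Theta$), check primitivity of the image, and verify the two composites are identities. The only minor divergence is in the primitivity check, where you invoke the fact that the cyclic stabilizer of $w^m$ is the set of shifts by multiples of $|w|$ for primitive $w$, while the paper argues more directly by splitting into periods that are, or are not, multiples of $|w|$ and invoking the suffix characterization of Lyndon words; both are standard and equivalent in force.
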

\begin{proof}
We prove that $\Psi$ is a bijection by providing an inverse map \[\Theta:  \bigsqcup_{\substack{\text{Lyndon}\\ w}}\mathscr{PN}\left(\mathcal{M}\left(\mathbf{z}_wh_w\right)\right)\longrightarrow \mathscr{PN}\left(\mathcal{M}\left(z_1 h_1 + z_2h_2 + \ldots\right)\right).\]
 Let $w = (w_1, w_2, \ldots, w_\ell)$ be a Lyndon word and let $\tau$ be the following primitive necklace in $\mathscr{PN}(\mathcal{M}(\mathbf{z}_w h_w)):$ 
\begin{align*}
\tau =\left[{\left(z_{w_1}\mathbf{x}_{\nu(1,1)},z_{w_2}\mathbf{x}_{\nu(1,2)}, \ldots, z_{w_\ell}\mathbf{x}_{\nu(1,\ell)} \right)},\ldots,{\left(z_{w_{1}}\mathbf{x}_{\nu(m,1)},z_{w_{2}}\mathbf{x}_{\nu(m,2)}, \ldots, z_{w_{\ell}}\mathbf{x}_{\nu(m,\ell)} \right)} \right].
\end{align*}

 Define $\Theta$ to be the map which ``removes parentheses,'' sending $\tau$ to the following necklace on $\mathcal{M}(z_1h_1 + z_2h_2 + \ldots),$
\[\left[z_{w_1}\mathbf{x}_{\nu(1, 1)}, z_{w_2}\mathbf{x}_{\nu(1, 2)}, \ldots, z_{w_\ell}\mathbf{x}_{\nu(1, \ell)}, z_{w_1}\mathbf{x}_{\nu(2,1)}, \ldots, z_{w_\ell }\mathbf{x}_{\nu(2,\ell)}, \ldots,  z_{w_\ell}\mathbf{x}_{\nu(m, \ell)}\right].\] We claim $\Theta(\tau)$ is a primitive necklace. If there were a prefix $z_{w_1}\mathbf{x}_{\nu(1, 1)} \ldots z_{w_i}\mathbf{x}_{\nu(j, i)}$ with $i < \ell$ and power $k$ for which \[\left(z_{w_1}\mathbf{x}_{\nu(1, 1)} \ldots z_{w_i}\mathbf{x}_{\nu(j, i)}\right)^k = z_{w_1}\mathbf{x}_{\nu(1, 1)}\ldots z_{w_\ell}\mathbf{x}_{\nu(m, \ell)},\] then by restricting our attention to the coefficients of $z,$ we'd see $(w^{j - 1}w_1\cdots w_i)^k = w^{m},$ which implies that $w_1w_2 \cdots w_{\ell - i} = w_{i + 1}\cdots w_\ell .$ This is not possible since $w$ is Lyndon, and an equivalent characterization of $w$ being Lyndon is that it is strictly lexicographically smaller than all of its proper suffixes (see \cite[Proposition 5.12]{Lothaire_1997}). There are also no factorizations of the form
\[\left(z_{w_1}\mathbf{x}_{\nu(1, 1)} \ldots z_{w_\ell}\mathbf{x}_{\nu(j, \ell)}\right)^k = z_{w_1}\mathbf{x}_{\nu(1, 1)}\ldots z_{w_\ell}\mathbf{x}_{\nu(m, \ell)}\] since $\tau$ is primitive. Hence, $\Theta(\tau)$ is a {primitive} necklace of $\mathcal{M}(z_1h_1 + z_2h_2 + \cdots).$ It is straightforward to check that $\Theta \circ \Psi$ is the identity mapping. The fact that $w$ is Lyndon implies $z \left(\Theta(\tau)\right)$ is necessarily $w^m,$ hence  $ \Psi \circ \Theta$ is the identity mapping as well, completing the proof.
\end{proof}

The symmetric function $L_n,$ and in fact all the higher Lie symmetric functions $L_\lambda,$ have combinatorial interpretations involving primitive necklaces (see {Gessel--Reutenauer's}\footnote{Gessel--Reutenauer used this necklace interpretation to prove their remarkable theorem mentioned in the proof of \cref{prop:sign-iso}.} work:  \cite[Equation 2.1]{gesselreutenauer}). The next lemma is an application of this interpretation.

\begin{lemma}\label{lem:Lie-plethysm-in-terms-of-necklaces}
 Let $g$ be a formal power series in $x_1, x_2, \ldots, z_1, z_2, \ldots $ which has nonnegative integer coefficients. Then,
\begin{align*}
        \sum_{n \geq 1}L_n[g] = \sum_{\eta \in \mathscr{PN}(\mathcal{M}(g))} \mathrm{eval}(\eta).
    \end{align*}
\end{lemma}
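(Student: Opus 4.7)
The plan is to reduce to the well-known necklace interpretation of $L_n$ as a symmetric function, which the paper has already cited as \cite[Equation 2.1]{gesselreutenauer}, and then observe that plethysm simply changes the alphabet on which the necklaces live.

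First I would recall (or state as a lemma from Gessel--Reutenauer) the identity
\[
L_n \;=\; \sum_{\eta \in \mathscr{PN}_n(\{x_1,x_2,\ldots\})} \mathrm{eval}(\eta),
\]
where the evaluation of a necklace is the product of its bead labels. Summing over $n \geq 1$ gives $\sum_{n\geq 1} L_n = \sum_{\eta \in \mathscr{PN}(\{x_1,x_2,\ldots\})}\mathrm{eval}(\eta)$.

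Next I would analyze what $L_n[g]$ means in terms of necklaces. By definition of plethysm of a formal power series (Section~\ref{def:plethysm-formal-power-series}), $L_n[g]$ is obtained from $L_n$ by replacing each $x_i$ with the $i$-th monomial of $\mathcal{M}(g)$. Since $L_n$ is symmetric in the $x_i$'s, this substitution is well-defined and independent of the ordering chosen on $\mathcal{M}(g)$. Under this substitution, a primitive necklace $\eta \in \mathscr{PN}_n(\{x_1,x_2,\ldots\})$ with beads $x_{i_1}, x_{i_2}, \ldots, x_{i_n}$ maps to a primitive necklace in $\mathscr{PN}_n(\mathcal{M}(g))$ whose beads are the corresponding monomials $m_{i_1}, m_{i_2}, \ldots, m_{i_n}$, and whose evaluation is the product of those monomials. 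Primitivity is preserved because the rearrangement structure depends only on positions, not on the underlying alphabet. This yields
\[
L_n[g] \;=\; \sum_{\eta \in \mathscr{PN}_n(\mathcal{M}(g))} \mathrm{eval}(\eta).
\]

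Finally, summing over all $n \geq 1$ gives exactly
\[
\sum_{n \geq 1} L_n[g] \;=\; \sum_{n \geq 1}\sum_{\eta \in \mathscr{PN}_n(\mathcal{M}(g))}\mathrm{eval}(\eta) \;=\; \sum_{\eta \in \mathscr{PN}(\mathcal{M}(g))}\mathrm{eval}(\eta),
\]
since $\mathscr{PN}(\mathcal{M}(g))$ is by definition the disjoint union of the $\mathscr{PN}_n(\mathcal{M}(g))$ over $n \geq 1$. The only non-routine step is verifying that the Gessel--Reutenauer necklace formula for $L_n$ is compatible with the formal-power-series plethysm of Section~\ref{def:plethysm-formal-power-series} when $g$ carries non-symmetric variables $z_i$; this reduces to observing that both sides are polynomial in the $z_i$'s and that, after fixing the exponents of the $z$'s, one is computing an ordinary plethysm in the $x$'s, to which the Gessel--Reutenauer interpretation directly applies.
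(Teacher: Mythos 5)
Your proof is correct and takes essentially the same route as the paper's: cite Gessel--Reutenauer's primitive-necklace expansion $L_n=\sum_{\tau\in\mathscr{PN}_n(x_1,x_2,\ldots)}\mathrm{eval}(\tau)$, then apply the definition of plethysm of formal power series. The paper's proof is a one-liner after the citation; you spell out the intermediate step---that the bead-by-bead substitution $x_i\mapsto(\text{$i$-th entry of }\mathcal{M}(g))$ gives a primitivity-preserving bijection between $\mathscr{PN}_n(\{x_1,x_2,\ldots\})$ and $\mathscr{PN}_n(\mathcal{M}(g))$ matching evaluations---which is exactly what the paper's ``the lemma then follows from the definition of plethysm'' is silently invoking.
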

\begin{proof}
 In \cite[Equation 2.1]{gesselreutenauer} (by setting $\lambda = n$), Gessel--Reutenauer explain that \[L_n = \sum_{\tau \in \mathscr{PN}_n(x_1, x_2, \ldots)}\mathrm{eval}(\tau).\] The lemma then follows from the definition of plethysm of formal power series (\cref{def:plethysm-formal-power-series}). 
\end{proof}

We are finally ready to prove \cref{lem:necklace-bijection}.
\begin{lemma} \label{lem:necklace-bijection}
    \[ \sum_{r \geq 1} L_r[z_1h_1 + z_2h_2  + \ldots] = \sum_{\substack{\text{Lyndon}\\w}} \ \sum_{m \geq 1}\mathbf{z}_{w^m}L_m[h_w].\]
\end{lemma}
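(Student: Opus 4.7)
The plan is to chain together the two preceding results, \cref{prop:eval-preserve-bij} and \cref{lem:Lie-plethysm-in-terms-of-necklaces}, in a ``necklace sandwich'': use the necklace interpretation to translate both sides of the identity into sums of evaluations of primitive necklaces, apply the bijection $\Psi$ in the middle, and then use a simple homogeneity property of plethysm to separate out the $\mathbf{z}$-weights on the right.

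First, I would apply \cref{lem:Lie-plethysm-in-terms-of-necklaces} with $g = z_1 h_1 + z_2 h_2 + \ldots$ to obtain
\[
\sum_{r \geq 1} L_r[z_1 h_1 + z_2 h_2 + \ldots] \;=\; \sum_{\eta \in \mathscr{PN}(\mathcal{M}(z_1 h_1 + z_2 h_2 + \ldots))} \mathrm{eval}(\eta).
\]
Next I would invoke \cref{prop:eval-preserve-bij}, the evaluation-preserving bijection
\[
\Psi: \mathscr{PN}\bigl(\mathcal{M}(z_1 h_1 + z_2 h_2 + \ldots)\bigr) \;\longrightarrow\; \bigsqcup_{\substack{\text{Lyndon}\\ w}} \mathscr{PN}\bigl(\mathcal{M}(\mathbf{z}_w h_w)\bigr),
\]
to rewrite the sum as $\displaystyle \sum_{\substack{\text{Lyndon}\\ w}} \sum_{\tau \in \mathscr{PN}(\mathcal{M}(\mathbf{z}_w h_w))} \mathrm{eval}(\tau)$. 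Finally, I would apply \cref{lem:Lie-plethysm-in-terms-of-necklaces} again, this time in reverse and once for each Lyndon word $w$ with $g = \mathbf{z}_w h_w$, to collapse the inner necklace sum into $\sum_{m \geq 1} L_m[\mathbf{z}_w h_w]$.

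The only remaining step, which I expect to be the most technical, is the identity $L_m[\mathbf{z}_w h_w] = \mathbf{z}_{w^m}\, L_m[h_w]$. The reasoning is that in plethysm of formal power series, each variable $x_i$ of $L_m$ is replaced by a monomial in $\mathcal{M}(\mathbf{z}_w h_w)$, and every such monomial carries the common factor $\mathbf{z}_w$. Since $L_m$ is homogeneous of degree $m$ in the $x$-variables, each of the $m$ substituted letters contributes one copy of $\mathbf{z}_w$, producing an overall factor of $\mathbf{z}_w^m$. A direct check on monomials gives $\mathbf{z}_w^m = \mathbf{z}_{w^m}$ (since concatenating $w$ with itself $m$ times multiplies each variable's exponent by $m$), and the remaining sum is exactly $L_m[h_w]$. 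Putting everything together yields
\[
\sum_{r \geq 1} L_r[z_1 h_1 + z_2 h_2 + \ldots] \;=\; \sum_{\substack{\text{Lyndon}\\ w}} \sum_{m \geq 1} \mathbf{z}_{w^m}\, L_m[h_w],
\]
as desired. The hardest part is really bookkeeping: being careful that the auxiliary variables $z_i$ in the formal power series framework truly behave as ``inert'' scalars with respect to the plethystic substitution in the $x$-variables, so that the factorization $L_m[\mathbf{z}_w h_w] = \mathbf{z}_{w^m} L_m[h_w]$ is legitimate; everything else is a direct application of results already in hand.
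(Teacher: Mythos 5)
Your proposal is correct and takes essentially the same route as the paper: both chain together \cref{prop:eval-preserve-bij} and \cref{lem:Lie-plethysm-in-terms-of-necklaces} and then extract the common $\mathbf{z}_w$ factor using the homogeneity of $L_m$ to get $L_m[\mathbf{z}_w h_w] = \mathbf{z}_{w^m}L_m[h_w]$. The only cosmetic difference is order of presentation; you also spell out the homogeneity argument for the final factorization, which the paper leaves as an ``observation.''
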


\begin{proof} 
By \cref{prop:eval-preserve-bij}, there is an evaluation-preserving bijection between \begin{align*}
\bigsqcup_{\substack{\text{Lyndon}\\ w}}\mathscr{PN}\left(\mathcal{M}\left(\mathbf{z}_wh_w\right)\right) \text{ and } \mathscr{PN}\left(\mathcal{M}\left(z_1 h_1 + z_2h_2 + \ldots\right)\right)  .
\end{align*}
Hence, we have the equality
\begin{align*}
   \sum_{\substack{\text{Lyndon}\\ w}}\sum_{\substack{\eta \in\\ \mathscr{PN}(\mathcal{M}(\mathbf{z}_wh_w))}}\mathrm{eval}(\eta) = \sum_{\substack{\eta \in\\ \mathscr{PN}(\mathcal{M}(z_1h_1 + \ldots))}}\mathrm{eval}(\eta).
\end{align*}

Thus, by \cref{lem:Lie-plethysm-in-terms-of-necklaces},
\begin{align*}
\sum_{\substack{\text{Lyndon} \\ w}}\sum_{m \geq 1}L_m[\mathbf{z}_wh_w]
=  \sum_{r \geq 1}L_r[z_1h_1 + z_2h_2 + \ldots].
\end{align*}
The lemma then follows from the observation that 
\begin{align*}
L_m[\mathbf{z}_wh_w]
= \mathbf{z}_{w^{m}}L_m[h_w]. 
\end{align*}
\end{proof}

    \subsection{General intervals}\label{sec:general-case}
        In this section, we complete the proofs from step 3 of the proof outline in \cref{sec:outline-proof}. In \cref{subsec:stabilizers}, we combinatorially interpret the $S_n$-orbits of $X \leq Y$ and identify their stabilizer subgroups. In \cref{subsec:homology-intervals-lattice}, we compute the representations on the homology of intervals of $\Pi_n$ and their associated determinant twists, allowing us to understand the representations $E_Y\C \mathcal{F}_n E_X$ in terms of step 2. 
\subsubsection{Identifying intersections of stabilizers} \label{subsec:stabilizers}
\begin{definition}
In a \textbf{multiset partition}, repeated elements are viewed as indistinguishable. For example, there are precisely four multiset partitions of the multiset $\{1, 1, 2\}:$ $\{112\}, \{1, 12\}, \{11, 2\}, \{1, 1, 2\}.$ Given partitions $\lambda, \mu$ of the same size we define a set \emph{$\lambda(\mu)$} of multiset partitions.  In particular, if $\mu$ does not refine $\lambda,$ set $\lambda(\mu)$ to be the empty set. Otherwise, define $\lambda(\mu)$ to be the set of multiset partitions of the multiset $\{\mu_i: 1 \leq i \leq \ell(\mu)\}$ which have block sizes summing to the parts of $\lambda$.
\end{definition}
\begin{example}
\begin{itemize}
    \item Let $\mu = (5, 3, 2, 2, 1,1)$ and $\lambda = (6, 5, 3).$ Then, \[\lambda(\mu) = \left\{\{ 2211 , 5 , 3\},\{ 51, 32, 21\}, \{ 51 , 221 , 3\}, \{ 321 , 5 , 21\} \right \}. \]
    \item Let $\mu = (2, 2, 1, 1, 1, 1)$ and $\lambda = (3,3,2)$. Then, 
    \[\lambda(\mu) = \left\{ \{21, 21, 11\}, \{21 , 111 , 2\}\right\}.\]
\end{itemize}
\end{example}

\begin{lemma} \label{lem:bij-w-lambda(mu)}
There is a bijection 
\begin{align*}
    \left\{S_n\text{-orbits }[X \leq Y]: X \in \mu, Y \in \lambda\right\}&\longleftrightarrow \lambda(\mu).
\end{align*}
\end{lemma}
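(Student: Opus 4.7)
The plan is to write down an explicit bijection, by extracting from each pair $X \leq Y$ the combinatorial datum that records how $Y$'s blocks break up under refinement to $X$. Given $X \leq Y$ in $\Pi_n$ with $X \in \mu$ and $Y \in \lambda$, each block $Y_j$ of $Y$ is the disjoint union of those blocks of $X$ it contains, so it is natural to assign to $Y_j$ the multiset $M_j := \{|X_i| : X_i \subseteq Y_j\}$. The entries of $M_j$ are positive integers summing to $|Y_j| = \lambda_j$, and ranging over all $j$ we use up each part of $\mu$ exactly once. Hence the collection $\{M_1, M_2, \ldots, M_{\ell(\lambda)}\}$ is a multiset partition of the multiset of parts of $\mu$ whose block sums are the parts of $\lambda$, i.e.\ an element of $\lambda(\mu)$. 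Call this assignment $\Phi$.

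I would then check that $\Phi$ descends to orbits and has an inverse. Well-definedness is immediate: any $\pi \in S_n$ carries $X$-blocks and $Y$-blocks to $X$-blocks and $Y$-blocks of the same sizes and preserves containment, so $\Phi(\pi X, \pi Y) = \Phi(X, Y)$. For the inverse $\Theta$, given $M = \{M_1, \ldots, M_{\ell(\lambda)}\} \in \lambda(\mu)$, partition $[n]$ into consecutive intervals of sizes $\lambda_1, \ldots, \lambda_{\ell(\lambda)}$ to form the blocks of a set partition $Y$, and within the $j$-th interval, subdivide into consecutive subintervals of sizes read off from $M_j$ to form the blocks of a refinement $X$. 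By construction $\Phi \circ \Theta$ is the identity on $\lambda(\mu)$.

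The main step, injectivity of $\Phi$ modulo the $S_n$-action, amounts to showing that two pairs with the same associated multiset partition $M$ lie in a common $S_n$-orbit. Given $(X, Y)$ and $(X', Y')$ both mapping to $M$, I would build the required permutation blockwise. Since the multiset of ``$X$-refinement types'' of the blocks of $Y$ coincides with that of $Y'$, I can match up the blocks of $Y$ with those of $Y'$ so that paired blocks have the same refinement multiset; within each matched pair, the $X$-blocks and $X'$-blocks have identical multisets of sizes, so I can match them up and finally choose any bijection of elements on each matched sub-block. Assembling these choices yields $\pi \in S_n$ with $\pi X = X'$ and $\pi Y = Y'$, completing the proof. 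I expect no serious obstacle here; the only care needed is bookkeeping when $\mu$ has repeated parts, which is handled automatically because we are working with multisets throughout.
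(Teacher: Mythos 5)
Your proposal is correct and follows essentially the same route as the paper: you define the identical map (recording the multiset of $X$-block sizes inside each $Y$-block), check it is well-defined on orbits, and verify it is a bijection by building representatives and matching blocks of equal type. The paper's proof differs only in minor bookkeeping — it first normalizes both pairs to have $X = X_\mu$ before matching blocks, whereas you match blocks of $(X,Y)$ and $(X',Y')$ directly — but the underlying combinatorial content is the same.
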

\begin{proof}
Consider the map which sends $[X \leq Y]$ to the multiset partition in $\lambda(\mu)$ which has one block $\nu$ for each block $Y_i$ of $Y,$ where $\nu$ is comprised of the sizes $\nu_1, \nu_2, \ldots, \nu_{\ell(\nu)}$ of the blocks in $X_{j_1}, \ldots, X_{j_{\ell(\nu)}}$ which combine to form $Y_i.$ To see that this map is well-defined, observe that for $\pi \in S_n,$ the blocks of $\pi(X)$ which form $\pi(Y_i)$ are $\pi\left(X_{j_1}\right), \ldots, \pi \left(X_{j_{\ell(\nu)}}\right),$ whose sizes are preserved.

Now, we prove the map is a bijection. First, note that each orbit has a representative of the form $X_\mu \leq Y,$ since $S_n$ acts transitively on $X$ with block sizes $\mu.$  Fixing a multiset partition $S$ in $\lambda(\mu)$, one can always build a set partition $Y$ with block sizes $\lambda$ for which $[X_\mu \leq Y]$ maps to $S$ by combining blocks of the appropriate sizes in $X_\mu$ to form $Y$. Hence, the map is surjective. To see injectivity, assume $[X_\mu \leq Y]$ and $[X_\mu \leq Y']$ map to the same multiset partition in $\lambda(\mu)$. By construction of the map, one can then make wholesale permutations between\textit{ blocks of the same sizes} in $X_\mu$ to get from $Y$ to $Y'$. A permutation formed from such swaps is necessarily in $\stab_{S_n}(X_\mu)$, so $[X_\mu \leq Y] = [X_\mu \leq Y'],$ as desired.
\end{proof}

It will be useful for us to specify a canonical representative of each orbit $[X \leq Y];$ equivalently, given an $S \in \lambda(\mu),$ we specify a canonical $X_S \leq Y_S$ for which $[X_S \leq Y_S]$ maps to $S.$ 
\begin{definition}\label{def:XS-YS}(Definition of $X_S, Y_S$)
Fix a multiset partition $S \in \lambda(\mu).$ Define \emph{$Y_S: = X_\lambda.$} Place an ordering on the blocks of $Y = Y_1, Y_2, \ldots, Y_{\ell(\lambda)}$ so that the minimum elements of the blocks increase. We place an ordering also on the blocks $\nu$ of $S = \{\nu^{m_\nu}\}$: first by size of $|\nu|,$ then breaking ties by increasing in lexicographical order (when the parts of $\nu$ are ordered to weakly decrease). In this way, each block in $Y$ is labelled by a partition $\nu \in S.$ To form the set partition $X_S$ from $Y_S,$ partition each block $B$ of $Y_S$ as follows. If $B'$s associated partition is $\nu = 1^{m_1}2^{m_2}\ldots$ then let the $m_1$ smallest numbers of $B$ be blocks of size one, then let the next $2m_2$ smallest numbers of $B$ be blocks of size $2$ (grouping together the smallest two, then the next smallest two, etc) and so on.
\end{definition}

\begin{example}\label{Ex:XS.YS}
    As an example, consider $S = \left\{3, 3, 3, 211, 211, 22, 22\right\} \in \lambda(\mu),$ for  $\lambda = 4^4 3^3$ and $\mu = 3^3 2^6 1^4.$ The partitions $\nu \in S$ are already written in the order described in \cref{def:XS-YS}. $Y_S$ (with its associated labels by the $\nu \in S$) is
   \begin{align*}Y_S = \left\{\underbrace{\{1, 2, 3\}}_{\nu = 3}, \underbrace{\{4, 5, 6\}}_{\nu = 3}, \underbrace{\{7, 8, 9\}}_{\nu = 3}, \underbrace{\{10, 11, 12, 13\}}_{\nu = 211}, \underbrace{\{14, 15, 16, 17\}}_{\nu = 211}, \underbrace{\{18, 19, 20, 21\}}_{\nu = 22}, \underbrace{\{22, 23, 24, 25\}}_{\nu = 22}\right\}.
   \end{align*} 
   Using the algorithm from \cref{def:XS-YS}, we obtain $X_S$ as
   \begin{align*}
        X_S =& \Big\{\{1, 2, 3\}, \{4, 5, 6\}, \{7, 8, 9\}, \{10\}, \{11\}, \{12, 13\}, \{14\}, \{15\}, \{16, 17\}, \{18, 19\},\\& \{20, 21\}, \{22, 23\}, \{24, 25\} \Big\}.
   \end{align*}
%   We include some more (smaller) examples below, without the details. Note that $\lambda$ and $\mu$ can be understood from the provided $S$ and we have written the partitions $\nu \in S$ in the order described in \cref{def:XS-YS}.
%   \begin{center}
%\begin{tabular}{|c|c|}
%\hline
 %  $S$ & $X_S \leq Y_S$\\
  %  \hline
   % $\{31, 2111\}$ & $\{1, 234, 5, 6, 7, 89\} \leq \{1234, 56789\}$\\
    %  $\{111, 21, 21\}$ &$\{1, 2, 3, 4, 56, 7, 89\} \leq \{123, 456, 789\}$\\
     %$\{1, 22, 22\}$ &$\{1, 23, 45, 67, 89\} \leq \{1, 2345, 6789\}$ \\\hline
%\end{tabular}
%\end{center}
\end{example}
For a fixed multiset partition $S = \{\nu^{m_{\nu}}: \nu \in S\} \in \lambda(\mu)$ with $|\lambda| = n,$ consider the subgroup of $S_n$ generated by following two types of permutations:
\begin{itemize}
    \item \textbf{Outer:} For each $\nu \in S:$ the transpositions which \textit{wholesale swap} two blocks of $Y_S$ labelled by the same partition $\nu$, and leave the remaining integers alone. (I.e. if $Y_i, Y_j$ are two blocks labelled by $\nu,$ the permutation which swaps the smallest numbers of $Y_i,Y_j,$ then the second smallest numbers, and so on, but fixes all $k \notin Y_i \cup Y_j$.)
    \item \textbf{Inner:} For each block $B$ of $Y_S,$ any permutation which \textit{maintains} the set partition of $B$ given by its sublocks in $X_S$ (and fixes all $i \notin B$). 
\end{itemize}
The union of these types of permutations generate a subgroup of $S_n$ isomorphic to \[\prod_{\nu \in S} S_{m_\nu}\left[\stab_{S_{|\nu|}}\left(X_\nu\right)\right].\] In fact, this shall be our definition of how to view $\prod_{\nu \in S} S_{m_\nu}\left[\stab_{S_{|\nu|}}\left(X_\nu\right)\right]$ as a subgroup of $S_n.$ Note that it embeds naturally into $\prod_{\nu}S_{m_\nu}\left[S_{|\nu|}\right],$ which in turn embeds naturally into $S_{m_1, 2 m_2, 3 m_3, \ldots}$ (writing $\lambda \vdash n$ as $1^{m_1}2^{m_2}\ldots$). 

\begin{example}\label{ex:stab-subgroup} We point out how to view $S_{3}[S_3]\times S_2\left[S_2[S_1] \times S_1[S_2]\right] \times S_2\left[S_2[S_2]\right]$ as a subgroup of $S_{25}$ pictorially in \cref{fig:stab-int}. The nineteen generators are indicated by the 19 arrows. Given the similarities to \cref{ex:wr-prod-1} and \cref{ex:wreath-prod}, we do not write them all out here. However, we will explain that the generators corresponding to the two teal arrows are the permutations \color{teal}{$(10, 14)(11, 15)(12,16)(13,17)$} \normalcolor and \color{teal}{$(18, 22)(19, 23)(20, 24)(21,25).$} \color{black}{} \normalcolor

 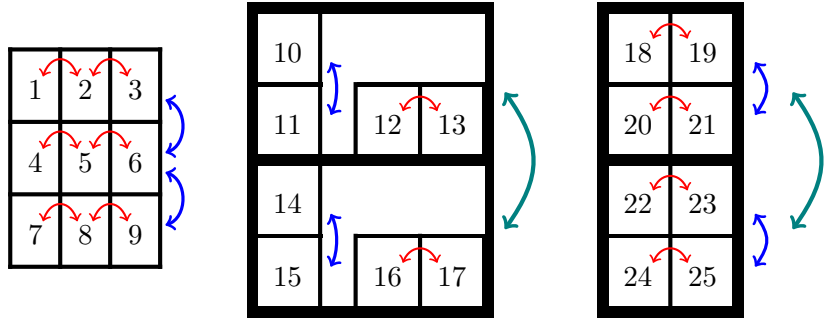
\begin{figure}[h]
 \normalcolor

    \centering
\def\thickhline{\noalign{\hrule height 4pt}}

\begin{center}
    \renewcommand{\arraystretch}{2}
\begin{NiceTabular}{ccc}[hvlines, create-medium-nodes]
   $1$ & $2$   & $3$  \\
  $4$ &  $5$ & $6$\\   
  $7$ &  $8$ & $9$
  \CodeAfter
  \begin{tikzpicture} %[->, name suffix=-medium]
        \draw[red,  thick, <->] ([yshift=0.5mm]2-2.north west) .. controls +(-.1,.3) and +(.1,.3) ..  ([yshift=0.5mm]2-1.north east) ; 
        \draw[red,  thick, <->] ([yshift=0.5mm]2-3.north west) .. controls +(-.1,.3) and +(.1,.3) ..  ([yshift=0.5mm]2-2.north east) ; 
        \draw[red,  thick, <->] ([yshift=0.5mm]1-2.north west) .. controls +(-.1,.3) and +(.1,.3) ..  ([yshift=0.5mm]1-1.north east) ; 
        \draw[red,  thick, <->] ([yshift=0.5mm]1-3.north west) .. controls +(-.1,.3) and +(.1,.3) ..  ([yshift=0.5mm]1-2.north east) ; 
        \draw[red,  thick, <->] ([yshift=0.5mm]3-2.north west) .. controls +(-.1,.3) and +(.1,.3) ..  ([yshift=0.5mm]3-1.north east) ; 
        \draw[red,  thick, <->] ([yshift=0.5mm]3-3.north west) .. controls +(-.1,.3) and +(.1,.3) ..  ([yshift=0.5mm]3-2.north east) ; 
    \draw[blue, very thick, <->] ([xshift=.3cm]1-3.south east) .. controls +(.3,-.1) and +(.3, .1) ..  ([xshift=.3cm]2-3.north east) ; 
        \draw[blue, very thick, <->] ([xshift=.3cm]2-3.south east) .. controls +(.3,-.1) and +(.3, .1) ..  ([xshift=.3cm]3-3.north east) ; 
  \end{tikzpicture}
\end{NiceTabular}\hspace{1cm}\begin{NiceTabular}{!{\vrule width 4pt}c|c|c|c!{\vrule width 4pt}}[create-medium-nodes]
\thickhline
 $10$
  &   \multicolumn{3}{c!{\vrule width 3pt}}{} \\
 \cline{1} \cline{3-4}
 $11$ & & $12$ & $13$\\ \thickhline
$14$  &  \multicolumn{3}{c!{\vrule width 3pt}}{} \\
\cline{1} \cline{3-4}
 $15$ & & $16$ & $17$\\
 \thickhline
 \CodeAfter
   \begin{tikzpicture} %[->, name suffix=-medium]
        \draw[red, thick, <->] ([yshift=0.5mm]2-3.north east) .. controls +(.1,.25) and +(-.1,.25) ..  ([yshift=0.5mm]2-4.north west) ; 
        \draw[red, thick, <->] ([yshift=0.5mm]4-3.north east) .. controls +(.1,.25) and +(-.1,.25) ..  ([yshift=0.5mm]4-4.north west) ; 
        \draw[blue, very thick, <->] ([xshift=.37cm]1-1.south east) .. controls +(.1, -.2) and +(.1,.2) ..  ([xshift=0.37cm]2-1.north east) ; 
        \draw[blue, very thick, <->] ([xshift=0.37cm]3-1.south east) .. controls +(.1, -.2) and +(.1,.2) ..  ([xshift=0.37cm]4-1.north east) ; 
        \draw[teal, ultra thick, <->] ([xshift=.5cm, yshift=3mm]2-4.north east) .. controls +(.5, -.7) and +(.5,.7) ..  ([xshift=.5cm, yshift=5mm]4-4.north east) ; 
    
  \end{tikzpicture}
\end{NiceTabular} 
\hspace{1cm} \begin{NiceTabular}{!{\vrule width 4pt}c|c!{\vrule width 4pt}}[create-medium-nodes]
\thickhline
 $18$ & $19$  \\ \hline
 $20$ & $21$  \\ \thickhline
$22$  & $23$  \\ \hline
 $24$ & $25$  \\
 \thickhline
 \CodeAfter
   \begin{tikzpicture} %[->, name suffix=-medium]
        \draw[blue, very thick, <->] ([xshift=5mm]1-2.south east) .. controls +(.2, -.3) and +(.2,.3) ..  ([xshift=5mm]2-2.north east) ; 
        
        \draw[blue, very thick, <->] ([xshift=5mm]3-2.south east) .. controls +(.2, -.3) and +(.2,.3) ..  ([xshift=5mm]4-2.north east) ; 
                \draw[teal, ultra thick, <->] ([xshift=1cm, yshift=3mm]2-2.north east) .. controls +(.5, -.7) and +(.5,.7) ..  ([xshift=1cm, yshift=5mm]4-2.north east) ;
                  \draw[red, thick, <->] ([yshift=0.5mm]4-1.north east) .. controls +(.1,.25) and +(-.1,.25) ..  ([yshift=0.5mm]4-2.north west) ; 
                   \draw[red, thick, <->] ([yshift=0.5mm]1-1.north east) .. controls +(.1,.25) and +(-.1,.25) ..  ([yshift=0.5mm]1-2.north west) ; 
                    \draw[red, thick, <->] ([yshift=0.5mm]2-1.north east) .. controls +(.1,.25) and +(-.1,.25) ..  ([yshift=0.5mm]2-2.north west) ; 
                     \draw[red, thick, <->] ([yshift=0.5mm]3-1.north east) .. controls +(.1,.25) and +(-.1,.25) ..  ([yshift=0.5mm]3-2.north west) ; 
  \end{tikzpicture}
\end{NiceTabular} 
\end{center}

        \caption{$S_{3}[S_3]\times S_2\left[S_2[S_1] \times S_1[S_2]\right] \times S_2\left[S_2[S_2]\right]$ as a subgroup of $S_{25}.$}
        \label{fig:stab-int}
    \end{figure}
\end{example}

\begin{lemma} \label{lem:normalizer-intersections}

Let $S = \{\nu^{m_\nu}: \text{partition }\nu\}$ be a multiset partition in $\lambda(\mu).$ Then, 
\[\stab_{S_n}(X_S) \cap \stab_{S_n}(Y_S) = \prod_{\nu \in S} S_{m_\nu}\left [\stab_{S_{|\nu|}}(X_\nu)\right].\] 
\end{lemma}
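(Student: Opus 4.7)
The plan is to prove the two set-theoretic inclusions separately, using the explicit generating set for the wreath product given immediately before the statement of the lemma. Since the embedding $\prod_{\nu \in S} S_{m_\nu}[\stab_{S_{|\nu|}}(X_\nu)] \hookrightarrow S_n$ is defined to be the subgroup generated by the inner and outer permutations associated with $(X_S, Y_S)$, the whole statement reduces to showing that this generated subgroup coincides with $\stab_{S_n}(X_S) \cap \stab_{S_n}(Y_S)$.

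For the forward inclusion ($\supseteq$) I would verify that each inner and each outer generator lies in both $\stab_{S_n}(X_S)$ and $\stab_{S_n}(Y_S)$. Inner generators fix each block of $Y_S$ setwise and, by construction, preserve the sub-block structure that $X_S$ induces on each block, so they fix both set partitions. Outer generators only swap pairs of blocks of $Y_S$ with a common $\nu$-label (hence of a common size), so they automatically fix $Y_S$; the one nonobvious check is that the specific ``smallest-to-smallest'' wholesale swap of two $\nu$-labelled blocks also fixes $X_S$. This is exactly what \cref{def:XS-YS} is engineered for: in every $\nu$-labelled block of $Y_S$, the $m_1$ smallest elements form singleton subblocks, the next $2m_2$ smallest form size-$2$ subblocks grouped consecutively, and so on, where $\nu = 1^{m_1} 2^{m_2} \cdots$. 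Thus the smallest-to-smallest bijection between any two $\nu$-labelled blocks sends subblocks to subblocks.

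For the reverse inclusion ($\subseteq$) I would take $\pi \in \stab_{S_n}(X_S) \cap \stab_{S_n}(Y_S)$ and decompose it. Since $\pi$ permutes the blocks of $Y_S$ and must preserve the partition-type induced by $X_S$ on each block, it can only send a $\nu$-labelled block of $Y_S$ to another $\nu$-labelled block. Grouping blocks by label, $\pi$ therefore splits as a product $\pi = \prod_{\nu \in S} \pi_\nu$ of commuting permutations, where each $\pi_\nu$ acts only on the union $U_\nu$ of the $m_\nu$ blocks of $Y_S$ labelled by $\nu$ and stabilizes the restrictions $X_S|_{U_\nu}$ and $Y_S|_{U_\nu}$. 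Hence it suffices to identify each $\pi_\nu$ with an element of $S_{m_\nu}[\stab_{S_{|\nu|}}(X_\nu)]$ inside the subgroup generated by the inner/outer permutations involving only $U_\nu$.

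To do this identification, I would let $B_1, \ldots, B_{m_\nu}$ be the $\nu$-labelled blocks of $Y_S$ listed in the canonical order of \cref{def:XS-YS}, and take $\phi_i : B_i \to [|\nu|]$ to be the unique order-preserving bijection; by the same construction of $X_S$ used in the first inclusion, $\phi_i\bigl(X_S|_{B_i}\bigr) = X_\nu$ for each $i$. Factoring $\pi_\nu$ as an outer permutation $\sigma \in S_{m_\nu}$ recording which $B_i$ is sent to which $B_{\sigma(i)}$, post-composed with inner adjustments $g_i \in \stab_{S_{|\nu|}}(X_\nu)$ obtained by conjugating $\pi_\nu|_{B_i \to B_{\sigma(i)}}$ through $\phi_i$ and $\phi_{\sigma(i)}$, realizes $\pi_\nu$ as the image of $(g_1, \ldots, g_{m_\nu}; \sigma)$ under the embedding. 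The main obstacle is purely bookkeeping: one must check that the wreath-product multiplication convention (in which the top $S_{m_\nu}$ permutes the coordinates before multiplying) matches the outer-then-inner factorization above, and that the order-preserving $\phi_i$'s really do intertwine the ``smallest-to-smallest'' outer swap with the standard coordinate permutation. Both of these match because of the very specific canonical choices made in \cref{def:XS-YS}, so the identification goes through and the lemma follows.
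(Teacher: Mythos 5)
Your proof is correct, but the second half takes a genuinely different route from the paper's. For the containment $\prod_{\nu \in S} S_{m_\nu}[\stab_{S_{|\nu|}}(X_\nu)] \subseteq \stab_{S_n}(X_S)\cap\stab_{S_n}(Y_S)$ you and the paper do the same thing: check it on the inner/outer generating set, using the canonical ``smallest-to-smallest'' conventions in \cref{def:XS-YS} to see the outer swaps preserve $X_S$. For the reverse containment, the paper does \emph{not} decompose an arbitrary $\pi$. Instead it invokes the orbit--stabilizer theorem: since $\stab_{S_n}(X_S)\cap\stab_{S_n}(Y_S)$ is the stabilizer of the pair $(X_S,Y_S)$, it suffices to check that $|[X_S \leq Y_S]| = n!\,/\,\bigl|\prod_{\nu} S_{m_\nu}[\stab_{S_{|\nu|}}(X_\nu)]\bigr|$, which the paper does with a direct count of pairs $X\leq Y$ mapping to $S$. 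Your approach is more constructive: you split $\pi$ into commuting pieces $\pi_\nu$ on the unions $U_\nu$ of $\nu$-labelled blocks (legitimate, since $\pi$ must preserve the $X_S$-induced subblock type of each $Y_S$-block), then factor each $\pi_\nu$ as a block permutation followed by block-preserving adjustments and transport via the order-preserving bijections $\phi_i$ into the standard embedding. One small clean-up worth making explicit: after peeling off the outer permutation $O_\sigma$ realizing the block permutation, the residue $\pi_\nu \circ O_\sigma^{-1}$ fixes each $Y_S$-block setwise and still stabilizes $X_S$ (as a product of two $X_S$-stabilizers), hence is a product of inner generators --- this closes the loop without appealing to orders. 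The trade-off: the paper's counting proof is shorter and avoids wreath-product bookkeeping, while yours exhibits the isomorphism explicitly, which is arguably more illuminating and would also directly give the factorization $\pi = \prod_\nu(g_1^\nu,\dots,g_{m_\nu}^\nu;\sigma^\nu)$ if one needed it later.
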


\begin{proof}
By construction, every generator (and so every element) of $\prod_{\nu \in S} S_{m_\nu}\left[\stab_{S_{|\nu|}}\left(X_\nu\right)\right]$ fixes both $Y_S$ and $X_S.$ Hence, $\prod_{\nu \in S} S_{m_\nu} \left[\stab_{S_{|\nu|}}\left(X_\nu\right)\right]$ is a subgroup of $\stab_{S_n}(X_S) \cap \stab_{S_n}(Y_S).$  To prove equality of the subgroups, we shall show their sizes are the same. 

Observe that $\stab_{S_n}\left(X_S \leq Y_S\right) = \stab_{S_n}(X_S) \cap \stab_{S_n}\left(Y_S\right).$ By the orbit--stabilizer theorem, it suffices to prove
\begin{align}\label{eqn:stabilizer}
    \left \lvert [X_S \leq Y_S]\right  \rvert = \frac{n!}{\left \lvert \prod_{\nu \in S}S_{m_\nu}\left[\stab_{S_{|\nu|}}\left(X_\nu\right)\right]\right \rvert}.
\end{align}
Given a partition $\nu = 1^{n_1} 2^{n_2} \ldots, $ define a constant $N_\nu = \prod_{i}n_i! (i!)^{n_i}.$
Observe that $\left \lvert \stab_{S_{|\nu|}}\left(X_\nu\right)\right \rvert = N_\nu,$ so that the right side of \cref{eqn:stabilizer} can be rewritten as
\[\frac{n!}{\left \lvert \prod_{\nu \in S}S_{m_\nu}\left[\stab_{S_{|\nu|}}\left(X_\nu\right)\right]\right \rvert} = \frac{n!}{\prod_{\nu \in S}m_\nu! \left(N_\nu\right)^{m_\nu}}.\]

The left side of \cref{eqn:stabilizer} counts the number of distinct pairs $X \leq Y \in \Pi_n$ so that $[X \leq Y]$ maps to $S.$ The number of ways to pick a set partition $Y \in \lambda$ is $\frac{n!}{N_\lambda}.$ To pick a set partition $X$ refining $Y$ so that $[X \leq Y]$ maps to $S,$ we must (i) pick which $m_\nu$ blocks of $Y$ of size $i$ get assigned to be partitioned into blocks of sizes for each $\nu \in S$ with $\nu \vdash i$ then (ii) count the number of ways to break each block of type $\nu$ up. 

Let $m_i(\lambda)$ denote the number of parts of size $i$ within the partition $\lambda.$ The number of ways to assign the blocks of size $i$ the partitions $\nu \vdash i$ is $\frac{m_i(\lambda)!}{\prod_{\nu \vdash i}m_\nu!}.$ The number of ways to break up a block of size $i$ into blocks with sizes the parts of $\nu$ is $\frac{i!}{N_\nu}.$ Putting it all together, the left side of \cref{eqn:stabilizer} is
\begin{align*}
\frac{n!}{N_\lambda} \prod_{i} \left(\frac{m_i(\lambda)!}{\prod_{\nu \vdash i} m_\nu!} \prod_{\nu \vdash i}\left(\frac{i!}{N_\nu}\right)^{m_\nu}\right) &=n! \prod_{i} \left(\frac{1}{\prod_{\nu \vdash i} m_\nu!} \prod_{\nu \vdash i}\left(\frac{1}{N_\nu}\right)^{m_\nu}\right)\\
&= \frac{n!}{\prod_\nu m_\nu! \left (N_\nu\right)^{m_\nu}},
\end{align*}
as desired.
\end{proof}

\subsubsection{Twisted homology of intervals in the partition lattice}\label{subsec:homology-intervals-lattice}

\begin{lemma} \label{lem:det-rep}
Let $S = \{\nu^{m_\nu}: \text{partition }\nu\}$ be a multiset partition in $\lambda(\mu).$ As representations of $ \prod_{\nu \in S}S_{m_\nu}[\stab_{S_{|\nu|}}(X_\nu)]$, 
\begin{align*}
    \det\left(X_{S}\right) \otimes \det\left(Y_S\right) \cong \bigotimes_{\nu \in S}\rho(\nu) \left[\det(X_\nu)\right],
\end{align*}
where $\rho(\nu) = \mathbb{1}_{S_{m_\nu}}$ if $\ell(\nu)$ is odd and $\rho(\nu) = \sgn_{S_{m_\nu}}$ if $\ell(\nu)$ is even.
\end{lemma}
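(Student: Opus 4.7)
The plan is to exploit that both sides of the claim are one-dimensional characters of the subgroup $\prod_{\nu \in S}S_{m_\nu}[\stab_{S_{|\nu|}}(X_\nu)]$ (identified with $\stab_{S_n}(X_S)\cap \stab_{S_n}(Y_S)$ via \cref{lem:normalizer-intersections}), so it suffices to verify agreement on a generating set. I would use the outer/inner generators described immediately before \cref{lem:normalizer-intersections}, and compare against the right-hand side using the one-line identity that for one-dimensional characters $\psi$ of $S_m$ and $\chi$ of $G$,
\[(\psi[\chi])(g_1,\ldots,g_m;\sigma) \;=\; \psi(\sigma)\prod_{i}\chi(g_i),\]
which follows immediately from the definition of $V[W]$ given in \cref{sec:two-algebras}.

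To compute $\det(X_S)$ and $\det(Y_S)$, I would fix the orientations on the subspaces associated to $X_S$ and $Y_S$ given by the sum-of-coordinate vectors $v_B = \sum_{j \in B}e_j$, one per block $B$, ordered lexicographically by the minimum element of $B$. For an outer transposition $\pi$, wholesale swapping two $\nu$-labelled blocks $B, B'$ of $Y_S$, $\pi$ swaps the two $Y_S$-orientation vectors $v_B$ and $v_{B'}$, giving $\det(Y_S)(\pi) = -1$. By the symmetric construction of $X_S$ in \cref{def:XS-YS}, $\pi$ also swaps the $\ell(\nu)$ sub-block orientation vectors of $B$ with those of $B'$ in matching order, producing $\ell(\nu)$ disjoint transpositions and hence $\det(X_S)(\pi) = (-1)^{\ell(\nu)}$. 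Combining, $\det(X_S)(\pi)\det(Y_S)(\pi) = (-1)^{\ell(\nu)+1}$, which matches $\rho(\nu)(s) = \pm 1$ for the underlying transposition $s \in S_{m_\nu}$ by definition of $\rho(\nu)$ — and the character identity above shows this is exactly the value of $\bigotimes_{\nu' \in S}\rho(\nu')[\det(X_{\nu'})]$ on $\pi$, since all other wreath-product factors are at the identity.

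For an inner generator $\pi$, coming from some $g \in \stab_{S_{|\nu|}}(X_\nu)$ acting on a single $\nu$-labelled block $B$ of $Y_S$, one has $\det(Y_S)(\pi) = 1$ because $\pi$ fixes every $Y_S$-orientation vector $v_{B'}$ setwise (and $v_B$ is fixed as a sum of basis vectors permuted among themselves). By construction of $X_S$, the sub-block structure of $B$ inside $X_S$ is canonically identified with $X_\nu \subset \Pi_{|\nu|}$, so $\pi$ acts on the sub-block orientation vectors of $B$ with the same matrix as $g$ acts on those of $X_\nu$, yielding $\det(X_S)(\pi) = \det(X_\nu)(g)$. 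On the other side, the character identity gives $(\rho(\nu)[\det(X_\nu)])(\pi) = \rho(\nu)(1)\cdot \det(X_\nu)(g) = \det(X_\nu)(g)$, with all other tensor factors trivially evaluating to $1$. Thus both characters agree on every generator, establishing the isomorphism.

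I do not expect a substantial obstacle; the one subtlety — and the only place where a sign could be mis-tracked — is that an outer transposition contributes $(-1)^{\ell(\nu)}$ (not $(-1)^{|\nu|}$) to $\det(X_S)$, because it permutes $\ell(\nu)$ sub-block orientation vectors rather than $|\nu|$ individual coordinate vectors. This is precisely what forces $\rho(\nu)$ to depend on the parity of $\ell(\nu)$ rather than $|\nu|$, and is the source of the statement of the lemma.
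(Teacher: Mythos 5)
Your proof is correct and takes essentially the same approach as the paper: the paper also reduces to checking equality of one-dimensional characters on the outer/inner generators, and computes the very same determinants (orientation signs) on $X_S$ and $Y_S$. The only organizational difference is that the paper first identifies $\det(X_S)$ and $\det(Y_S)$ separately as wreath-product characters and then tensors them via \cref{lem:mult-wreath-prod-reps}, whereas you multiply the two determinant values directly on each generator and compare against the wreath character formula $(\psi[\chi])(g_1,\ldots,g_m;\sigma)=\psi(\sigma)\prod_i\chi(g_i)$ — a harmless streamlining of the same argument, and you correctly flag the $(-1)^{\ell(\nu)}$ vs.\ $(-1)^{|\nu|}$ subtlety that the lemma's statement hinges on.
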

\begin{proof}
First, we claim that as $\prod_{\nu}S_{m_\nu}\left[\stab_{S_{|\nu|}}\left(X_\nu\right)\right]$-representations, 
\begin{align}\label{eqn:det-rep}
    \det(X_S) \cong \bigotimes_{\nu \in S}\left(\rho(\nu) \otimes \sgn_{S_{m_\nu}}\right) \left[\det\left(X_\nu\right)\right],
\end{align}
Since both are one-dimensional characters, it suffices to check they agree on a generating set of $\prod_{\nu}S_{m_\nu}\left[\stab_{S_{|\nu|}}\left(X_\nu\right)\right].$ Order the blocks $B_1, B_2, \ldots, B_{k}$ of $X_S$ so that the minimal element of each block increases. An ordered basis for the space associated to $X_{S}$ is $v_1, v_2, \ldots, v_k,$
where $v_i$ is the sum of the standard vectors $e_j$ for the integers $j$ appearing in the block $B_i$ of $X_S.$ 

Recall the notion of outer versus inner generators from the discussion preceding \cref{ex:stab-subgroup}. An \textit{inner} generator $\pi$ associated to a block of $Y_S$ labelled by $\nu$ formed from the blocks  $B_{i + 1}, \ldots, B_{i + \ell(\nu)}$ of $X$ acts block diagonally on $v_1, v_2, \ldots, v_k$. In particular, it acts trivially on every basis vector except for the block $v_{i + 1}, \ldots, v_{i + \ell(\nu)},$ which it permutes amongst itself. The submatrix corresponding to $v_{i + 1}, \ldots, v_{i + \ell(\nu)}$ has determinant $\det(X_\nu)(\pi).$ An \textit{outer} transposition associated to $\nu$ wholesale swaps $\ell(\nu)$ basis vectors with another $\ell(\nu)$ basis vectors -- this is represented by a matrix with determinant $+1$ if $\ell(\nu)$ is even and $-1$ if $\ell(\nu)$ is odd. Hence, \cref{eqn:det-rep} holds.

We now show that as $\prod_{\nu}S_{m_\nu}\left[\stab_{S_{|\nu|}}\left(X_\nu\right)\right]$-representations, 
\begin{align}\label{eqn:det-rep2}
    \det(Y_S) \cong \bigotimes_{\nu \in S}\sgn_{S_{m_\nu}}\left[\mathbb{1}_{S_{|\nu|}}\right].
\end{align}

Again, it suffices to check on the generators. If $C_1, C_2, \ldots, C_r$ are the blocks of $Y_S,$ one ordered basis for $Y_S$ is $u_1, u_2, \ldots, u_r,$ where $u_i$ is the sum of the $v_j$ for which $B_j \subseteq C_i.$ The inner generators act trivially on this ordered basis and an outer generator swaps $u_i, u_j$ for two blocks $C_i, C_j$ of $Y_S$ indexed by the same partition, hence is orientation reversing. Thus, \cref{eqn:det-rep2} holds and the lemma then follows from \cref{lem:mult-wreath-prod-reps}.
\end{proof}

To compute the homology of intervals in the partition lattice, we will use the extremely useful tools Sundaram explains in \cite{SundaramJerusalem}. If $P$ and $Q$ are posets, let \emph{$P \times Q$} denote the \emph{cartesian product} of $P$ and $Q,$ which is the set $P\times Q$ under the relation $(p, q) \leq (p', q')$ if and only if $p \leq p'$ in $P$ and $q \leq q'$ in $Q.$ We write \emph{$P^n$} to denote the $n$-fold Cartesian product $\underbrace{P \times P \times \ldots \times P}_{n \text{ times}}.$

\begin{prop}\label{prop:poset-product}(Sundaram, \cite{SundaramJerusalem}).
Let $P$ and $Q$ be posets, each with a minimal element $\hat{0}$ and a maximal element $\hat{1}.$ Let $G_P$, $G_Q$ be groups acting by poset automorphisms on $P$ and $Q$ respectively. Assume that for $i \neq r$, the homology groups $\Tilde{H}_i(P) = 0$ and that for $j \neq s,$ the homology groups $\Tilde{H}_j(Q) = 0.$ 

\begin{enumerate}
    \item[(i)][Proposition 2.1 of \cite{SundaramJerusalem}] $\Tilde{H}_i(P \times Q) = 0$ unless $i = r + s + 2$, and as $G_P \times G_Q$ representations, \[\Tilde{H}_{r + s + 2}(P \times Q) \cong \Tilde{H}_{r}(P) \otimes \Tilde{H}_{s}(Q).\]
    \item[(ii)][{Proposition 2.3 of \cite{SundaramJerusalem}}\footnote{There is a very minor typo in Proposition 2.3 of \cite{SundaramJerusalem} in what dimension the homology should be concentrated in.}] $\Tilde{H}_i(P^n) = 0$ unless $i = n(r + 2) - 2$ and as $S_n[G_P]$-representations, 
    \begin{align*}
        \Tilde{H}_{n(r + 2) - 2}(P^n) \cong \begin{cases}
        \mathbb{1}_{S_n}\left[\Tilde{H}_r(P)\right] \text{ if $r$ is even},\\
        \mathbb{\sgn}_{S_n}\left[\Tilde{H}_r(P)\right] \text{ if $r$ is odd}.
    \end{cases}
    \end{align*}
\end{enumerate}
\end{prop}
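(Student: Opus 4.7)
The plan is to prove both parts following the general scheme of Sundaram's argument in \cite{SundaramJerusalem}, reducing the homology calculation for a product of posets to a K\"unneth-type formula via an equivariant homotopy equivalence, and then iterating.

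For part (i), the first step is to establish a $G_P \times G_Q$-equivariant homotopy equivalence
\[
\Delta\bigl(\overline{P \times Q}\bigr) \simeq \Sigma\bigl(\Delta(\bar P) * \Delta(\bar Q)\bigr),
\]
where $*$ denotes the topological join and $\Sigma$ the suspension. I would produce this via a Quillen fiber argument: construct a poset map relating $\overline{P \times Q}$ to an auxiliary poset whose order complex realizes the right-hand side, and verify that the fibers are $G_P \times G_Q$-equivariantly contractible. The standard K\"unneth formula for joins
\[
\Tilde{H}_n(X * Y) \cong \bigoplus_{i + j = n - 1} \Tilde{H}_i(X) \otimes \Tilde{H}_j(Y),
\]
combined with the one-degree shift from the suspension, then shows that $\Tilde{H}_i(P \times Q)$ vanishes outside degree $r + s + 2$, where it is isomorphic to $\Tilde{H}_r(P) \otimes \Tilde{H}_s(Q)$. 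Equivariance is automatic because every step of the construction is functorial in each input poset.

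For part (ii), the strategy is induction on $n$, applying part (i) to the decomposition $P^n = P \times P^{n-1}$. The base case $n = 1$ is trivial. At the inductive step, part (i) and the inductive hypothesis immediately yield a $G_P \times S_{n-1}[G_P]$-equivariant isomorphism
\[
\Tilde{H}_*(P^n) \cong \Tilde{H}_r(P) \otimes \rho\bigl[\Tilde{H}_r(P)\bigr],
\]
where $\rho$ is the inductively determined character ($\mathbb{1}_{S_{n-1}}$ or $\sgn_{S_{n-1}}$). To promote this to an $S_n[G_P]$-equivariant isomorphism, it suffices to compute the action of a single transposition of adjacent factors on $\Tilde{H}_r(P)^{\otimes 2} \cong \Tilde{H}_{2r+2}(P \times P)$. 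By part (i), this is the action of the swap on $\Sigma\bigl(\Delta(\bar P) * \Delta(\bar P)\bigr)$, governed by the K\"unneth-type sign rule for joins together with the sign arising from the suspension flip. A careful bookkeeping produces exactly the trivial twist when $r$ is even and the sign twist when $r$ is odd, as claimed.

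The main obstacle is this sign bookkeeping: both the K\"unneth sign from the join and any orientation signs appearing in the homotopy equivalence of part (i) must be pinned down precisely to recover the stated trivial-versus-sign dichotomy. A concrete alternative that sidesteps the orientation conventions of the abstract equivalence is to work directly at the chain level --- exhibit an explicit generator of $\Tilde{H}_{n(r+2)-2}\bigl(\Delta(\overline{P^n})\bigr)$ as a signed sum of top-dimensional simplices of the order complex of $\overline{P^n}$, and compute transposition actions on this generator simplex-by-simplex. This turns the parity dichotomy into a local calculation on a fixed top simplex, bypassing the global sign conventions of the Quillen-fiber argument.
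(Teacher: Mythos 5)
The paper does not prove this proposition; it is attributed to Sundaram and cited directly from \cite{SundaramJerusalem} (her Propositions 2.1 and 2.3), with the footnote noting a minor typo in the dimension index there. So there is no in-paper argument to compare against, and you are effectively reconstructing Sundaram's proof. Your outline is sound in its broad strokes: a $G_P\times G_Q$-equivariant homotopy equivalence $\Delta(\overline{P\times Q}) \simeq \Sigma(\Delta(\bar P) * \Delta(\bar Q))$ is a known result for bounded posets, and feeding it into the join form of the K\"unneth theorem gives part (i); iterating via $P^n = P\times P^{n-1}$ and then computing a single adjacent transposition's action on $\Tilde{H}_{2r+2}(P\times P)$ is the right way to pin down the wreath-product structure in part (ii).

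The genuine gap, which you flag yourself, is that the sign computation is only sketched. It is worth being precise about where it bites: in the natural realization of the homotopy equivalence, the $S^0$ factor of the suspension is \emph{not} fixed by the swap of the two $P$-factors (its two points get interchanged), so the swap reverses orientation on the suspension coordinate, contributing an extra factor of $-1$ beyond the join sign. The sign from permuting the two degree-$(r+1)$ vertex blocks in $\Delta(\bar P)*\Delta(\bar P)$ is $(-1)^{(r+1)^2}=(-1)^{r+1}$, and multiplying by the suspension's $-1$ gives $(-1)^{r}$, which is the stated trivial-versus-sign dichotomy. If one forgets the $S^0$ flip, the parity comes out backwards, and the test case $P=Q=B_2$ exposes the error: there $P\times Q\cong B_4$, $\Delta(\overline{B_4})\cong S^2$, and the factor swap is the even permutation $(1,3)(2,4)\in S_4$, hence orientation-preserving on $S^2$, so the action must be trivial when $r=0$. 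Your chain-level alternative at the end would bypass this orientation-convention hazard entirely and is closest in spirit to what Sundaram actually does; I would recommend pursuing that route rather than trying to track signs through the abstract homotopy equivalence.
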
 

We note that the finite set partition lattices satisfy the hypotheses of \cref{prop:poset-product} since they are a special type of lattice called a \emph{geometric lattice.} In \cite{Folkman}, Folkman proved that geometric lattices of rank $r$ have nonzero homology only in dimension $r - 2.$ Since $\Pi_n$ is a geometric lattice of rank $n - 1,$ it has nonzero homology only in dimension $n - 3.$

We are now ready to compute the (twisted) homology representations of intervals in the partition lattice in terms of the twisted homology of upper intervals.

\begin{lemma}\label{prop:homology-of-intervals.}
  Fix a multiset partition $S = \{\nu^{m_\nu}: \nu \in S\} \in \lambda(\mu)$ and let $r = \ell(\mu) - \ell(\lambda).$ As representations of $\prod_{\nu \in S}S_{m_\nu}[\stab_{S_{|\nu|}}(X_\nu)],$ 
    \begin{align*}
        \Tilde{H}_{r - 2}\left(X_S, Y_S\right) \otimes \det(X_S) \otimes \det(Y_S) \cong \bigotimes_{\nu \in S} \mathbb{1}_{S_{m_\nu}} \left[ \Tilde{H}_{\ell(\nu) - 3} \left( X_\nu, \hat{1} \right) \otimes \det\left(X_\nu\right) \right].
    \end{align*}
\end{lemma}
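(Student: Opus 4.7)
The plan is to decompose the closed interval $[X_S, Y_S] \subseteq \Pi_n$ as a Cartesian product of smaller intervals (one for each block of $Y_S$), apply Sundaram's \cref{prop:poset-product} to compute the homology of the resulting product poset, and then absorb the determinant twists using \cref{lem:det-rep} and \cref{lem:mult-wreath-prod-reps}. The key input is the observation that a set partition $Z$ with $X_S \leq Z \leq Y_S$ is uniquely determined by the collection of its restrictions to the blocks of $Y_S$, giving an equivariant poset isomorphism
\[[X_S, Y_S] \;\cong\; \prod_{\nu \in S}\, [X_\nu, \hat{1}]^{m_\nu},\]
where each $[X_\nu, \hat{1}]$ is the upper interval in $\Pi_{|\nu|}$ above $X_\nu$. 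Under this identification, each inner factor $\stab_{S_{|\nu|}}(X_\nu)$ acts on a single copy of $[X_\nu, \hat{1}]$, and each outer $S_{m_\nu}$ permutes the $m_\nu$ identical copies, matching the wreath product structure from \cref{lem:normalizer-intersections}. Taking the open interval $(X_S, Y_S)$ corresponds to taking the proper part of the product poset, so Sundaram's machinery applies directly.

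Iterating part (i) of \cref{prop:poset-product} across distinct $\nu \in S$ and applying part (ii) to each $(X_\nu, \hat{1})^{m_\nu}$ factor (whose top homology concentrates in dimension $m_\nu(\ell(\nu) - 1) - 2$) then yields, as a representation of $\prod_\nu S_{m_\nu}[\stab_{S_{|\nu|}}(X_\nu)]$, the isomorphism
\[\tilde{H}_{r-2}(X_S, Y_S) \;\cong\; \bigotimes_{\nu \in S} \rho(\nu)\bigl[\tilde{H}_{\ell(\nu)-3}(X_\nu, \hat{1})\bigr],\]
where $r = \ell(\mu) - \ell(\lambda) = \sum_\nu m_\nu(\ell(\nu)-1)$ and $\rho(\nu) = \mathbb{1}_{S_{m_\nu}}$ when $\ell(\nu)-3$ is even and $\sgn_{S_{m_\nu}}$ otherwise. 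Crucially, this parity convention matches the $\rho(\nu)$ appearing in \cref{lem:det-rep}. Tensoring with $\det(X_S) \otimes \det(Y_S) \cong \bigotimes_\nu \rho(\nu)[\det(X_\nu)]$ from \cref{lem:det-rep} and applying \cref{lem:mult-wreath-prod-reps} factor by factor transforms each tensor slot as
\[\rho(\nu)\bigl[\tilde{H}_{\ell(\nu)-3}(X_\nu,\hat{1})\bigr] \otimes \rho(\nu)[\det(X_\nu)] \cong \bigl(\rho(\nu)\otimes\rho(\nu)\bigr)\bigl[\tilde{H}_{\ell(\nu)-3}(X_\nu,\hat{1}) \otimes \det(X_\nu)\bigr] \cong \mathbb{1}_{S_{m_\nu}}\bigl[\tilde{H}_{\ell(\nu)-3}(X_\nu,\hat{1}) \otimes \det(X_\nu)\bigr],\]
since $\rho(\nu)$ is a one-dimensional $\{\pm 1\}$-valued character and hence self-inverse. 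Taking the tensor product over all $\nu \in S$ gives the desired isomorphism.

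The main obstacle will be verifying that the poset isomorphism in the first step is fully equivariant for the wreath product action --- in particular, ensuring that the outer $S_{m_\nu}$-action obtained by permuting identical copies of $[X_\nu, \hat{1}]$ in the product model agrees with the intrinsic action of the block-swap generators on $[X_S, Y_S]$ described just before \cref{ex:stab-subgroup}. Beyond this, the argument amounts to careful bookkeeping of homological degrees and sign parities so that the $\rho(\nu)$ produced by Sundaram's Proposition 2.3(ii) aligns with the $\rho(\nu)$ from \cref{lem:det-rep}; once these two characters are confirmed to agree, the conclusion follows as a formal combination of the background lemmas.
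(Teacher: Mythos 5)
Your proposal is correct and mirrors the paper's own proof: both identify $[X_S, Y_S]$ equivariantly with the product $\prod_{\nu \in S}[X_\nu, \hat{1}]^{m_\nu}$, apply \cref{prop:poset-product} to compute the homology with the twist $\rho(\nu)$, and then absorb $\det(X_S)\otimes\det(Y_S)$ via \cref{lem:det-rep} and \cref{lem:mult-wreath-prod-reps}, using that $\rho(\nu)^{\otimes 2}$ is trivial. Your explicit verification that the parity of $\ell(\nu)-3$ in Sundaram's proposition aligns with the parity convention in \cref{lem:det-rep} is a detail the paper leaves implicit but is exactly the check needed.
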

\begin{proof}
    The interval $[Y_S, X_S]$ in $\Pi_n$ is (equivariantly) isomorphic to the following product of partition lattices: \[\prod_{\nu \in S}\left( [X_\nu, \hat{1}]\right)^{m_\nu},\]
with each $S_{m_\nu}[\stab_{S_{|\nu|}}(X_\nu)]$ acting on its corresponding $\left( [X_\nu, \hat{1}]\right)^{m_\nu}.$ Recall the definition of $\rho(\nu)$ from \cref{lem:det-rep}. By \cref{prop:poset-product}, as $S_{m_\nu}\left[\stab_{S_{|\nu|}}\left(X_\nu\right)\right]$ representations, 
\begin{align*}
    \Tilde{H}_{r - 2} \left(X_S, Y_S\right) \cong \bigotimes_{\nu \in S} \rho(\nu) \left[\Tilde{H}_{\ell(\nu) - 3}\left(X_\nu, \hat{1}\right)\right].
\end{align*}

Now, by applying \cref{lem:det-rep} and \cref{lem:mult-wreath-prod-reps}, we have that as $\prod_{\nu \in S}S_{m_\nu}[\stab_{S_{|\nu|}}(X_\nu)]$ representations,
\begin{align*}
\Tilde{H}_{r - 2}\left(X_S, Y_S\right) \otimes \det(X_S) \otimes \det(Y_S) \cong \bigotimes_{\nu \in S}\mathbb{1}_{S_{m_\nu}} \left[ \Tilde{H}_{\ell(\nu) - 3} \left( X_\nu, \hat{1} \right) \otimes \det\left(X_\nu\right) \right].
\end{align*}
\end{proof}
\subsection{Final steps}\label{subsec:final-step}
In this section, we complete step 4 of the proof outline in \cref{sec:outline-proof}, proving our main theorem. To do so, we must first state a lemma and proposition. The following lemma appears as Exercise 4.1.3 in \cite{Grinberg--Reiner}.

\begin{lemma} \label{lem:induction}
Let $G_1$, $G_2$ be finite groups, with subgroups $H_1$, $H_2$, respectively. Let $V_1$, $V_2$ be representations of $H_1$, $H_2$, respectively. Then, as $G_1 \times G_2$ representations,
\begin{align*}
    \left( V_1 \otimes V_2\right) \Big \uparrow_{H_1 \times H_2}^{G_1 \times G_2} \cong V_1 \Big \uparrow_{H_1}^{G_1} \otimes V_2 \Big \uparrow_{H_2}^{G_2}.
\end{align*}
\end{lemma}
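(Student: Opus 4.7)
The plan is to give a direct proof using the tensor-product description of induced representations, namely $U \uparrow_H^G \cong \mathbb{C}G \otimes_{\mathbb{C}H} U$, together with the standard algebra isomorphism $\mathbb{C}(G_1 \times G_2) \cong \mathbb{C}G_1 \otimes_\mathbb{C} \mathbb{C}G_2$ (and similarly for $H_1 \times H_2$). The key observation is that both sides of the claimed isomorphism are realized as tensor products over these group algebras, and one can simply ``rearrange the tensor factors.''

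More concretely, I would define the map
\[
\Phi: (V_1 \otimes V_2)\Big\uparrow_{H_1 \times H_2}^{G_1 \times G_2} \longrightarrow V_1 \Big\uparrow_{H_1}^{G_1} \otimes V_2 \Big\uparrow_{H_2}^{G_2}
\]
on simple tensors by
\[
(g_1,g_2) \otimes_{\mathbb{C}(H_1 \times H_2)} (v_1 \otimes v_2) \ \longmapsto \ \bigl(g_1 \otimes_{\mathbb{C}H_1} v_1\bigr) \otimes \bigl(g_2 \otimes_{\mathbb{C}H_2} v_2\bigr).
\]
The main steps would then be to (i) verify that $\Phi$ is well-defined, i.e.\ that relations of the form $(g_1 h_1, g_2 h_2) \otimes (v_1 \otimes v_2) = (g_1, g_2) \otimes (h_1 v_1 \otimes h_2 v_2)$ for $h_i \in H_i$ are respected on the right-hand side; (ii) check $G_1 \times G_2$-equivariance, which is immediate since $(y_1, y_2)$ acts by left multiplication on each factor independently; (iii) write down the obvious candidate inverse sending $(g_1 \otimes v_1) \otimes (g_2 \otimes v_2)$ to $(g_1, g_2) \otimes (v_1 \otimes v_2)$ and check it is well-defined and two-sided inverse to $\Phi$. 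Alternatively, once $\Phi$ is shown to be a well-defined $G_1 \times G_2$-equivariant surjection, a dimension count finishes the proof:
\[
\dim (V_1 \otimes V_2)\Big\uparrow_{H_1 \times H_2}^{G_1 \times G_2} = \frac{|G_1||G_2|}{|H_1||H_2|}(\dim V_1)(\dim V_2) = \dim\Bigl( V_1 \Big\uparrow_{H_1}^{G_1} \otimes V_2 \Big\uparrow_{H_2}^{G_2}\Bigr).
\]

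There is no real obstacle here; the statement is a standard ``compatibility of induction with external tensor products,'' and the only thing requiring care is bookkeeping with the tensor-product-over-group-algebra relations to confirm that $\Phi$ (and its inverse) is well-defined. A slicker alternative, which I would mention as a remark, is to verify the isomorphism on characters: for $(g_1, g_2) \in G_1 \times G_2$, the induced character on the left factors as a product of induced characters on the right via the standard induced-character formula, and since both sides are completely reducible representations over $\mathbb{C}$, equality of characters implies the desired isomorphism.
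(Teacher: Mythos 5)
Your proof is correct. The paper itself does not prove this lemma; it simply cites it as Exercise 4.1.3 of Grinberg--Reiner, so there is no "paper's approach" to compare against. Your argument via the tensor-product model $U\uparrow_H^G \cong \C G \otimes_{\C H} U$, with the explicit map $(g_1,g_2)\otimes(v_1\otimes v_2) \mapsto (g_1\otimes v_1)\otimes(g_2\otimes v_2)$ and its evident inverse, is the standard direct verification: the well-definedness check reduces to the relation $g_ih_i \otimes v_i = g_i \otimes h_iv_i$ in each factor, equivariance is immediate, and either the explicit inverse or the dimension count $\tfrac{|G_1||G_2|}{|H_1||H_2|}(\dim V_1)(\dim V_2)$ closes it. The character-theoretic alternative you mention is also valid over $\C$, since both sides are completely reducible and the induced character formula factors over a direct product. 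Both routes are sound and fill in the citation cleanly.
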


\begin{prop}\label{prop:reduction-to-lambda=n}
Let $\lambda$ and $\mu$ be any partitions of $n.$ Write \emph{$m_\nu(S)$} to mean the multiplicity of a partition $\nu$ in a multiset partition $S.$
\begin{align*}
    \ch(E_\lambda \C\mathcal{F}_n E_\mu) = \sum_{S \in \lambda(\mu)}\prod_{\nu \in S}h_{m_\nu(S)}\left[\sum_{\substack{\text{Lyndon }\\w}}\sum_{\substack{m \geq 1:\\w^m \sim \nu}}L_{m}[h_{w}]\right].
\end{align*}
\end{prop}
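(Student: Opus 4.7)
My plan is to assemble the proposition from the ingredients already developed in Sections \ref{sec:poset-top}, \ref{sec:base-case}, and \ref{sec:general-case}; no new geometric input is needed, only a careful bookkeeping of representations through several stages of induction. The strategy is to start from the orbit decomposition of \cref{cor:induced-cohomology}, reindex summands by $\lambda(\mu)$, replace each twisted homology summand by the wreath-product expression of \cref{prop:homology-of-intervals.}, and then push the induction up to $S_n$ in three stages, at each stage applying one of the induction identities available in the paper.

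First, by \cref{cor:induced-cohomology} combined with the bijection of \cref{lem:bij-w-lambda(mu)}, I can choose $X_S \leq Y_S$ as the canonical representative of the orbit indexed by $S \in \lambda(\mu)$. Then \cref{lem:normalizer-intersections} identifies the relevant stabilizer as $H_S := \prod_{\nu \in S} S_{m_\nu}[\stab_{S_{|\nu|}}(X_\nu)]$, and \cref{prop:homology-of-intervals.} rewrites the summand as
\begin{equation*}
\bigotimes_{\nu \in S} \mathbb{1}_{S_{m_\nu}}\!\left[\tilde H_{\ell(\nu)-3}(X_\nu,\hat 1)\otimes \det(X_\nu)\right] \Big\uparrow_{H_S}^{S_n}.
\end{equation*}

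Next, I would induce up in three stages using transitivity. Stage (i): within each $\nu$-factor, induce from $S_{m_\nu}[\stab_{S_{|\nu|}}(X_\nu)]$ to $S_{m_\nu}[S_{|\nu|}]$ via \cref{conj:induction-wreath-prods}, which replaces the inner bracketed representation by its induction $W_\nu := (\tilde H_{\ell(\nu)-3}(X_\nu,\hat 1)\otimes \det(X_\nu))\!\uparrow_{\stab_{S_{|\nu|}}(X_\nu)}^{S_{|\nu|}}$. Stage (ii): induce each wreath factor from $S_{m_\nu}[S_{|\nu|}]$ to $S_{m_\nu |\nu|}$; taking Frobenius characteristics here produces precisely $h_{m_\nu}[\ch(W_\nu)]$. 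Stage (iii): induce from the Young subgroup $\prod_\nu S_{m_\nu|\nu|}$ to $S_n$; by \cref{lem:induction}, its Frobenius characteristic is the product of the characteristics of the factors.

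Finally, I recognize $\ch(W_\nu)$ as the left-hand side of \cref{prop:base-case} evaluated at $\mu = \nu$: indeed, $W_\nu \cong E_{|\nu|}\C\mathcal{F}_{|\nu|}E_\nu$ by \cref{cor:induced-cohomology} applied with $\lambda = (|\nu|)$, since $\det(\hat 1)$ is trivial. Extracting the coefficient of $\mathbf{z}_\nu$ from the generating function in \cref{prop:base-case} yields
\begin{equation*}
\ch(W_\nu) = \sum_{\substack{\text{Lyndon}\\w}} \sum_{\substack{m\geq 1:\\ w^m \sim \nu}} L_m[h_w],
\end{equation*}
and plugging this into the product-of-plethysms expression from stage (iii) yields exactly the claimed formula. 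I do not anticipate any real obstacle: all difficult content is already packaged in \cref{cor:induced-cohomology}, \cref{prop:homology-of-intervals.}, and \cref{prop:base-case}. The only subtlety is keeping the staged inductions bookkept correctly (especially making sure the $\mathbb{1}_{S_{m_\nu}}[\cdot]$ survives stage (i) intact so stage (ii) can convert it into $h_{m_\nu}[\cdot]$ at the level of symmetric functions), and verifying that the case $\mu$ does not refine $\lambda$ is covered vacuously by $\lambda(\mu) = \emptyset$ matching $E_\lambda \C\mathcal{F}_n E_\mu = 0$.
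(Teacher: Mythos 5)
Your proposal is correct and follows essentially the same route as the paper: decompose via \cref{cor:induced-cohomology}, reindex by \cref{lem:bij-w-lambda(mu)} and \cref{lem:normalizer-intersections}, rewrite each summand with \cref{prop:homology-of-intervals.}, push the induction up through the tower of wreath products using transitivity and \cref{conj:induction-wreath-prods}, recognize the inner module as $E_{|\nu|}\C\mathcal{F}_{|\nu|}E_\nu$ by a second application of \cref{cor:induced-cohomology} (and you correctly note that $\det(\hat 1)$ is trivial there), and finish with \cref{prop:base-case}. One small reattribution: \cref{lem:induction} is what lets you split the induction into a tensor of factorwise inductions in your stage (i), whereas the stage-(iii) passage from the Young subgroup $\prod_\nu S_{m_\nu|\nu|}$ to $S_n$ uses the standard identity $\ch\big((\rho_1\otimes\cdots\otimes\rho_k)\uparrow_{S_\alpha}^{S_n}\big)=\prod_i \ch(\rho_i)$ recalled in the background, not \cref{lem:induction}.
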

\begin{proof}
If $\mu$ does not refine $\lambda,$ then  by \cref{prop:double-idemp-sapces}, $E_\lambda \C \mathcal{F}_n E_\mu = 0.$ On the right side, we also have $0,$ since $\lambda(\mu)$ is empty by definition.

Now, assume  $\mu$ refines $\lambda$ with $\ell(\mu) - \ell(\lambda) = r.$
 By \cref{cor:induced-cohomology}, as $S_n$-representations,
\[E_\lambda \C \mathcal{F}_n E_\mu \cong \bigoplus_{\substack{S_n-\text{orbits }[X \leq Y]:\\ X \in \mu\\ Y \in \lambda}} \Tilde{H}_{r - 2}\left(X, Y\right) \otimes \det(X) \otimes \det(Y)\Bigg \uparrow_{\stab_{S_n}(X) \cap \stab_{S_n}(Y)}^{S_n}.\]

Using \cref{lem:bij-w-lambda(mu)} and \cref{lem:normalizer-intersections},
    \begin{align*}
    E_\lambda \C \mathcal{F}_n E_\mu &\cong \bigoplus_{S \in \lambda(\mu)} \left(   \Tilde{H}_{r - 2}(X_S, Y_S) \otimes {\det(X_S) \otimes \det(Y_S)}\right)\Bigg\uparrow_{\prod_{\nu \in S}S_{m_\nu(S)}\left[\stab_{S_{|\nu|}}(X_\nu)\right]}^{S_n} ,
\end{align*}
 Using \cref{prop:homology-of-intervals.}, the above is isomorphic (as $S_n$-representations) to
\begin{align} \label{eqn:before-fixing-S}
\bigoplus_{S \in \lambda(\mu)}\left(\bigotimes_{\nu \in S} \mathbb{1}_{S_{m_\nu(S)}}\left[ \Tilde{H}_{\ell(\nu) - 3} \left( X_\nu, \hat{1}\right) \otimes \det \left({X_{\nu}}\right)\right]\right)\Bigg \uparrow_{\prod_{\nu}S_{m_{\nu}(S)}\left[\stab_{S_{|\nu|}}(X_\nu)\right]}^{S_n}.
  \end{align}

We fix a particular $S = \{\nu^{m_\nu}:  \text{partition }\nu\} \in \lambda(\mu)$, and manipulate the summand indexed by $S$ as follows.  By transitivty of induction  and \cref{lem:induction}, the summand indexed by $S$ above is isomorphic as an $S_n$-module to
\begin{align*}
\left(\bigotimes_{\nu \in S}\mathbb{1}_{S_{m_\nu}}\left[ \Tilde{H}_{\ell(\nu) - 3} \left(X_\nu, \hat{1}\right) \otimes \det\left({X_{\nu}}\right)\right]\Bigg\uparrow_{S_{m_\nu}\left[\stab_{S_{|\nu|}}(X_\nu)\right]}^{S_{m_\nu\cdot|\nu|}}\right)\Bigg \uparrow_{\Pi_{\nu}S_{m_\nu \cdot |\nu|}}^{S_n} \cong \\
\left(\bigotimes_{\nu \in S}\mathbb{1}_{S_{m_\nu}}\left[ \Tilde{H}_{\ell(\nu) - 3} \left(X_\nu, \hat{1}\right) \otimes \det \left({X_{\nu}}\right)\right]\Bigg \uparrow_{S_{m_\nu}\left[\stab_{S_{|\nu|}}(X_\nu)\right]}^{S_{m_\nu}\left[S_{|\nu|}\right]} \Bigg \uparrow_{S_{m_\nu}\left[S_{|\nu|}\right]}^{S_{m_\nu \cdot |\nu|}}\right)\Bigg\uparrow_{\Pi_{\nu}S_{m_\nu \cdot|\nu|}}^{S_n}.\\
\end{align*}
By \cref{conj:induction-wreath-prods}, this is equivalent to 
\begin{align*}
 \bigotimes_{\nu \in S}\left(\mathbb{1}_{S_{m_\nu}}\left[  \Tilde{H}_{\mathrm{\ell(\nu) - 3}} \left( X_\nu, \hat{1} \right) \otimes \det \left({X_{\nu}}\right)\Big \uparrow_{\stab_{S_{|\nu|}}(X_\nu)}^{S_{|\nu|}}\right] \Bigg \uparrow_{S_{m_\nu}\left[S_{|\nu|}\right]}^{S_{m_\nu \cdot |\nu|}}\right)\Bigg\uparrow_{\Pi_{\nu}S_{m_\nu \cdot |\nu|}}^{S_n}.
\end{align*}
However, by setting $\lambda = |\nu|, \mu=\nu$ in \cref{cor:induced-cohomology}, this is the same as
\begin{align*}
 \bigotimes_{\nu \in S}\left(\mathbb{1}_{S_{m_\nu}}\left[ E_{|\nu|}\C \mathcal{F}_{|\nu|} E_\nu\right] \Big \uparrow_{S_{m_\nu}\left[S_{|\nu|}\right]}^{S_{m_\nu \cdot |\nu|}} \right)\Bigg \uparrow_{\Pi_{\nu}S_{m_\nu \cdot |\nu|}}^{S_n},
\end{align*}
which has Frobenius characteristic $ \prod_{\nu\in S}h_{m_\nu}\left[ \ch( E_{|\nu|} \C \mathcal{F}_{|\nu|}E_\nu)\right].$ Returning to \cref{eqn:before-fixing-S} and summing over all $S \in \lambda(\mu)$ reveals that

\[\ch(E_\lambda \C \mathcal{F}_n E_\mu) = \sum_{S \in \lambda(\mu)} \prod_{\nu\in S}h_{m_\nu(S)}\left[ \ch( E_{|\nu|} \C \mathcal{F}_{|\nu|}E_\nu)\right].\]

The proposition then follows from \cref{prop:base-case}.
\end{proof}

This next lemma follows from repeatedly applying Lemma 1.5 of \cite{SundaramAdvances}.

\begin{lemma}\label{lem:plethysm-properties}
Let $f_1, f_2, \ldots, f_k$ be symmetric functions and $t$ an abstract variable. Then, \[\sum_{i \geq 0}t^i h_i[f_1 + f_2 + \ldots + f_k] = \prod_{j = 1}^k \sum_{\ell \geq 0}t^\ell h_\ell[f_j].\]
\end{lemma}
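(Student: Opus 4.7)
The plan is a straightforward induction on $k$, with the base case $k=2$ being exactly Sundaram's Lemma 1.5 of \cite{SundaramAdvances}. Recall that Sundaram's lemma gives the plethystic generating-function identity
\[\sum_{i \geq 0} t^{i} h_i[f + g] \;=\; \Bigl(\sum_{i \geq 0} t^{i} h_i[f]\Bigr)\Bigl(\sum_{j \geq 0} t^{j} h_j[g]\Bigr),\]
which is the symmetric-function incarnation of the fact that the Hopf algebra element $H(t) := \sum_{i \geq 0} t^i h_i$ is group-like, so that its plethystic value on a sum is the product of its values. (Concretely, $H(t)[f] = \prod_\alpha \frac{1}{1 - t m_\alpha(f)}$ where $m_\alpha(f)$ runs over the monomials appearing in $f$ with multiplicity; a sum in the argument just concatenates the two monomial sequences.)

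For the base case $k=1$ there is nothing to prove. For the inductive step, assume the identity for $k-1$ summands and apply Sundaram's Lemma 1.5 with $f = f_1 + f_2 + \ldots + f_{k-1}$ and $g = f_k$. This yields
\[\sum_{i \geq 0} t^{i} h_i\bigl[f_1 + \cdots + f_k\bigr] \;=\; \Bigl(\sum_{i \geq 0} t^{i} h_i[f_1 + \cdots + f_{k-1}]\Bigr) \cdot \Bigl(\sum_{\ell \geq 0} t^{\ell} h_\ell[f_k]\Bigr).\]
The inductive hypothesis rewrites the first factor on the right as $\prod_{j=1}^{k-1} \sum_{\ell \geq 0} t^\ell h_\ell[f_j]$, completing the induction.

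I do not expect any genuine obstacle here: the only point that requires a bit of care is making sure the $t$-variable behaves formally as a bookkeeping variable commuting with plethysm into the $f_j$'s, which is immediate from the definition of plethysm of formal power series given in \cref{def:plethysm-formal-power-series} (one may view $t$ as a new variable distinct from the $x_i$'s, so that plethystic substitution into $h_i$ leaves $t^i$ alone).
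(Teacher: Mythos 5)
Your proof is correct and takes essentially the same approach as the paper: the paper's own proof is the one-line remark that the lemma "follows from repeatedly applying Lemma 1.5 of \cite{SundaramAdvances}," and your induction on $k$ with Sundaram's Lemma 1.5 as the engine for each inductive step is precisely a formalization of that iteration.
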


\begin{comment}
\begin{proof}[Vic's Proof 2 of Lemma~\ref{lem:plethysm-properties}.] \patty{my original proof can be found in generalstory.tex}\patty{Alternatively, we can omit proof and credit to iterating Lemma 1.5 of Sheila's advances paper}
Recall \cite[Ch. I, \S2, pp. 23-25]{macdonald} the generating function relation
\begin{equation}\label{h-and-p-gen-fn-relation}
H(t):=\sum_{n \geq 0} h_n t^n
=\exp 
\left( 
\sum_{r \geq 1} \frac{p_r t^r}{r}
\right).
\end{equation}
Since \cite[Ch. I, \S8, (8.6)]{macdonald} says $g \mapsto p_r[g]$
is a ring endomorphism of $\Lambda$, for $r \geq 1$, one has
$$
p_r\left[\sum_{j=1}^k f_j\right]
=\sum_{j=1}^k p_r[f_j].
$$
And since $f \mapsto f[g]$ is always a ring endomorphism of $\Lambda$\cite[Ch. I, \S8, (8.3)]{macdonald}, one then has
$$
\left( \sum_{r \geq 1} \frac{p_r t^r}{r} \right)\left[\sum_{j=1}^k f_j\right]=
\sum_{j=1}^k \left( \sum_{r \geq 1} \frac{p_r t^r}{r} \right)\left[f_j\right].
$$
Applying $\exp(-)$ and using \eqref{h-and-p-gen-fn-relation} gives the desired equality:
$$
H(t)\left[\sum_{j=1}^k f_j\right]
=
\exp\left( \sum_{r \geq 1} \frac{p_r t^r}{r} \right)\left[\sum_{j=1}^k f_j\right]=
\prod_{j=1}^k \exp\left( \sum_{r \geq 1} \frac{p_r t^r}{r} \right)\left[f_j\right]
=
\prod_{j=1}^k H(t)\left[f_j\right]. \qedhere
$$
\end{proof}
\end{comment}

We are finally ready to prove our main theorem.

\begin{thm}\label{conj:typeA-comp-factors} There is an equality of generating functions
\begin{align}\label{eqn:ogf}
    \sum_{n \geq 0} \sum_{\substack{\lambda \vdash n\\ \mu \vdash n}}  \mathbf{y}_\lambda \mathbf{z}_\mu\cdot \ch(E_\lambda \C \mathcal{F}_n E_\mu) = \prod_{\substack{\text{Lyndon }\\ w}} \sum_{\substack{\text{partition}\\\rho}} \mathbf{y}_{\rho \cdot |w|}\mathbf{z}_{w^{|\rho|}}L_\rho[h_w].
\end{align}
\end{thm}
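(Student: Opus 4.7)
The plan is to derive \cref{conj:typeA-comp-factors} directly from \cref{prop:reduction-to-lambda=n} --- which already supplies an explicit closed form for each coefficient $\ch(E_\lambda \C \mathcal{F}_n E_\mu)$ --- by a generating-function reorganization built on \cref{lem:plethysm-properties}. All of the representation-theoretic and poset-topological work is packaged inside \cref{prop:reduction-to-lambda=n}, so this final step is essentially an exercise in carefully tracking indices.

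First, substitute \cref{prop:reduction-to-lambda=n} into the left-hand side. Summing over all $n$, $\lambda \vdash n$, $\mu \vdash n$, and $S \in \lambda(\mu)$ is equivalent to summing over all finitely-supported choices of multiplicities $\{m_\nu \geq 0\}_{\nu}$ (i.e.\ multiset partitions $S = \{\nu^{m_\nu}\}$), since $S$ uniquely determines $\mu$ (the multiset of parts of the blocks $\nu$, with multiplicities $m_\nu$) and $\lambda$ (with part $|\nu|$ appearing $m_\nu$ times). Because the weights factor as $\mathbf{y}_\lambda \mathbf{z}_\mu = \prod_\nu (y_{|\nu|}\mathbf{z}_\nu)^{m_\nu}$ and each $m_\nu$ varies independently, the left-hand side becomes $\prod_\nu \sum_{m_\nu \geq 0} (y_{|\nu|}\mathbf{z}_\nu)^{m_\nu} h_{m_\nu}[F_\nu]$, where $F_\nu := \sum_{(w, m): w^m \sim \nu} L_m[h_w]$.

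Next, use the plethystic identity $u^m h_m[g] = h_m[u g]$ --- valid whenever $u$ is a formal variable external to the ambient symmetric function algebra --- to absorb the weight $(y_{|\nu|}\mathbf{z}_\nu)^{m_\nu}$ into the plethysm, and then apply \cref{lem:plethysm-properties} at $t = 1$ in the direction $\sum_m h_m[\sum_j f_j] = \prod_j \sum_m h_m[f_j]$ to split the sum over $m_\nu$ into a product indexed by the pairs $(w, m)$ with $w^m \sim \nu$. The outer product $\prod_\nu$ then ranges over $\{(\nu, (w, m)) : w \text{ Lyndon}, m \geq 1, w^m \sim \nu\}$, which is in bijection with $\{(w, m) : w \text{ Lyndon}, m \geq 1\}$ via the map $(w,m) \mapsto \nu = $ the sorting of $w^m$; moreover, for such a pair, $|\nu| = m|w|$ and $\mathbf{z}_\nu = \mathbf{z}_w^m$. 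Thus the left-hand side becomes $\prod_{w \text{ Lyndon}} \prod_{m \geq 1} \sum_{k \geq 0} h_k[y_{m|w|} \mathbf{z}_w^m L_m[h_w]]$.

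To match the right-hand side, fix a Lyndon word $w$ and reinterpret the vector $(k_m)_{m \geq 1}$ of summation indices as the multiplicities of a partition $\rho = 1^{k_1} 2^{k_2} \cdots$. Pulling the weights out of the plethysm once more and combining with the associativity of plethysm (\cref{prop:plethysm-properties}(1)) and the definition $L_\rho = \prod_i h_{m_i}[L_i]$ from \cref{eqn:def-higher-lie}, the per-$w$ factor collapses to $\sum_\rho \mathbf{y}_{\rho \cdot |w|} \mathbf{z}_{w^{|\rho|}} L_\rho[h_w]$, completing the identity. The main obstacle is purely bookkeeping: keeping straight two successive reindexings (``one $m_\nu$ per partition $\nu$'' $\leftrightarrow$ ``one $k_{w,m}$ per Lyndon pair $(w,m)$'' in the first rearrangement, and ``one $k_m$ per $m \geq 1$'' $\leftrightarrow$ ``one partition $\rho$'' in the second) while shepherding the $\mathbf{y}$ and $\mathbf{z}$ weights correctly through each plethysm; no new representation-theoretic input is required.
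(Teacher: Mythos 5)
Your proof is correct and takes essentially the same approach as the paper: both reduce to a generating-function identity via \cref{prop:reduction-to-lambda=n}, then apply \cref{lem:plethysm-properties}, associativity of plethysm, and the same two bijective reindexings (multiset partitions $S \leftrightarrow$ independent multiplicities $(m_\nu)$, and exponent vectors $(k_m) \leftrightarrow$ partitions $\rho$). The only difference is that you manipulate the left-hand side toward the right while the paper runs the identical chain of steps in reverse, and you absorb the monomial weight into the plethysm via $u^m h_m[g] = h_m[ug]$ rather than carrying it as the parameter $t$ in \cref{lem:plethysm-properties}; these are cosmetically different packagings of the same calculation.
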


\begin{proof}
By \cref{prop:reduction-to-lambda=n}, it suffices to prove that 
\begin{align}\label{eqn-to-prove}
   \sum_{n \geq 0}\sum_{\substack{\lambda \vdash n\\ \mu \vdash n}} \mathbf{y}_\lambda \mathbf{z}_\mu \cdot  \sum_{S \in \lambda(\mu)}\prod_{\nu \in S}h_{m_\nu(S)}\left[\sum_{\substack{\text{Lyndon }\\w}}\sum_{\substack{m \geq 1:\\w^m \sim \nu}}L_{m}[h_{w}]\right] = \prod_{\substack{\text{Lyndon}\\w}} \sum_{\substack{\text{partition}\\\rho}} \mathbf{y}_{\rho \cdot |w|}\mathbf{z}_{w^{|\rho|}} L_\rho[h_w].
    \end{align}

We start by rewriting the right side of (\ref{eqn-to-prove}). Forming a partition $\rho$ is the same as choosing a multiplicity $i \geq 0$ for each part size $m > 0$. Since $L_{1^{i_1}2^{i_2} \ldots} = \prod_{m \geq 1} h_{i_m}[L_m]$ (\cref{eqn:def-higher-lie}),

\begin{align*}
\prod_{\substack{\text{Lyndon}\\w}} \sum_{\substack{\text{partition}\\ \rho}} \mathbf{y}_{\rho \cdot |w|}\mathbf{z}_{w^{|\rho|}} L_\rho[h_w] 
&= \prod_{\substack{\text{Lyndon}\\w}} \prod_{m \geq 1} \sum_{i \geq 0}\mathbf{y}_{(m \cdot |w|)^i}\mathbf{z}_{\left(w^{im}\right)} h_i[L_m][h_w].
\end{align*}

Using associativity of plethysm (\cref{prop:plethysm-properties}(1)) to rewrite 
$h_i \left[L_m\right] [h_w]=h_i\left[L_m [h_w]\right]$ and grouping together pairs $(m, w)$ for which $w^m$ rearranges to the same partition $\nu$ (written $w^m \sim \nu$), we can rewrite the above as 
\begin{align*}
\prod_{\substack{\text{partition}\\ \nu \neq \emptyset}} \prod_{\substack{(w, m):\\ w \text{ Lyndon},\\ m \geq 1,\\ w^m \sim \nu}} \sum_{i \geq 0} (\mathbf{y}_{|\nu|}\mathbf{z}_{\nu})^i h_i[L_{m}[h_w]].
\end{align*}

Setting $t = \mathbf{y}_{|\nu|}\mathbf{z}_\nu$ and using the $L_m[h_w]$ for $m \geq 1$ with $w^m \sim \nu$ as the $f_i$'s in \cref{lem:plethysm-properties}, we can rewrite the result as 

\[\prod_{\substack{\text{partition}\\ \nu \neq \emptyset}} \sum_{i \geq 0}(\mathbf{y}_{|\nu|}\mathbf{z}_\nu)^i h_i \left[ \sum_{\substack{\text{Lyndon}\\w}}\sum_{\substack{m \geq 1:\\ w^{m} \sim \nu}}L_{m}[h_w]\right].\]

Choosing a multiplicity $i \geq 0$ for each nonempty partition $\nu$ is the same as building a multiset partition $S$ (with block $\nu$ appearing $i = m_\nu(S)$ times).
{Thus, we can rewrite the above as} 
\[\sum_S \prod_{\nu \in S} (\mathbf{y}_{|\nu|}\mathbf{z}_\nu)^{m_\nu(S)} h_{m_\nu(S)} \left[ \sum_{\substack{\text{Lyndon}\\w}}\sum_{\substack{m \geq 1:\\ w^{m} \sim \nu}}L_{m}[h_w]\right].\]

A multiset partition $S$ lies in $\lambda(\mu)$ if and only if $\prod_{\nu \in S}\left(\mathbf{y}_{|\nu|}\mathbf{z}_\nu\right)^{m_\nu(S)} = \mathbf{y}_\lambda \mathbf{z}_\mu.$ Thus, we can rewrite the above to the desired generating function:
\[\sum_{n \geq 0} \sum_{\substack{\lambda \vdash n\\ \mu \vdash n}} \mathbf{y}_\lambda \mathbf{z}_\mu \cdot \sum_{S \in \lambda(\mu)}  \prod_{\nu \in S}  h_{m_\nu(S)} \left[ \sum_{\substack{\text{Lyndon}\\w}}\sum_{\substack{m \geq 1:\\ w^{m} \sim \nu}}L_{m}[h_w]\right].\qedhere\]
\end{proof}

\section{Generalizations} \label{sec:generalizations}
\subsection{Other Coxeter types}
Some, but not all, of this work holds in general Coxeter types. In this section, we (briefly) explain partial generalizations and point out obstacles to full generalizations. For brevity, we assume the reader is familiar with reflection group theory and the face algebra of a reflection arrangement. See \cite{kane} for a reference on reflection groups and \cite{Aguiar-Mahajan, saliolaquiverdescalgebra, saliolafacealgebra} for nice descriptions of the general face and descent algebras.

Let \emph{$W$} be a finite real reflection group with a set $S$ of simple reflections. Write \emph{$\mathcal{F}(W)$} for the face monoid of the associated reflection arrangement and $\C \mathcal{F}(W)$ for the face monoid algebra. Let \emph{$\Sigma(W)$} be $W$'s associated descent algebra over the complex numbers. Bidigare proved his theorem \cite[Theorem 3.8.1]{Bidigare} at the level of general reflection groups, so there is an antiisomorphism \emph{$\Phi: \left(\C \mathcal{F}(W)\right)^W \to \Sigma(W).$} 

 Write \emph{$\mathcal{L}(W)$} for the lattice of intersections of the reflection arrangement of $W$ ordered by \textit{reverse} inclusion. The reflection group $W$ acts on $\mathcal{L}(W)$ by poset automorphisms. There is again a support map \emph{$\sigma: \mathcal{F}(W) \to \mathcal{L}(W)$} which has the same nice properties as in type $A$ (it commutes with the action of $W,$ $\sigma(fg)  = \sigma(f) \wedge \sigma(g)$, etc). The \textit{pointwise stabilizer subgroup}  of an intersection $X \in \mathcal{L}(W)$ is $W_X:= \{w \in W: w(x) = x \text{ for all }x \in X\}$ and turns out to always be conjugate to a parabolic subgroup. (Recall that a parabolic subgroup is some subgroup $W_J$ of $W$ generated by some subset $J \subseteq S.$)
 
 The simple modules for Solomon's descent algebra are indexed by $W$-orbits of intersections $[X]$ for $X \in \mathcal{L}(W).$  We write $X \in [X']$ if $X$ is in the $W$-orbit of $X'$ and write $\mathcal{L}(W) / W$ for the set of $W$-orbits of intersections. Similarly, we write $\mathcal{F}(W) / W$ for the set of $W-$orbits $[f]$ of faces $f \in \mathcal{F}(W).$ Let \emph{$\{E_{[X]}: [X] \in \mathcal{L}(W) / W\}$} be a cfpoi for $\left(\C\mathcal{F}(W)\right)^W$ and let \emph{$\{M_{[X]}: [X] \in \mathcal{L}(W)  W\}$} denote the corresponding $\Sigma(W)$-simples.

The analogue of \cref{prop:decomp-isos} in general Coxeter type is the following proposition. 

 \begin{prop}\label{prop:analogue-W}
Let $\chi$ be an irreducible character of $W.$ The $\chi$-isotypic subspace of the $\C \mathcal{F}(W)$ decomposes into $\Sigma(W)$-modules as
 \begin{align*}
     \left(\C \mathcal{F}(W)\right)^\chi = \bigoplus_{\substack{[X] \in \mathcal{L}(W) / W}} \left(\C \mathcal{F}(W) E_{[X]}\right)^\chi,
 \end{align*}
 The $\C$-dimension of the submodule $  \left(\C \mathcal{F}(W) E_{[X]}\right)^\chi$ is \[\dim \left(\chi\right) \cdot \# \{[f] \in \mathcal{F}(W) / W: \sigma(f) \in [X]\} \cdot \left \langle \chi, \C [W / W_X]\right \rangle.\]
 \end{prop}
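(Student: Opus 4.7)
The plan is to mimic the proof of \cref{prop:decomp-isos} essentially verbatim, taking care that each input from the type $A$ story has been stated in sufficient generality to apply to an arbitrary finite real reflection group $W$. First, since Saliola's construction of cfpois in \cite{saliolaquiverdescalgebra, saliolafacealgebra} is given for arbitrary reflection arrangements (and more generally, left regular bands), there exist cfpois $\{E_X : X \in \mathcal{L}(W)\}$ for $\C \mathcal{F}(W)$ that are permuted by $W$ and for which $E_{[X]} = \sum_{Y \in [X]} E_Y$. The analogues of \cref{cor:supports-of-kb-idemps}, \cref{prop:bE_b-form-basis}, and \cref{lem:invariant-idems-are-sums-of-idems} then go through verbatim, since their proofs only rely on the support map's basic properties $\sigma(fg) = \sigma(f) \wedge \sigma(g)$, the left-regular band identity $fgf = fg$, and $W$-equivariance, all of which hold in arbitrary Coxeter type. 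Because $E_{[X]} \in (\C \mathcal{F}(W))^W$ for each orbit $[X]$, the isotypic projector $\pi_\chi$ for $\chi$ commutes with right multiplication by $E_{[X]}$ (the analogue of \cref{lem:commuting-action}), so orthogonal completeness of $\{E_{[X]}\}$ yields the claimed direct sum decomposition
\[
\left(\C \mathcal{F}(W)\right)^\chi \ = \ \bigoplus_{[X] \in \mathcal{L}(W)/W} \left(\C\mathcal{F}(W)\right)^\chi E_{[X]} \ = \ \bigoplus_{[X] \in \mathcal{L}(W)/W} \left(\C \mathcal{F}(W) E_{[X]}\right)^\chi.
\]

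Next, to compute the $\C$-dimension of $(\C \mathcal{F}(W) E_{[X]})^\chi$, I would establish the analogue of \cref{prop:decomposing-monoid-algebra}: the $\C$-linear map
\[
\varphi \colon \mathrm{span}_\C\{f \in \mathcal{F}(W) : \sigma(f) \in [X]\} \longrightarrow \C \mathcal{F}(W) E_{[X]}, \qquad f \longmapsto f E_{\sigma(f)}
\]
is a $W$-equivariant isomorphism. The proof carries over: $\varphi$ is $W$-equivariant because $E_{\pi\sigma(f)} = \pi(E_{\sigma(f)})$; it is injective since the generalized \cref{prop:bE_b-form-basis} tells us that $\{f E_{\sigma(f)}\}$ is $\C$-linearly independent; and it is surjective by the same orthogonality argument used in the type $A$ proof, applied orbit-summand by orbit-summand of $E_{[X]} = \sum_{Y \in [X]} E_Y$.

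To finish the dimension count, I would decompose the domain of $\varphi$ as a $W$-representation. The $W$-orbits of faces with support in $[X]$ are indexed by $\{[f] \in \mathcal{F}(W)/W : \sigma(f) \in [X]\}$, and each orbit contributes a transitive permutation representation. The stabilizer of a face $f$ under the $W$-action equals the pointwise stabilizer $W_{\sigma(f)}$ of its support; this is the content of the fact that faces correspond to ordered decompositions of the intersection into relative chambers within its face, so an element of $W$ fixes the face if and only if it fixes each of those chambers pointwise, i.e.\ fixes $\sigma(f)$ pointwise. Consequently, as $W$-representations,
\[
\C \mathcal{F}(W) E_{[X]} \ \cong \ \bigoplus_{\substack{[f] \in \mathcal{F}(W)/W \\ \sigma(f) \in [X]}} \mathbb{1} \Big\uparrow_{W_X}^{W} \ = \ \bigoplus_{\substack{[f] \in \mathcal{F}(W)/W \\ \sigma(f) \in [X]}} \C[W/W_X],
\]
where we have used that all groups $W_{\sigma(f)}$ with $\sigma(f)\in [X]$ are conjugate to $W_X$ in $W$. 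Taking the inner product with $\chi$ and multiplying by $\dim(\chi)$ gives the claimed formula
\[
\dim_\C \left(\C \mathcal{F}(W) E_{[X]}\right)^\chi \ = \ \dim(\chi) \cdot \#\{[f] \in \mathcal{F}(W)/W : \sigma(f) \in [X]\} \cdot \langle \chi, \C[W/W_X]\rangle.
\]

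The main (and mild) obstacle is confirming that each of the building-block lemmas about Saliola's idempotents ported cleanly from $\C \mathcal{F}_n$ to $\C \mathcal{F}(W)$; this amounts to checking that the proofs in \cref{subsec:idemps} rely only on the left regular band axioms and the behavior of the support map into a geometric lattice, both of which hold at the level of arbitrary reflection arrangements. No new combinatorial or representation-theoretic input beyond what is already recorded in \cref{subsec:idemps} and the generality of Saliola's construction is needed.
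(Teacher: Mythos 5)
Your proposal is correct and follows essentially the same route as the paper's proof outline: it ports the idempotent lemmas verbatim (the paper also cites the generality of Saliola's and Aguiar--Mahajan's constructions for this), establishes the general-type analogue of Lemma~3.2 by the same $\varphi$-map argument, identifies the face stabilizer with the pointwise stabilizer $W_{\sigma(f)}$ of the support, and uses conjugacy of $W_{\sigma(f)}$ with $W_X$ to reduce to the coset representation $\C[W/W_X]$. The only cosmetic blemish is the garbled justification that ``faces correspond to ordered decompositions of the intersection into relative chambers within its face''---the cleaner way to see that $\mathrm{stab}_W(f) = W_{\sigma(f)}$ is that the setwise stabilizer of a face in a reflection arrangement is a parabolic subgroup fixing the face pointwise---but the conclusion you draw is the correct and standard one, and the paper likewise asserts it without proof.
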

 \begin{proof}[Proof outline] The main ideas are:
   \begin{itemize}
        \item Direct analogues of  \cref{cor:supports-of-kb-idemps}, \cref{prop:bE_b-form-basis}, \cref{lem:invariant-idems-are-sums-of-idems}, and \cref{lem:invariant-saliola-lemma} all hold because Aguiar--Mahajan \cite{Aguiar-Mahajan} and Saliola \cite{saliolaquiverdescalgebra} proved their results on cfpois for general finite reflection groups.
        \item The direct analogue to \cref{prop:decomposing-monoid-algebra} also holds, giving a $W$-isomorphism between $\C \mathcal{F}(W)E_{[X]}$ and the $\C$-span of faces $f \in \mathcal{F}(W)$ with $\sigma(f) \in [X].$ Since $\{w: w(f) = f\} = W_{\sigma(f)}$ and $W_X$ is conjugate to $W_{X'}$ for $X' \in [X],$ the latter is $W$-isomorphic to the sum of coset representations\[\bigoplus_{\substack{[f] \in \mathcal{F}(W):\\ \sigma(f) \in [X]}} \C[W / W_{\sigma(f)}] \cong \bigoplus_{\substack{[f] \in \mathcal{F}(W):\\ \sigma(f) \in [X]}} \C[W / W_{X}].\]
     \end{itemize}
 \end{proof}
It would be nice if we could understand the composition multiplicities of the $\Sigma(W)$-simples $M_{[Y]}$ in each submodule $\left(\C \mathcal{F}(W) E_{[X]}\right)^\chi.$ As in type A, we are able to reduce this to a question in (twisted) equivariant poset cohomology. There is a well-defined partial ordering on the $W$-orbits  $[X] \in \mathcal{L}(W) / W$ given by declaring \emph{$[X] \leq [Y]$} if there exists $X' \in [X]$ and $ Y' \in [Y]$ such that $X'\leq Y'$ in $\mathcal{L}(W).$ For an intersection $X,$ let $N_W(X)$ denote the \textit{setwise} $W$-stabilizer of $X,$ i.e. $N_W(X):= \{w \in W: w(X) = X\}.$ (This notation is often used because $N_W(X)$ turns out to be the \textit{normalizer} of $W_X$ in $W.$) An analogue of \cref{cor:induced-cohomology} is below. 

 \begin{prop}\label{gen-of-cor:induced-cohomology}
      If $[X]\nleq [Y],$ then the composition multiplicity $\left[\left(\C \mathcal{F}(W) E_{[X]}\right)^\chi:M_{[Y]}\right] = 0.$ Otherwise, if $k = \dim_{\mathbb{R}}(X) - \dim_{\mathbb{R}}(Y),$ then \[\left[\left(\C \mathcal{F}(W) E_{[X]}\right)^\chi:M_{[Y]}\right] = \dim \left(\chi \right)\cdot \left \langle \chi, E_{[Y]}\C \mathcal{F}(W) E_{[X]}\right\rangle.\] Further, as $W$-modules, $E_{[Y]}\C \mathcal{F}(W) E_{[X]}$ is isomorphic to
     \begin{align*}
        \bigoplus_{[X' \leq Y']}\left (\Tilde{{H}}_{k - 2} \left(X', Y'\right) \otimes \det(X') \otimes \det(Y') \right)\Bigg \uparrow_{N_W(X') \cap N_W(Y')}^W.
     \end{align*}
     The sum is over $W$-orbits of pairs $[X' \leq Y']$ in $\mathcal{L}(W)$ where $X' \in [X], Y' \in [Y].$ 
 \end{prop}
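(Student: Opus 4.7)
The plan is to adapt the proof of \cref{cor:induced-cohomology} (the type A version) essentially verbatim, relying on the fact that the main ingredients — Saliola's cfpois for $\C\mathcal{F}(W)$, the analogue of \cref{lem:invariant-idems-are-sums-of-idems}, the support properties of \cref{cor:supports-of-kb-idemps}, and Saliola's quiver isomorphism theorem — were all established for general finite reflection groups in \cite{saliolaquiverdescalgebra, saliolafacealgebra} and \cite{Aguiar-Mahajan}. First I would establish the composition multiplicity formula. By the Bidigare antiisomorphism, the right $\Sigma(W)$-module structure on $(\C\mathcal{F}(W)E_{[X]})^\chi$ becomes a left $(\C\mathcal{F}(W))^W$-module structure, and the projective cover of $M_{[Y]}$ corresponds to $(\C\mathcal{F}(W))^W E_{[Y]}$. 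Applying \cref{eqn:composition-mults} and then the commutation $E_{[Y]}\pi_\chi = \pi_\chi E_{[Y]}$ (the $W$-invariance of $E_{[Y]}$, verbatim from \cref{lem:commuting-action}) yields
\[
\left[\left(\C\mathcal{F}(W)E_{[X]}\right)^\chi : M_{[Y]}\right] = \dim_\C \left(E_{[Y]}\C\mathcal{F}(W)E_{[X]}\right)^\chi = \dim(\chi)\cdot \left\langle \chi, E_{[Y]}\C\mathcal{F}(W)E_{[X]}\right\rangle.
\]

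Second I would prove the $W$-module decomposition of $E_{[Y]}\C\mathcal{F}(W)E_{[X]}$. The general-type version of \cref{lem:invariant-idems-are-sums-of-idems} gives $E_{[X]} = \sum_{X' \in [X]} E_{X'}$ and $E_{[Y]} = \sum_{Y' \in [Y]} E_{Y'}$ for a $W$-permuted cfpoi $\{E_Z : Z \in \mathcal{L}(W)\}$ of $\C\mathcal{F}(W)$. Orthogonality of the $E_Z$ gives a vector space decomposition into pieces $E_{Y'}\C\mathcal{F}(W)E_{X'}$, and the general-type analogue of \cref{cor:supports-of-kb-idemps}(iii) forces the pieces to vanish unless $X' \leq Y'$. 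The setwise stabilizer of the pair $(X',Y')$ is $N_W(X') \cap N_W(Y')$, and grouping by $W$-orbit and invoking standard induction theory gives the required direct sum over orbits $[X' \leq Y']$, now with the factor $E_{Y'}\C\mathcal{F}(W)E_{X'}$ living inside each summand — exactly as in \cref{prop:double-idemp-sapces}. This establishes the shape of the decomposition; the vanishing statement for $[X] \nleq [Y]$ is an immediate consequence, since no pair of representatives is then comparable.

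Third I would identify $E_{Y'}\C\mathcal{F}(W)E_{X'}$ with the twisted homology $\widetilde{H}_{k-2}(X',Y') \otimes \det(X') \otimes \det(Y')$ as an $N_W(X') \cap N_W(Y')$-module, where $k = \dim_{\mathbb{R}}(X')-\dim_{\mathbb{R}}(Y')$. Saliola's quiver theorem \cite[Theorem 6.2]{saliolaquiverdescalgebra} holds for any finite reflection group: the quiver of $\C\mathcal{F}(W)$ is the Hasse diagram of $\mathcal{L}(W)$, the map $\varphi:\C\mathcal{Q} \twoheadrightarrow \C\mathcal{F}(W)$ is $W$-equivariant with respect to the sign-twisted $W$-action on paths defined exactly as in \cref{eqn:quiver-action}, and $\ker \varphi$ is generated by the sum of length-two paths. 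The cochain-complex computation of \cref{prop:change-coho} — identifying chains of the open interval $(X',Y')$ with paths from $\varepsilon_{X'}$ to $\varepsilon_{Y'}$, and $\im \partial_{k-3}^\ast$ with $\varepsilon_{Y'}(\ker\varphi)\varepsilon_{X'}$ — then carries over word-for-word, producing the cohomology $\widetilde{H}^{k-2}(X',Y')$ with the twist by $\det(X') \otimes \det(Y')$ coming from Saliola's sign convention. Passing from cohomology to homology uses that $\det(X')\otimes \det(Y')$ is self-dual (being $\pm 1$-valued) and that irreducible representations of finite real reflection groups are self-dual (they are defined over $\mathbb{R}$, and in fact over $\mathbb{Q}$ by Benard--Springer), followed by the commutation of duality with induction, mirroring the proof of \cref{cor:induced-cohomology}.

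The main obstacle is bookkeeping rather than genuine difficulty: one must verify that the determinant character $\det(X')$, defined via orientations on the subspace $X'$, is well-defined on the setwise stabilizer $N_W(X')$ (not merely the pointwise stabilizer $W_{X'}$), and that Saliola's sign twist on the quiver action agrees with $\det(X')\otimes \det(Y')$ on $N_W(X') \cap N_W(Y')$. Both checks are identical in spirit to the type A arguments of \cref{sec:det-char} and \cref{sec:Saliola'smap}, but require the choice of orientation to be made once on each subspace in $\mathcal{L}(W)$ rather than on each set partition subspace. Once these conventions are fixed, every step of the type A proof transfers directly.
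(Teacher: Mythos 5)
Your proposal is correct and takes essentially the same route as the paper's own proof outline: establish the composition-multiplicity formula via Bidigare and the commuting action, decompose $E_{[Y]}\C\mathcal{F}(W)E_{[X]}$ over $W$-orbits of pairs using the general-type idempotent lemmas, identify each summand with twisted cohomology via Saliola's uniform quiver surjection, and pass to homology using self-duality. Your flag about verifying that $\det(X')$ is well-defined on the setwise stabilizer $N_W(X')$ (not merely on the pointwise stabilizer $W_{X'}$) and that it matches Saliola's sign twist is a genuine point the paper elides in its proof sketch — it holds since an element of $N_W(X')$ preserves $X'$ as a subspace and so has an intrinsic, orientation-independent determinant on it — and calling it out is a small improvement in rigor over what the paper itself records.
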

 \begin{proof}[Proof outline] As in type $A$ (\cref{prop:transform-to-group-rep-q}), finite dimensional algebra tools reveal the composition multiplicity \[\left[\left ( \C \mathcal{F}(W) E_{[X]}\right)^\chi: M_{[Y]}\right] = \dim \left(\chi \right)\cdot \left \langle \chi, E_{[Y]}\C \mathcal{F}(W) E_{[X]}\right\rangle.\] 
% \vic{Sorry that I didn't comment on this earlier, but I actually find the intermediate expression that appears on the right side of this displayed line, $\dim \left(\chi \right)\cdot \left \langle \chi, E_{[Y]}\C \mathcal{F}(W) E_{[X]}\right\rangle$, sufficiently attractive that maybe it is worth putting in between the two other answers in the proposition?}
 Given that the analogues of \cref{cor:supports-of-kb-idemps} and \cref{lem:invariant-idems-are-sums-of-idems} hold, we have the analogue of \cref{prop:double-idemp-sapces}. Additionally, Saliola's equivariant surjection from the path algebra of the quiver to the face algebra described in \cref{sec:Saliola'smap} was a uniform result, which allows us to reframe $E_Y \C \mathcal{F}_n E_X$ in terms of (twisted) poset cohomology by the argument in \cref{sec:precise-conversion}. Finally, the representations of real reflection groups are all self-dual as they are defined over the real numbers (see  \cite{BENARD1976318}), so the same tricks hold for converting cohomology to homology in the general case.
 \end{proof}

 We are also able to understand the sign isotypic subspace for general type, gaining an analogue to \cref{prop:sign-iso}. To state it, we first explain a known generalization of ``cycle type'' of an element $w \in W.$ 
Let $V$ be the ambient space of the reflection arrangement of $W$ (i.e. the intersection at the bottom of $\mathcal{L}(W)$).  Given an element $w \in W$, the space $\mathrm{Fix}(w):= \{v \in V: w(v) = v\}$ turns out to be an intersection in $\mathcal{L}(W)$ (\cite[Theorem 6.27]{orlik2013arrangements}). Let $\mathrm{cyc}(w)$ be the $W$-orbit $[\mathrm{Fix}(w)].$ 

In \cite[Thm 1.1]{blesshohlshock}, Blessenohl, Hohlweg, and Schocker beautifully generalize Gessel and Reutenauer's result (\cite[Theorem 2.1]{gesselreutenauer}) using connections between descent algebras and characters of reflection groups. One special case of their theorem proves $\left\langle \C W \Phi^{-1}\left(E_{[Y]}\right), \chi_{\sgn} \right \rangle$ is the number of elements $w \in W$ with cycle type $[Y]$ and descent set $S.$ 
 
 \begin{prop}\label{prop:sign-iso-general}
     As $\Sigma(W)$-modules, the sign isotypic subspace of the face algebra $\C \mathcal{F}(W)$ is isomorphic to the $\Sigma(W)$-simple $M_{\mathrm{cyc}(w_0)},$ where $w_0$ is the longest element of $W.$
 \end{prop}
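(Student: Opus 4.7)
The plan is to follow the blueprint of the type A proof of Proposition~\ref{prop:sign-iso}, using the general type analogues already established in Section~\ref{sec:generalizations}. The argument divides into three stages: first, show that the sign isotypic subspace is one-dimensional and sits entirely in the summand indexed by the bottom element of $\mathcal{L}(W)$; second, identify this summand with the chamber space (equivalently, with $\C W$ as right $\Sigma(W)$-modules); third, invoke the Blessenohl--Hohlweg--Schocker theorem to pin down which $\Sigma(W)$-simple appears.

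For the first stage, I would apply \cref{prop:analogue-W} with $\chi = \sgn$. By Frobenius reciprocity, $\langle \sgn, \C[W/W_X]\rangle = \langle \sgn|_{W_X}, \mathbb{1}_{W_X}\rangle$, which is $1$ if $W_X$ contains no reflections and $0$ otherwise. Since $W_X$ is generated by the reflections of $W$ whose hyperplanes contain $X$ (Steinberg's theorem), the only $X \in \mathcal{L}(W)$ with trivial $W_X$ is the ambient space $V$ (the unique intersection not contained in any reflecting hyperplane). Thus $(\C\mathcal{F}(W))^{\sgn} = (\C\mathcal{F}(W) E_{[V]})^{\sgn}$; and since $W$ acts simply transitively on chambers, which are precisely the faces of support $V$, there is a unique $W$-orbit of such faces, yielding $\dim (\C\mathcal{F}(W))^{\sgn} = 1$.

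For the second stage, I would mimic the argument of \cref{sec:Uyemura--Reyes}: the general-type analogue of \cref{cor:supports-of-kb-idemps}(ii) shows that $E_V$ lies in the $\C$-span of the chambers (since $V$ is the minimum of $\mathcal{L}(W)$), while the analogue of \cref{cor:supports-of-kb-idemps}(iv) gives $C \cdot E_V = C$ for every chamber $C$. Hence $\C\mathcal{F}(W) E_{[V]}$ is exactly the chamber space. Choosing a fundamental chamber produces a $W$-equivariant isomorphism with $\C W$, and Brown's theorem (as cited in \cite[Theorem 8]{BrownonLRBs}) turns the right $\Sigma(W)$-action on chambers into the natural right $\Sigma(W)$-action on $\C W$ (by right multiplication, via $\Sigma(W) \hookrightarrow \C W$).

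For the third stage, since $\Sigma(W)$ is elementary and $(\C\mathcal{F}(W))^{\sgn}$ is one-dimensional, it suffices to identify the unique $[Y]$ for which the composition multiplicity $[(\C W)^{\sgn}: M_{[Y]}]$ is nonzero. For right $\Sigma(W)$-modules, $[M: M_{[Y]}] = \dim M \cdot \Phi^{-1}(E_{[Y]})$; applying this to $M = (\C W)^{\sgn}$ and using that the $\Sigma(W)$-action (right multiplication) commutes with the $W$-action (left multiplication) gives
\[
[(\C W)^{\sgn}: M_{[Y]}] \;=\; \langle \sgn,\, \C W \cdot \Phi^{-1}(E_{[Y]}) \rangle.
\]
The Blessenohl--Hohlweg--Schocker theorem cited before the statement equates this with $\#\{w \in W : \mathrm{cyc}(w) = [Y],\ \mathrm{Des}(w) = S\}$. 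The only element of $W$ with full descent set $S$ is the longest element $w_0$, so the only nonvanishing multiplicity is for $[Y] = \mathrm{cyc}(w_0)$, whence $(\C\mathcal{F}(W))^{\sgn} \cong M_{\mathrm{cyc}(w_0)}$.

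The main obstacle is the bookkeeping in the second stage: one must verify that Brown's identification of chamber space with $\C W$ really transports the right $\Sigma(W)$-structure coming from $\C\mathcal{F}(W) E_{[V]}$ (i.e., from left multiplication by $\Phi(x) \in (\C\mathcal{F}(W))^{W}$) into the right multiplication action of $\Sigma(W) \subset \C W$, uniformly in Coxeter type. This is standard and follows the type A argument verbatim, but it is the only place where something beyond the results already established in the excerpt is used.
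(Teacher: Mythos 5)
Your proof is correct and follows essentially the same three-step path as the paper's ``proof idea'': use \cref{prop:analogue-W} to show $(\C\mathcal{F}(W))^{\sgn}$ is one-dimensional and concentrated in the $E_{[V]}$-summand, identify $\C\mathcal{F}(W)E_{[V]}$ with the chamber space $\cong \C W$ via the argument of \cref{sec:Uyemura--Reyes}, then apply Blessenohl--Hohlweg--Schocker. You have filled in details the paper leaves implicit (Steinberg's theorem for pinning down when $W_X$ is trivial, and \cref{cor:supports-of-kb-idemps}(ii)/(iv) for showing $\C\mathcal{F}(W)E_{[V]}$ equals the chamber span), but the approach is the same.
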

 \begin{proof}[Proof idea]
     It can be shown from \cref{prop:analogue-W} that $\left(\C \mathcal{F}(W)\right)^{\sgn}$ is one dimensional and equal to $\left(\C \mathcal{F}(W) E_{[V]}\right)^{\sgn}$. Hence it is isomorphic to the simple $M_{[Y]}$ for the unique $[Y]$ for which  \[0 \neq \dim \left(E_{[Y]}\C\mathcal{F}(W)E_{[V]}\right)^{\sgn}.\]  Using the same argument as in \cref{sec:Uyemura--Reyes}, one can show $E_{[Y]}\C\mathcal{F}(W)E_{[V]}$ is isomorphic as a $W$-representation to $\C W \Phi^{-1}\left(E_{[Y]}\right).$ Hence,  \[\dim \left(E_{[Y]}\C\mathcal{F}(W)E_{[V]}\right)^{\sgn} = \dim \left(\C W\Phi^{-1}\left(E_{[Y]}\right)\right)^{\sgn}= \left \langle \C W \Phi^{-1} \left(E_{[Y]}\right), \chi_{\sgn}\right \rangle.\]
     The proposition then follows from Blessenohl--Hohlweg--Schocker's result.
 \end{proof}

 Unfortunately, \cref{prop:analogue-W} and \cref{prop:sign-iso-general} are as far as we are able to generalize our work \textit{uniformly} to general reflection groups. Indeed, the proof of our main theorem in type $A$ (\cref{conj:typeA-comp-factors}) relied heavily on symmetric function theory and the {equivariant} description of the homology of the lattice of intersections. As far as we are aware, there are no uniform generalizations of either of these notions. However, from at least a naive standpoint, there appears to be more hope in generalizing this story specifically to type $B.$ 

Since the hyperoctahedral groups are wreath products of symmetric groups, there is a theory of type $B$ symmetric functions: see \cite[\S 5 of Appendix B]{macdonald} and also \cite[\S 2]{Poirier}. 
The representation structure of the hyperoctahedral groups acting on the top (co)homology of the type $B$ intersection lattice has also been studied: see \cite{bergeronfreeliealgebra, GottliebWachs, HanlonTypeB}. Perhaps the most promising sign is that the type $B$ descent algebra Cartan invariants are known (unlike in general Coxeter type). They were computed by N. Bergeron in \cite{Bergerons-hyper2}, and are very closely connected to Garsia--Reutenauer's result in type $A$ (\cref{thm:garsia-reutenauer}). Hence, we pose the following question.
%The equivalence classes of $J \subseteq S$ for the  hyperoctahedral group $B_n$ are indexed by partitions of size $0 \leq m \leq n$. After factoring a composition $\alpha$ into Lyndon words, let \emph{$\mathrm{type}_B(\alpha)$} be the partition rearrangement of the composition obtained by summing the letters of \textit{odd-lengthed} Lyndon word factors.  The below theorem appears (in a more general form) as Theorem 3.3 of \cite{Bergerons-hyper2}.
%\begin{thm}[N. Bergeron] The composition multiplicity of the simple $M_\lambda$ in the indecomposable projective $P_\mu$ is given by
%\[[P_\mu: M_\lambda] = \# \{ \alpha \sim \mu: \type_B(\alpha) = \lambda\}.\]
%\end{thm}

%\begin{example} Let $\mu = (2, 1, 1, 1$). Seeing the below table, the composition factors of $P_{(2, 1, 1, 1)}$ are one copy each of $P_{(2, 1, 1, 1)}$, $P_{(1, 1)}$, $P_{(4, 1)}$, and $P_{\emptyset}$.

%\begin{table}[h]\label{table:excompmulttypeb}
 %   \centering
  %  \begin{tabular}{c|c|c}
   %      $\alpha$ & Lyndon Factorization & $\type_B(\alpha)$  \\
    %     \hline
     %    $(2, 1, 1, 1)$ & (2)(1)(1)(1) & $(2, 1, 1, 1)$\\
      %   $(1,2, 1, 1)$ & (12)(1)(1) & $(1, 1)$\\
       %   $( 1, 1, 2, 1)$ & (112)(1) & $(4, 1)$\\
        %   $( 1, 1, 1,2)$ & (1112) & $\emptyset$
   % \end{tabular}
%\end{table}
%\end{example}

\begin{question}\label{quest:type-B}
    What is the type $B$ analogue to \cref{conj:typeA-comp-factors}?
\end{question}

Some useful first steps towards answering \cref{quest:type-B} would be to understand:
\begin{itemize}
    \item the equivariant structure of upper intervals in the type $B$ lattice of intersections (as Sundaram did for type $A$ in \cref{lem:upper-interval-stabilizer-rep}),
    \item type $B$ analogues of \cref{lem:upper-intervals-plethysm} and \cref{prop:base-case} (this may involve developing a hyperoctahedral analogues of Wachs's lemma (\cref{lem:Wachs-plethysm-gen-fn}) and applying Poirier's work in \cite[Theorem 16]{Poirier}, which interprets a wreath product analog of $L_\lambda$ as a necklace generating function), and
    \item the intersections of the setwise stabilizers of intersections, $N_W(X) \cap N_W(Y)$, and the $W$-orbits $[X \leq Y]$ (i.e. type $B$ analogues of \cref{lem:bij-w-lambda(mu)}, \cref{lem:normalizer-intersections}).\end{itemize}

\subsection{Left regular bands} \label{sec:LRBs}A {left regular band (or LRB)} is a finite semigroup $B$ for which $x^2 = x$ and $xyx = xy$ for all $x, y \in B.$ As discussed in the background (\cref{eqn:lrb-property}), the face monoid of the braid arrangement-- and in fact the face semigroups for all hyperplane arrangements--are examples of LRBs. LRBs  have been studied extensively in \cite{aguiar2006coxeter, BrownonLRBs, margolis2015combinatorial, MSS, saliolaquiverlrb, SteinbergRepMonoidsBook} and have fascinating connections with other mathematical areas such as Markov chains and poset topology. Many of the LRBs occurring in the literature come equipped with a natural group action and it is natural to consider the invariant theory of their semigroup rings. For instance, see \cite{BraunerComminsReiner} for a study of the invariant theory of the free LRB and its $q$-analog, which involved Stirling numbers, random-to-top shuffling, and the derangement symmetric function. Generalizations of the stories in \cite{BraunerComminsReiner} and this paper to general LRBs is ongoing thesis work of the author.

\bibliographystyle{abbrv}
\bibliography{bibliography}

\end{document}